\DeclareMathOperator{\divsymb}{div}
\DeclareMathOperator{\odiv}{\overline{div}}
\DeclareMathOperator{\tr}{tr}
\DeclareMathOperator{\Vol}{Vol}
\DeclareMathOperator{\dvol}{dV}
\DeclareMathOperator{\darea}{dA}
\DeclareMathOperator{\Ric}{Ric}
\DeclareMathOperator{\Rm}{Rm}
\newcommand{\oRm}{\overline{\mathrm{Rm}}}
\newcommand{\Wm}{\mathcal{I}}
\newcommand{\Wn}{\mathcal{J}}
\newcommand{\Pf}{\mathrm{Pf}}
\newcommand{\oPf}{\overline{\mathrm{Pf}}}
\DeclareMathOperator{\lots}{l.o.t.}
\DeclareMathOperator{\vspan}{span}
\DeclareMathOperator{\Contr}{Contr}
\newcommand{\mfdim}{n}
\newcommand{\submfdim}{k}
\newcommand{\pedimplaceholder}{n}
\newcommand{\pedim}{n}
\newcommand{\pedimparen}{n}
\newcommand{\pedimminusone}{n-1}
\newcommand{\pedimminustwo}{n-2}
\newcommand{\pedimminusthree}{n-3}
\newcommand{\pedimminusoneparen}{(n-1)}
\newcommand{\pedimplusone}{n+1}
\newcommand{\pedimiseven}{$n$ is even}
\newcommand{\pedimeven}{$n$ even}
\newcommand{\pedimevengeqfour}{$n \geq 4$ even}
\newcommand{\pedimequalsthree}{$n=3$}
\newcommand{\pedimequalsfour}{$n=4$}
\newcommand{\pedimgeqthree}{$n \geq 3$}
\newcommand{\pedimequalsfive}{$n = 5$}
\newcommand{\pedimgeqfive}{$n \geq 5$}
\newcommand{\penologterms}{$n$ is even or $n=3$}
\newcommand{\pelogterms}{$n > 3$ is odd}
\newcommand{\pesubmfdim}{k}
\newcommand{\pesubmfdimplaceholder}{k}
\newcommand{\pesubmfdimparen}{k}
\newcommand{\pesubmfdimplusone}{k+1}
\newcommand{\pesubmfdimplustwo}{k+2}
\newcommand{\pesubmfdimminusone}{k-1}
\newcommand{\pesubmfdimminusoneparen}{(k-1)}
\newcommand{\pesubmfdimminustwo}{k-2}
\newcommand{\pesubmfdimminustwoparen}{(k-2)}
\newcommand{\pesubmfdimminusthree}{k-3}
\newcommand{\pesubmfdimtwo}{$k=2$}
\newcommand{\pesubmfdimiseven}{$k$ is even}
\newcommand{\pesubmfdimeven}{$k$ even}
\newcommand{\minuspedim}{-n}
\newcommand{\minuspesubmfdim}{-k}
\newcommand{\minussubmfdim}{-k}
\newcommand{\pecodim}{n-k}
\newcommand{\pecodimone}{$n = k+1$}
\newcommand{\pecodimpositive}{$n > k$}
\newcommand{\codimpositive}{$n > k$}
\newcommand{\codimone}{$n = k+1$}
\newcommand{\Sch}{\mathsf{P}}
\newcommand{\trSch}{\mathsf{J}}
\newcommand{\Fi}{\mathsf{F}}
\newcommand{\trFi}{\mathsf{G}}
\newcommand{\Di}{\mathsf{D}}
\newcommand{\oSch}{\overline{\Sch}}
\newcommand{\otrSch}{\overline{\trSch}}
\newcommand{\onf}{\mathsf{n}}
\newcommand{\PtrFi}{P_{\dagger}}
\newcommand{\QtrFi}{Q_{\dagger}}
\newcommand{\Int}[1]{\mathring{#1}}
\newcommand{\defn}[1]{{\boldmath\bfseries#1}}
\newcommand{\tfss}{\mathring{L}}
\newcommand{\oP}{\overline{P}}
\newcommand{\oQ}{\overline{Q}}
\newcommand{\oR}{\overline{R}}
\newcommand{\oW}{\overline{W}}
\newcommand{\oh}{\overline{h}}
\newcommand{\oDelta}{\overline{\Delta}}
\newcommand{\onabla}{\overline{\nabla}}
\newcommand{\cmI}{\widetilde{\mathcal{I}}}
\newcommand{\cmJ}{\widetilde{\mathcal{J}}}
\newcommand{\cmN}{\widetilde{\mathcal{N}}}
\newcommand{\hh}{\widehat{h}}
\newcommand{\lp}{\langle}
\newcommand{\rp}{\rangle}
\newcommand{\lv}{\lvert}
\newcommand{\rv}{\rvert}
\newcommand{\mA}{\mathcal{A}}
\newcommand{\mB}{\mathcal{B}}
\newcommand{\mC}{\mathcal{C}}
\newcommand{\mI}{\mathcal{I}}
\newcommand{\mJ}{\mathcal{J}}
\newcommand{\mK}{\mathcal{K}}
\newcommand{\mN}{\mathcal{N}}
\newcommand{\mP}{\mathcal{P}}
\newcommand{\mR}{\mathcal{R}}
\newcommand{\mV}{\mathcal{V}}
\newcommand{\mW}{\mathcal{W}}
\newcommand{\mZ}{\mathcal{Z}}
\newcommand{\kc}{\mathfrak{c}}
\newcommand{\bN}{\mathbb{N}}
\newcommand{\bR}{\mathbb{R}}
\newcommand{\sI}{\mathscr{I}}
\def\sideremark#1{\ifvmode\leavevmode\fi\vadjust{\vbox to0pt{\vss
 \hbox to 0pt{\hskip\hsize\hskip1em
 \vbox{\hsize3cm\tiny\raggedright\pretolerance10000
 \noindent #1\hfill}\hss}\vbox to8pt{\vfil}\vss}}}
\newtheorem{theorem}{Theorem}[section]
\newtheorem{proposition}[theorem]{Proposition}
\newtheorem{lemma}[theorem]{Lemma}
\newtheorem{corollary}[theorem]{Corollary}
\theoremstyle{definition}
\newtheorem{conjecture}[theorem]{Conjecture}
\theoremstyle{remark}
\newtheorem{remark}[theorem]{Remark}
\numberwithin{equation}{section}
\begin{document}

\title[Gauss--Bonnet formula for renormalized area]{A Gauss--Bonnet formula for the renormalized area of minimal submanifolds of Poincar\'e--Einstein manifolds}
\author[J.\ S.\ Case]{Jeffrey S.\ Case}
\address{Department of Mathematics \\ Penn State University \\ University Park, PA 16802 \\ USA}
\email{jscase@psu.edu}
\author[C R.\ Graham]{C Robin Graham}
\address{Department of Mathematics \\ University of Washington \\ Box 354350 \\ Seattle, WA 98195-4350 \\ USA}
\email{robin@math.washington.edu}
\author[T.-M.\ Kuo]{Tzu-Mo Kuo}
\address{Department of Mathematics \\ National Taiwan University \\ Taipei 10617 \\ Taiwan}
\email{tzumokuo@ntu.edu.tw}
\author[A.\ J.\ Tyrrell]{Aaron J.\ Tyrrell}
\address{Department of Mathematics and Statistics \\ Texas Tech University \\ Lubbock, TX 79409-1042 \\ USA}
\email{aatyrrel@ttu.edu}
\author[A.\ Waldron]{Andrew Waldron}
\address{Center for Quantum Mathematics and Physics (QMAP) \\ Department of Mathematics \\ University of California \\ Davis, CA 95616 \\ USA}
\email{wally@math.ucdavis.edu}
\keywords{renormalized area; minimal submanifolds; conformal invariants}
\subjclass[2020]{Primary 53C40; Secondary 53A31, 53C42}
\begin{abstract}
 Assuming the extrinsic $Q$-curvature admits a decomposition into the Pfaffian, a scalar conformal submanifold invariant, and a tangential divergence, we prove that the renormalized area of an even-dimensional minimal submanifold of a Poincar\'e--Einstein manifold can be expressed as a linear combination of its Euler characteristic and the integral of a scalar conformal submanifold invariant.
 We derive such a decomposition of the extrinsic $Q$-curvature in dimensions two and four, thereby recovering and generalizing results of Alexakis--Mazzeo and Tyrrell, respectively.
 We also conjecture such a decomposition for general natural submanifold scalars whose integral over compact submanifolds is conformally invariant, and verify our conjecture in dimensions two and four.
 Our results also apply to the area of a compact even-dimensional minimal submanifold of an Einstein manifold. 
\end{abstract}
\maketitle

\tableofcontents

\section{Introduction}
\label{sec:intro}

Poincar\'e--Einstein manifolds are asymptotically hyperbolic $\pedim$-manifolds
with constant Ricci curvature $\Ric = -\pedimminusoneparen g$;
see \cref{subsec:asymptotics} for a precise definition.  The motivating
example is the Poincar\'e ball model  
of hyperbolic space.
They have a boundary at infinity which inherits a conformal class of metrics.
Poincar\'e--Einstein manifolds were introduced by Fefferman and Graham~\cite{FeffermanGraham1985} as a means of studying conformal geometry by
associating a single Riemannian metric to a conformal class, and have been
intensely studied from that point of view.
Later it was realized that their
geometry is precisely that underlying the AdS/CFT correspondence in
physics~\cites{Maldacena1998,GubserKlebanovPolyakov1998,Witten1998,HenningsonSkenderis1998}. 

The boundary problem of finding a minimal submanifold of hyperbolic
space with prescribed boundary at infinity has also been much studied,
beginning with work of Anderson~\cites{Anderson1982,Anderson1983}.   
In the AdS/CFT correspondence, such minimal submanifolds $Y^{\pesubmfdim}$ of 
Poincar\'e--Einstein manifolds arise as dual objects to submanifold
observables in the boundary theory~\cite{GrahamWitten1999}.  


Poincar\'e--Einstein $\pedim$-manifolds have infinite volume.  But 
they have a renormalized volume $\mV$, which is an invariant when
\pedimiseven.  Similarly, a minimal submanifold $Y^{\pesubmfdim}$ 
with prescribed boundary at infinity has infinite area, but there is a 
renormalized area $\mA$ which is an invariant when \pesubmfdimiseven.  The 
renormalized volume and renormalized area both arose first in the physics  
literature and they are now regarded as fundamental geometric invariants;
see~\cite{Graham2000} for further references and discussion.
One would like to determine the relationship between these and other geometric invariants of Poincar\'e--Einstein manifolds and their minimal submanifolds.

Chang, Qing, and Yang~\cite{ChangQingYang2006} derived a formula of 
Gauss--Bonnet type for the renormalized volume:
if $(X^{\pedim},g_+)$ is 
Poincar\'e--Einstein with \pedimevengeqfour, then
\begin{equation}\label{eqn:cqygb}
  \mV =  c_{\pedimplaceholder} \chi(X) + \int_X\mZ\dvol ,\qquad\qquad
  c_{\pedimplaceholder} = \frac{(-2\pi)^{\pedimparen/2}}{\pedimminusoneparen!!}.
\end{equation}
Here $\chi(X)$ denotes the Euler characteristic of $X$ and $\mZ$ is a 
natural scalar that is pointwise conformally invariant of weight $\minuspedim$.
The result for \pedimequalsfour\ was first proved by Anderson~\cite{Anderson2001b}.  In this   
case $\mZ= - |W|^2/24$, where $W$ denotes the Weyl tensor.  Conformal  
invariance implies convergence of the integral in Equation~\eqref{eqn:cqygb}:  the 
expression $\mZ\,\dvol$ does not change upon conformally rescaling the
metric, so it equals the same expression when evaluated on a compactification
of~$g_+$.  

A main ingredient in Chang, Qing, and Yang's derivation of Equation~\eqref{eqn:cqygb} is a result
of Alexakis~\cite{Alexakis2012}, motivated by a conjecture of Deser and
Schwimmer~\cite{DeserSchwimmer1993},
that establishes a decomposition of any natural scalar whose 
integral over compact manifolds is conformally invariant.
Namely, any such scalar $I$ on manifolds of even dimension $\pedim$ can be written  
\begin{equation}\label{eqn:Adecomp}
I = c\,\Pf + \mZ_I + \divsymb V,  
\end{equation}  
where $c \in \bR$, $\Pf$ denotes the Pfaffian of the curvature tensor,
$\mZ_I$ is a natural scalar that is pointwise conformally invariant of
weight $\minuspedim$, and $V$ is a natural vector field.
Branson's~\cites{Branson1995,ChangEastwoodOrstedYang2008} critical $Q$-curvature is a natural scalar that plays a fundamental role in conformal geometry.
It is not conformally invariant, but it transforms by addition of a divergence under conformal change.
Hence its integral over compact manifolds is conformally invariant.
Chang, Qing, and Yang applied Alexakis' result with $I=Q$ in their proof of Equation~\eqref{eqn:cqygb}.
The proof shows that the conformal
invariant in \eqref{eqn:cqygb} is given by
$\mZ= \frac{(-1)^{\pedimparen/2}}{\pedimminusoneparen!} \mZ_Q$.  

Our main result is an analogue of Equation~\eqref{eqn:cqygb} for even-dimensional
minimal submanifolds of Poincar\'e--Einstein manifolds, assuming a special
case of a submanifold version of Alexakis' result.  We first formulate this 
submanifold version as a conjecture.
Our convention is that manifolds and submanifolds are without boundary unless otherwise specified.
Natural submanifold scalars and tensors are defined in \cref{subsec:natural}.

\begin{conjecture}\label{submanifold-alexakis}
  Let $\submfdim,\mfdim\in \bN$ with $\submfdim$ even and \pecodimpositive.  Suppose that $I$ is a
  natural scalar on $\submfdim$-dimensional submanifolds $Y$ of  
  $\mfdim$-dimensional Riemannian manifolds $(X,g)$ whose integral over compact
  $Y$ is invariant under conformal rescaling of~$g$.  Then there is a natural submanifold scalar 
  $\mW_I$ that is pointwise conformally invariant of weight $-\submfdim$, a
  natural submanifold vector field $V$, and a constant $c \in \bR$ so that  
\begin{equation}\label{eqn:submanifold-decomposition}
I = c\,\oPf + \mW_I +\odiv V. 
\end{equation}
Here $\odiv$ and $\oPf$ are the divergence operator and the
Pfaffian of the Riemannian curvature tensor, respectively, for the metric 
induced on $Y$ by $g$.  
\end{conjecture}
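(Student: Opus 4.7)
The plan is to model the proof on Alexakis's resolution of the Deser--Schwimmer conjecture, adapted to the submanifold category. First, I would set up the variational framework: if $\int_Y I\,\darea$ is conformally invariant for every compact $Y^{\pesubmfdim} \subset (X^{\pedim},g)$, then for any conformal factor $\omega \in C^\infty(X)$, the $t$-derivative at $t=0$ of $\int_Y I[e^{2t\omega}g]\,\darea[e^{2t\omega}\og]$ must vanish. This produces a pointwise local identity of the form $\int_Y \omega P_I(g) + (\text{terms linear in derivatives of }\omega)\, \darea = 0$ for all $\omega$ and all compact $Y$. Integration by parts on $Y$ then collapses this into a single pointwise identity on natural submanifold scalars, which plays the role of Alexakis's starting conformal variation equation.

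Next, I would carry out a Weyl-style classification of natural submanifold scalars. By the equivariant theory of natural tensors, every natural submanifold scalar is a polynomial in complete contractions of the induced metric $\og$, the intrinsic curvature $\oRm$, the ambient curvature $\Rm$ restricted to $Y$ (and its normal derivatives), and the second fundamental form $L$, together with their iterated intrinsic covariant derivatives $\onabla$ and $\onablas$. I would assign each such monomial a weight and a complexity (number of factors, number of derivatives, number of $L$-factors), and then induct on this complexity. The base case, where $I$ is a contraction of $\oRm$ alone, reduces to the original Alexakis theorem applied intrinsically on $Y$ and produces the $c\,\oPf$ term.

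The inductive step is where the real work lies. I would adapt Alexakis's ``super divergence formula'': varying $\omega$ in the local identity and tracking the highest-derivative and highest-$L$ terms forces cancellations that identify a tangential divergence $\odiv V$ to peel off. The additional ingredients beyond Alexakis's argument are (i) the possibility of normal derivatives of ambient tensors, which can be traded for tangential derivatives plus $L$ using the Gauss--Codazzi--Ricci equations, and (ii) terms built from $L$ and its derivatives, which must be handled using the transformation law of the trace-free second fundamental form $\tfss$ (a conformally covariant object of weight $1$). After removing $c\,\oPf$ and a divergence, what remains must be built out of conformally covariant combinations of $\tfss$, the ambient Weyl tensor, and their natural derivatives, and hence is pointwise conformally invariant of weight $\minuspesubmfdim$; this is the desired $\mW_I$.

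The principal obstacle is the inductive step, specifically producing a submanifold analogue of Alexakis's super divergence formula robust enough to handle the enlarged algebra of generators (ambient curvature with arbitrary normal derivatives, together with $L$ and its derivatives). Alexakis's original argument proceeds through a delicate hierarchy of linear combinations designed to isolate maximal-order terms; here the hierarchy must be refined to simultaneously control derivative order, $L$-order, and normal-derivative order, and the classification of conformally invariant ``leftovers'' at each stage is considerably richer than in the closed-manifold case. I would expect the low-dimensional verifications in the paper (for $\pesubmfdim = 2,4$) to suggest the precise combinatorial invariants one must track, and the uniqueness of the decomposition up to $\odiv V$ to fall out of the fact that $c\,\oPf + \mW_I$ represents a cohomological invariant of the variational problem.
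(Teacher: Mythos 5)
The statement you are addressing is a \emph{conjecture} in the paper, not a theorem: the authors do not prove it in general, and explicitly leave it open. What the paper does prove is \cref{low-dimension-submanifold-alexakis} (verified in \cref{sec:alexakis}), namely that the conjecture holds when $\submfdim=2$ and $\submfdim=4$. So there is no general proof in the paper to compare against; what you have written is a research program for the open problem, and you honestly flag the main obstacle (a submanifold analogue of Alexakis's super divergence formula) as unresolved. That is a fair assessment of where the difficulty lies, but it is not a proof.

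One concrete error in the sketch deserves emphasis: you claim that normal derivatives of ambient tensors "can be traded for tangential derivatives plus $L$ using the Gauss--Codazzi--Ricci equations." This is not true. Those equations relate the various \emph{projections} of the restricted ambient curvature to $\oRm$, $\oR_{\alpha\beta\alpha'\beta'}$, and $L$, but they say nothing about normal derivatives $\nabla_{\alpha'}\Rm$, $\nabla_{\alpha'}\nabla_{\beta'}\Rm$, etc., which are genuinely new jet data of the ambient metric along $Y$. In the paper's own analysis for $\submfdim=4$ (\cref{lem:4d-invariants}), these normal-derivative terms — $\Delta\trSch$, $H^{\alpha'}\nabla_{\alpha'}\trSch$, and contractions of $\Sch_{\alpha'\beta'}$ — survive the Gauss--Codazzi reduction and must be killed one jet-order at a time by the variational argument in \cref{subsec:alexakis/4}, precisely because they cannot be expressed in tangential terms. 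Any general inductive scheme must track this stratification by transverse jet order; your proposal conflates it with the $L$-order stratification.

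For comparison, the paper's actual proof of the $\submfdim=2,4$ cases takes a much more concrete route than what you outline: rather than attempting an Alexakis-style super divergence induction, the authors (i) hand-build a finite spanning set $\bigcup_j \sI_4^j$ of natural submanifold scalars of the critical weight modulo tangential divergences and conformal invariants, organized by dependence on the transverse $j$-jet of the conformal factor, using the nontrivial conformal invariants $\mK_1,\mK_2,\Wm,\Wn$ constructed in \cref{subsec:invariants/construct} to shrink the list; and then (ii) compute the conformal linearization of the integral of a general linear combination and show by direct elimination that invariance forces the combination to be $a(\otrSch^2 - \lv\oSch\rv^2)$, which is the Pfaffian up to a Weyl-norm conformal invariant. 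This is a classification-by-hand argument specific to low weight, not an instance of a general machine. Your proposal, if completed, would be far stronger — but the hard inductive step, which you acknowledge you have not supplied, is exactly what separates the two.
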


An extrinsic submanifold version of Branson's critical $Q$-curvature was
defined by Case, Graham, and Kuo~\cite{CaseGrahamKuo2023}.  For $\submfdim,\mfdim\in \bN$ with $\submfdim$ even
and $\mfdim>\submfdim$, this $Q$-curvature is a natural submanifold scalar whose 
integral over compact $Y$ is conformally invariant and which has a nice factorization for minimal submanifolds of Einstein manifolds.  It will be  
reviewed in \cref{sec:q-bg}.  Henceforth in this paper, by $Q$ we will
always mean this critical extrinsic submanifold $Q$-curvature. 

Our main theorem is the following.

\begin{theorem}\label{main-theorem}
Let $\pesubmfdimplaceholder,\pedimplaceholder \in \bN$ with \pesubmfdimeven\ and \pecodimpositive.
Suppose that
\eqref{eqn:submanifold-decomposition} holds for $I=Q$ on $\pesubmfdimparen$-dimensional submanifolds of $\pedimparen$-manifolds, with $\mW_Q$ and $V$ as in the statement of 
Conjecture~\ref{submanifold-alexakis}.
If $(X^{\pedim},g_+)$ is a
Poincar\'e--Einstein manifold,
$Y^{\pesubmfdim}$ is a smooth compact manifold with boundary, and
$i \colon Y\rightarrow (X,g_+)$ is a polyhomogeneous minimal immersion, then
\begin{equation}\label{eqn:gb}
\mA = c_{\pesubmfdimplaceholder} \chi(Y) + \frac{(-1)^{\pesubmfdimparen/2}}{\pesubmfdimminusoneparen!}\int_Y\mW_Q\darea ,
\end{equation}
where $\mA$ denotes the renormalized area of $Y$ and $c_{\pesubmfdimplaceholder}$ is as in Equation~\eqref{eqn:cqygb}.    
\end{theorem}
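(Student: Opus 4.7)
The approach I propose is to follow the template of Chang--Qing--Yang's proof of \eqref{eqn:cqygb}, transferred to the submanifold setting with the extrinsic critical $Q$-curvature in place of the intrinsic one. Fix a boundary defining function $\rho$ on $X$, set $Y_\epsilon := Y \cap \{\rho > \epsilon\}$, and recall that $\mA = \fp_{\epsilon \to 0} \int_{Y_\epsilon} \darea_{g_+}$ is defined via polyhomogeneous expansion of the area integral, using that $k$ is even.

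The central step is to establish the submanifold analogue of the Chang--Qing--Yang identity
\begin{equation*}
\fp_{\epsilon \to 0} \int_{Y_\epsilon} Q \, \darea_{g_+} = (-1)^{k/2}(k-1)! \, \mA,
\end{equation*}
where $Q$ denotes the extrinsic critical $Q$-curvature of the induced metric $h = g_+|_Y$. This I would derive from the factorization of $Q$ on minimal submanifolds of Einstein manifolds from Case--Graham--Kuo: for $g_+$ Poincar\'e--Einstein, that factorization reduces $Q_h$ to a computable expression whose renormalized integral against $\darea_{g_+}$ yields a constant multiple of $\mA$, by exactly the mechanism that underlies \eqref{eqn:cqygb} in the ambient (nonsubmanifold) case.

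Granted this identity, I would apply the hypothesized decomposition $Q = c\,\oPf + \mW_Q + \odiv V$ under the integral. The $\mW_Q$ term contributes $\int_Y \mW_Q\,\darea$ after passing to a compactification, since $\mW_Q\,\darea$ is conformally invariant and the resulting integrand extends smoothly to the compact manifold-with-boundary $Y$. The $\oPf$ term contributes a multiple of $\chi(Y)$ via a Chern--Gauss--Bonnet argument on $(Y, \overline{h})$ for a compactified induced metric $\overline{h}$, combined with the conformal transformation law of the Pfaffian (a Chern--Simons-type transgression) that controls the difference between $\oPf$ computed in $h$ and in $\overline{h}$. The $\odiv V$ term becomes, via Stokes' theorem, a boundary integral on $\partial Y_\epsilon$ whose finite part must be matched against the Pfaffian boundary transgression. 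Matching constants and solving for $\mA$ then yields \eqref{eqn:gb}, with $c_k$ emerging from the Chern--Gauss--Bonnet normalization.

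The main obstacle is the asymptotic bookkeeping near $\rho = 0$: the Chern--Gauss--Bonnet formula for $(Y,\overline{h})$ contributes a boundary transgression involving the shape of $\partial Y$; the divergence term produces a boundary integral of $V$; and the conformal rescaling from $h$ to $\overline{h}$ injects further polyhomogeneous corrections. Showing that all such boundary and subleading contributions either vanish in the finite part or cancel among themselves, so as to leave only the clean expression $c_k \chi(Y) + \frac{(-1)^{k/2}}{(k-1)!}\int_Y \mW_Q\,\darea$, is the technical heart of the argument. It requires adapting the transgression cancellations of Chang--Qing--Yang to the submanifold setting, together with a careful expansion of the extrinsic contributions to $Q$ along $Y$ that makes use of the assumed minimality and polyhomogeneity of the immersion $i$.
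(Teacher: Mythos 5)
Your outline reproduces the high-level shape of the argument (decompose $Q$, integrate, extract $\chi(Y)$ from the Pfaffian and $\mA$ from the remaining terms), but it omits the single technical device on which the paper's proof actually turns, namely the scattering potential $v$ and the associated \emph{scattering compactification} $\hh = e^{2v}h_+$ with $Q^{\hh} = 0$ (\cref{fg,cqy}). This is not an optional shortcut: it is what replaces your ``asymptotic bookkeeping,'' which you correctly flag as the technical heart but do not carry out. Concretely, you propose to apply the decomposition $Q = c\,\oPf + \mW_Q + \odiv V$ with $g = g_+$ on $Y$, take finite parts of $\fp\int_{Y_\epsilon}\oPf^{h_+}\darea_{h_+}$ and $\fp\int_{Y_\epsilon}\odiv^{h_+}V^{g_+}\darea_{h_+}$, and hope that the resulting boundary transgression and boundary flux of $V$ cancel each other. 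Nothing in your proposal establishes this cancellation, and the metric $h_+$ does not even extend to $\partial Y$, so CGB for $(Y,h_+)$ requires the explicit transgression analysis you defer. This is essentially the route of Alexakis--Mazzeo for $\submfdim=2$, which does not scale easily to higher even $\submfdim$.

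The paper's proof avoids all of this by applying the decomposition to a \emph{geodesic compactification} $h = \rho^2 h_+$ (and $g = \rho^2 g_+$), where everything is smooth up to the boundary and $\partial Y$ is totally geodesic for $h$; thus CGB on $(Y,h)$ produces $(2\pi)^{\submfdim/2}\chi(Y)$ with \emph{no} boundary transgression. The scattering relation $Q^{\hh}=0$ then gives $Q^h + P^h_{\submfdim}(F) = 0$, and integrating the decomposition yields exactly three extra terms: $\int_Y \mW_Q$ (convergent by conformal invariance, as you note), a boundary term $\int_{\partial Y}\langle \onf, V\rangle$ that \emph{vanishes} via a parity argument (not a cancellation against the Pfaffian), and $\int_Y P^h_{\submfdim}(F)$, whose boundary value picks out the coefficient $B$ in the expansion of $F$ and hence gives $-(-1)^{\submfdim/2}(\submfdim-1)!\,\mA$ by \cref{fg}. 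Your ``central step'' $\fp\int_{Y_\epsilon}Q^{h_+}\darea_{h_+}=(-1)^{\submfdim/2}(\submfdim-1)!\,\mA$ is true but trivial (a consequence of $Q^{h_+}$ being the constant $(-1)^{\submfdim/2}(\submfdim-1)!$ from \cref{factorization}), and does not by itself implement the CQY mechanism you allude to, since CQY's argument \emph{is} the scattering-potential argument you have left out. To repair your proposal you would either need to prove the boundary cancellation you posit, or better, switch to the compactified metric and invoke the scattering compactification as the paper does.
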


\noindent
Importantly, the conformal invariance of $\mW_Q$ implies that  
$\int_Y\mW_Q\darea$ converges.
See \cref{subsec:asymptotics} for the definition of polyhomogeneous minimal immersions.

Explicit formulas for $Q$ for $\submfdim=2,4$ are derived
in~\cite{CaseGrahamKuo2023}.
The formula~\cite{CaseGrahamKuo2023}*{Equation~(5.14)} for $k=2$ already exhibits a decomposition
of the form~\eqref{eqn:submanifold-decomposition}:
\[
Q=\oPf+\mW_Q,\qquad\qquad
\mW_Q=\frac12\lv \tfss \rv^2 - W^T.
\]
Here $\lv\tfss\rv^2$ is the squared norm of the trace-free part of the
second fundamental form and $W^T:=W(e_1,e_2,e_1,e_2)$ is the tangential
component of the background Weyl tensor, where $e_1$, $e_2$ is an
orthonormal basis for $TY$.  In this case, the resulting Gauss--Bonnet
formula \eqref{eqn:gb} was derived by Alexakis and
Mazzeo~\cite{AlexakisMazzeo2010} using another method.

When $\submfdim = 4$, the formula for $Q$ in~\cite{CaseGrahamKuo2023}*{Equation~(5.14)} is not
decomposed in the
form~\eqref{eqn:submanifold-decomposition}.  The main issue is  
recognizing $\mW_Q$, since the obvious scalar conformal submanifold
invariants   
constructed from the trace-free second fundamental form and the Weyl tensor 
are not sufficient to produce such a decomposition for $Q$.
In Subsection~\ref{subsec:invariants/construct} below we identify four non-obvious scalar
conformal submanifold invariants of weight $-4$ for submanifolds of general
dimension and codimension.  One of them, which we denote $\Wm$,
takes the following form when $\submfdim = 4$: 
\begin{equation*}
 \Wm = -3 \oDelta\trFi - 4\lp\Fi - \trFi i^\ast g, \oSch + \Fi \rp -
 2\onabla^\alpha\big(\mathcal{C}_\alpha
 -\Di^{\beta\alpha^\prime}\tfss_{\alpha\beta\alpha^\prime}\big)
 -\frac{2}{\mfdim-4}\mB_\alpha{}^\alpha + 2\lv\Di\rv^2. 
\end{equation*}
Here $\Fi$ is the conformally invariant Fialkow
tensor given in Equation~\eqref{eqn:defn-Fialkow}, $\trFi$ is 
its trace, $\oSch$ is the Schouten tensor for the induced metric, $\tfss$
the trace-free second fundamental form, and 
\begin{align*}
 \Di_{\alpha\alpha^\prime} & := \Sch_{\alpha\alpha^\prime} - \onabla_\alpha H_{\alpha^\prime} , \\
 \mC_{\alpha} &:
 =C_{\beta\alpha}{}^\beta-H^{\alpha'}W_{\beta\alpha}{}^\beta{}_{\alpha'} ,\\ 
 \mB_\alpha{}^\alpha & := B_\alpha{}^\alpha -
 2(\mfdim-4)H^{\alpha^\prime}C_{\alpha\alpha^\prime}{}^{\alpha} +
 (\mfdim-4)H^{\alpha^\prime}H^{\beta^\prime}W_{\alpha^\prime\alpha\beta^\prime}{}^{\alpha},
\end{align*}
where $\Sch$ is the background Schouten tensor, $H$ is the mean curvature
vector, and $B$, $C$, and $W$ are the background Bach, Cotton, and Weyl tensors,
respectively.
(Our notational conventions are described in \cref{sec:bg}.)
In Subsection
\ref{subsec:invariants/q} we show that
the formula \cite{CaseGrahamKuo2023}*{Equation~(5.14)} for $Q$ when
  $\submfdim =4$ can be rewritten in the form 
  \eqref{eqn:submanifold-decomposition} with 
  \begin{equation*}
  \mathcal{W}_Q=- \frac{1}{4}\lv \oW \rv^2 + \Wm + 2\lv \Fi\rv^2 - 2\trFi^2 ,
 \end{equation*}
where $\lv\oW\rv^2$ is the squared length of the Weyl tensor of the induced metric.
 Theorem~\ref{main-theorem} then immediately implies the following: 
  
\begin{corollary}
 \label{renormalized-area-special}
 Let $i \colon Y^{4} \to (X^{\pedim},g_+)$ be a polyhomogeneous minimal
 immersion into a Poincar\'e--Einstein manifold of dimension \pedimgeqfive.  Then
 \begin{equation}
  \label{eqn:minimal-renorm-gbc4}
  \mA  =  \frac{4\pi^2}{3}\chi(Y) -  \frac{1}{6}\int_Y \left( \frac{1}{4}\lv 
  \oW\rv^2 - \Wm - 2\lv\Fi\rv^2 + 2\trFi^2 \right) \darea.
 \end{equation}
\end{corollary}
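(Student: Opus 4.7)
The proof is essentially the substitution of the explicit formula for $\mW_Q$ into \cref{main-theorem}; the genuine content lies in verifying the hypothesis of that theorem, namely that \eqref{eqn:submanifold-decomposition} holds for $I = Q$ in dimension four with the stated $\mW_Q$. Granting this, \cref{main-theorem} with $\pesubmfdim = 4$ yields
\[
\mA = c_4\,\chi(Y) + \frac{(-1)^2}{3!}\int_Y \mW_Q\,\darea,
\]
where $c_4 = (-2\pi)^{2}/3!! = 4\pi^2/3$ and the factor out front of the integral is $\tfrac16$. Substituting the expression $\mW_Q = -\tfrac{1}{4}\lv\oW\rv^2 + \Wm + 2\lv\Fi\rv^2 - 2\trFi^2$ recorded just before the corollary and pulling out the overall minus sign produces precisely \eqref{eqn:minimal-renorm-gbc4}. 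Thus the task reduces entirely to exhibiting the decomposition of $Q$.

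To verify that decomposition I would proceed from the explicit four-dimensional formula for $Q$ in \cite{CaseGrahamKuo2023}*{Equation~(5.14)}. The first maneuver is to identify the piece of $Q$ that reproduces $\oPf$ by using the Gauss equation to express the intrinsic Riemann curvature of $Y$ in terms of $\oW$, $\oSch$, background curvature, and $\tfss$; in dimension four this lets one write $\oPf$ as a specific combination of $\lv\oW\rv^2$, $|\oSch|^2 - (\tr\oSch)^2$, and extrinsic corrections, which can be matched term by term against the corresponding intrinsic part of $Q$. The next step is to assemble the non-obvious scalar conformal invariants: the Fialkow terms $\lv\Fi\rv^2$, $\trFi^2$, and the weight $-4$ invariant $\Wm$ as defined in the introduction. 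Their combined appearance is forced by the requirement that the residual coefficient of every non-divergence, non-Pfaffian, non-invariant term vanishes.

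The main obstacle is the bookkeeping required to recognize what is left over as a tangential divergence $\odiv V$ for some natural vector field $V$. Here one must systematically apply the Codazzi equation, the second Bianchi identity for the background Weyl tensor (which controls how $C$ and $B$ differentiate), the conformal transformation law for $\Fi$, and commutator identities for $\onabla$ in order to convert background-curvature derivatives into intrinsic divergences. A clean organizing principle is that, \emph{a priori}, the scalar $Q - \oPf - \mW_Q$ is natural of weight $-4$ and integrates to zero on every closed $Y^4\subset X^{\pedim}$, so by the general structure of conformally invariant integrands (and the classification of weight $-4$ scalar conformal invariants obtained in Subsection~\ref{subsec:invariants/construct}) it must already be a divergence. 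One then only needs to fix the finitely many coefficients in $V$, which can be done either by direct term matching or by testing on explicit backgrounds such as a totally geodesic $\bH^4 \subset \bH^{\pedim}$ or a product $\bR^4 \times M^{\pedim-4}$, on which the majority of the terms vanish and the remaining coefficients are determined unambiguously.
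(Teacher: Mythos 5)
Your first paragraph reproduces the paper's argument exactly: apply \cref{main-theorem} with $k=4$ and substitute the formula $\mW_Q = -\tfrac14\lv\oW\rv^2 + \Wm + 2\lv\Fi\rv^2 - 2\trFi^2$; the constants $c_4 = (-2\pi)^2/3!! = 4\pi^2/3$ and $(-1)^2/3! = 1/6$ check out, as does the distribution of the sign. The paper's proof is in fact the one-liner ``Apply \cref{main-theorem,Q decomposition},'' where the second reference supplies the decomposition of $Q$ that you correctly identify as the genuine content. \Cref{Q decomposition} is proved by explicit algebraic manipulation: one starts from the general-dimension identity \cref{general-Q-formula}, which rewrites the Case--Graham--Kuo formula for $Q_4$ in terms of the intrinsic $\oQ_4$, the invariant $\Wm$, and a quantity $\QtrFi$ that becomes a tangential divergence when $k=4$, then regroups so that $\oQ_4$ produces $2\oPf$ and the remaining non-invariant pieces assemble into a divergence. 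No appeal to any classification of conformal invariants is required.

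Your alternative ``organizing principle'' for obtaining the decomposition is where the gap lies. You assert that $Q - \oPf - \mW_Q$ (the coefficient on $\oPf$ should be $2$, as in your first paragraph) integrates to zero on every closed $Y^4$ and conclude that it must therefore be a divergence. But the vanishing of $\int_Y(Q - 2\oPf - \mW_Q)\,\darea$ is exactly what the pointwise decomposition \eqref{eqn:submanifold-decomposition} would yield, not an \emph{a priori} fact: both $\int Q$ and $\int\mW_Q$ are nontrivial conformal invariants of the data $(Y,X,[g])$, and nothing short of the decomposition itself tells you they differ by precisely $2(2\pi)^2\chi(Y)$. Using the integral vanishing to obtain the decomposition is therefore circular. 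In addition, the implication ``integrates to zero on all closed submanifolds $\implies$ tangential divergence'' for natural submanifold scalars is not proved anywhere in the paper, and \cref{subsec:invariants/construct} constructs conformal invariants but does not classify them, so the step would need an independent justification even if the circularity were repaired. The paper sidesteps all of this by exhibiting the decomposition explicitly.
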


Tyrrell~\cite{Tyrrell2022} derived a formula equivalent to Equation~\eqref{eqn:minimal-renorm-gbc4} in the \pedimequalsfive\ hypersurface case.
Tyrrell's
integrand is a natural submanifold scalar that agrees with our integrand 
for minimal hypersurfaces of Poincar\'e--Einstein manifolds.  But his integrand is not  
conformally invariant and an analysis of the asymptotics of its summands was required to establish convergence of the integral.  See  
Remark~\ref{rk:tyrrell} below for further discussion.   

Decompositions of the
form~\eqref{eqn:submanifold-decomposition} are not unique when $\submfdim = 4$.
In Proposition~\ref{conformally-invariant-divergence} we identify two
scalar conformal submanifold invariants $\mK_1$ and $\mK_2$ of weight $-4$ that are
divergences when $\submfdim = 4$.  Theorem~\ref{main-theorem}
implies that the Gauss--Bonnet formula~\eqref{eqn:gb} holds for any
$\mathcal{W}_Q$ that arises in such a decomposition for $Q$.  

The main reason that it is important to have a conformally invariant
integrand in Equations \eqref{eqn:cqygb} and \eqref{eqn:gb} is to
ensure convergence of the integral.  The formulas then give a
concrete expression for the outcome of the renormalization process applied
to volume or area.  There are other formulas of Gauss--Bonnet type in this  
setting, for instance in~\cite{Albin2009} for Poincar\'e--Einstein manifolds and in~\cite{TaylorToo2020} for minimal
submanifolds.  But these formulas involve renormalized
quantities other than just the volume or area.  A drawback of Equations~\eqref{eqn:cqygb}
and \eqref{eqn:gb} is that they require identifying
a conformal invariant in the decomposition of $Q$-curvature to become
explicit.  

Our proof of Theorem~\ref{main-theorem} follows the same outline as the
proof in~\cite{ChangQingYang2006} of Equation~\eqref{eqn:cqygb}.
But our geometric situation is more complicated and we introduce two
modifications which simplify the argument.  Important 
properties of the extrinsic $Q$-curvature which enter are its linear transformation law 
in terms of a critical extrinsic GJMS operator and the fact that both of these objects have 
an explicit factorization for minimal submanifolds of Einstein manifolds. 
These properties are established in \cite{CaseGrahamKuo2023}.  The other 
main ingredient in the proof is the existence and properties of what we
call the scattering potential.  This is a solution of a linear scalar
equation on the submanifold whose asymptotic expansion contains a term which produces 
the renormalized area when integrated over the boundary.  The   
scattering potential was introduced in \cite{FeffermanGraham2002} in the
setting of renormalized volume.  The scattering potential determines a
specific compactification of the metric induced on $Y$ by $g_+$, called the
scattering compactification, whose $Q$-curvature vanishes identically.
(In the setting of Poincar\'e--Einstein manifolds, this is
sometimes called the Fefferman--Graham compactification.)
In our formulation of the argument, the 
decomposition of $Q$ is applied to a geodesic compactification and the renormalized
area arises via the integrated asymptotics of the scattering potential upon
conformally transforming to the scattering compactification.

The following theorem is a result analogous to \cref{main-theorem} for compact minimal submanifolds of Einstein manifolds, in  
which the renormalized area is replaced by the area.  In this case there
are no convergence issues and the result follows immediately upon
integrating the decomposition 
\eqref{eqn:submanifold-decomposition} for the extrinsic 
$Q$-curvature determined by the background Einstein metric itself.  

\begin{theorem}\label{closed case}
Let $\pesubmfdimplaceholder , \pedimplaceholder \in \bN$ with \pesubmfdimeven\ and \pecodimpositive.
Suppose that
\eqref{eqn:submanifold-decomposition} holds for $I=Q$ on $\pesubmfdimparen$-dimensional submanifolds of $\pedimparen$-manifolds, with $\mW_Q$ and $V$ as in the statement of 
Conjecture~\ref{submanifold-alexakis}.
If $(X^{\pedim},g)$ satisfies $\Ric(g)=\lambda \pedimminusoneparen g$ and if 
$i \colon Y^{\pesubmfdim} \rightarrow (X,g)$ is a minimal immersion of a compact manifold, then
\begin{equation}\label{eqn:closed gb}
\lambda^{\pesubmfdimparen/2} A = \frac{(2\pi)^{\pesubmfdimparen/2}}{\pesubmfdimminusoneparen!!} \chi(Y) + \frac{1}{\pesubmfdimminusoneparen!}\int_Y\mW_Q\darea , 
\end{equation}
where $A$ denotes the area of $Y$.    
\end{theorem}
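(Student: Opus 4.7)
The plan is to integrate the hypothesized decomposition $Q = c\,\oPf + \mW_Q + \odiv V$ over the compact submanifold $Y$ and combine it with two standard ingredients: the Chern--Gauss--Bonnet theorem for the Pfaffian of the induced metric, and the explicit factorization of the critical extrinsic $Q$-curvature on minimal submanifolds of Einstein manifolds.

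Since $Y$ is compact without boundary, the divergence theorem eliminates the term $\int_Y \odiv V\,\darea$, and integration of \eqref{eqn:submanifold-decomposition} yields
\begin{equation*}
  \int_Y Q\,\darea = c\int_Y \oPf\,\darea + \int_Y \mW_Q\,\darea.
\end{equation*}
For the first term on the right I apply the Chern--Gauss--Bonnet theorem to $(Y, i^{\ast}g)$, which writes $\int_Y \oPf\,\darea$ as a universal constant (depending only on $k$) times $\chi(Y)$. For the left-hand side I invoke the factorization from~\cite{CaseGrahamKuo2023}: on a minimal submanifold of an Einstein manifold with $\Ric(g) = \lambda(n-1)g$, the critical extrinsic GJMS operator factors as a product of shifted Laplacians whose action on the constant function $1$ yields $Q = (k-1)!\,\lambda^{k/2}$, so $\int_Y Q\,\darea = (k-1)!\,\lambda^{k/2}\,A$. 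Substituting and dividing by $(k-1)!$ produces an identity of the shape \eqref{eqn:closed gb}.

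What remains is to check that the coefficient of $\chi(Y)$ is $(2\pi)^{k/2}/(k-1)!!$. Since the Chern--Gauss--Bonnet constant and the constant $c$ in \eqref{eqn:submanifold-decomposition} are both universal (depending only on $k$ and $n$), it suffices to test the identity on one model example in each dimension---for instance an equator $\bS^k \subset \bS^n$ with the round metric, which is a totally geodesic minimal submanifold of a conformally flat Einstein manifold. On such an example $\mW_Q$ vanishes pointwise by conformal invariance, and comparing the known values of $A$, $\chi(\bS^k)$, and $\lambda = 1$ pins down $c$ and verifies the coefficient. The only real obstacle is bookkeeping of combinatorial factors and sign conventions; there are no analytical difficulties in this compact setting, since all integrals converge automatically and Stokes' theorem applies directly. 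This is substantially simpler than the argument for \cref{main-theorem}, which requires a compactification, an analysis of polyhomogeneous asymptotics, and a scattering potential.
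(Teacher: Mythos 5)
Your proposal is correct and follows essentially the same route as the paper: integrate the decomposition, discard the divergence, apply Chern--Gauss--Bonnet, invoke \cref{factorization} for $Q$, and determine the constant $c$ by testing on the equatorial $S^k \subset S^n$ where $\mW_Q$ vanishes. The paper packages this last step as \cref{q-formula-modulo-alexakis} (proved verbatim via the equatorial sphere) and then gives essentially a one-line proof of the theorem; your version simply inlines that lemma.
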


In particular, if $\dim Y = 2$, then
\begin{equation}
 \label{eqn:minimal-gbc2}
 \lambda A = 2\pi\chi(Y) + \int_Y \left( \frac{1}{2}\lv\tfss\rv^2 - W^T \right) \darea , 
 \end{equation}
and if $\dim Y = 4$, then
\begin{equation}
 \label{eqn:minimal-gbc4}
 \lambda^2A = \frac{4\pi^2}{3}\chi(Y) - \frac{1}{6}\int_Y \left( \frac{1}{4}\lv \oW\rv^2
- \Wm - 2\lv\Fi\rv^2 + 2\trFi^2 \right) \darea .
\end{equation}
Equation~\eqref{eqn:minimal-gbc2} follows immediately by integrating the Gauss equation~\eqref{eqn:gcJ}.
It has been used, for example, in the classification of minimal surfaces in $S^3$ of index at most five~\cite{Urbano1990}*{p.\ 991} and in the study of a class of immersed surfaces in self-dual Einstein manifolds~\cite{Friedrich1984}*{Section~2}.

Our second result is the verification of \cref{submanifold-alexakis}
for submanifolds of dimension two or four.  

\begin{theorem}
 \label{low-dimension-submanifold-alexakis}
 \Cref{submanifold-alexakis} is true when $\submfdim = 2$ and $\submfdim = 4$.
\end{theorem}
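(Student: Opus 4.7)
The strategy is to reduce \Cref{submanifold-alexakis} for $k\in\{2,4\}$ to a finite-dimensional linear algebra problem, in the spirit of the low-order cases of Alexakis's theorem on the ambient manifold. First I would fix a linear basis for the space of natural scalar monomials of weight $-k$ built polynomially from $\bar g$, the trace-free second fundamental form $\tfss$, the mean curvature vector $H$, and the ambient Weyl, Cotton, Schouten and Bach tensors (and their covariant derivatives), contracted along tangent and normal indices using $\bar g$ and the normal projector. Finiteness follows from the weight constraint together with the requirement that all indices contract.

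Next I would reduce this list modulo (i) the Gauss--Codazzi--Mainardi equations, (ii) the Ricci and second Bianchi identities, and (iii) tangential divergences. Since the statement of \Cref{submanifold-alexakis} already allows an $\odiv V$ term, the relevant object is the quotient of natural scalars of weight $-k$ by the subspace of tangential divergences $\odiv V$. For $k=2$ this quotient has only a handful of generators, and the surface Gauss equation already rewrites $\oR = 2\,\oPf$ plus contributions one may absorb into $\mW_I$, giving the decomposition after inspection. For $k=4$ the quotient is larger but still finite and an explicit basis can be written down; the conformal invariants produced in Subsection~\ref{subsec:invariants/construct}, namely $\Wm$, $\lv\tfss\rv^4$, $\lv\oW\rv^2$, $\lv\Fi\rv^2$, $\trFi^2$, $\lv W\rv^2$ restricted to $TY$, together with the divergence invariants $\mathcal{K}_1$, $\mathcal{K}_2$ of Proposition~\ref{conformally-invariant-divergence}, furnish candidate generators of the pointwise conformally invariant subspace.

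The third step is to compute the infinitesimal conformal variation of each basis element under $\hat g = e^{2\omega}g$. The hypothesis that $\int_Y I\,\darea$ is conformally invariant is equivalent to the statement that, modulo divergences, the coefficient of every independent monomial in $\omega$ and its derivatives appearing in the variation of $I$ vanishes. This produces a linear system on the coefficients of $I$ in the chosen basis. Solving it should show that the only conformally invariant combinations are $\oPf$, the pointwise conformal invariants listed above, and tangential divergences, which is precisely \eqref{eqn:submanifold-decomposition}. As a cross-check, the explicit decomposition of the extrinsic $Q$-curvature described in Subsection~\ref{subsec:invariants/q} for $k=4$ should appear as one particular solution, confirming consistency with Theorem~\ref{main-theorem} and Corollary~\ref{renormalized-area-special}.

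The main obstacle is the $k=4$ enumeration and the associated conformal variational calculus: the monomial list is long, the transformation laws for $\Sch$, $C$, $B$ and for the extrinsic data $L,H,\Di$ each contribute many terms, and care is needed to avoid over- or under-counting relations, especially in codimension greater than one where the normal bundle connection must be tracked alongside tangential derivatives. None of this requires new ideas beyond the building blocks identified in Section~\ref{subsec:invariants/construct}, but turning the finite-dimensional linear algebra statement into a rigorous argument is the bookkeeping bottleneck.
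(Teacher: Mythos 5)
Your plan follows essentially the same route as the paper: enumerate natural submanifold scalar monomials of weight $-k$, reduce modulo the Gauss--Codazzi and Bianchi/Ricci identities, tangential divergences, and the conformal invariants of Subsection~\ref{subsec:invariants/construct} (in particular $\Wm$ and $\Wn$, which are what let the $k=4$ spanning set shrink to manageable size), then impose vanishing of the conformal variation of the integral and solve the resulting linear system. The paper's one organizational device worth flagging for the $k=4$ bookkeeping you worry about is to stratify the spanning set by the order of the transverse jet of the conformal factor on which each element's variation depends; testing first with $\Upsilon$ whose low-order normal jets vanish along $Y$ then lets one peel off the linear constraints one stratum at a time rather than confronting the full system at once.
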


The main ingredient in our proof of
\cref{low-dimension-submanifold-alexakis} is the identification of a
well-chosen spanning set for the space of natural submanifold scalars of
weight $\minussubmfdim$
modulo scalar conformal submanifold invariants and tangential divergences.
When $\submfdim = 4$, we need to use two non-obvious scalar conformal
submanifold invariants $\mI$ and $\mJ$ identified in Subsection~\ref{subsec:invariants/construct} 
to eliminate potential elements from the spanning set.  
The cardinality of our spanning set is 3 when $\submfdim = 2$ and 33 when 
$\submfdim = 4$.  We then calculate directly the 
conformal variation of the integral of a linear combination of the 
elements of this set to argue that if this integral is conformally
invariant, then the linear combination must be proportional to the Pfaffian modulo
a conformal invariant.   

Various cases of \cref{submanifold-alexakis} have been considered in the  
literature.  Mondino and Nguyen~\cite{MondinoNguyen2018} discussed
natural submanifold scalars whose integrals are conformally invariant.
They gave a characterization of a family of such scalars involving the curvature and second fundamental form, but not their derivatives, for surfaces of codimension at most two and hypersurfaces in general dimension. 
Juhl~\cite{Juhl2022} verified \cref{submanifold-alexakis} when $\submfdim = 4$
for the singular Yamabe extrinsic $Q$-curvature on hypersurfaces
derived by Gover and Waldron~\cite{GoverWaldron2015} and conjectured the
decomposition~\eqref{eqn:submanifold-decomposition} of this invariant for
even $\submfdim > 4$.    

The invariants that enter the Gauss--Bonnet formula~\eqref{eqn:gb} are even;
i.e.\ they are unchanged under changes of orientation.
Throughout this paper we only consider even invariants.

This paper is organized as follows:

In \cref{sec:q-bg} we define natural submanifold tensors and differential
operators and review properties of the extrinsic $Q$-curvature and the
scattering compactification which we will use in the proof of
\cref{main-theorem}. 

In \cref{sec:main-thm} we prove Theorems~\ref{main-theorem} and \ref{closed
  case}.  

In \cref{sec:bg} we review background material concerning Riemannian and
conformal submanifold geometry and fix our notational conventions.
In Subsection~\ref{subsec:conformal-submanifold-invariants} we 
also introduce three tensors $\mP$, $\mC$, and $\mB$ that are modifications of 
projections of the background Schouten, Cotton, and Bach tensors,
respectively.  These tensors play an important role in our subsequent analysis; one
reason for their importance is that their conformal transformation laws
only involve tangential derivatives of the conformal factor.  

\cref{sec:invariants} contains three subsections.  In
Subsection~\ref{subsec:invariants/construct} 
we introduce four non-obvious scalar conformal submanifold invariants $\mK_1$, $\mK_2$,
$\mI$, $\mJ$ of weight $-4$ in general dimension and codimension.  
In Subsection~\ref{subsec:invariants/q} we use $\mI$ to derive a
decomposition in 
general dimension of the fourth-order $Q$-curvature defined in 
\cite{CaseGrahamKuo2023}; this specializes to the 
form \eqref{eqn:submanifold-decomposition} when $\submfdim = 4$.
\Cref{renormalized-area-special} and Equation~\eqref{eqn:minimal-gbc4} are immediate consequences.
In Subsection~\ref{subsec:invariants/comparison} we state without proof how
some invariants previously
found in special cases by Blitz, Gover, and
Waldron~\cite{BlitzGoverWaldron2021}, Juhl~\cite{Juhl2022}, Astaneh and  
Solodukhin~\cite{AstanehSolodukhin2021}, and Chalabi, Herzog,
O'Bannon, Robinson, and Sisti~\cite{ChalabiHerzogOBannonRobinsonSisti2022}
can be written in terms of our invariants constructed in Subsection~\ref{subsec:invariants/construct},
thus generalizing their constructions to general dimension and codimension. 

In \cref{sec:alexakis} we prove \cref{low-dimension-submanifold-alexakis}.
As indicated above, the proof uses our invariants $\mI$ and $\mJ$ from
Subsection~\ref{subsec:invariants/construct}.  

In \cref{sec:extra} we give the details of the computations omitted in
Subsection~\ref{subsec:invariants/comparison}. 

\section{Extrinsic $Q$-curvature and scattering compactification}
\label{sec:q-bg}

\subsection{Natural submanifold tensors and extrinsic $Q$-curvature}
\label{subsec:natural}

We will be dealing with immersions $i \colon Y^{\submfdim}\to (X^{\mfdim},g)$ into 
Riemannian manifolds.  The pullback bundle $i^*TX$ admits the
$g$-orthogonal splitting $i^*TX=TY\oplus NY$.
We use the induced metrics on $TY$ and $NY$ to identify these bundles with their respective duals, $T^\ast Y$ and $N^\ast Y$.
The Levi-Civita connection 
$\nabla$ of $g$ determines connections $\onabla$ on $TY$ and $NY$ by
projection.
On $TY$ this connection is the Levi-Civita
connection of $i^*g$.  We use bars to denote intrinsic quantities on~$Y$.
For example, $\Rm$ denotes the curvature tensor of $g$ and $\oRm$ the
curvature tensor of $i^*g$;
also, $\Delta$ denotes the Laplacian of $g$ and $\oDelta$ the
Laplacian of $i^*g$.  Our sign convention is $\Delta = \sum \partial_i^2$
on Euclidean space.  The second fundamental form $L:S^2TY\to NY$ is  
defined by $L(U,V)=(\nabla_UV)^\perp$.  
If $i \colon Y^{\submfdim} \to (X^{\mfdim},[g])$ is an immersion into a conformal manifold,
then there is an induced conformal class $i^*[g]$ on $Y$.  In $i^*[g]$, we allow
rescalings by arbitrary functions in $C^\infty(Y)$, not just pullbacks of
functions in $C^\infty(X)$.  

Let $\submfdim$, $\mfdim \in \bN$ with \codimpositive.
A natural tensor on $\submfdim$-dimensional
submanifolds of $\mfdim$-dimensional Riemannian manifolds, or a \defn{natural submanifold tensor}, is an assignment to
each immersion $i \colon Y^{\submfdim} \to (X^{\mfdim},g)$ of a  
section of $(T^*Y)^{\otimes r}\otimes (N^*Y)^{\otimes s}$ for some integers $r,s \geq 0$ which can be
expressed as an $\bR$-linear combination of partial contractions of tensors  
\begin{equation}\label{eqn:partial contraction}
  \pi_1(\nabla^{M_1}\Rm) \otimes \dotsm \otimes \pi_p(\nabla^{M_p}\Rm)
  \otimes \onabla^{N_1}L \otimes \dotsm \otimes \onabla^{N_q}L \otimes \pi(g^{\otimes P}) .
\end{equation}
Here $M_j$, $N_j$, and $P$ denote powers, and $\pi$ and $\pi_j$ denote restriction to $Y$ followed
by projection to either $TY$ or $NY$ in each index.
The tensor displayed at~\eqref{eqn:partial contraction} is viewed as covariant in all indices
(that is, with all indices lowered), and the contractions
are taken with respect to the metrics induced by $g$ on $TY$ and $NY$ for 
partial pairings of tangential and normal indices.
The special case $r,s=0$ defines \defn{natural submanifold scalars}.
A natural submanifold
tensor $T$ has \defn{weight} $w\in \mathbb{R}$ if $T^{c^2 g}=c^wT^g$
for all $c>0$.  A natural submanifold tensor $T$ is \defn{conformally
invariant} of weight $w$, or a \defn{conformal submanifold invariant}, if
$T^{e^{2\Upsilon}g} = e^{wi^\ast\Upsilon}T^g$ for all immersions
$i\colon Y^{\submfdim} \to (X^{\mfdim},g)$ and all
$\Upsilon \in C^\infty(X)$.  

A (linear) \defn{natural submanifold differential
operator} on $\submfdim$-dimensional submanifolds of $\mfdim$-dimensional Riemannian
manifolds is an assignment to each immersion $i \colon Y^{\submfdim} \to (X^{\mfdim},g)$ of a
differential operator
\begin{equation*}
 D \colon C^\infty(Y) \to C^\infty(Y;(T^*Y)^{\otimes
  r}\otimes (N^*Y)^{\otimes s})
\end{equation*}
for some integers $r,s \geq 0$ which can be
expressed as 
an $\bR$-linear combination of partial contractions of terms of the form  
\begin{equation*}
  \pi_1(\nabla^{M_1}\Rm) \otimes \dotsm \otimes \pi_p(\nabla^{M_p}\Rm)
  \otimes \onabla^{N_1}L \otimes \dotsm \otimes \onabla^{N_q}L \otimes \pi(g^{\otimes P}) \otimes 
  \onabla^Q .
\end{equation*}
(In this paper, we only need to consider natural operators acting on scalars.)
For both natural submanifold tensors and natural submanifold differential operators, some of the free
indices can be raised to view the tensor as contravariant in these
indices.

The paper~\cite{CaseGrahamKuo2023} uses a different definition of natural submanifold scalar differential operators, and hence of natural submanifold scalars, that is formulated in terms of isometry invariant assignments depending polynomially on the metric.
Graham and Kuo extended~\cite{GrahamKuo2023} that definition to tensors and showed that it is equivalent to the definition in this paper.

Case, Graham, and Kuo~\cite{CaseGrahamKuo2023} constructed \defn{extrinsic GJMS operators} and
\defn{extrinsic $Q$-curvatures} associated to a submanifold of a 
conformal manifold which satisfy covariance relations under 
conformal change.  The objects of critical weight
are the ones that are relevant to the proof of \cref{main-theorem}.
As discussed in~\cite{CaseGrahamKuo2023}, the construction applies to
immersed submanifolds and the conclusion can be formulated as follows: 

\begin{proposition}[\cite{CaseGrahamKuo2023}*{Theorem~1.1}]
\label{properties}
Let $(X^{\mfdim},[g])$, $\mfdim\geq3$, be a conformal manifold and let $i \colon Y^{\submfdim}\to X^{\mfdim}$ be an   
immersion with $2 \leq \submfdim < \mfdim$ and $\submfdim$ even.  For each
$h\in i^*[g]$, there is a formally self-adjoint differential operator 
$P_{\submfdim} \colon C^\infty(Y) \to C^\infty(Y)$ and a scalar function
$Q \in C^\infty(Y)$ such that $P_{\submfdim}(1)=0$,    
\begin{equation}
\label{eqn:gjms-leading-term}
P_{\submfdim} = (-\oDelta)^{\submfdim/2} + \lots ,
\end{equation}
and if $\widehat{h} = e^{2\Upsilon}h$ with $\Upsilon\in C^\infty(Y)$, then     
\begin{equation}
\label{eqn:conformal-transformation-law}
\begin{split}
   e^{\submfdim\Upsilon}P^{\widehat{h}}_{\submfdim} & = P^h_{\submfdim}, \\
   e^{\submfdim\Upsilon}Q^{\widehat{h}} & = Q^h + P^h_{\submfdim}(\Upsilon) . 
\end{split}
\end{equation}
\end{proposition}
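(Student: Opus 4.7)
The plan is to construct $P_{\submfdim}$ and $Q$ via the Fefferman--Graham ambient metric, adapting to the submanifold setting the strategy of Graham--Jenne--Mason--Sparling and Branson for intrinsic GJMS operators and critical $Q$-curvatures. I first associate to $(X^{\mfdim},[g])$ its Fefferman--Graham ambient metric $\widetilde{g}$: a smooth Lorentzian metric on a thickening of the ray bundle over $X$, homogeneous of degree $2$ under the natural $\bR_+$-action and formally Ricci-flat to the order dictated by the parity of $\mfdim$. Given the immersion $i$, I then construct an ambient submanifold $\widetilde{Y}\subset \widetilde{X}$ extending $Y$: $\bR_+$-invariant, of codimension $\mfdim-\submfdim$, and suitably harmonic (or minimal) off $Y$ to sufficient order. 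Existence and uniqueness of this ambient extension, modulo terms that do not affect the subsequent construction, is a formal power series problem in defining functions for $Y$ inside $\widetilde{X}$.

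Next, for each parameter $s$ with $s\neq \submfdim/2$, I define a family of subcritical extrinsic GJMS operators $P_{\submfdim,s}$ as follows: extend $f\in C^\infty(Y)$ to a function on $\widetilde{Y}$ homogeneous of the appropriate degree in the $\bR_+$-direction, apply the intrinsic Laplacian of the metric induced on $\widetilde{Y}$ by $\widetilde{g}$ a total of $\submfdim/2$ times, and restrict to $Y$. Canonicity of the ambient construction immediately yields conformal covariance for $P_{\submfdim,s}$ at the corresponding weights, while a principal symbol computation on the ambient Laplacian yields the leading term $(-\oDelta)^{\submfdim/2}$ of~\eqref{eqn:gjms-leading-term}.

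At the critical parameter $s=\submfdim/2$ the subcritical covariance degenerates and $P_{\submfdim,\submfdim/2}$ automatically annihilates constants; set $P_{\submfdim}:=P_{\submfdim,\submfdim/2}$. I then define $Q$ as a constant multiple of the $\partial_s$-derivative at $s=\submfdim/2$ of $P_{\submfdim,s}(1)$. Differentiating the subcritical covariance identity in $s$ and evaluating at $s=\submfdim/2$ on the constant function $1$ produces precisely the $Q$-transformation law in~\eqref{eqn:conformal-transformation-law}, and the corresponding law for $P_{\submfdim}$ follows by setting $s=\submfdim/2$ directly. Formal self-adjointness of $P_{\submfdim}$ descends from that of the ambient Laplacian on the appropriate density bundle over $\widetilde{Y}$.

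The main obstacle is the canonical construction of $\widetilde{Y}$ to sufficient order: this requires solving a sequence of prescribed mean-curvature equations off $Y$ in the ambient manifold and verifying that the result is well-defined modulo terms contributing neither to $P_{\submfdim,s}$ nor to its $\partial_s$-derivative at the critical weight. A secondary obstacle is controlling the dependence on $s$ so that the derivative extraction produces a natural submanifold scalar. These are precisely the technical points handled in the construction in~\cite{CaseGrahamKuo2023}.
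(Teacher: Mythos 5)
The paper does not prove this proposition; it quotes it verbatim from \cite{CaseGrahamKuo2023}*{Theorem~1.1}, so there is no internal argument to compare against. Your sketch does track the strategy of that reference: associate the Fefferman--Graham ambient metric $\widetilde{g}$ to $(X^{\mfdim},[g])$, construct an $\bR_+$-invariant, formally minimal ambient extension $\widetilde{Y}$ of $Y$, run a GJMS-style construction on $\widetilde{Y}$ using the induced metric to produce a family of weight-parametrized operators with leading symbol $(-\oDelta)^{\submfdim/2}$, and obtain $P_{\submfdim}$ and $Q$ at the critical weight by Branson-type analytic continuation, with self-adjointness and the transformation laws~\eqref{eqn:conformal-transformation-law} descending from ambient covariance. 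However, you explicitly defer to the same reference the two points that actually carry the theorem: existence and canonicity of $\widetilde{Y}$ to the needed order (a nontrivial formal power-series problem with potential parity obstructions since $\widetilde{g}$ is only formally Ricci-flat to finite order for $\mfdim$ even), and control of the weight dependence so that the $\partial_s$-extraction yields a natural submanifold scalar. As written, your text is a faithful road map of the proof in \cite{CaseGrahamKuo2023}, not an independent or self-contained argument; this is consistent with how the present paper treats the result, namely as an imported black box.
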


\noindent
In Equation~\eqref{eqn:gjms-leading-term}, $\lots$ denotes a differential operator on $Y$ of order at most $k-2$.

Note that $P_\submfdim$ and $Q$ depend on the conformal class $[g]$ on $X$ and
the choice of $h \in i^\ast[g]$.  We can also view $P_{\submfdim}$ and $Q$ as determined simply by
a choice of metric $g$ on $X$, since $g$ determines $[g]$ and the induced
metric $h=i^*g$.  When viewed this way, $P_{\submfdim}$ is a natural submanifold
differential operator and $Q$ is a natural submanifold
scalar.

\Cref{properties} implies that if $Y$ is compact, then the total $Q$-curvature is conformally invariant:
\begin{equation*}
\int_Y Q^{\widehat{h}} \darea_{\hh} = \int_Y Q^h \darea_{h}.  
\end{equation*}

There are many operators and related curvatures which satisfy the conclusions of \cref{properties};
see~\cite{GoverWaldron2017,GoverWaldron2015} for a different family and the introduction of~\cite{CaseGrahamKuo2023} for a detailed comparison of these two families.
An essential property of the operators and $Q$-curvatures constructed 
in~\cite{CaseGrahamKuo2023} is their factorization for minimal submanifolds
of Einstein manifolds.  
We rely on the specialization of this fact in  the critical-order case:  

\begin{proposition}[\cite{CaseGrahamKuo2023}*{Theorem~1.2}]
  \label{factorization}
  Let $i \colon Y^{\submfdim} \to (X^{\mfdim},g)$ be a minimal immersion, where $\submfdim$ is
  even and $g$ is Einstein with $\Ric(g) = \lambda (\mfdim-1)g$.
  Then
\begin{align*}
  P_{\submfdim} & = \prod_{j=1}^{\submfdim/2} \left( -\oDelta
  + \lambda (\tfrac{\submfdim}{2}+j-1)(\tfrac{\submfdim}{2}-j) \right) , \\
Q & = \lambda^{\submfdim/2} (\submfdim-1)!.
\end{align*}
\end{proposition}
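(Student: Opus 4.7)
The plan is to exploit the rigidity produced by the Einstein-plus-minimal hypothesis, which forces the construction of $P_\submfdim$ and $Q$ from~\cite{CaseGrahamKuo2023} to reduce to a separable problem whose radial factor is one-dimensional. On an Einstein background $(X^{\mfdim},g)$ with $\Ric(g) = \lambda(\mfdim-1)g$ there is an explicit ambient Poincar\'e--Einstein metric of warped-product form
\begin{equation*}
 g_+ = r^{-2}\bigl(dr^2 + \phi_\lambda(r)^2\, g\bigr)
\end{equation*}
on $(0,\epsilon)_r \times X$, with $\phi_\lambda(r) = 1 \pm \tfrac{\lambda}{4}r^2$. When $i \colon Y^\submfdim \to X$ is minimal, the cylinder $\cY := (0,\epsilon)_r\times Y$ immersed in the obvious way is minimal in $(X_+,g_+)$, since its ambient mean curvature vector is proportional to that of $i$ by a scalar factor. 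Thus the construction plays out on the explicit warped-product geometry with induced metric $r^{-2}\bigl(dr^2 + \phi_\lambda(r)^2 h\bigr)$, $h = i^\ast g$.

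Next I would feed this warped structure into the scattering-type construction of $P_\submfdim$ from~\cite{CaseGrahamKuo2023}, which is governed by the asymptotic analysis of the scalar Laplacian on the minimal extension. The induced Laplacian separates into an ODE in $r$ plus $\phi_\lambda(r)^{-2}\oDelta$, so a formal solution with boundary data $f$ admits an expansion $u(r,y) = \sum_j r^{s+2j} u_j(y)$ in which the recursion for $u_{j+1}$ in terms of $u_j$ takes the form $\bigl(-\oDelta + \lambda\, \alpha_j\beta_j\bigr)u_j$ up to normalizing constants, where $(\alpha_j,\beta_j)$ is the indicial pair at order $2j$. Composition of these recursion operators reconstructs $P_\submfdim$; solving the radial indicial equation identifies the pairs as $\bigl(\tfrac{\submfdim}{2}+j-1, \tfrac{\submfdim}{2}-j\bigr)$ for $j = 1,\dots,\submfdim/2$, producing the claimed factorization. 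The vanishing constant at $j = \submfdim/2$ yields a factor of $-\oDelta$, consistent with the general property $P_\submfdim(1) = 0$ from \cref{properties}.

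The value $Q = \lambda^{\submfdim/2}(\submfdim-1)!$ would be obtained by the standard critical-order limit. Subcritical natural scalars $Q_{2s}$ exist in a one-parameter family, and on the warped minimal Einstein model the computation above shows that each $Q_{2s}$ is the constant $\lambda^s \prod_{j=1}^s \bigl(\tfrac{\submfdim}{2}+j-1\bigr)\bigl(\tfrac{\submfdim}{2}-j\bigr)$; the critical $Q$ arises as the residue at $2s \to \submfdim$, in which the apparent zero at $j = s = \submfdim/2$ is cancelled against a compensating pole in the normalization, yielding the stated factorial.

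The main obstacle is the indicial bookkeeping: one must track both the codimension $\mfdim-\submfdim$ (which controls the ambient Poincar\'e--Einstein operator through $g_+$) and the submanifold dimension $\submfdim$ (which determines the critical order), while accounting for the shifts in the indicial roots produced by the warping factor $\phi_\lambda$. A useful consistency check is that specializing to $\lambda = 0$ collapses the factorization to $(-\oDelta)^{\submfdim/2}$, matching the leading term in~\eqref{eqn:gjms-leading-term}.
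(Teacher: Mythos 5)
This paper does not prove Proposition~\ref{factorization}; it quotes Theorem~1.2 of \cite{CaseGrahamKuo2023} and uses it as a black box, so there is no proof in the paper itself to compare your proposal against. Your sketch does identify the right geometric reduction: over an Einstein metric $g$ with $\Ric(g) = \lambda(\mfdim-1)g$ one has the explicit Poincar\'e--Einstein filling $g_+ = r^{-2}\bigl(dr^2 + (1-\tfrac{\lambda}{4}r^2)^2 g\bigr)$ (the sign should be a minus, not a $\pm$), and your verification that the cylinder $(0,\epsilon)\times Y$ is then minimal in $g_+$ is correct. That observation is indeed what makes the problem computable.

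Two genuine gaps remain. First, you assert without justification that the construction of $P_\submfdim$ and $Q$ in \cite{CaseGrahamKuo2023} ``is governed by the asymptotic analysis of the scalar Laplacian on the minimal extension,'' so that feeding in the warped-product induced metric $r^{-2}(dr^2+\phi_\lambda(r)^2h)$ suffices. Nothing in the paper under review licenses this: \cref{properties} presents the extrinsic GJMS operators abstractly as natural submanifold differential operators defined for arbitrary immersions into arbitrary conformal manifolds, and the identification of that construction with a scattering problem on the induced metric of a minimal extension in a Poincar\'e--Einstein filling is itself a substantive input from \cite{CaseGrahamKuo2023}, not a free assumption. Second, even granting that identification, the indicial recursion producing the pairs $(\tfrac{\submfdim}{2}+j-1,\tfrac{\submfdim}{2}-j)$, and the residue bookkeeping that turns the vanishing factor at $j=\submfdim/2$ into the constant $(\submfdim-1)!$, are only gestured at; these computations are the quantitative heart of the statement and must be carried out. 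Your consistency check that $\lambda=0$ returns $(-\oDelta)^{\submfdim/2}$ is a good sanity test, but it does not distinguish the claimed indicial pairs from many alternatives with the same leading term.
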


\subsection{Asymptotics and scattering compactification}
\label{subsec:asymptotics}
If $X$ is a manifold with nonempty boundary, by a \defn{boundary identification}
we mean a diffeomorphism from a collar neighborhood of $\partial X$
to $\partial X \times [0,\epsilon)_r$ for some $\epsilon >0$, for which  
$\partial X\ni p \to (p,0)$.  A function or tensor defined on $X$ is
\defn{polyhomogeneous} if in a boundary identification it has an asymptotic 
expansion in powers of $r$ and nonnegative integral powers of $\log r$
whose coefficients are smooth on $\partial X$.  This is an informal formulation;
see, for example, \cite{Grieser2001} for
a detailed presentation.  This polyhomogeneity condition is independent of
the choice of boundary identification.

Let $(X^{\pedim},g_+)$, \pedimgeqthree, be a \defn{Poincar\'e--Einstein manifold} with
conformal infinity $(\partial X,\kc)$.
That is, $X$ is a compact connected manifold with nonempty boundary, $g_+$ is a complete metric in the interior of $X$ for which $\Ric_{g_+} = -(n-1)g_+$, and $r^2g_+$ extends to a $C^{2,\alpha}$-metric on $X$ such that $\kc := \bigl[ r^2g_+ \rv_{T\partial X} \bigr]$ is a smooth conformal class whenever $r$ is a defining function for $\partial X$.
Poincar\'e--Einstein manifolds are necessarily \defn{asymptotically hyperbolic};
i.e.\ $\lvert dr \rvert_{r^2g_+} = 1$ along $\partial X$ for any defining function $r$ for $\partial X$.
A representative $g_{(0)} \in \kc$ uniquely determines~\cite{GrahamLee1991}*{Lemma~5.2} a defining function $r$ near $\partial X$ such that
$r^2g_+|_{T\partial X}=g_{(0)}$ and
$\lv dr \rv^2_{r^2g_+}=1$.
We call $r$ the \defn{geodesic defining function} and $r^2g_+$ the \defn{geodesic compactification}
determined by $g_{(0)}$.  The metric $g_{(0)}$ also determines, for some
$\epsilon>0$, a boundary identification with respect to which    
\begin{equation*}
r^2g_+ = dr^2 + g_r .
\end{equation*}
Here~\citelist{\cite{ChruscielDelayLeeSkinner2005}*{Theorem~A}
  \cite{BiquardHerzlich2014}*{Th\'eor\`eme~1} } $g_r$ is
a one-parameter family of metrics on $\partial X$ which is smooth if \penologterms, and is polyhomogeneous if \pelogterms.  The    
expansion of $g_r$ has the form~\cite{FeffermanGraham2012}*{Theorem~4.8}
\begin{equation}
 \label{eqn:pe-expansion}
 \begin{aligned}
 g_r & = g_{(0)} + g_{(2)}r^2 + \dotsm, && \text{\pedimequalsthree} , \\
 g_r & = g_{(0)} + \dotsm + g_{(\pedimminustwo)}r^{\pedimminustwo} + g_{(\pedimminusone)}r^{\pedimminusone} + \dotsm, && \text{\pedimeven} , \\
 g_r & = g_{(0)} + \dotsm + g_{(\pedimminusthree)}r^{\pedimminusthree} + \kappa r^{\pedimminusone}\log r
 + g_{(\pedimminusone)}r^{\pedimminusone}+\dotsm,  && \text{otherwise} ,
 \end{aligned}
\end{equation}
where the coefficients $g_{(j)}$ and $\kappa$ are smooth symmetric 2-tensors on
$\partial X$.  Terms $r^j$ do not occur for odd $j<\pedimminusone$.    

Let $(X^{\pedim},g_+)$ be Poincar\'e--Einstein and let $Y^{\pesubmfdim}$ be a compact
manifold with nonempty boundary.  Let $i \colon Y \to X$ be an
immersion that is $C^\infty$ on the interior~$\mathring{Y}$, is a $C^1$ embedding in
a neighborhood of $\partial Y$, satisfies $i(\partial Y) \subset \partial X$ with $i(Y)$ transverse to $\partial X$, and 
is such that $i\rv_{\partial Y}$ is a $C^\infty$ embedding into  
$\partial X$.  If $(x^\alpha,u^{\alpha'})$, $1\leq \alpha\leq \pesubmfdimminusone$,
$1\leq \alpha'\leq \pecodim$, are 
local coordinates on $\partial X$ with $i(\partial Y) = \{u=0\}$, then in
the boundary identification induced by $g_{(0)}\in \kc$,  
we may write $i(Y)$ in the form $i(Y)=\{u=u(x,r)\}$.  
We say that $i$ is a \defn{polyhomogeneous immersion} if the graphing
map $u(x,r)$ is polyhomogeneous.     
This condition is independent of the choice of coordinates $(x,u)$ on
$\partial X$.  Since 
$r^2g_+$ is itself polyhomogeneous, the
transition maps relating boundary identifications determined by different 
choices of $g_{(0)}$ themselves have polyhomogeneous expansions, so the
condition that $i$ is a polyhomogeneous immersion is also independent of
the choice of $g_{(0)}$.  

Our analysis requires that the minimal immersions under consideration 
are polyhomogeneous (at least to some order).  
We will simply assume this to be the case.  There are results establishing
the polyhomogeneity of minimal submanifolds under certain initial 
regularity hypotheses~\citelist{  
  \cite{AlexakisMazzeo2010}*{Proposition~2.2}
  \cite{HanJiang2023}*{Theorem~1.1}
  \cite{Marx-Kuo2021}*{Theorem~3.1} }.

We need a global invariant description of the asymptotics of minimal
submanifolds.  We use the normal exponential map of $\Sigma := i(\partial Y)$ for
this purpose as in~\cite{GrahamReichert2017}.
Let $i:Y^{\pesubmfdim}\to (X^{\pedim},g_+)$ be a polyhomogeneous immersion.   
Choose $g_{(0)}\in \kc$.  Use the boundary 
identification determined by $g_{(0)}$ to identify a neighborhood of
$\partial X$ in $X$ with $\partial X \times [0,\epsilon)_r$.  For $r\geq 0$
small, let $\Sigma_r\subset \partial X$ denote the slice of $i(Y)$ at
height $r$; i.e.\ $i(Y)\cap (\partial X \times \{r\}) = \Sigma_r\times \{r\}$.
Then $\Sigma_r$ is a smooth submanifold of $\partial X$ of dimension $\pesubmfdimminusone$
and $\Sigma_0=\Sigma$.  The normal exponential map of
$\Sigma$ with respect to $g_{(0)}$, denoted $\exp_\Sigma$, defines a 
diffeomorphism of a neighborhood of the zero section in $N\Sigma$ to a
neighborhood of $\Sigma$ in $\partial X$.  So there is a unique section 
$U_r\in\Gamma(N\Sigma)$ near $r=0$ such that
$\exp_\Sigma\{U_r(p):p \in \Sigma\}=\Sigma_r$.  This defines a
polyhomogeneous 1-parameter family $U_r$ of sections of $N\Sigma$ for which
we have 
\begin{equation}\label{eqn:Y}
i(Y) = \bigl\{ \big(\exp_\Sigma U_r(p),r\big) : p\in \Sigma, r\geq 0 \bigr\}.
\end{equation}

The inverse normal exponential map can be used to define a boundary 
identification for $i(Y)$.  Define a diffeomorphism $\psi$ 
from a neighborhood of $\Sigma$ in $i(Y)$ to $\Sigma\times [0,\epsilon)$
for some $\epsilon >0$ by    
\[
\psi(q,r) := (\pi((\exp_\Sigma)^{-1}q),r),\qquad
(q,r)\in i(Y)\subset \partial X\times [0,\epsilon) ,
\]
where $\pi \colon N\Sigma \to \Sigma$ is the canonical projection.
Then $\psi$ is a boundary identification for $i(Y)$.  

Choose local coordinates 
$\{x^\alpha: 1\leq \alpha\leq \pesubmfdimminusone \}$ for $\Sigma$ and a local frame 
$\{e_{\alpha'}(x): 1\leq \alpha'\leq  \pecodim \}$ for $N\Sigma$.  
The map $\exp_\Sigma \big(u^{\alpha'}e_{\alpha'}(x)\big)\mapsto (x,u)$
defines a geodesic normal coordinate system for $\partial X$ near $\Sigma$
with respect to which $\Sigma = \{u=0\}$.
Extend the coordinates $(x,u)$ to
$\partial X \times [0,\epsilon_0)\subset X$   
to be constant in $r$.  In these coordinates, the diffeomorphism $\psi$ is
given by $\psi(x,u,r) = (x,r)$.
Express the section $U_r$ in~\eqref{eqn:Y} as
$U_r(x) = u^{\alpha'}(x,r)e_{\alpha'}(x)$; then $i(Y)$ is locally the   
graph $\{u=u(x,r)\}$.  A function defined on $i(Y)$ near $\partial X$ can
be uniquely extended to a neighborhood of $i(Y)$ in $X$ near $\partial X$
by making it independent of the $u$ variables.  Since varying $u$ gives the fibers
of $\pi:N\Sigma\to \Sigma$, this extension is independent of the choice of 
coordinates and is globally defined near $\partial X$. 

If $i \colon Y^{\pesubmfdim}\to (X^{\pedim},g_+)$ is a polyhomogeneous minimal immersion, the
form of its expansion can be formally calculated~\cites{GrahamWitten1999,GrahamReichert2017} from the minimal submanifold equation $H=0$.
Henceforth we assume that \pesubmfdimiseven.  In this case,
\begin{equation}
 \label{eqn:U-expansion}
U_r= U_{(2)}r^2+ U_{(4)}r^4 +\dotsm + U_{(\pesubmfdim)}r^{\pesubmfdim} + U_{(\pesubmfdimplusone)}r^{\pesubmfdimplusone} +
O(r^{\pesubmfdimplustwo}\log r),
\end{equation}
where the $U_{(j)}$ are global sections of $N\Sigma$.  
Equivalently, in local coordinates 
$(x,u,r)$ as above, the expansion of $u(x,r)$ takes the form   
\begin{equation}
 \label{eqn:u-expansion}
 u = u_{(2)}r^2 + u_{(4)}r^4 + \dotsm + u_{(\pesubmfdim)}r^{\pesubmfdim} + u_{(\pesubmfdimplusone)}r^{\pesubmfdimplusone}
 + O(r^{\pesubmfdimplustwo}\log r),  
\end{equation}
where the $u_{(j)}$ are functions of $x$.
The log terms in~\eqref{eqn:U-expansion} and~\eqref{eqn:u-expansion} come only from the log terms in $g_r$:
If \penologterms, then $g_r$ is smooth, and hence the expansion of $u(x,r)$ has no log terms.  
If \pelogterms, then the $r^{\pedimminusone}\log r$ term in the expansion of $g_r$ 
can generate a $r^{\pedimplusone}\log r$ term in the expansions of $U_r$ and
$u(x,r)$.  Thus the $O(r^{\pesubmfdimplustwo}\log r)$ terms are actually $O(r^{\pesubmfdimplustwo})$
unless \pelogterms\ and \pecodimone.

Set $h_+:=i^*g_+$.  The metric $\oh:=r^2h_+$ can be written in the
coordinates $(x,r)$:
\begin{equation}\label{eqn:hbar1}
  \oh = \oh_{00}dr^2 + 2\oh_{\alpha 0}dr dx^\alpha + \oh_{\alpha\beta}dx^\alpha dx^\beta,
\end{equation}
where
\begin{equation}\label{eqn:hbar}
\begin{aligned}
 \oh_{\alpha\beta} & = g_{\alpha\beta} +
 2g_{\alpha^\prime(\alpha}u^{\alpha^\prime}{}_{,\beta)} +
 g_{\alpha^\prime\beta^\prime}u^{\alpha^\prime}{}_{,\alpha}u^{\beta^\prime}{}_{,\beta}
 , \\ 
 \oh_{\alpha 0} & = g_{\alpha\alpha^\prime}u^{\alpha^\prime}{}_{,r} +
 g_{\alpha^\prime\beta^\prime}u^{\alpha^\prime}{}_{,\alpha}u^{\beta^\prime}{}_{,r}
 , \\ 
 \oh_{00} & = 1 + g_{\alpha^\prime\beta^\prime}u^{\alpha^\prime}{}_{,r}u^{\beta^\prime}{}_{,r} .
\end{aligned}
\end{equation}
We have used a `0' index for the $r$-direction, commas denote partial derivatives with respect to the coordinates
$(x^\alpha,r)$ on $Y$, and round parentheses denote symmetrization as in~\eqref{eqn:symmetrize}.  The components of $\oh$ and the derivatives of $u$
are evaluated at $(x,r)$.  The above formulas for the components of $\oh$
were obtained from the pullback of $r^2g_+$ upon writing
\[
g_r=g_{\alpha\beta}(x,u,r)dx^\alpha dx^\beta
+2g_{\alpha\alpha'}(x,u,r)dx^\alpha du^{\alpha'}
+g_{\alpha'\beta'}(x,u,r)du^{\alpha'} du^{\beta'} , 
\]
and all $g_{ij}$ in \eqref{eqn:hbar} are understood to be evaluated at
$(x,u(x,r),r)$.

The expansions of $\oh$ are obtained by substituting  
\eqref{eqn:pe-expansion} and \eqref{eqn:u-expansion} into
\eqref{eqn:hbar}.
Observe first that $\oh_{\alpha 0}=0$ and $\oh_{00}=1$ at $r=0$.  Thus
$|dr|^2_{\oh}=1$ on $\partial Y$, so $h_+$ is asymptotically hyperbolic.
One also finds that 
the first odd term in the expansions of $\oh_{\alpha\beta}$ and $\oh_{00}$
occurs at order $\pesubmfdimplusone$, and the first even term in the expansion of  
$\oh_{\alpha 0}$ occurs at order $\pesubmfdim+2$.  (Once again there can be log terms
if \pelogterms:  in this case the expansion of $\oh_{\alpha\beta}$ can
have a $r^{\pedimminusone}\log r $ term, the expansion of $\oh_{\alpha 0}$ can have a 
$r^{\pedim}\log r$ term, and the expansion of $\oh_{00}$ can have a
$r^{\pedimplusone}\log r$ term.)

Set $h_{(0)}:=i^*g_{(0)}$.  Let $\rho$ be the geodesic defining function for 
$h_+$ determined by $h_{(0)}$ and let $(y,\rho)$, $y\in \partial Y$, be the
associated boundary identification for $Y$.  For this discussion we
identify $Y$ with $i(Y)$ near $\partial Y$.  The map $(x,r)\to (y,\rho)$
relating the two boundary identifications is uniquely determined by the
requirement that
\begin{equation}\label{eqn:hplus}
\rho^2 h_+ = d\rho^2 +h_\rho
\end{equation}
relative to the $(y,\rho)$ boundary identification.  The defining function
$\rho$ is determined by the eikonal equation $|d\rho|^2_{\rho^2h_+}=1$ and
then $y$ is extended to $Y$ by following the gradient flow of 
$\operatorname{grad}_{\rho^2 h_+}(\rho)$ (see 
\cite{GrahamLee1991}*{Lemma~5.2 and the subsequent paragraph}).  Analysis
of the eikonal and gradient flow equations as in
\cite{Guillarmou2005}*{Lemma~2.1} shows that $y=y(x,r)$,
$\rho = \rho(x,r)$, where the expansions of $y$ and $\rho$ have the form    
\begin{equation}\label{eqn:yrhoexpand}
\begin{aligned}
  y(x,r) & =x + y_{(2)}r^2+\dotsm +y_{(\pesubmfdimminustwo)}r^{\pesubmfdimminustwo} + O(r^{\pesubmfdim}\log r) , \\ 
  \rho(x,r) & =r\big(1 + \rho_{(3)}r^2+\dotsm +\rho_{(\pesubmfdimminusone)}r^{\pesubmfdimminustwo} +
  O(r^{\pesubmfdim}\log r)\big).  
\end{aligned}
\end{equation}
Here $y \in \partial Y$ is described in terms of its $x$-coordinate, and 
the coefficients $y_{(2j)}$ and $\rho_{(2j+1)}$ are functions of $x$.

The expansions of the inverse map have the same form:
\begin{equation}\label{eqn:xrexpand}
\begin{aligned}
  x(y,\rho) & =y + x_{(2)}\rho^2+\dotsm +x_{(\pesubmfdimminustwo)}\rho^{\pesubmfdimminustwo} +
  O(\rho^{\pesubmfdim}\log \rho) , \\
  r(y,\rho) & =\rho\big(1 + r_{(3)}\rho^2+\dotsm +r_{(\pesubmfdimminusone)}\rho^{\pesubmfdimminustwo} +
  O(\rho^{\pesubmfdim}\log \rho)\big), 
\end{aligned}
\end{equation}
where the coefficients $x_{(2j)}$, $r_{(2j+1)}$ are functions of $y$.  Now   
pull back $h_+=r^{-2}\oh$ by the transformation \eqref{eqn:xrexpand} and use the
parity of the components of $\oh$ discussed above to deduce that
$h_\rho$ in Equation~\eqref{eqn:hplus} has the expansion
\begin{equation}\label{eqn:poincare-expansion}
  h_\rho  = h_{(0)} + h_{(2)}\rho^2 + \dotsm + h_{(\pesubmfdimminustwo)}\rho^{\pesubmfdimminustwo} +
  O(\rho^{\pesubmfdim}\log \rho).
\end{equation}
There are no log terms in any of the expansions \eqref{eqn:yrhoexpand},
\eqref{eqn:xrexpand}, \eqref{eqn:poincare-expansion} if \penologterms.
If \pelogterms, then the first log term in these expansions
occurs at order~$\pedimminusone$.  Thus the remainder terms are actually $O(r^{\pesubmfdim})$ in
\eqref{eqn:yrhoexpand} or $O(\rho^{\pesubmfdim})$ in \eqref{eqn:xrexpand},
\eqref{eqn:poincare-expansion} unless \pelogterms\ and \pecodimone.      

The renormalized area $\mA$ of $Y$ was defined in \cite{GrahamWitten1999}
as the constant term in the asymptotic expansion of  
$\operatorname{Area}_Y\{r>\epsilon\}$ as $\epsilon \to 0$, and it was 
shown that $\mA$ is independent of the choice of geodesic defining function
$r$ for $g_+$ on $X$.  The same argument shows that $\mA$ also equals 
the constant term in the asymptotic expansion of  
$\operatorname{Area}_Y\{\rho>\epsilon\}$, where as above $\rho$ is a
geodesic defining function for $h_+$ on $Y$.  The argument only
uses the parity of the terms in the expansions \eqref{eqn:yrhoexpand}, 
\eqref{eqn:xrexpand} of $r$ and $\rho$ in terms of one another and the
parity of the terms in \eqref{eqn:poincare-expansion}.
Since $\rho$ is an intrinsic geodesic defining function on $Y$, the 
constant term in the asymptotic expansion of
$\operatorname{Area}_Y\{\rho>\epsilon\}$ can also be interpreted as a 
renormalized volume of the asymptotically hyperbolic manifold $(Y,h_+)$.

In \cite{FeffermanGraham2002}*{Theorems 4.1 and 4.3}, the scattering theory
of \cite{GrahamZworski2003} was applied to show that the renormalized
volume of an asymptotically hyperbolic approximately Einstein manifold can be
calculated  
as an integral over the boundary of a function that appears in the 
asymptotic expansion of a solution of a particular linear scalar equation.
The same argument applies to any asymptotically hyperbolic metric that is
even to the appropriate order.  In particular, it applies to calculate the 
renormalized area of a minimal submanifold:  

\begin{proposition}
  \label{fg}
  Let $i \colon Y^{\pesubmfdim} \to (X^{\pedim},g_+)$ be a polyhomogeneous minimal
  immersion of an even-dimensional manifold into a Poincar\'e--Einstein
  manifold and let $h_{(0)}$ be a representative of the conformal infinity
  of $(Y,h_+:=i^\ast g_+)$. 
  Then there is a unique $v\in C^\infty(\mathring{Y})$ such that
  \begin{equation}
   \label{eqn:scattering}
   \begin{cases}
    -\Delta_{h_+}v = \pesubmfdimminusone , & \text{in $\Int{Y}$}, \\
    v = \log \rho + o(1) , & \text{near $\partial Y$} ,
   \end{cases}
  \end{equation}
  where $\rho$ is the geodesic defining function determined by $h_{(0)}$.
  Moreover, near $\partial Y$,
  \begin{equation}
   \label{eqn:fg-potential-expression}
   v = \log \rho + F ,
  \end{equation}
where $F$ is polyhomogeneous with expansion
  \begin{equation}
   \label{eqn:expand F}
   F= F_{(2)}\rho^2 +\dotsm + F_{(\pesubmfdimminustwo)}\rho^{\pesubmfdimminustwo} + B\rho^{\pesubmfdimminusone} +O(\rho^{\pesubmfdim}\log\rho).
  \end{equation}
The renormalized area of $Y$ is given by:
  \begin{equation}\label{eqn:Bint}
   \mA = \int_{\partial Y} \, B\rv_{\partial Y} \, d\sigma ,
  \end{equation}
  where $d\sigma$ is the volume form of $h_{(0)}$.
 \end{proposition}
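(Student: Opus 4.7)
The plan is to transpose the argument of Fefferman and Graham~\cite{FeffermanGraham2002}*{Theorems~4.1 and~4.3} from the Poincar\'e--Einstein setting to the asymptotically hyperbolic $\pesubmfdim$-manifold $(Y,h_+)$. The essential structural input is the parity exhibited by the expansion~\eqref{eqn:poincare-expansion}: the metric $h_\rho$ contains only even powers of $\rho$ through order $\rho^{\pesubmfdimminustwo}$, with log terms deferred to order $\rho^{\pesubmfdim}\log\rho$ at the earliest. This is precisely the ``evenness to critical order'' hypothesis required by the scattering-theoretic construction of $v$, so once this parity is checked the Fefferman--Graham argument applies with no modification.

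For the existence, uniqueness, and asymptotic form~\eqref{eqn:expand F} of $v$, I would first compute the indicial behavior of $-\Delta_{h_+}$ using
\begin{equation*}
\Delta_{h_+}f = \rho^2\partial_\rho^2 f + (2-\pesubmfdim)\rho\,\partial_\rho f + \tfrac{\rho^2}{2}\tr(h_\rho^{-1}\partial_\rho h_\rho)\,\partial_\rho f + \rho^2\Delta_{h_\rho}f ,
\end{equation*}
obtaining $-\Delta_{h_+}(\rho^s) = s(\pesubmfdimminusone-s)\rho^s + O(\rho^{s+2})$ with indicial roots $0$ and $\pesubmfdimminusone$. A direct computation yields $-\Delta_{h_+}(\log\rho) = \pesubmfdimminusone + \phi_0$, with $\phi_0 = -\tfrac{1}{2}\rho\,\tr(h_\rho^{-1}\partial_\rho h_\rho)$ inheriting the even-power structure of $h_\rho$ and beginning at order $\rho^2$. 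Existence and uniqueness of $v$ then follow from the scattering construction of~\cite{FeffermanGraham2002}, which rests on the scattering theory of~\cite{GrahamZworski2003}, and the indicial recursion $j(\pesubmfdimminusone-j)F_{(j)} = (\cdots)$ solves successively at the even orders $j = 2, 4, \dots, \pesubmfdimminustwo$ for the locally determined coefficients. At $j = \pesubmfdimminusone$ the indicial coefficient vanishes, so $B := F_{(\pesubmfdimminusone)}$ is a global scattering coefficient fixed by interior smoothness; the first log term then appears only at order $\rho^{\pesubmfdim}\log\rho$.

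For the formula~\eqref{eqn:Bint}, I would integrate $-\Delta_{h_+}v = \pesubmfdimminusone$ over $Y_\epsilon := \{\rho > \epsilon\}$ and apply the divergence theorem. Since the outward unit $h_+$-normal on $\{\rho = \epsilon\}$ is $-\rho\partial_\rho$, this gives
\begin{equation*}
(\pesubmfdimminusone)\Vol_{h_+}(Y_\epsilon) = \int_{\partial Y}\epsilon^{2-\pesubmfdim}\,\partial_\rho v(x,\epsilon)\,\sqrt{\det h_\epsilon(x)}\,dx .
\end{equation*}
Substituting $v = \log\rho + F$ and expanding, the integrand becomes the product of the structure of $\epsilon^{2-\pesubmfdim}\partial_\rho v$ (with terms at orders $\epsilon^{1-\pesubmfdim},\epsilon^{3-\pesubmfdim},\dots,\epsilon^{-1}$, and the constant $(\pesubmfdimminusone)B\epsilon^0$) with the even-power expansion of $\sqrt{\det h_\epsilon}$. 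The unique $\epsilon^0$ contribution is $(\pesubmfdimminusone)B\sqrt{\det h_{(0)}}$; all remaining terms are strictly negative odd powers of $\epsilon$, plus $O(\epsilon\log\epsilon)$. Matching constant terms with the left-hand side---whose constant term in $\epsilon$ equals $(\pesubmfdimminusone)\mA$ by the paragraph preceding the proposition statement---and cancelling $\pesubmfdimminusone$ yields~\eqref{eqn:Bint}. The main technical obstacle is verifying that the parity of~\eqref{eqn:poincare-expansion} is strong enough to preclude any log or odd-power contribution to $F$ below order $\rho^{\pesubmfdimminusone}$, since any such term would produce a spurious $\epsilon^0$ contribution in the boundary integral and disrupt the clean identification $\mA = \int_{\partial Y}B\,d\sigma$.
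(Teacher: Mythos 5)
Your proposal is correct and takes essentially the same approach as the paper, which for this proposition simply invokes Fefferman--Graham~\cite{FeffermanGraham2002}*{Theorems~4.1 and~4.3} (and~\cite{ChangMcKeownYang2022} in the polyhomogeneous case) and observes that the evenness of $h_\rho$ through order $\rho^{\pesubmfdimminustwo}$ from Equation~\eqref{eqn:poincare-expansion} lets that argument transfer verbatim. Your computations of the indicial roots, of $-\Delta_{h_+}\log\rho$, and of the Green's-identity bookkeeping correctly fill in the details that the paper leaves to those references.
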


\noindent
Some remarks are in order.  The existence of $v$ uses scattering
theory for the metric $h_+$.  If \pelogterms, then $\rho^2h_+$ is
polyhomogeneous but not necessarily smooth.  The extension of the scattering
theory to the case 
of polyhomogeneous metrics is addressed in~\cite{ChangMcKeownYang2022}.
Also, the same comments as above apply to the remainder term in
\eqref{eqn:expand F}.  Namely, $F$ is smooth if \penologterms, and 
if \pelogterms, then the first log term occurs at order $\pedimminusone$, so that the
remainder term is $O(\rho^{\pesubmfdim})$ unless \pecodimone.  Taking this into
consideration, when \pesubmfdimtwo, Equation~\eqref{eqn:expand F} should be interpreted as  
$F=B\rho +O(\rho^2)$.     

We call $v$ the \defn{scattering potential} determined by $h_{(0)}$.  
Equations~\eqref{eqn:fg-potential-expression} and \eqref{eqn:expand F}
imply that $e^v$ is a (polyhomogeneous) defining function for
$\partial Y$.  Therefore $\hh:=e^{2v}h_+$ 
is a compactification of $h_+$, which we call the
\defn{scattering compactification}.  The scattering compactification of a
Poincar\'e--Einstein metric was introduced in \cite{ChangQingYang2006}, 
where it was observed that it has vanishing Branson's $Q$-curvature.  The
attempt to find an analogy in the setting of minimal submanifolds of 
Poincar\'e--Einstein manifolds led to the introduction of the extrinsic
$Q$-curvature in \cite{CaseGrahamKuo2023}, and the corresponding vanishing
statement is likewise essential for the proof of \cref{main-theorem}:  

\begin{proposition}
  \label{cqy}
  Let $i \colon Y^{\pesubmfdim} \to (X^{\pedim},g_+)$ be a polyhomogeneous minimal
  immersion of an even-dimensional manifold into a Poincar\'e--Einstein
  manifold and let $h_{(0)}$ be a representative of the conformal infinity
  of   $(Y,h_+:=i^\ast g_+)$. 
Let $v$ be the associated scattering potential and 
set $\hh := e^{2v}h_+$.  Then $Q^{\hh} = 0$.
\end{proposition}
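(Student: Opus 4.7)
The strategy is to combine the conformal covariance of the extrinsic $Q$-curvature (\cref{properties}) with the Einstein factorization of $P_{\pesubmfdim}^{h_+}$ (\cref{factorization}) and the scattering equation \eqref{eqn:scattering} satisfied by $v$.

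First, I would apply the conformal transformation law of \cref{properties} with $h = h_+$ and $\Upsilon = v$, so that $\hh = e^{2v}h_+$, to obtain
\[
e^{\pesubmfdim v}Q^{\hh} = Q^{h_+} + P_{\pesubmfdim}^{h_+}(v)
\]
on the interior $\mathring{Y}$, where $h_+$ and $v$ are smooth. It therefore suffices to show $P_{\pesubmfdim}^{h_+}(v) = -Q^{h_+}$.

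Since $g_+$ is Einstein with $\Ric(g_+) = -(\pedimminusoneparen)g_+$ and $i$ is minimal, \cref{factorization} applies with $\lambda = -1$, yielding $Q^{h_+} = (-1)^{\pesubmfdim/2}(\pesubmfdim - 1)!$ and
\[
P_{\pesubmfdim}^{h_+} = \prod_{j=1}^{\pesubmfdim/2}\bigl(-\oDelta_{h_+} - (\tfrac{\pesubmfdim}{2}+j-1)(\tfrac{\pesubmfdim}{2}-j)\bigr).
\]
The crucial observation is that the factor corresponding to $j = \pesubmfdim/2$ reduces to $-\oDelta_{h_+}$, because the shift vanishes; this is precisely the operator appearing in the scattering equation \eqref{eqn:scattering}. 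Applying this factor to $v$ first converts $v$ into the constant $\pesubmfdim - 1$, and each of the remaining $\pesubmfdim/2 - 1$ factors then acts on a constant by multiplication by the negative of its shift. A short telescoping computation, using $\prod_{j=1}^{\pesubmfdim/2-1}(\tfrac{\pesubmfdim}{2}+j-1)(\tfrac{\pesubmfdim}{2}-j) = (\pesubmfdim-2)!$, gives
\[
P_{\pesubmfdim}^{h_+}(v) = (\pesubmfdim - 1)\cdot (-1)^{\pesubmfdim/2-1}(\pesubmfdim - 2)! = (-1)^{\pesubmfdim/2 - 1}(\pesubmfdim - 1)! = -Q^{h_+},
\]
and therefore $Q^{\hh} = 0$ on $\mathring{Y}$.

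The only real obstacle is arithmetic bookkeeping: one must carry the sign $\lambda = -1$ correctly through the Einstein factorization and telescope the product of subdiagonal roots over $j = 1,\ldots, \pesubmfdim/2 - 1$ to recover the correct factorial. Conceptually the argument is transparent: the vanishing of the $j = \pesubmfdim/2$ shift is exactly what aligns the final factor of the Einstein-factored $P_{\pesubmfdim}^{h_+}$ with the operator in the scattering equation, so the scattering potential $v$ is custom-built to cancel the constant value of $Q^{h_+}$ under the conformal change $h_+ \mapsto \hh$.
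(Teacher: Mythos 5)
Your proof is correct and follows essentially the same route as the paper: apply the conformal transformation law \eqref{eqn:conformal-transformation-law} with $\Upsilon=v$, invoke the Einstein factorization of \cref{factorization} (with $\lambda=-1$) so that the $j=\pesubmfdim/2$ factor of $P_{\pesubmfdim}^{h_+}$ is exactly $-\Delta_{h_+}$, use \eqref{eqn:scattering} to convert $v$ to the constant $\pesubmfdimminusone$, and telescope the remaining factors. The arithmetic you carry out is precisely the computation in the paper's displayed chain of equalities.
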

\begin{proof}
Applying Equation~\eqref{eqn:conformal-transformation-law},
\cref{factorization} and Equation~\eqref{eqn:scattering} to $h_+$ and $\hh
= e^{2v}h_+$ yields 
\begin{align*}
  e^{\pesubmfdimparen v}Q^{\hh} & =
Q^{h_+}+P^{h_+}_{\pesubmfdim}v\\
  &=(-1)^{\frac{\pesubmfdim}{2}}\pesubmfdimminusoneparen! +
\left(\prod_{j=1}^{(\pesubmfdimminustwo)/2} \biggl(-\Delta_{h_+} -
\big(\tfrac{\pesubmfdimminustwo}{2}+j\big)\big(\tfrac{\pesubmfdim}{2}-j\big)\biggr)\right)
(-\Delta_{h_+}v) \\  
   & = 0 . \qedhere
\end{align*}
\end{proof}

\section{Proofs of Theorems~\ref{main-theorem} and \ref{closed case}} 
\label{sec:main-thm}

We turn first to the proof of \cref{main-theorem}.  
Our first objective is to compute the coefficient $c$ of the 
Pfaffian in the decomposition~\eqref{eqn:submanifold-decomposition} for
$I=Q$. Our normalization of the Pfaffian is such that the
Chern--Gauss--Bonnet formula reads 
\begin{equation}\label{eqn:genericcgb}
(2\pi)^{\submfdim/2} \chi(Y) = \int_Y \Pf \dvol 
\end{equation}
for compact Riemannian manifolds $(Y^{\submfdim},h)$ of even dimension $\pesubmfdim$.   

\begin{lemma}
  \label{q-formula-modulo-alexakis}
Let $\submfdim,\mfdim\in \bN$ with $\submfdim$ even and $\mfdim>\submfdim$.  Suppose that
 there is a constant $c_{\mfdim,\submfdim} \in \bR$, a scalar conformal submanifold
 invariant $\mW_Q$,  
 and a natural submanifold vector field $V$ such that 
\begin{equation}\label{eqn:alexakis-assumption}
Q = c_ {\mfdim,\submfdim}\,\oPf + \mW_Q +\odiv V
\end{equation}
for all Riemannian manifolds $(X^{\mfdim},g)$ and embedded submanifolds
$Y^{\submfdim}\subset X^{\mfdim}$.  Then 
\begin{equation*}
  c_{\mfdim, \submfdim} = \frac{(\submfdim-1)!}{(\submfdim-1)!!}.
\end{equation*}
\end{lemma}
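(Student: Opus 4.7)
My plan is to pin down $c_{\mfdim,\submfdim}$ by evaluating both sides of \eqref{eqn:alexakis-assumption} on a single well-chosen test configuration: a totally geodesic great $\submfdim$-sphere $Y = S^{\submfdim}$ sitting inside $X = S^{\mfdim}$ equipped with the round metric of sectional curvature one. Since the ambient is Einstein with $\Ric = (\mfdim-1)g$, i.e.\ $\lambda = 1$, and $Y$ is minimal, \cref{factorization} yields $Q \equiv (\submfdim-1)!$ on $Y$. Integrating \eqref{eqn:alexakis-assumption} over the closed $Y$ kills the divergence term by Stokes, and the Chern--Gauss--Bonnet formula \eqref{eqn:genericcgb} gives $\int_Y \oPf\, dA = (2\pi)^{\submfdim/2}\chi(S^{\submfdim}) = 2(2\pi)^{\submfdim/2}$.

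The crux of the argument is to show that $\int_Y \mW_Q\, dA = 0$ on this configuration. To do so, I would fix a point $p_0 \in Y$ and use stereographic projection $\phi\colon (S^{\mfdim}\setminus\{p_0\}, g_{\mathrm{round}}) \to (\mathbb{R}^{\mfdim}, g_{\mathrm{flat}})$, which is a conformal diffeomorphism carrying the great $\submfdim$-sphere $Y\setminus\{p_0\}$ onto a linear subspace $\mathbb{R}^{\submfdim}\subset\mathbb{R}^{\mfdim}$. On this target both the ambient Riemann tensor and the second fundamental form vanish identically, and hence so do all their covariant derivatives. By the definition recalled in \cref{subsec:natural}, any natural submanifold scalar is an $\bR$-linear combination of partial contractions of tensors of the form $\nabla^{M_1}\Rm\otimes\cdots\otimes\onabla^{N_q}L\otimes g^{\otimes P}$; the terms with $p=q=0$ contribute only weight-zero constants, so every natural submanifold scalar of nonzero weight involves at least one factor of $\nabla^M\Rm$ or $\onabla^N L$. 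Consequently $\mW_Q$, which has weight $-\submfdim\neq 0$, vanishes pointwise on $\mathbb{R}^{\submfdim}\subset\mathbb{R}^{\mfdim}$. Pulling back by $\phi$ and using the pointwise conformal invariance of $\mW_Q$ of weight $-\submfdim$ forces $\mW_Q\equiv 0$ on $Y\setminus\{p_0\}\subset S^{\mfdim}$, and hence on all of $Y$ by continuity.

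Combining these observations with the standard area formula $A(S^{\submfdim}) = 2(2\pi)^{\submfdim/2}/(\submfdim-1)!!$, integration of \eqref{eqn:alexakis-assumption} over $Y$ reduces to
\begin{equation*}
 (\submfdim-1)!\cdot\frac{2(2\pi)^{\submfdim/2}}{(\submfdim-1)!!} = 2\,c_{\mfdim,\submfdim}\,(2\pi)^{\submfdim/2},
\end{equation*}
which immediately solves to $c_{\mfdim,\submfdim} = (\submfdim-1)!/(\submfdim-1)!!$.

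The main obstacle in this plan is the pointwise vanishing of $\mW_Q$ on the test submanifold; the stereographic/naturality argument above resolves it cleanly via conformal flatness of the round sphere, after which everything else reduces to a direct computation using the known area of $S^{\submfdim}$ and the Euler characteristic $\chi(S^{\submfdim})=2$.
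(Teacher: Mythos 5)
Your proof is correct and follows essentially the same route as the paper: evaluate on an equatorial $S^{\submfdim}\subset S^{\mfdim}$, use \cref{factorization} to get $Q=(\submfdim-1)!$, integrate so the divergence drops out and Chern--Gauss--Bonnet gives $2(2\pi)^{\submfdim/2}$, and kill $\mW_Q$ by observing via stereographic projection that a totally geodesic affine $\bR^{\submfdim}\subset\bR^{\mfdim}$ has vanishing $\Rm$ and $L$, so every weight-$(-\submfdim)$ natural submanifold scalar vanishes there, and hence by conformal invariance $\mW_Q\equiv 0$ on the round configuration. The only difference is that you spell out the naturality step (weight-zero vs.\ curvature-dependent terms) a bit more explicitly than the paper does; the substance is identical.
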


\begin{proof}
 Consider an equatorial sphere $i \colon S^{\submfdim} \to (S^{\mfdim},g)$ in the
 round $\mfdim$-sphere.  
 By stereographic projection, the equatorial sphere is locally equivalent
 to the embedding $\bR^{\submfdim} \hookrightarrow (\bR^{\mfdim}, \lv dx \rv^2)$ of
 $\bR^{\submfdim}$ as an affine subspace in flat Euclidean $\mfdim$-space. 
 The latter is a totally geodesic submanifold of a flat manifold, and
 hence, by naturality, $\mW_Q^{\lv dx \rv^2} = 0$.
 By conformal invariance, $\mW_Q^{g} = 0$.
 \Cref{factorization} implies that $Q^{i^\ast g}=(\submfdim-1)!$.
 We conclude by integration that
 \begin{equation*}
  2(2\pi)^{\submfdim/2}c_{\mfdim, \submfdim} = c_{\mfdim, \submfdim} \int_{S^{\submfdim}} \oPf \darea =
  (\submfdim-1)!\Vol(S^{\submfdim}) = (\submfdim-1)! \times
  \frac{2(2\pi)^{\submfdim/2}}{(\submfdim-1)!!}. 
 \end{equation*}
 Therefore $c_{\mfdim, \submfdim} = \frac{(\submfdim-1)!}{(\submfdim-1)!!}$.
\end{proof}

We introduce two simplifications to the argument of Chang, Qing, and Yang
to help deal with the more complicated submanifold setting.  
The first is that we
make a second conformal change so as to apply the conjectured
decomposition~\eqref{eqn:submanifold-decomposition} to the geodesic compactification instead of the
scattering compactification.  This gives a more direct route to
the term producing the renormalized area in  
the Chern--Gauss--Bonnet formula.  The second is that we provide a simpler
version of the parity argument for the vanishing of the other terms.

\begin{proof}[Proof of \cref{main-theorem}]
Set $h_+=i^*g_+$.  Pick a metric $g_{(0)}\in \kc$ and set
$h_{(0)}=i^*g_{(0)}$.  Let $\rho$ be the geodesic defining 
function for $h_+$ determined by $h_{(0)}$.  Regard $\rho$ as defined on
$i(Y)$ near $i(\partial Y)$.  Extend $\rho$ to a
neighborhood of $i(Y)$ in $X$ near $\partial X$ by requiring it to be
independent of the $u$ variables in coordinates $(x,u,r)$ as described
in \cref{sec:q-bg}.  Now choose some positive 
smooth extension of $\rho$ to a neighborhood of $i(Y)$ in all of $X$ and
set $g:=\rho^2g_+$ and $h:=i^*g$.  Then $h= d\rho^2 +h_\rho$ near $\partial Y$
in the boundary identification determined by $h_{(0)}$.

Let $v$ be the scattering potential determined by 
$h_{(0)}$ as in \cref{fg} and let $\hh=e^{2v}h_+$ be the associated
scattering compactification.  Extend $F$ so that Equation~\eqref{eqn:fg-potential-expression} holds on all of $Y$;
then $e^v = \rho e^F$, and so $\hh = e^{2F}h$ on all of $Y$.  Extend $F$ to a neighborhood of $i(Y)$
in $X$ near $\partial X$ by requiring it to be independent of the $u$
variables.    

\Cref{cqy} followed by
Equation~\eqref{eqn:conformal-transformation-law} gives 
\[
0=e^{\pesubmfdimparen F}Q^{\hh}= Q^h+P_{\pesubmfdim}^h (F).
\]
Now write the assumed decomposition \eqref{eqn:submanifold-decomposition}
for $Q^h$ and use 
\cref{q-formula-modulo-alexakis} to obtain
\begin{equation}\label{eqn:Pf}
\frac{\pesubmfdimminusoneparen!}{\pesubmfdimminusoneparen!!}\oPf^h + \mW_{Q}^g + \odiv^h V^g +P_{\pesubmfdim}^h (F) = 0.
\end{equation}
On a manifold with boundary, there is an additional boundary integral in
the Chern--Gauss--Bonnet formula~\eqref{eqn:genericcgb}, but it vanishes 
if the boundary is totally geodesic.  The expansion~\eqref{eqn:poincare-expansion} of $h_\rho$
implies that $\partial Y$ is
totally geodesic for $h$.  So integrating Equation~\eqref{eqn:Pf} gives
\begin{equation}\label{eqn:almostthere}
  \frac{\pesubmfdimminusoneparen!}{\pesubmfdimminusoneparen!!}(2\pi)^{\pesubmfdimparen/2}\chi(Y) + \int_Y \mW_Q\darea
  +\int_{\partial Y} \langle \onf,V^g \rangle\, d\sigma
  +\int_YP_{\pesubmfdim}^h(F)\darea = 0,
\end{equation}  
where $\onf = -\partial_\rho$ is the $h$-outward pointing unit normal along $\partial Y$.
The proof will be completed by showing that $\langle \onf,V^g \rangle=0$ and
$\int_YP_{\pesubmfdim}^h(F)\darea = -(-1)^{\pesubmfdimparen/2} \pesubmfdimminusoneparen!\,\mA$.

By definition, $V=V^g$ is a linear combination of partial contractions with
one free raised tangential index of tensors of the form
\eqref{eqn:partial contraction}.  
The condition that $\odiv V$ has weight $\minuspesubmfdim$ is equivalent to  
\[
\sum_{i=1}^p (M_i+2) +\sum_{j=1}^q (N_j+1) = \pesubmfdimminusone . 
\]
It follows that each $M_i\leq \pesubmfdimminusthree$ and each $N_j\leq \pesubmfdimminustwo$.  Since
$\nabla^M\Rm$ and $\onabla^NL$ depend on at most $M+2$ and $N+1$
derivatives of $g$, respectively, we see that $V$
depends on at most $\pesubmfdimminusone$ derivatives of $g$.  Also, since
$\onabla^NL$ depends on at most $N+2$ derivatives of (defining functions 
for) $Y$, we see that $V$ depends on at most $\pesubmfdim$ derivatives of $Y$. 

Consider the expansion at $r=0$ of $g=(\rho/r)^2(dr^2+g_r)$ in the
$(x,u,r)$ coordinates.  Using the expansion~\eqref{eqn:pe-expansion} for $g_r$, 
the expansion~\eqref{eqn:yrhoexpand} for $\rho/r$, and the 
fact that $\rho$ was extended to be independent of $u$, it follows that the
expansion of $g$ is even through order $\pesubmfdimminustwo$ and has no $r^{\pesubmfdimminusone}$ term.
An evenness argument implies $\langle \onf,V^g\rangle=0$:  
Define $g^0$ by truncating the expansion of $g$ at order $\pesubmfdimminusone $.  Likewise,
define $U^0_r$ by 
truncating the expansion \eqref{eqn:U-expansion} of $U_r$ at order $\pesubmfdim$ 
(keeping the term of order $\pesubmfdim$).  Define $Y^0$ by \eqref{eqn:Y} with
$U_r$ replaced by   
$U^0_r$ and define $V^0$ to be the natural vector field determined by 
$g^0$ and $Y^0$.  Then $V=V^0$ at $r=0$.  Since they are
polynomial and even in $r$, both $g^0$ and $U^0$ extend to
$r\in (-\epsilon,\epsilon)$ and are invariant under the reflection
$\mR(p,r):=(p,-r)$ on $\partial X\times (-\epsilon,\epsilon)_r$.      
Extend $Y^0$ to $r<0$ by \eqref{eqn:Y} with $Y$ replaced by $Y^0$ and $U_r$ 
replaced by $U_r^0$.  Naturality implies that $\mR^*V^0 = V^0$.  Hence  
$\mR^*V = V$ at $r=0$.  But $\mR^*\onf=-\onf$, and hence
$\mR^*\langle \onf,V\rangle = -\langle \onf,V\rangle$ at $r=0$.  We
conclude that $\langle \onf,V\rangle=0$.  

Now $F$ has the expansion \eqref{eqn:expand F} in the boundary
identification $(y,\rho)$ induced by $h_{(0)}$.  This is related to the
$(x,r)$ boundary identification on $Y$ by \eqref{eqn:yrhoexpand}.  Since
$y(x,r)$ and $\rho/r$ have even expansions to order $\pesubmfdimminustwo$ with no $r^{\pesubmfdimminusone}$
term and $\rho/r = 1$ on $\partial Y$, it follows that the expansion of $F$
in the $(x,r)$ boundary identification has the same form: 
\[
F= \widetilde{F}_{(2)}r^2 +\dotsm + \widetilde{F}_{(\pesubmfdimminustwo)}r^{\pesubmfdimminustwo} +
Br^{\pesubmfdimminusone} +O(r^{\pesubmfdim}\log r),
\]
where the $\widetilde{F}_{(2j)}$ are functions of $x$ and $B$ is the same
as in \eqref{eqn:expand F}.  This expansion also holds in a neighborhood of   
$i(Y)$ in $X$ near $\partial X$ since $F$ was extended to be independent of
the $u$ variables.

Recall that $P_{\pesubmfdim}$ is a natural formally self-adjoint operator with
leading term $(-\oDelta)^{\pesubmfdimparen/2}$ that annihilates constants.
Therefore there is a natural  $TY$-valued differential operator
$T \colon C^\infty(Y)\to C^\infty(Y;TY)$ of order at most $\pesubmfdimminusthree$ such that
\[
P_{\pesubmfdim}= (-\oDelta)^{\pesubmfdimparen/2}+ \odiv \mathop{\circ} T.
\]
Therefore
\[
\int_Y P_{\pesubmfdim}^h(F)\darea =
\int_{\partial Y} \left(
-\langle\onf,\operatorname{grad}_h(-\Delta_h)^{\pesubmfdimminustwoparen/2}(F^0 + Br^{\pesubmfdimminusone})\rangle
+ \langle\onf, T(F^0)\rangle \right) \, d\sigma , 
\]
where
$F^0 := \widetilde{F}_{(2)}r^2 + \dotsm + \widetilde{F}_{(\pesubmfdimminustwo)}r^{\pesubmfdimminustwo}$.  
On the one hand, the evenness argument above
shows that
\[
\langle\onf,\operatorname{grad}_h(\Delta_h)^{(\pesubmfdimminustwo)/2}(F^0)\rangle =0,\qquad
 \langle\onf, T(F^0)\rangle =0.
\]
On the other hand, there is a differential operator $T'$
of transverse order strictly less than $\pesubmfdimminusone$ so that 
\begin{align*}
 \langle\onf,\operatorname{grad}_h(-\Delta_h)^{(\pesubmfdimminustwo)/2}(Br^{\pesubmfdimminusone})\rangle   
 & = \big((-1)^{\pesubmfdimparen/2}\partial_r^{\pesubmfdimminusone} + T'\big)(Br^{\pesubmfdimminusone})\big|_{r=0} \\
 & = (-1)^{\pesubmfdimparen/2}\pesubmfdimminusoneparen!\,B.
\end{align*}
Integrating using Equation~\eqref{eqn:Bint} shows that
$\int_YP_{\pesubmfdim}^h(F)\darea = -(-1)^{\pesubmfdimparen/2}\pesubmfdimminusoneparen!\,\mA$, as claimed.  
\end{proof}

The proof of \cref{closed case} is much simpler:
\begin{proof}[Proof of \cref{closed case}]
Proposition~\ref{factorization} shows that
$Q^{i^*g} = \lambda^{\pesubmfdimparen/2} \pesubmfdimminusoneparen!$.  Recalling 
Lemma~\ref{q-formula-modulo-alexakis}, the assumed 
decomposition~\eqref{eqn:submanifold-decomposition} reads 
\[
\lambda^{\pesubmfdimparen/2} = \frac{1}{\pesubmfdimminusoneparen!!} \oPf + \frac{1}{\pesubmfdimminusoneparen!}\mW_Q + \frac{1}{\pesubmfdimminusoneparen!}\odiv V.
\]
The result follows upon integrating over $Y$. 
\end{proof}

\section{Riemannian and conformal submanifold geometry}
\label{sec:bg} 

\subsection{Conventions from Riemannian geometry}

Let $(X^{\mfdim},g)$ be a Riemannian manifold.
We always assume that $\mfdim \geq 3$.
The \defn{Riemann curvature tensor} is determined by
\begin{equation}
 \label{eqn:riemann}
 \nabla_a\nabla_b\tau_c - \nabla_b\nabla_a\tau_c = R_{abc}{}^d\tau_d
\end{equation}
for all one-forms $\tau_a$, where $\nabla_a$ is the Levi-Civita connection,
we raise and lower indices using $g$, and we employ abstract index
notation~\cite{Penrose1984}.
The \defn{Ricci tensor} and \defn{scalar curvature} of $g$ are the
contractions $R_{ab} := R_{acb}{}^c$ and $R := R_a{}^a$, respectively, of
$R_{abcd}$.
The \defn{Weyl tensor} is
\begin{equation}
 \label{eqn:defn-weyl}
 W_{abcd} := R_{abcd} - \Sch_{ac}g_{bd} - \Sch_{bd}g_{ac} + \Sch_{ad}g_{bc} + \Sch_{bc}g_{ad} ,
\end{equation}
where
\begin{equation*}
 \Sch_{ab} := \frac{1}{\mfdim-2}\left( R_{ab} - \trSch g_{ab} \right)
\end{equation*}
is the \defn{Schouten tensor} and $\trSch := \frac{1}{2(\mfdim-1)}R$.
Note that $\trSch = \Sch_a{}^a$.
Recall that the Weyl tensor is trace-free;
i.e.\ $W_{acb}{}^c=0$.
As discussed further below, the interest in $W_{abcd}$ stems from its conformal invariance.
The \defn{Cotton tensor} and the \defn{Bach tensor} are
\begin{align*}
 C_{abc} & := \nabla_a \Sch_{bc} - \nabla_b \Sch_{ac} , \\
 B_{ab} & := \nabla^c C_{cab} + W_{acbd}\Sch^{cd} ,
\end{align*}
respectively.
Note that our convention for the Cotton tensor differs from that used in~\cite{CaseGrahamKuo2023}.
Clearly $C_{abc} = -C_{bac}$.
We use square brackets (resp.\ round parentheses) to denote skew symmetrization (resp.\ symmetrization) of the enclosed indices;
e.g.
\begin{equation}
 \label{eqn:symmetrize}
 \begin{split}
  T_{[abc]} & := \frac{1}{3!}\left( T_{abc} - T_{bac} - T_{cba} - T_{acb} + T_{bca} + T_{cab} \right) , \\
  T_{(abc)} & := \frac{1}{3!}\left( T_{abc} + T_{bac} + T_{cba} + T_{acb} + T_{bca} + T_{cab} \right) .
 \end{split}
\end{equation}
The Bianchi identities imply that
\begin{align}
 \label{eqn:weyl-bianchi} \nabla_{[a}W_{bc]}{}^{de} & = -2C_{[ab}{}^{[d}g_{c]}{}^{e]} , \\
 \label{eqn:W-to-C} \nabla^e W_{abec} & = (\mfdim-3)C_{abc} ,
\end{align}
and also that
\begin{align*}
 C_{ba}{}^b & = 0 , & C_{[abc]} & = 0 , \\
 B_a{}^a & = 0 , & B_{[ab]} & = 0 .
\end{align*}
We refer to Equation~\eqref{eqn:weyl-bianchi} as the Weyl--Bianchi identity.

\subsection{Riemannian submanifolds}

Let $i \colon Y^{\submfdim} \to (X^{\mfdim},g)$ be an immersion into a Riemannian manifold.
We always assume that $1 \leq \submfdim < \mfdim$ and $\mfdim \geq 3$.
As above, we use abstract indices, with a lowercase Latin letter ($a,b,c,\dotsc$) labeling a section of $i^\ast TX$ or its dual.
Recall the $g$-orthogonal splitting $i^\ast TX = TY \oplus NY$.
We use a lowercase Greek letter ($\alpha,\beta,\gamma,\dotsc$) to label a section of $TY$ or its dual, and we use a primed
lowercase Greek letter ($\alpha^\prime,\beta^\prime,\gamma^\prime,\dotsc$) to label a section of $NY$ or its dual.
We also use Greek or primed Greek indices implicitly to denote composition with the projections to $TY$ or $NY$, respectively, or their duals.
With these conventions, $g_{ab}$ denotes the metric on $X$, while $g_{\alpha\beta}$ and $g_{\alpha^\prime\beta^\prime}$ denote the induced metrics on $TY$ and $NY$, respectively.
Note that $g_{\alpha\alpha^\prime}=0$.

Recall from \cref{sec:q-bg} that the Levi-Civita connection of $g_{ab}$ determines connections $\onabla_\alpha$ on $TY$ and $NY$ by projection.
Moreover, the connection $\onabla_\alpha$ on $TY$ is the Levi-Civita connection of $g_{\alpha\beta}$.

Our convention from \cref{sec:q-bg} for the \defn{second fundamental form} $L_{\alpha\beta\alpha^\prime}$ of $Y$ is that if $\tau_a$ is a section of $i^\ast T^\ast X$ with projections $\tau_\alpha$ and $\tau_{\alpha^\prime}$, then
\begin{equation}
 \label{eqn:second-fundamental-form-dual}
 \nabla_\alpha\tau_{\alpha^\prime} = \onabla_\alpha\tau_{\alpha^\prime} + L_{\alpha\beta\alpha^\prime}\tau^\beta .
\end{equation}
Recall that $L_{\alpha\beta\alpha^\prime}=L_{\beta\alpha\alpha^\prime}$.
Since the splitting $i^\ast TX = TY \oplus NY$ is $g$-orthogonal, we deduce that
\begin{equation}
 \label{eqn:second-fundamental-form}
 \nabla_\alpha\tau_\beta = \onabla_\alpha\tau_\beta - L_{\alpha\beta\alpha^\prime}\tau^{\alpha^\prime} .
\end{equation}
These identities extend to higher rank tensors by the Leibniz rule;
e.g.\
\begin{equation*}
 \nabla_\beta\tau_{\alpha\alpha^\prime} = \onabla_\beta\tau_{\alpha\alpha^\prime} - L_{\alpha\beta}{}^{\beta^\prime}\tau_{\beta^\prime\alpha^\prime} + L_\beta{}^\gamma{}_{\alpha^\prime}\tau_{\alpha\gamma} .
\end{equation*}

The \defn{mean curvature} of $Y$ is the vector field
\begin{equation*}
 H^{\alpha^\prime} := \frac{1}{\submfdim}L_\alpha{}^{\alpha\alpha^\prime} .
\end{equation*}
We denote by
\begin{equation*}
 \tfss_{\alpha\beta\alpha^\prime} := L_{\alpha\beta\alpha^\prime} - H_{\alpha^\prime} g_{\alpha\beta}
\end{equation*}
the trace-free part of the second fundamental form.
As discussed further below, the interest in $\tfss_{\alpha\beta\alpha^\prime}$ stems from its conformal invariance.

When a function $u \in C^\infty(X)$ is given, we denote $u_{ab\dotsm c} := \nabla_c \dotsm \nabla_b\nabla_au$.
Thus, by the conventions described above,
\begin{equation*}
 u_{\alpha\beta} = \nabla_\beta\nabla_\alpha u = \onabla_\beta \onabla_\alpha u - L_{\alpha\beta\alpha^\prime}\nabla^{\alpha^\prime}u = \onabla_\beta\onabla_\alpha u - L_{\alpha\beta\alpha^\prime}u^{\alpha^\prime} .
\end{equation*}

The Gauss--Codazzi--Ricci equations (see, e.g., \cite{DajczerTogeiro2019}*{Section~1.3}) and their various contracted forms are obtained by combining
Equations~\eqref{eqn:riemann}, \eqref{eqn:second-fundamental-form-dual},
and~\eqref{eqn:second-fundamental-form}: 
\begin{align*}
 R_{\alpha\beta\gamma\delta} & = \oR_{\alpha\beta\gamma\delta} -
 L_{\alpha\gamma\alpha^\prime}L_{\beta\delta}{}^{\alpha^\prime} +
 L_{\alpha\delta\alpha^\prime}L_{\beta\gamma}{}^{\alpha^\prime} , \\ 
 R_{\alpha\beta} & = \oR_{\alpha\beta} +
 R_{\alpha\alpha^\prime\beta}{}^{\alpha^\prime} +
 L_{\alpha\gamma\alpha^\prime}L_\beta{}^{\gamma\alpha^\prime} - \submfdim
 H^{\alpha^\prime} L_{\alpha\beta\alpha^\prime} , \\ 
 R & = \oR + 2R_{\alpha^\prime}{}^{\alpha^\prime} +
 L_{\alpha\beta\alpha^\prime}L^{\alpha\beta\alpha^\prime} -
 \submfdim^2H_{\alpha^\prime} H^{\alpha^\prime} -
 R_{\alpha^\prime\beta^\prime}{}^{\alpha^\prime\beta^\prime} , \\ 
 R_{\alpha\beta\alpha^\prime\gamma} & = 2\onabla_{[\alpha}L_{\beta]\gamma\alpha^\prime} , \\
 R_{\alpha\alpha^\prime} & = R_{\alpha\beta^\prime\alpha^\prime}{}^{\beta^\prime} - \onabla^\beta L_{\beta\alpha\alpha^\prime} + \submfdim\onabla_\alpha H_{\alpha^\prime} , \\
 R_{\alpha\beta\alpha^\prime\beta^\prime} & = \oR_{\alpha\beta\alpha^\prime\beta^\prime} - L^\gamma{}_{\alpha\alpha^\prime}L_{\gamma\beta\beta^\prime} + L^\gamma{}_{\alpha\beta^\prime}L_{\gamma\beta\alpha^\prime} ,
\end{align*}
where $\oR_{\alpha\beta\alpha^\prime\beta^\prime}$ is the curvature of the connection $\onabla_\alpha$ on $NY$.
It is useful to rewrite these in terms of the Schouten tensor, the Weyl tensor, and the second fundamental form.
To that end, set
\begin{equation}
 \label{eqn:defn-Di}
 \Di_{\alpha\alpha^\prime} := \Sch_{\alpha\alpha^\prime} - \onabla_\alpha H_{\alpha^\prime} .
\end{equation}
If $\submfdim \geq 3$, then the Gauss--Codazzi--Ricci equations are equivalent to (cf.\ \cite{Fialkow1944})
\begin{subequations}
 \label{eqn:gauss-codazzi}
 \begin{align}
  \label{eqn:gcW} W_{\alpha\beta\gamma\delta} & = \oW_{\alpha\beta\gamma\delta} - \tfss_{\alpha\gamma\alpha^\prime}\tfss_{\beta\delta}{}^{\alpha^\prime} + \tfss_{\alpha\delta\alpha^\prime}\tfss_{\beta\gamma}{}^{\alpha^\prime} - 2\Fi_{\alpha[\gamma}g_{\delta]\beta} + 2\Fi_{\beta[\gamma}g_{\delta]\alpha} , \\
  \label{eqn:gcP} \Sch_{\alpha\beta} & = \oSch_{\alpha\beta} -
 H^{\alpha^\prime}\tfss_{\alpha\beta\alpha^\prime} - \frac{1}{2}
 H^{\alpha^\prime} H_{\alpha^\prime} g_{\alpha\beta} + \Fi_{\alpha\beta} ,
 \\ 
  \label{eqn:gcJ} \trSch & = \otrSch + \Sch_{\alpha^\prime}{}^{\alpha^\prime} - \frac{\submfdim}{2}H^{\alpha^\prime} H_{\alpha^\prime} + \trFi , \\
  \label{eqn:gcdL} W_{\alpha\beta\alpha^\prime\gamma} & =
 2\onabla_{[\alpha}\tfss_{\beta]\gamma\alpha^\prime} + 2g_{\gamma[\alpha}\Di_{\beta]\alpha^\prime} , \\
  \label{eqn:gcD} (\submfdim-1)\Di_{\alpha\alpha^\prime} & = -\onabla^\beta\tfss_{\beta\alpha\alpha^\prime} - W_{\alpha\beta\alpha^\prime}{}^\beta , \\
  \label{eqn:gcnc} W_{\alpha\beta\alpha^\prime\beta^\prime} & = \oR_{\alpha\beta\alpha^\prime\beta^\prime} - \tfss^\gamma{}_{\alpha\alpha^\prime}\tfss_{\gamma\beta\beta^\prime} + \tfss^\gamma{}_{\alpha\beta^\prime}\tfss_{\gamma\beta\alpha^\prime} ,
 \end{align}
\end{subequations}
where
\begin{equation}
 \label{eqn:defn-Fialkow}
 \Fi_{\alpha\beta} := \frac{1}{\submfdim-2}\left(
 \tfss_{\alpha\gamma\alpha^\prime}\tfss_\beta{}^{\gamma\alpha^\prime} -
 W_{\alpha\gamma\beta}{}^{\gamma} - \trFi g_{\alpha\beta} \right) 
\end{equation}
is the manifestly conformally invariant (of weight 0) \defn{Fialkow tensor}~\cite{Fialkow1944} and
\begin{equation*}
 \trFi := \Fi_\alpha{}^\alpha = \frac{1}{2(\submfdim-1)}\left(
 \tfss_{\alpha\beta\alpha^\prime}\tfss^{\alpha\beta\alpha^\prime} -
 W_{\alpha\beta}{}^{\alpha\beta} \right) 
\end{equation*}
is its trace.
Equation~\eqref{eqn:gcnc} follows from the Gauss--Codazzi--Ricci equations and the fact that $R_{\alpha\beta\alpha^\prime\beta^\prime}=W_{\alpha\beta\alpha^\prime\beta^\prime}$.
Notably, it recovers the fact~\cite{Chen1974} that the curvature of the normal bundle is conformally invariant.
Of course, equations involving the trace of the Fialkow tensor, but not the Fialkow tensor itself, require only $\submfdim \geq 2$;
and Equations~\eqref{eqn:gcdL}--\eqref{eqn:gcnc} hold for $\submfdim=1$, but are trivial.

\subsection{Conformal submanifolds}
\label{subsec:conformal-submanifold-invariants}

Let $L^g \colon C^\infty(Y;T_1) \to C^\infty(Y;T_2)$ be a metric-dependent differential operator
on sections of vector bundles $T_1$ and $T_2$ over $Y$, where $g$ is a metric on $X$.
We say 
that $L$ is \defn{homogeneous} if there is an $h \in \bR$
so that $L^{c^2g} = c^h L^g$ for all $c>0$. 
In this case we call $h$ the \defn{homogeneity} of $L$.
Note that if $L_1$ and $L_2$ have homogeneities $h_1$ and $h_2$,
respectively, then $L_1 \circ L_2$ has homogeneity $h_1+h_2$, provided the composition makes sense.
We say that $L$ is \defn{conformally covariant} if there are constants $a,b \in \bR$ such that
\begin{equation*}
 L^{e^{2u}g}(v) = e^{-bi^\ast u} L^g (e^{ai^\ast u}v)
\end{equation*}
for all $u \in C^\infty(X)$ and all $v \in C^\infty(Y;T_1)$.
Note that conformally covariant operators are necessarily homogeneous.

Branson showed~\cite{Branson1985}*{Section~1} that homogeneous differential operators on $X$ are conformally covariant if and only if their conformal linearizations are zero.
We develop the analogous framework for metric-dependent differential operators $L$ associated to an immersion.
Suppose that $L$ has homogeneity $h$.  Fix $w \in \bR$.
Given an immersion $i \colon Y^{\submfdim} \to (X^{\mfdim},g)$
and a function $\Upsilon \in C^\infty(X)$, define the \defn{conformal
  linearization of $L$}, regarded as an operator on densities of weight
$w$, by 
\begin{equation*}
 L^\bullet := \left.\frac{\partial}{\partial t}\right|_{t=0} e^{-t(w+h)i^\ast\Upsilon} \circ L^{e^{2t\Upsilon}g} \circ e^{twi^\ast\Upsilon} ,
\end{equation*}
where $e^{ci^\ast\Upsilon}$ acts as a multiplication operator and
$\left.\frac{\partial}{\partial t}\right|_{t=0}A^t$ is the evaluation at
$t=0$ of the derivative in $t$ of the one-parameter family of
operators $A^t$.
(We suppress the dependence of $L^\bullet$ on $g$, $\Upsilon$, and $w$ to simplify our notation.)
We define the conformal linearization of metric-dependent tensor fields by regarding them as zeroth-order metric-dependent differential operators; note that $L^\bullet$ is independent of $w$ in this case.
We use the same notation for the conformal linearizations of metric-dependent differential operators, including tensors, defined on $X$.

It is easy to see
(cf.\ \cite{Branson1985}*{Corollary~1.14}) that~$L^\bullet=0$ if and only
if
\begin{equation*}
 L^{e^{2\Upsilon}g} = e^{(w+h)i^\ast\Upsilon} \circ L^g \circ e^{-wi^\ast\Upsilon}
\end{equation*}
for all Riemannian metrics $g$ on $X$ and all $\Upsilon \in C^\infty(X)$.
It is straightforward to check that ${}^\bullet$ satisfies a Leibniz rule:
Suppose that $L_1 \colon C^\infty(Y;T_1) \to C^\infty(Y;T_2)$ and $L_2
\colon C^\infty(Y;T_2) \to C^\infty(Y;T_3)$ have homogeneities $h_1$ and $h_2$, respectively. 
Fix $w \in \bR$.
Then
\begin{equation*}
 (L_2 \circ L_1)^\bullet = L_2 \circ L_1^\bullet + L_2^\bullet \circ L_1 ,
\end{equation*}
where $L_1$ and $L_2 \circ L_1$ are regarded as operators on densities of
weight $w$, and $L_2$ is regarded as an operator on densities of weight $w
+ h_1$. 

Since the difference of two connections is a tensor, it makes sense to
consider the conformal linearization $\nabla_a^\bullet \colon
C^\infty(X;T^\ast X) \to C^\infty(X; (T^\ast X)^{\otimes 2})$ of the
Levi-Civita connection. 
Indeed, given $w \in \bR$, the Koszul formula implies that
\begin{equation*}
 \nabla_a^\bullet \tau_b = (w-1)\Upsilon_a \tau_b - \Upsilon_b\tau_a + \Upsilon^c\tau_c g_{ab} .
\end{equation*}
From this one recovers the formulas
\begin{equation}
 \label{eqn:ambient-conformal-change}
 \begin{split}
  W_{abcd}^\bullet & = 0 , \\
  \Sch_{ab}^\bullet & = -\Upsilon_{ab} , \\
  C_{abc}^\bullet & = -\Upsilon^d W_{abcd} , \\
  B_{ab}^\bullet & = 2(\mfdim - 4)\Upsilon^cC_{c(ab)} ,
 \end{split}
\end{equation}
for the conformal linearizations of the Weyl, Schouten, Cotton, and Bach tensors, respectively, where round parentheses denote symmetrization.
One also recovers the formulas
\begin{equation}
 \label{eqn:submanifold-conformal-change}
 \begin{split}
  \onabla_\alpha^\bullet\tau_\beta & = (w-1)\Upsilon_\alpha\tau_\beta - \Upsilon_\beta\tau_\alpha + \Upsilon^\gamma\tau_\gamma g_{\alpha\beta} , \\
  \onabla_\alpha^\bullet\tau_{\alpha^\prime} & = (w-1)\Upsilon_\alpha\tau_{\alpha^\prime} , \\
  L_{\alpha\beta\alpha^\prime}^\bullet & = -\Upsilon_{\alpha^\prime} g_{\alpha\beta} ,
 \end{split}
\end{equation}
for the conformal linearizations of the connections $\onabla_\alpha$ on
$TY$ and $NY$, and of the second fundamental form
$L_{\alpha\beta\alpha^\prime}$. 
In particular, the various projections of the Weyl tensor $W_{abcd}$ and the
trace-free part $\tfss_{\alpha\beta\alpha^\prime}$ of the second
fundamental form are conformal submanifold invariants of weight $2$, as defined in \cref{sec:q-bg}.
Equations~\eqref{eqn:submanifold-conformal-change} yield formulas for the conformal linearization of the tangential divergence of tangential one-forms, the tangential Laplacian on functions, and the mean curvature:
\begin{equation}
 \label{eqn:submanifold-basic-operator-linearizations}
 \begin{split}
  (\onabla^\alpha)^\bullet\tau_\alpha & = (\submfdim + w - 2)\Upsilon^\alpha\tau_\alpha , \\
  \oDelta^\bullet u & = (\submfdim + 2w - 2)\Upsilon^\alpha u_\alpha + w(\oDelta\Upsilon)u , \\
  H_{\alpha^\prime}^\bullet & = -\Upsilon_{\alpha^\prime} .
 \end{split}
\end{equation}

We conclude this subsection by introducing four more tensors, the first of which is a variant of a tensor introduced by Blitz, Gover, and Waldron~\cite{BlitzGoverWaldron2021}*{Lemma~6.1}:
\begin{align}
 \label{eqn:mP} \mP_{\alpha\beta} & := \Sch_{\alpha\beta} +
 H^{\alpha^\prime}\tfss_{\alpha\beta\alpha^\prime} +
 \frac{1}{2}H^{\alpha^\prime}H_{\alpha^\prime}g_{\alpha\beta} , \\ 
 \label{eqn:mC} \mC_{abc} & := C_{abc} - H^{\alpha^\prime}W_{abc\alpha^\prime} , \\
 \label{eqn:mC-tr} \mC_{a} & := \mC_{\beta a}{}^\beta , \\
 \label{eqn:mB} \mB_{\alpha\beta} & := B_{\alpha\beta} + 2(\mfdim -
 4)H^{\alpha^\prime}C_{\alpha^\prime(\alpha\beta)} + (\mfdim -
 4)H^{\alpha^\prime}H^{\beta^\prime}W_{\alpha\alpha^\prime\beta\beta^\prime}
 .
\end{align}
There are two key points.
First, these tensors make sense in all submanifold dimensions $\submfdim \geq 1$;
in particular, Equation~\eqref{eqn:gcP} implies
that $\mP_{\alpha\beta}$ generalizes $\oSch_{\alpha\beta} +
\Fi_{\alpha\beta}$ to all dimensions. 
Second, \cref{new-tangential-invariants} below shows that, under conformal change, these tensors depend only on tangential derivatives of the conformal factor.
In particular, they depend only on an immersion $i \colon Y^{\submfdim} \to (X^{\mfdim} , [g])$ and a choice of representative $h \in
i^\ast[g]$, in the sense that they are independent of the choice of local extension of $h$ to a metric in $[g]$ defined on a neighborhood of $i(Y)$.

\begin{lemma}
 \label{new-tangential-invariants}
 Let $i \colon Y^{\submfdim} \to (X^{\mfdim},g)$ be an immersion with $1 \leq \submfdim < \mfdim$ and $\mfdim \geq 3$.
 Then
 \begin{align*}
  \mP_{\alpha\beta}^\bullet & = -\onabla_\alpha\onabla_\beta\Upsilon , \\
  \mC_{abc}^\bullet & = -\Upsilon^\alpha W_{abc\alpha} , \\
  \mC_{a}^\bullet & = -\Upsilon^\gamma W_{\beta a}{}^\beta{}_\gamma , \\
  \mB_{\alpha\beta}^{\bullet} & = 2(\mfdim - 4)\Upsilon^\gamma \mC_{\gamma(\alpha\beta)} , \\
  \Di_{\alpha\alpha^\prime}^\bullet & = -\Upsilon^\beta\tfss_{\beta\alpha\alpha^\prime} .
 \end{align*}
\end{lemma}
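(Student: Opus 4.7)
The plan is to prove each identity by direct computation, using the ambient linearizations in Equation~\eqref{eqn:ambient-conformal-change}, the submanifold linearizations in Equation~\eqref{eqn:submanifold-conformal-change} and Equation~\eqref{eqn:submanifold-basic-operator-linearizations}, together with the Gauss-type identities \eqref{eqn:second-fundamental-form-dual} and \eqref{eqn:second-fundamental-form} to rewrite ambient covariant derivatives of $\Upsilon$ in terms of intrinsic ones plus second-fundamental-form corrections. The common mechanism in all five identities is that the definitions of $\mP$, $\mC$, $\mB$, and $\Di$ are designed so that all contributions involving $\Upsilon_{\alpha^\prime}$ cancel, leaving only tangential derivatives of $\Upsilon$.

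First I would handle $\Di^\bullet$: linearize $\Sch_{\alpha\alpha^\prime}$ and $\onabla_\alpha H_{\alpha^\prime}$, rewrite $\nabla_\alpha\Upsilon_{\alpha^\prime} = \onabla_\alpha\Upsilon_{\alpha^\prime}+L_{\alpha\beta\alpha^\prime}\Upsilon^\beta$ via \eqref{eqn:second-fundamental-form-dual}, and observe that the $\onabla_\alpha\Upsilon_{\alpha^\prime}$ terms from $\Sch^\bullet$ and $H^\bullet$ cancel, and that the connection contribution $-\Upsilon_\alpha H_{\alpha^\prime}$ combines with $L_{\alpha\beta\alpha^\prime}\Upsilon^\beta$ to produce $-(L_{\alpha\beta\alpha^\prime}-H_{\alpha^\prime}g_{\alpha\beta})\Upsilon^\beta = -\Upsilon^\beta \tfss_{\beta\alpha\alpha^\prime}$. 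For $\mP^\bullet$ I would linearize the three terms of \eqref{eqn:mP}, using that $\tfss^\bullet=0$ and $(H^{\alpha^\prime})^\bullet = -\Upsilon^{\alpha^\prime}$, and rewrite $\nabla_\beta\nabla_\alpha\Upsilon$ via \eqref{eqn:second-fundamental-form}; the normal-gradient contributions collapse via the decomposition $L_{\alpha\beta\alpha^\prime}=\tfss_{\alpha\beta\alpha^\prime}+H_{\alpha^\prime}g_{\alpha\beta}$, leaving $-\onabla_\alpha\onabla_\beta\Upsilon$.

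The identity for $\mC^\bullet$ is the cleanest illustration of the cancellation pattern: splitting $\Upsilon^d = \Upsilon^\gamma + \Upsilon^{\gamma^\prime}$ in $C_{abc}^\bullet = -\Upsilon^d W_{abcd}$ and adding the contribution $-(H^{\alpha^\prime})^\bullet W_{abc\alpha^\prime} = \Upsilon^{\alpha^\prime}W_{abc\alpha^\prime}$ exactly kills the normal-gradient piece, giving $-\Upsilon^\alpha W_{abc\alpha}$. Tracing with $g^{\beta\gamma}$ (which satisfies $(g^{\beta\gamma})^\bullet = 0$) then yields the formula for $\mC_a^\bullet$.

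The main work is the Bach identity. I would linearize each of the three summands of \eqref{eqn:mB}, using $B_{ab}^\bullet = 2(\mfdim-4)\Upsilon^c C_{c(ab)}$, $W^\bullet = 0$, and the already-established formulas for $H^\bullet$ and $C^\bullet$. The $\Upsilon^{\gamma^\prime}$ contributions from $B_{\alpha\beta}^\bullet$ cancel against those from $(H^{\alpha^\prime})^\bullet\, C_{\alpha^\prime(\alpha\beta)}$. The remaining $\Upsilon^{\gamma^\prime}$ contributions, coming from $H^{\alpha^\prime}\, C_{\alpha^\prime(\alpha\beta)}^\bullet$ and from the two $(H^\bullet)\cdot H\cdot W$ terms, must cancel after symmetrizing in $(\alpha,\beta)$; this follows from the antisymmetry of $W$ in each pair together with the pair-swap symmetry $W_{abcd}=W_{cdab}$, which convert the $\Upsilon^{\alpha^\prime}H^{\beta^\prime}W_{\alpha\alpha^\prime\beta\beta^\prime}$ expression into $-2H^{\alpha^\prime}\Upsilon^{\gamma^\prime}W_{\alpha^\prime(\alpha\beta)\gamma^\prime}$ after relabeling. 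The tangential-gradient terms then reassemble, using the same symmetries to show $H^{\alpha^\prime}W_{\alpha^\prime(\alpha\beta)\gamma} = H^{\delta^\prime}W_{\gamma(\alpha\beta)\delta^\prime}$, into $2(\mfdim-4)\Upsilon^\gamma[C_{\gamma(\alpha\beta)}-H^{\delta^\prime}W_{\gamma(\alpha\beta)\delta^\prime}]=2(\mfdim-4)\Upsilon^\gamma\mC_{\gamma(\alpha\beta)}$. The main obstacle is the careful bookkeeping of Weyl-index manipulations in this last step; the factor of $\mfdim-4$ is exactly what is needed for the $B^\bullet$ correction terms to have the right strength to produce the cancellation.
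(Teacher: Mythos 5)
Your proposal is correct, and it follows the same route as the paper: compute the conformal linearization of each quantity from Equations~\eqref{eqn:ambient-conformal-change} and~\eqref{eqn:submanifold-conformal-change}, then convert ambient derivatives of $\Upsilon$ into tangential ones via Equations~\eqref{eqn:second-fundamental-form-dual}--\eqref{eqn:second-fundamental-form}. The paper's own proof is essentially a terse version of yours: it records the intermediate linearizations of $\mP$, $\mC$, $\mB$, and $\Di$ (in which $\mC^\bullet$ and $\mB^\bullet$ already appear in final form) and says the Gauss-type identities finish the job; you have spelled out the index manipulations, including the Weyl-symmetry bookkeeping in the Bach computation, that the paper leaves implicit.
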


\begin{remark}
 \label{rk:conformal-geodesic}
 If $\submfdim=1$, then each of $\tfss_{\alpha\beta\alpha^\prime}$, $W_{\alpha\beta ab}$, and $\mC_{\alpha\beta a}$ vanishes.
 \Cref{new-tangential-invariants} thus implies that $\Di_{\alpha\alpha^\prime}$ and $\mB_{\alpha\beta}$ are conformal invariants of immersed curves.
 Indeed, the condition $\Di_{\alpha\alpha^\prime}=0$ characterizes unparameterized conformal circles~\citelist{ \cite{Belgun2015}*{Theorem~5.4} \cite{CurryGoverSnell2023}*{Theorem~1.2 and Proposition~5.4} }.
\end{remark}

\begin{proof}
 Equations~\eqref{eqn:ambient-conformal-change} and~\eqref{eqn:submanifold-conformal-change} imply that
 \begin{align*}
  \mP_{\alpha\beta}^\bullet & = -\nabla_\alpha\nabla_\beta\Upsilon - \Upsilon^{\alpha^\prime}L_{\alpha\beta\alpha^\prime} , \\
  \mC_{abc}^\bullet & = -\Upsilon^\alpha W_{abc\alpha} , \\
  \mB_{\alpha\beta}^\bullet & = 2(\mfdim - 4)\Upsilon^\gamma\mC_{\gamma(\alpha\beta)} , \\
  \Di_{\alpha\alpha^\prime}^\bullet & = -\nabla_\alpha\nabla_{\alpha^\prime}\Upsilon + \onabla_\alpha\nabla_{\alpha^\prime}\Upsilon + H_{\alpha^\prime}\nabla_\alpha\Upsilon .
 \end{align*}
 Combining these with Equations~\eqref{eqn:second-fundamental-form-dual}
 and~\eqref{eqn:second-fundamental-form} yields the desired
 conclusion. 
\end{proof}

\section{New conformal submanifold invariants}
\label{sec:invariants}

In this section we identify and apply four new non-obvious scalar conformal submanifold invariants.
To that end, we introduce some useful notation:
\begin{equation}
 \label{eqn:conventions}
 \begin{aligned}
  \tfss_{\alpha\beta}^2 & := \tfss^\gamma{}_{\alpha\alpha^\prime}\tfss_{\gamma\beta}{}^{\alpha^\prime} , & \lv\tfss\rv^2 & = \tfss_{\alpha\beta\alpha^\prime}\tfss^{\alpha\beta\alpha^\prime} , \\
 \lp \tfss^2 , \oSch \rp & := \tfss_{\alpha\beta}^2\oSch^{\alpha\beta} , & \tr\tfss_{\alpha^\prime}^3 & := \tfss_\alpha{}^\beta{}_{\beta^\prime}\tfss_\beta{}^{\gamma\beta^\prime}\tfss_\gamma{}^\alpha{}_{\alpha^\prime} , \\
  \oDelta \tfss_{\alpha\beta\alpha^\prime} & := \onabla^\gamma\onabla_\gamma\tfss_{\alpha\beta\alpha^\prime} .
 \end{aligned}
\end{equation}
We use similar notation to denote other inner products and squared lengths.

\subsection{Identification of invariants}
\label{subsec:invariants/construct}

We begin by identifying four scalar conformal submanifold invariants of weight $-4$ in general dimension and codimension.
Our first step is to compute some useful tangential divergences.

\begin{lemma}
 \label{div-F}
 Let $i \colon Y^{\submfdim} \to (X^{\mfdim},g)$ be an immersion with $1 \leq \submfdim < \mfdim$ and $\mfdim \geq 3$.
 Then
 \begin{align}
  \label{eqn:div-mP} \onabla^\beta \mP_{\alpha\beta} & = \onabla_\alpha\mP_\beta{}^\beta + \mC_\alpha  - \Di^{\beta\alpha^\prime}\tfss_{\beta\alpha\alpha^\prime} , \\
  \label{eqn:div-tfss2} \onabla^\beta\tfss_{\alpha\beta}^2 & = \frac{1}{2}\onabla_\alpha\lv\tfss\rv^2 - (\submfdim - 2)\Di^{\beta\alpha^\prime}\tfss_{\alpha\beta\alpha^\prime} - W^{\beta\gamma\alpha^\prime}{}_\gamma\tfss_{\alpha\beta\alpha^\prime} - \tfss^{\beta\gamma\alpha^\prime}W_{\beta\alpha\gamma\alpha^\prime} , \\
  \label{eqn:div-W} \onabla^\beta W_{\alpha\gamma\beta}{}^\gamma & = \frac{1}{2}\onabla_\alpha W_{\beta\gamma}{}^{\beta\gamma} - (\submfdim - 2)\mC_\alpha - \tfss^{\beta\gamma\alpha^\prime}W_{\beta\alpha\gamma\alpha^\prime} - \tfss_\alpha{}^{\beta\alpha^\prime}W_{\beta\gamma\alpha^\prime}{}^\gamma .
 \end{align}
\end{lemma}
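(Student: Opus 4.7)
Each of the three identities is a direct consequence of the Gauss--Codazzi--Ricci equations~\eqref{eqn:gauss-codazzi}, the contracted second Bianchi identity on the ambient manifold, and the definitions~\eqref{eqn:defn-Di}, \eqref{eqn:mP}, \eqref{eqn:mC}, \eqref{eqn:mC-tr} of $\Di$, $\mP$, and $\mC$. The key technical input at every step is the pair of formulas~\eqref{eqn:second-fundamental-form-dual} and~\eqref{eqn:second-fundamental-form} expressing the discrepancy between the induced connection $\onabla$ and the ambient Levi--Civita connection $\nabla$ in terms of the second fundamental form. The whole proof is a straightforward but tedious term-by-term computation, and the only nontrivial aspect is bookkeeping.

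For~\eqref{eqn:div-mP}, I would differentiate the definition~\eqref{eqn:mP} of $\mP_{\alpha\beta}$ term by term. On $\onabla^\beta\Sch_{\alpha\beta}$, pass to the ambient divergence using~\eqref{eqn:second-fundamental-form}, invoke $\nabla^b\Sch_{ab}=\nabla_a\trSch$, and split the resulting gradient into tangential and normal components: the tangential part contributes to $\onabla_\alpha\mP_\beta{}^\beta$, while the normal part, together with the $L$-corrections produced by converting $\onabla$ to $\nabla$, reorganizes into $C_{\beta\alpha}{}^\beta - H^{\alpha^\prime}W_{\beta\alpha}{}^\beta{}_{\alpha^\prime}=\mC_\alpha$ via~\eqref{eqn:mC-tr}. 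For $\onabla^\beta(H^{\alpha^\prime}\tfss_{\alpha\beta\alpha^\prime})$, the Leibniz rule produces an $\onabla H \cdot \tfss$ term that becomes $-\Di^{\beta\alpha^\prime}\tfss_{\beta\alpha\alpha^\prime}$ by~\eqref{eqn:defn-Di} after absorbing the Schouten contribution, and an $H\cdot\onabla\tfss$ term rewritten via~\eqref{eqn:gcdL} to cancel residual pieces. The $|H|^2g_{\alpha\beta}$ summand contributes only to $\onabla_\alpha\mP_\beta{}^\beta$.

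For~\eqref{eqn:div-tfss2}, I would apply Leibniz to the product $\tfss^\gamma{}_{\alpha\alpha^\prime}\tfss_{\gamma\beta}{}^{\alpha^\prime}$. On one factor, use the trace Codazzi equation~\eqref{eqn:gcD} to replace $\onabla^\beta\tfss_{\gamma\beta\alpha^\prime}$ by $-(\submfdim-1)\Di_{\gamma\alpha^\prime}-W_{\gamma\delta\alpha^\prime}{}^\delta$. On the other factor, apply~\eqref{eqn:gcdL} to commute the differentiated index past the free index: the symmetric part builds $\tfrac12\onabla_\alpha|\tfss|^2$, the antisymmetric part produces the Weyl--second fundamental form contraction $\tfss^{\beta\gamma\alpha^\prime}W_{\beta\alpha\gamma\alpha^\prime}$, and the $g_{\gamma[\alpha}\Di_{\beta]\alpha^\prime}$ tail adjusts the $\Di$ coefficient to $\submfdim-2$.

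For~\eqref{eqn:div-W}, I would contract the Weyl--Bianchi identity~\eqref{eqn:weyl-bianchi} on a suitable pair of indices so that $\nabla^\beta W_{\alpha\gamma\beta}{}^\gamma$ appears on one side and $\nabla_\alpha W_{\beta\gamma}{}^{\beta\gamma}$ on the other, with Cotton contributions yielding $-(\submfdim-2)C_{\beta\alpha}{}^\beta$; then pass from $\nabla$ to $\onabla$ via~\eqref{eqn:second-fundamental-form}, whose $L$-corrections applied to $W$ generate the two $\tfss W$ terms, and whose $H\cdot W$ piece assembles $C_{\beta\alpha}{}^\beta$ into $\mC_\alpha$ through definition~\eqref{eqn:mC-tr}. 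The main obstacle throughout is purely bookkeeping: tracking projections, signs, and the various $L$-corrections introduced by the $\nabla$-to-$\onabla$ conversions, and ensuring that every residual term reorganizes into the combinations $\Di$, $\mP$, or $\mC$ rather than their unmodified counterparts. No new idea beyond \cref{subsec:conformal-submanifold-invariants} is required.
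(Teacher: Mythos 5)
Your proposal is substantively correct and the ingredients match the paper's: passing between $\nabla$ and $\onabla$ via~\eqref{eqn:second-fundamental-form}--\eqref{eqn:second-fundamental-form-dual}, the (contracted) Codazzi equations, the Weyl--Bianchi identity, and the definitions~\eqref{eqn:defn-Di}--\eqref{eqn:mC-tr}. Your outline for~\eqref{eqn:div-tfss2} and~\eqref{eqn:div-W} is essentially identical to the paper's proof. For~\eqref{eqn:div-mP}, however, the paper's route is cleaner than yours: rather than computing $\onabla^\beta\mP_{\alpha\beta}$ term by term and invoking the contracted second Bianchi identity $\nabla^b\Sch_{ab}=\nabla_a\trSch$ (which inevitably introduces the transverse derivative $\nabla^{\beta'}\Sch_{\alpha\beta'}$ and the full ambient trace $\trSch$, requiring an extra pass through $C_{ba}{}^b=0$ and the splitting $\trSch=\Sch_\beta{}^\beta+\Sch_{\beta'}{}^{\beta'}$ to clean up), the paper computes the tangential curl $\onabla^\beta\mP_{\alpha\beta}-\onabla_\alpha\mP_\beta{}^\beta$, so $C_{\beta\alpha}{}^\beta$ appears immediately from the curl of $\Sch_{\alpha\beta}$ without any appeal to the contracted Bianchi identity, and the $|H|^2g$ piece contributes only the harmless term $-(\submfdim-1)H^{\alpha'}\onabla_\alpha H_{\alpha'}$ which is absorbed into $\Di$. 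One small slip: to convert the $H^{\alpha'}\onabla^\beta\tfss_{\alpha\beta\alpha'}$ term into the piece $-H^{\alpha'}W_{\alpha\beta\alpha'}{}^\beta$ of $\mC_\alpha$ (plus cancelling $H\Di$ terms), you need the \emph{contracted} Codazzi equation~\eqref{eqn:gcD}, not the full Codazzi equation~\eqref{eqn:gcdL} which you cite; the latter is the right tool for~\eqref{eqn:div-tfss2} but not here.
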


\begin{proof}
 First we compute the tangential divergence of $\mP_{\alpha\beta}$.
 Equation~\eqref{eqn:second-fundamental-form} implies that
 \begin{equation*}
  \onabla^\beta\Sch_{\alpha\beta} - \onabla_\alpha\Sch_\beta{}^\beta = C_{\beta\alpha}{}^\beta - \Sch^{\beta\alpha^\prime}\tfss_{\alpha\beta\alpha^\prime} + (\submfdim - 1)H^{\alpha^\prime}\Sch_{\alpha\alpha^\prime} .
 \end{equation*}
 Combining this with the definitions~\eqref{eqn:defn-Di} and~\eqref{eqn:mP} of $\Di_{\alpha\alpha^\prime}$ and $\mP_{\alpha\beta}$, respectively, yields
 \begin{align*}
  \onabla^\beta\mP_{\alpha\beta} - \onabla_\alpha\mP_\beta{}^\beta & = C_{\beta\alpha}{}^\beta - \Di^{\beta\alpha^\prime}\tfss_{\alpha\beta\alpha^\prime} + (\submfdim - 1)H^{\alpha^\prime}\Di_{\alpha\alpha^\prime} + H^{\alpha^\prime}\onabla^\beta\tfss_{\alpha\beta\alpha^\prime} .
 \end{align*}
 Combining this with Equations~\eqref{eqn:gcD} and~\eqref{eqn:mC-tr} yields Equation~\eqref{eqn:div-mP}.

 Second we compute the tangential divergence of $\tfss_{\alpha\beta}^2$.
 Direct computation using Equation~\eqref{eqn:gcD} yields
 \begin{equation*}
  \onabla^\beta \tfss^2_{\alpha\beta} = -(\submfdim - 1)\Di^{\beta\alpha^\prime}\tfss_{\alpha\beta\alpha^\prime} - W^{\beta\gamma\alpha^\prime}{}_\gamma \tfss_{\alpha\beta\alpha^\prime} + \tfss^{\beta\gamma\alpha^\prime}\onabla_\beta \tfss_{\alpha\gamma\alpha^\prime} .
 \end{equation*}
 Rewriting the last summand using Equation~\eqref{eqn:gcdL} yields Equation~\eqref{eqn:div-tfss2}.

 Third we compute the tangential divergence of $W_{\alpha\gamma\beta}{}^\gamma$.
 On the one hand, Equation~\eqref{eqn:second-fundamental-form} implies that
 \begin{align*}
  \onabla^\beta W_{\alpha\gamma\beta}{}^\gamma & = \nabla^\beta W_{\alpha\gamma\beta}{}^\gamma + \tfss_\alpha{}^{\beta\alpha^\prime}W_{\beta\gamma\alpha^\prime}{}^\gamma - \tfss^{\beta\gamma\alpha^\prime}W_{\beta\alpha\gamma\alpha^\prime} + \submfdim H^{\alpha^\prime} W_{\alpha\beta\alpha^\prime}{}^\beta , \\
  \onabla_\alpha W_{\beta\gamma}{}^{\beta\gamma} & = \nabla_\alpha W_{\beta\gamma}{}^{\beta\gamma} + 4\tfss_\alpha{}^{\beta\alpha^\prime}W_{\beta\gamma\alpha^\prime}{}^\gamma + 4H^{\alpha^\prime}W_{\alpha\beta\alpha^\prime}{}^{\beta} .
 \end{align*}
 On the other hand, the Weyl--Bianchi identity~\eqref{eqn:weyl-bianchi} implies that
 \begin{equation}
  \label{eqn:weyl-bianchi-divW}
  2\nabla^\beta W_{\alpha\gamma\beta}{}^\gamma = \nabla^\beta W_{\alpha\gamma\beta}{}^\gamma + \nabla_\gamma W^\beta{}_{\alpha\beta}{}^\gamma = \nabla_\alpha W_{\beta\gamma}{}^{\beta\gamma} - 2(\submfdim - 2)C_{\beta\alpha}{}^\beta .
 \end{equation}
 Combining these three identities with Equation~\eqref{eqn:mC-tr} yields Equation~\eqref{eqn:div-W}.
\end{proof}

Our first two scalar conformal submanifold invariants involve a single derivative of the trace-free second fundamental form or the Weyl tensor.
These invariants are tangential divergences in the critical case $\dim Y = 4$.
This is in contrast to the intrinsic case:
there is no nonzero scalar conformal invariant of weight~$-4$ that is a natural divergence.

\begin{proposition}
 \label{conformally-invariant-divergence}
 Let $i \colon Y^{\submfdim} \to (X^{\mfdim},g)$ be an immersion with $1 \leq \submfdim < \mfdim$ and $\mfdim \geq 3$.
 Then
 \begin{align*}
  \mK_1 & := \onabla^\alpha\left( \tfss^{\beta\gamma\alpha^\prime}W_{\alpha\beta\alpha^\prime\gamma} \right) + (\submfdim - 4)\tfss^{\alpha\beta\alpha^\prime}\mC_{\alpha\alpha^\prime\beta} , \\
  \mK_2 & := \onabla^\alpha\left( \tfss_\alpha{}^{\beta\alpha^\prime}W_{\beta\gamma\alpha^\prime}{}^\gamma \right) + (\submfdim - 4)\Di^{\alpha\alpha^\prime}W_{\alpha\beta\alpha^\prime}{}^{\beta} ,
 \end{align*}
 are conformally invariant of weight $-4$.
\end{proposition}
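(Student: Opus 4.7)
The plan is to compute the conformal linearizations $\mK_1^\bullet$ and $\mK_2^\bullet$ directly and show that they vanish, which by the framework of Subsection \ref{subsec:conformal-submanifold-invariants} is equivalent to the claimed conformal invariance. The key inputs are that $\tfss_{\alpha\beta\alpha'}$ and $W_{abcd}$ are conformally invariant of weight $2$ with all indices lowered, together with the non-trivial linearizations $\mC_{abc}^\bullet = -\Upsilon^\alpha W_{abc\alpha}$ and $\Di_{\alpha\alpha'}^\bullet = -\Upsilon^\beta \tfss_{\beta\alpha\alpha'}$ supplied by Lemma \ref{new-tangential-invariants}, the operator linearization $(\onabla^\alpha)^\bullet \tau_\alpha = (\submfdim+w-2)\Upsilon^\alpha\tau_\alpha$ from \eqref{eqn:submanifold-basic-operator-linearizations}, and the Leibniz rule for $\bullet$.

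For $\mK_1$, I would set $T_\alpha := \tfss^{\beta\gamma\alpha'}W_{\alpha\beta\alpha'\gamma}$. Tracking the three raised indices shows $T_\alpha$ is conformally invariant of weight $-2$, so $T_\alpha^\bullet = 0$, and the divergence contributes only the operator term $(\onabla^\alpha)^\bullet T_\alpha = (\submfdim-4)\Upsilon^\alpha T_\alpha$. The second summand contributes $(\submfdim-4)\tfss^{\alpha\beta\alpha'}\mC_{\alpha\alpha'\beta}^\bullet = -(\submfdim-4)\Upsilon^\gamma \tfss^{\alpha\beta\alpha'}W_{\alpha\alpha'\beta\gamma}$, since $\tfss^{\alpha\beta\alpha'}$ is conformally invariant of weight $-4$. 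After relabeling dummy indices to bring $\Upsilon$ to a common slot, one obtains
\begin{equation*}
 \mK_1^\bullet = (\submfdim-4)\,\Upsilon^\alpha \tfss^{\beta\gamma\alpha'}\left[ W_{\alpha\beta\alpha'\gamma} - W_{\beta\alpha'\gamma\alpha} \right].
\end{equation*}
Applying the first Bianchi identity to $W_{\beta\alpha'\gamma\alpha}$ together with the pair-swap and skew-symmetries of the Weyl tensor reduces the bracket to $W_{\alpha\alpha'\beta\gamma}$, which is skew in $\beta\gamma$. Since $\tfss^{\beta\gamma\alpha'}$ is symmetric in $\beta\gamma$, the contraction vanishes.

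The treatment of $\mK_2$ is strictly parallel, but even simpler. Setting $S_\alpha := \tfss_\alpha{}^{\beta\alpha'}W_{\beta\gamma\alpha'}{}^\gamma$, which is conformally invariant of weight $-2$, gives $(\onabla^\alpha S_\alpha)^\bullet = (\submfdim-4)\Upsilon^\alpha S_\alpha$. For the second summand, $W_{\alpha\beta\alpha'}{}^\beta$ is conformally invariant of weight $0$, so only the factor $(\Di^{\alpha\alpha'})^\bullet = -\Upsilon^\gamma \tfss_\gamma{}^{\alpha\alpha'}$ contributes; a dummy relabeling shows this equals $-(\submfdim-4)\Upsilon^\alpha S_\alpha$, cancelling the divergence term exactly. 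There is no genuine obstacle here: the invariants $\mK_1$ and $\mK_2$ seem designed so that the lone surviving term from the linearization of $\mC$ or $\Di$ is cancelled, modulo Weyl symmetries in the case of $\mK_1$, by the operator term coming from the conformal covariance failure of the tangential divergence. The only substantive step is the Bianchi-plus-symmetry reduction needed to close the argument for $\mK_1$.
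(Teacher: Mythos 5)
Your proof is correct and takes essentially the same route as the paper: use the operator-linearization formula~\eqref{eqn:submanifold-basic-operator-linearizations} together with the conformal invariance of $\tfss_{\alpha\beta\alpha^\prime}$ and the projected Weyl tensor to show the divergence term contributes $(\submfdim-4)\Upsilon^\alpha T_\alpha$, then cancel against the linearizations of $\mC_{abc}$ and $\Di_{\alpha\alpha^\prime}$ from \cref{new-tangential-invariants}. The Weyl-symmetry and Bianchi bookkeeping you carry out to close the $\mK_1$ case is a step the paper's proof leaves implicit.
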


\begin{remark}
 \Cref{rk:conformal-geodesic} implies that if $\submfdim=1$, then $\mK_1=0$ and $\mK_2=0$.
\end{remark}

\begin{remark}
 \label{rk:mK}
 Equations~\eqref{eqn:gcdL} and~\eqref{eqn:gcD} imply that
 \begin{align*}
  \mK_1 & = \tfss^{\beta\gamma\alpha^\prime}\onabla^\alpha W_{\alpha\beta\alpha^\prime\gamma} + \frac{1}{2}W_{\alpha\beta\alpha^\prime\gamma}W^{\alpha\beta\alpha^\prime\gamma} + \Di^{\alpha\alpha^\prime}W_{\alpha\beta\alpha^\prime}{}^\beta + (\submfdim-4)\tfss^{\alpha\beta\alpha^\prime}\mC_{\alpha\alpha^\prime\beta} , \\
  \mK_2 & = \tfss^{\alpha\beta\alpha^\prime}\onabla_\alpha W_{\beta\gamma\alpha^\prime}{}^\gamma - 3\Di^{\alpha\alpha^\prime}W_{\alpha\beta\alpha^\prime}{}^\beta - W_{\alpha\beta\alpha^\prime}{}^\beta W^{\alpha\gamma\alpha^\prime}{}_{\gamma} .
 \end{align*}
 Note that $\mK_2=0$ on hypersurfaces.
 Also, $\mK_1 = \tfss^{\beta\gamma\alpha^\prime}\onabla^\alpha W_{\alpha\beta\alpha^\prime\gamma} + \frac{1}{2}W_{\alpha\beta\alpha^\prime\gamma}W^{\alpha\beta\alpha^\prime\gamma}$ when $\submfdim=4$ and $\mfdim=5$; hence $\tfss^{\beta\gamma\alpha^\prime}\onabla^\alpha W_{\alpha\beta\alpha^\prime\gamma}$ is conformally invariant in this case.
\end{remark}

\begin{proof}
 Recall that $\tfss_{\alpha\beta\alpha^\prime}$ and $W_{\alpha\beta\gamma\alpha^\prime}$ are conformal submanifold invariants of weight $2$.
 We deduce from Equations~\eqref{eqn:submanifold-basic-operator-linearizations} that
 \begin{align}
  \label{eqn:div-LW-Wfree} \left( \onabla^\alpha (\tfss^{\beta\gamma\alpha^\prime}W_{\alpha\beta\alpha^\prime\gamma}) \right)^\bullet & = (\submfdim - 4)\Upsilon^\alpha\tfss^{\beta\gamma\alpha^\prime} W_{\alpha\beta\alpha^\prime\gamma} , \\
  \label{eqn:div-LW-Dfree} \left( \onabla^\alpha (\tfss_\alpha{}^{\beta\alpha^\prime} W_{\beta\gamma\alpha^\prime}{}^\gamma) \right)^\bullet & = (\submfdim - 4)\Upsilon^\alpha\tfss_\alpha{}^{\beta\alpha^\prime}W_{\beta\gamma\alpha^\prime}{}^\gamma .
 \end{align}
 The conclusion now follows from \cref{new-tangential-invariants}.
\end{proof}

Our third scalar conformal submanifold invariant is the following:

\begin{proposition}
 \label{Wm}
 Let $i \colon Y^{\submfdim} \to (X^{\mfdim},g)$ be an immersion with $1 \leq \submfdim < \mfdim$ and $\mfdim \geq 3$, with $\mfdim \not= 4$.
 Then
 \begin{multline}
  \label{Iformula}
  \Wm := (\submfdim - 1)\bigl( -\oDelta \trFi + 2\trFi\mP_\alpha{}^\alpha \bigr) + (\submfdim - 6) \Bigl[ \bigl( \tfss_{\alpha\beta}^2 - W_{\alpha\gamma\beta}{}^\gamma \bigr) \mP^{\alpha\beta} \\
   + \onabla^\alpha \bigl( \mC_\alpha - \Di^{\beta\alpha^\prime}\tfss_{\alpha\beta\alpha^\prime} \bigr) + \frac{\submfdim - 3}{\mfdim - 4}\mB_\alpha{}^\alpha - (\submfdim-3)\lv\Di\rv^2 \Bigr]
 \end{multline}
 is conformally invariant of weight $-4$.
\end{proposition}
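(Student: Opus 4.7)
The plan is to verify conformal invariance by computing the bullet $\Wm^\bullet$ and showing it vanishes, which corresponds to weight $-4$ conformal invariance in the convention of \cref{subsec:conformal-submanifold-invariants}. The ingredients fall into two classes: $\tfss_{\alpha\beta\alpha^\prime}$, $\trFi$, and the various projections of $W_{abcd}$ are conformally invariant and so have vanishing bullets; while $\mP_{\alpha\beta}$, $\mC_{abc}$, $\mB_{\alpha\beta}$, and $\Di_{\alpha\alpha^\prime}$ have bullets given by \cref{new-tangential-invariants}, each involving only tangential derivatives of $\Upsilon$. Combining these with~\eqref{eqn:submanifold-basic-operator-linearizations} for $(\onabla^\alpha)^\bullet$ and $\oDelta^\bullet$ together with the Leibniz rule, $\Wm^\bullet$ reduces to a polynomial in the tangential Hessian $\Upsilon^{\alpha\beta}$ and tangential gradient $\Upsilon^\alpha$; no normal derivatives of $\Upsilon$ appear.

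I would first handle the $(\submfdim-1)$-group. Using $\trFi^\bullet=0$ (since $\trFi$ is weight $-2$ conformally invariant) and the short computation $(\mP_\alpha{}^\alpha)^\bullet=-\oDelta\Upsilon$, the two $(\oDelta\Upsilon)\trFi$ contributions — one from $-(\oDelta\trFi)^\bullet$ via the weight-$-2$ density formula in~\eqref{eqn:submanifold-basic-operator-linearizations}, the other from $2\trFi(\mP_\alpha{}^\alpha)^\bullet$ — cancel, leaving exactly $-(\submfdim-1)(\submfdim-6)\Upsilon^\alpha\trFi_\alpha$. Next I would isolate the second-derivative-of-$\Upsilon$ terms inside the $(\submfdim-6)$-bracket, which arise only from $(\mP^{\alpha\beta})^\bullet=-\onabla^\alpha\onabla^\beta\Upsilon$ inside $(\tfss^2_{\alpha\beta}-W_{\alpha\gamma\beta}{}^\gamma)\mP^{\alpha\beta}$ and from $\onabla^\alpha$ hitting the tangential derivatives of $\Upsilon$ buried in $\mC_\alpha^\bullet=-\Upsilon^\gamma W_{\beta\alpha}{}^\beta{}_\gamma$ and $(\Di^{\beta\alpha^\prime}\tfss_{\alpha\beta\alpha^\prime})^\bullet=-\Upsilon^\gamma\tfss^2_{\alpha\gamma}$. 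Using the Weyl symmetry $W_{\alpha\gamma\beta}{}^\gamma=W_{\gamma\alpha}{}^\gamma{}_\beta$ (a consequence of pair-exchange) together with the symmetry of $\tfss^2_{\alpha\beta}$ in $\alpha,\beta$, these two contributions cancel pointwise.

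The remaining first-derivative-of-$\Upsilon$ terms are a linear combination of $\Upsilon^\gamma$ times natural tangential tensors, with contributions from $(\onabla^\alpha)^\bullet V_\alpha$ for $V_\alpha:=\mC_\alpha-\Di^{\beta\alpha^\prime}\tfss_{\alpha\beta\alpha^\prime}$; from $\Upsilon^\gamma\onabla^\alpha(\tfss^2_{\alpha\gamma}-W_{\beta\alpha}{}^\beta{}_\gamma)$ simplified via~\eqref{eqn:div-tfss2} and~\eqref{eqn:div-W}; from $(\mB_\alpha{}^\alpha)^\bullet$ (where a short computation using the algebraic Bianchi identity and the fact that $g^{\alpha\beta}$ annihilates the pair-antisymmetric part of $C_{abc}$ and of $W_{abc\alpha^\prime}$ yields $g^{\alpha\beta}\mC_{\gamma(\alpha\beta)}=-\mC_\gamma$, while the prefactor $\tfrac{\submfdim-3}{\mfdim-4}$ absorbs the $(\mfdim-4)$ factor in $\mB_{\alpha\beta}^\bullet$); and from $(\lv\Di\rv^2)^\bullet$. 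After substituting $2(\submfdim-1)\trFi=\lv\tfss\rv^2-W_{\alpha\beta}{}^{\alpha\beta}$, the coefficient of $\Upsilon^\gamma\trFi_\gamma$ inside the bracket turns out to be $(\submfdim-1)(\submfdim-6)$, exactly cancelling the first group. The main obstacle is the final bookkeeping: verifying that the coefficients of $\Upsilon^\gamma\mC_\gamma$, of $\Upsilon^\gamma\Di^{\alpha\alpha^\prime}\tfss_{\gamma\alpha\alpha^\prime}$, and of the mixed quadratic contraction $\Upsilon^\gamma\tfss_\gamma{}^{\beta\alpha^\prime}W_{\beta\delta\alpha^\prime}{}^\delta$ each vanish separately. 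The last of these requires observing that the two apparently different $\tfss\cdot W$ traces produced by~\eqref{eqn:div-tfss2} and~\eqref{eqn:div-W} are in fact the same tensor by pair-exchange symmetry of $W$. The four independent constraints that result are precisely what force the specific coefficients $(\submfdim-1)$, $(\submfdim-6)$, $\tfrac{\submfdim-3}{\mfdim-4}$, and $-(\submfdim-3)$ appearing in~\eqref{Iformula}.
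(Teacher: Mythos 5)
Your plan is correct and rests on exactly the same ingredients as the paper's proof: the conformal linearizations in \cref{new-tangential-invariants}, the operator linearizations~\eqref{eqn:submanifold-basic-operator-linearizations}, and the divergence identities of \cref{div-F}. The cancellations you describe — the $\oDelta\Upsilon\cdot\trFi$ terms inside the $(\submfdim-1)$-group, the tangential Hessian terms via $W_{\alpha\gamma\beta}{}^\gamma = W_{\gamma\alpha}{}^\gamma{}_\beta$, the identity $g^{\alpha\beta}\mC_{\gamma(\alpha\beta)} = -\mC_\gamma$ for the Bach contribution, and the final first-derivative bookkeeping — are all the ones the paper encounters. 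The difference is purely organizational: the paper introduces the auxiliary decomposition $\Wm = I_1 + (\submfdim-6)(I_2+I_3)$ (Equations~\eqref{eqn:defn-I1},~\eqref{eqn:defn-I2},~\eqref{eqn:defn-I3},~\eqref{eqn:Wm-via-Is}) so that each $I_j^\bullet$ takes the clean form $\pm(\onabla^\alpha\circ T_{\alpha\beta}\circ\onabla^\beta)\Upsilon$ and cancellation is visible at the level of the $I_j^\bullet$'s, whereas you expand $\Wm^\bullet$ head-on and group by order of $\Upsilon$-derivatives. Both require the same amount of computation; the paper's factoring makes the bookkeeping modular and avoids ever writing down the full list of $\Upsilon^\gamma$-monomials. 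Two minor remarks: your phrase ``the coefficient of $\Upsilon^\gamma\trFi_\gamma$ inside the bracket turns out to be $(\submfdim-1)(\submfdim-6)$'' should read $(\submfdim-1)$ inside the bracket (with the extra $(\submfdim-6)$ coming from the bracket's prefactor); and the case $\submfdim=1$ is not covered by your computation since $\trFi$ is a $0/0$ expression there — the paper disposes of it separately via \cref{rk:Wm-dim-1} and \cref{rk:conformal-geodesic} before assuming $\submfdim\geq 2$.
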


\begin{remark}
 \label{rk:Wm-dim-1}
 If $\submfdim=1$, then $\mI = -10\lv\Di\rv^2 + \frac{10}{\mfdim-4}\mB_\alpha{}^\alpha$ is conformally invariant by \cref{rk:conformal-geodesic}.
\end{remark}

\begin{remark}
 If $k=6$, then the operator $-\oDelta + 2\mP_\alpha{}^\alpha$ is conformally invariant on densities of weight $-2$.
 This explains the invariance of $\Wm$ in this dimension.
 A similar phenomenon occurs for other conformal invariants below.
\end{remark}

\begin{remark}
 \label{rk:ell3m4}
 Interpreting $(\submfdim-3)/(\mfdim-4)=1$ when $\submfdim=3$ and $\mfdim=4$ extends the definition of $\Wm$ to these dimensions.
 A straightforward modification of the proof of \cref{Wm} shows that $\Wm$ remains conformally invariant of weight $-4$.
\end{remark}

\begin{remark}
 \label{rk:pole}
 Since the Bach tensor is conformally invariant in dimension four, the residue of $\Wm$ at $\mfdim=4$ is conformally invariant.
 In other words, $(\mfdim-4)\Wm$ is conformally invariant and defined in all dimensions $1 \leq \submfdim < \mfdim$ and $\mfdim \geq 3$.
 Again, a similar phenomenon occurs for other conformal invariants below.
\end{remark}

\begin{remark}
 \label{rk:tyrrell}
 If $i \colon Y^{\submfdim} \to (X^{\mfdim},g)$ is a minimal immersion into an Einstein manifold with $\Ric(g) = \lambda(\mfdim - 1)g$, then
 \begin{equation*}
  \Wm = (\submfdim - 1)\left( -\oDelta\trFi + 2\lambda(\submfdim - 3)\trFi \right) .
 \end{equation*}
 On minimal four-dimensional hypersurfaces in Poincar\'e--Einstein manifolds, this realizes the last integrand in Tyrrell's formula~\cite{Tyrrell2022}*{Equation~(1.4)} for the renormalized area as a constant multiple of $\Wm$.
 In particular, the conformal invariance of $\Wm$ explains the finiteness of that integral.
\end{remark}

\begin{proof}
 \Cref{rk:Wm-dim-1} implies that we can assume $\submfdim \geq 2$.
 
 First, set
 \begin{equation}
  \label{eqn:defn-I1}
  I_1 := (\submfdim - 1)\left( -\oDelta \trFi + (\submfdim - 4)\trFi\mP_\alpha{}^\alpha \right) .
 \end{equation}
 Since $G$ is a conformal submanifold invariant of weight $-2$, we conclude from Equations~\eqref{eqn:submanifold-basic-operator-linearizations} and \cref{new-tangential-invariants} that
 \begin{equation}
  \label{eqn:conf-I1}
  I_1^\bullet = -(\submfdim - 1)(\submfdim - 6)\left( \onabla^\alpha \circ \trFi \circ \onabla_\alpha \right)\Upsilon .
 \end{equation}
 
 Second, set
 \begin{equation}
  \label{eqn:defn-I2}
  I_2 := \onabla^\alpha\bigl(  \mC_\alpha - \Di^{\beta\alpha^\prime}\tfss_{\alpha\beta\alpha^\prime} \bigr) + \frac{\submfdim - 4}{2(\mfdim - 4)}\mB_\alpha{}^\alpha - \frac{\submfdim - 4}{2}\lv\Di\rv^2 .
 \end{equation}
 Combining Equation~\eqref{eqn:submanifold-basic-operator-linearizations} and \cref{new-tangential-invariants} yields
 \begin{equation}
  \label{eqn:conf-I2}
  I_2^\bullet = \left( \onabla^\alpha \circ \bigl( \tfss_{\alpha\beta}^2 - W_{\alpha\gamma\beta}{}^\gamma \bigr) \circ \onabla^\beta \right)\Upsilon .
 \end{equation}
 
 Third, set
 \begin{equation}
  \label{eqn:defn-I3}
  I_3 := \left( \tfss_{\alpha\beta}^2 - W_{\alpha\gamma\beta}{}^\gamma \right)\mP^{\alpha\beta} - (\submfdim - 1)\trFi\mP_\alpha{}^\alpha + \frac{\submfdim - 2}{2(\mfdim - 4)}\mB_\alpha{}^\alpha - \frac{\submfdim - 2}{2}\lv\Di\rv^2 .
 \end{equation}
 \Cref{new-tangential-invariants} implies that
 \begin{multline*}
  I_3^\bullet = -\left(\tfss_{\alpha\beta}^2 - W_{\alpha\gamma\beta}{}^\gamma \right)\onabla^\alpha\onabla^\beta\Upsilon + (\submfdim - 1)\trFi\oDelta\Upsilon \\
   - (\submfdim - 2)\mC_\alpha\Upsilon^\alpha + (\submfdim - 2)\Upsilon^\beta\Di^{\alpha\alpha^\prime}\tfss_{\alpha\beta\alpha^\prime} .
 \end{multline*}
 Combining this with \cref{div-F} yields
 \begin{equation}
  \label{eqn:conf-I3}
  I_3^\bullet = -\left( \onabla^\alpha \circ \bigl( \tfss_{\alpha\beta}^2 - W_{\alpha\gamma\beta}{}^\gamma - (\submfdim - 1)\trFi g_{\alpha\beta} \bigr) \circ \onabla^\beta \right)\Upsilon .
 \end{equation}

 Finally, observe that
 \begin{equation}
  \label{eqn:Wm-via-Is}
  \Wm = I_1 + (\submfdim - 6)(I_2 + I_3) .
 \end{equation}
 We conclude from Equations~\eqref{eqn:conf-I1}, \eqref{eqn:conf-I2}, and~\eqref{eqn:conf-I3} that $\Wm$ is conformally invariant of weight $-4$.
\end{proof}

Unlike $\Wm$, our fourth scalar conformal submanifold invariant is trivial for codimension one submanifolds.

\begin{proposition}
 \label{Wn}
 Let $i \colon Y^{\submfdim} \to (X^{\mfdim},g)$ be an immersion with $1 \leq \submfdim < \mfdim$ and $\mfdim \geq 3$, with $\mfdim \not= 4$.
 Then
 \begin{multline}
  \label{Jformula}
   \Wn := -\oDelta W_{\alpha\beta}{}^{\alpha\beta} + 2W_{\alpha\beta}{}^{\alpha\beta}\mP_\gamma{}^\gamma - 2(\submfdim - 6)\biggl[ \onabla^\alpha \mC_\alpha - \mP^{\alpha\beta}W_{\alpha\gamma\beta}{}^{\gamma} \\
    + \Di^{\alpha\alpha^\prime}W_{\alpha\beta\alpha^\prime}{}^{\beta} + \tfss^{\alpha\beta\alpha^\prime}\mC_{\alpha\alpha^\prime\beta} + \frac{\submfdim - 3}{\mfdim - 4}\mB_{\alpha}{}^{\alpha} \biggr]
 \end{multline}
 is conformally invariant of weight $-4$.
\end{proposition}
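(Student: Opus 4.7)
The plan is to adapt the strategy from the proof of \cref{Wm}. The key observation is that the trace $W_{\alpha\beta}{}^{\alpha\beta}$ is a scalar conformal submanifold invariant of weight $-2$, playing the same structural role as $\trFi$ in the proof of \cref{Wm}. All other ingredients of $\Wn$ are built from the tangential tensors $\mP$, $\mC$, $\Di$, and $\mB$, whose conformal linearizations depend only on tangential derivatives of $\Upsilon$ by \cref{new-tangential-invariants}.

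First I would split $\Wn = J_1 + (\submfdim - 6)J_2$, where
\[
 J_1 := -\oDelta W_{\alpha\beta}{}^{\alpha\beta} + (\submfdim - 4)\, W_{\alpha\beta}{}^{\alpha\beta}\,\mP_\gamma{}^\gamma
\]
collects the Yamabe-type piece and $J_2$ collects the remaining algebraic and divergence terms. Applying the transformation of $\oDelta$ on densities of weight $-2$ from Equations~\eqref{eqn:submanifold-basic-operator-linearizations} together with $(\mP_\gamma{}^\gamma)^\bullet = -\oDelta\Upsilon$ from \cref{new-tangential-invariants}, a direct computation parallel to Equation~\eqref{eqn:conf-I1} yields
\[
 J_1^\bullet = -(\submfdim - 6)\bigl(\onabla^\alpha \circ W_{\beta\gamma}{}^{\beta\gamma} \circ \onabla_\alpha\bigr)\Upsilon .
\]
It remains to show that $(\submfdim - 6)J_2^\bullet$ equals the opposite of this expression, so that $\Wn^\bullet = 0$.

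Next I would linearize each summand of $J_2$ using \cref{new-tangential-invariants}. The formula $\mP_{\alpha\beta}^\bullet = -\onabla_\alpha\onabla_\beta\Upsilon$ produces second-order contributions $\onabla^\alpha\onabla^\beta\Upsilon$ with coefficients given by $W_{\alpha\gamma\beta}{}^\gamma$ and by $W_{\beta\gamma}{}^{\beta\gamma}g_{\alpha\beta}$; the variations of $\mC_a$, $\Di_{\alpha\alpha^\prime}$, $\mB_\alpha{}^\alpha$ and the derivative contributions from the conformal invariants $\tfss_{\alpha\beta\alpha^\prime}$ and $W_{abcd}$ produce first-order contributions of the form $\Upsilon^\alpha$ contracted against a projection of $W$. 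To reassemble these into a single divergence I would invoke \cref{div-F}, especially Equation~\eqref{eqn:div-W}, which expresses $\onabla^\beta W_{\alpha\gamma\beta}{}^\gamma$ in terms of $\onabla_\alpha W_{\beta\gamma}{}^{\beta\gamma}$ and $\mC_\alpha$, together with the contracted Weyl--Bianchi identity~\eqref{eqn:weyl-bianchi-divW}. This mirrors the use of \cref{div-F} in Equations~\eqref{eqn:conf-I2} and~\eqref{eqn:conf-I3}.

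The principal obstacle is the bookkeeping of the various second- and first-order tangential derivatives of $\Upsilon$ that appear with a variety of $W$-type coefficients, and showing that after the reorganization they collapse to a scalar multiple of $W_{\beta\gamma}{}^{\beta\gamma}g^{\alpha\beta}$. As in the proof of \cref{Wm}, it is likely cleanest to further decompose $J_2 = J_2^a + J_2^b$ in direct analogy with $I_2$ and $I_3$: $J_2^a$ collects the divergence $-2\onabla^\alpha\mC_\alpha$, the $\Di$--$W$ coupling, and the $\mB$ trace, yielding a variation of the divergence form analogous to Equation~\eqref{eqn:conf-I2}; $J_2^b$ collects the algebraic terms in $\mP^{\alpha\beta}W_{\alpha\gamma\beta}{}^\gamma$, $\tfss^{\alpha\beta\alpha^\prime}\mC_{\alpha\alpha^\prime\beta}$, and $W_{\alpha\beta}{}^{\alpha\beta}\mP_\gamma{}^\gamma$, yielding a variation analogous to Equation~\eqref{eqn:conf-I3}. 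The cancellation of the second-order derivative contributions between $J_1^\bullet$, $J_2^{a,\bullet}$, and $J_2^{b,\bullet}$ is then transparent and leaves only the asserted identity $(\submfdim - 6)J_2^\bullet = -J_1^\bullet$.
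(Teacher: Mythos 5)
Your proposal takes essentially the same approach as the paper: both identify $J_1 := -\oDelta W_{\alpha\beta}{}^{\alpha\beta} + (\submfdim-4)W_{\alpha\beta}{}^{\alpha\beta}\mP_\gamma{}^\gamma$ with variation $-(\submfdim-6)\bigl(\onabla^\alpha\circ W_{\beta\gamma}{}^{\beta\gamma}\circ\onabla_\alpha\bigr)\Upsilon$, and both split the remainder into two pieces whose conformal linearizations are computed via \cref{new-tangential-invariants} and \cref{div-F} and collapse to divergence form. The paper writes the remainder precisely as $-2(\submfdim-6)(J_2 - J_3)$ with $J_2 = \onabla^\alpha\mC_\alpha + \tfrac{\submfdim-4}{2(\mfdim-4)}\mB_\alpha{}^\alpha$ and $J_3 = (W_{\alpha\gamma\beta}{}^\gamma - \tfrac12 W_{\gamma\delta}{}^{\gamma\delta}g_{\alpha\beta})\mP^{\alpha\beta} - \tfss^{\alpha\beta\alpha^\prime}\mC_{\alpha\alpha^\prime\beta} - \Di^{\alpha\alpha^\prime}W_{\alpha\beta\alpha^\prime}{}^\beta - \tfrac{\submfdim-2}{2(\mfdim-4)}\mB_\alpha{}^\alpha$, which differs from your suggested $J_2^a$/$J_2^b$ grouping only in bookkeeping (the $\Di$--$W$ term and part of the $\mB$ trace land in $J_3$), so the organizing idea and tools are identical.
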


\begin{remark}
 \label{Wn-zero-on-hypersurfaces}
 The definition~\eqref{eqn:second-fundamental-form-dual} of the second fundamental form and the definitions~\eqref{eqn:mP}--\eqref{eqn:mB} of $\mP_{\alpha\beta}$, $\mC_\alpha$, and $\mB_{\alpha\beta}$ imply that $\Wn=0$ on hypersurfaces.
 The key observation is that, on hypersurfaces,
 \begin{equation*}
  \onabla^\alpha \mC_\alpha = -\nabla^\alpha C_{\alpha\beta}{}^\beta - \tfss^{\alpha\beta\alpha^\prime}C_{\alpha\alpha^\prime\beta} = -B_\alpha{}^\alpha + W_{\alpha\gamma\beta}{}^\gamma \Sch^{\alpha\beta} - \tfss^{\alpha\beta\alpha^\prime}C_{\alpha\alpha^\prime\beta} .
 \end{equation*}
 This remark includes the case $\submfdim=3$ and $\mfdim=4$ upon replacing $(k-3)/(n-4)$ by~$1$ in the last term of Equation~\eqref{Jformula}.
\end{remark}

\begin{remark}
 \label{2I-plus-J}
 Note that
 \begin{multline*}
  2\Wm + \Wn = -\oDelta\lv\tfss\rv^2 + 2\lv\tfss\rv^2\mP_\alpha{}^\alpha + 2(\submfdim-6) \Bigl[ \tfss^2_{\alpha\beta}\mP^{\alpha\beta} - \onabla^\alpha(\Di^{\beta\alpha^\prime}\tfss_{\alpha\beta\alpha^\prime}) \\
   - (\submfdim-3)\lv\Di\rv^2 - \Di^{\alpha\alpha^\prime}W_{\alpha\beta\alpha^\prime}{}^\beta - \tfss^{\alpha\beta\alpha^\prime}\mC_{\alpha\alpha^\prime\beta} \Bigr]
 \end{multline*}
 is defined when $\mfdim=4$.
 It is straightforward to adapt the proofs of \cref{Wm,Wn} to conclude that it is conformally invariant for $1 \leq \submfdim < \mfdim$ and $\mfdim \geq 3$.
\end{remark}

\begin{remark}
 If $\submfdim=1$, then $\Wn = -\frac{20}{\mfdim-4}\mB_\alpha{}^\alpha$ is conformally invariant by \cref{rk:conformal-geodesic}.
\end{remark}

\begin{remark}
 Analogous to \cref{rk:tyrrell}, if $i \colon Y^{\submfdim} \to (X^{\mfdim},g)$ is a minimal immersion into an Einstein manifold with $\Ric(g) = \lambda(\mfdim - 1) g$, then
 \begin{equation*}
  \Wn = -\oDelta W_{\alpha\beta}{}^{\alpha\beta} + 2\lambda(\submfdim - 3) W_{\alpha\beta}{}^{\alpha\beta} .
 \end{equation*}
\end{remark}

\begin{proof}
 First, set
 \begin{equation*}
  J_1 := -\oDelta W_{\alpha\beta}{}^{\alpha\beta} + (\submfdim - 4)W_{\alpha\beta}{}^{\alpha\beta}\mP_\gamma{}^\gamma .
 \end{equation*}
 Since $W_{\alpha\beta}{}^{\alpha\beta}$ is conformally invariant of weight $-2$, we deduce from Equation~\eqref{eqn:submanifold-basic-operator-linearizations} and \cref{new-tangential-invariants} that
 \begin{equation}
  \label{eqn:Wn-W}
  J_1^\bullet = -(\submfdim - 6)\left( \onabla^\alpha \circ W_{\beta\gamma}{}^{\beta\gamma} \circ \onabla_\alpha \right)\Upsilon .
 \end{equation}
 
 Second, set
 \begin{align*}
  J_2 & := \onabla^\alpha \mC_\alpha + \frac{\submfdim - 4}{2(\mfdim - 4)}\mB_\alpha{}^\alpha .
 \end{align*}
 Equation~\eqref{eqn:submanifold-basic-operator-linearizations} and \cref{new-tangential-invariants} imply that
 \begin{equation}
  \label{eqn:Wn-2}
  J_2^\bullet = -\left( \onabla^\alpha \circ W_{\alpha\gamma\beta}{}^\gamma \circ \onabla^\beta \right)\Upsilon .
 \end{equation}
 
 Third, set
 \begin{equation*}
  J_3 := \left( W_{\alpha\gamma\beta}{}^\gamma - \frac{1}{2}W_{\gamma\delta}{}^{\gamma\delta}g_{\alpha\beta} \right)\mP^{\alpha\beta} - \tfss^{\alpha\beta\alpha^\prime}\mC_{\alpha\alpha^\prime\beta} - \Di^{\alpha\alpha^\prime}W_{\alpha\beta\alpha^\prime}{}^\beta - \frac{\submfdim - 2}{2(\mfdim - 4)}\mB_\alpha{}^\alpha .
 \end{equation*}
 Combining \cref{new-tangential-invariants,div-F} yields
 \begin{equation}
  \label{eqn:Wn-3}
  J_3^\bullet = -\left( \onabla^\alpha \circ \Bigl( W_{\alpha\gamma\beta}{}^\gamma - \frac{1}{2}W_{\gamma\delta}{}^{\gamma\delta}g_{\alpha\beta} \Bigr) \circ \onabla^\beta \right)\Upsilon .
 \end{equation}

 The conclusion now follows from Equations~\eqref{eqn:Wn-W}, \eqref{eqn:Wn-2}, and~\eqref{eqn:Wn-3} and the observation
 \begin{equation*}
  \Wn = J_1 - 2(\submfdim - 6)(J_2 - J_3) . \qedhere
 \end{equation*}
\end{proof}

\subsection{The fourth-order extrinsic $Q$-curvature}
\label{subsec:invariants/q}
Our main goal in this subsection is to derive a decomposition of the form~\eqref{eqn:submanifold-decomposition} for the critical extrinsic $Q$-curvature for submanifolds of dimension $\submfdim = 4$.
This is easily accomplished using a representation of $Q$ derived in~\cite{CaseGrahamKuo2023} together with our knowledge of the invariant
$\mathcal{I}$ identified in \cref{Wm}.
Our analysis applies more generally in dimensions $3\leq \submfdim < \mfdim$ with $\mfdim \neq 4$ to 
the extrinsic scalar $Q_4$ of order $4$ derived in~\cite{CaseGrahamKuo2023} and given by Equation~\eqref{Qdecompose} below, for which $Q_4=Q$ in the critical case
$\submfdim =4$. 

Recall the formula
\begin{equation}\label{Qbar}
  \oQ_4 =-\oDelta\,\otrSch -2\lv\oSch\rv^2
  +\frac{\submfdim}{2}\otrSch^2 
\end{equation}
for the intrinsic $Q$-curvature of order $4$ in general dimension $\submfdim$.
Define
 \begin{multline}
   \label{Q4Gformula}
   \QtrFi := (\submfdim - 2)\oDelta\trFi
   - (\submfdim - 6)\onabla^\alpha\bigl(  \mC_\alpha -
   \Di^{\beta\alpha^\prime}\tfss_{\alpha\beta\alpha^\prime} \bigr)
   - 2(\submfdim -   4)\trFi\mP_\alpha{}^\alpha\\
   - (\submfdim - 4)^2\Fi_{\alpha\beta}\mP^{\alpha\beta} 
    - \frac{(\submfdim - 4)(\submfdim - 5)}{\mfdim - 4}\mB_\alpha{}^\alpha +
   (\submfdim - 4)(\submfdim - 5)\lv\Di\rv^2 . 
 \end{multline}
Observe that $\QtrFi$ is a divergence in the critical case $\submfdim =4$. 

\begin{proposition}
 \label{general-Q-formula}
 Let $i \colon Y^{\submfdim} \to (X^{\mfdim},g)$ be an immersion with $3 \leq \submfdim < \mfdim$ and $\mfdim \not= 4$.
 Then
      \begin{equation}
  \label{eqn:general-Q4}
  Q_4 = \oQ_4 + \QtrFi + \Wm + 2\lv \Fi\rv^2 - \frac{\submfdim}{2}\trFi^2 . 
 \end{equation}
\end{proposition}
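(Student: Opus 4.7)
The plan is to start from the explicit formula for $Q_4$ derived in~\cite{CaseGrahamKuo2023} (referenced as Equation~\eqref{Qdecompose} in the excerpt) and rewrite it by using the Gauss--Codazzi--Ricci equations~\eqref{eqn:gauss-codazzi} to isolate the intrinsic $\oQ_4$, the explicitly divergence-type piece $\QtrFi$, the conformal invariant $\Wm$, and the algebraic Fialkow remainder. The decomposition~\eqref{eqn:general-Q4} is an identity of natural submanifold scalars, so the proof is purely algebraic and local once the ingredient formulas are in hand.

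First I would translate all intrinsic quantities appearing in the CGK formula into background plus Fialkow data. The trace and trace-free parts of~\eqref{eqn:gcP} give $\oSch_{\alpha\beta} = \mP_{\alpha\beta} - \Fi_{\alpha\beta}$ and $\otrSch = \mP_\alpha{}^\alpha - \trFi$, so from~\eqref{Qbar}
\begin{align*}
\oQ_4 &= -\oDelta\mP_\alpha{}^\alpha + \oDelta\trFi - 2\lv\mP\rv^2 + 4\lp\mP,\Fi\rp - 2\lv\Fi\rv^2 + \tfrac{\submfdim}{2}\bigl(\mP_\alpha{}^\alpha - \trFi\bigr)^2.
\end{align*}
Separately, contracting~\eqref{eqn:gcdL} with $\tfss$ and~\eqref{eqn:gcD} with itself provides the conversion of the divergence terms in $\QtrFi$ into tangential derivatives of $\tfss$ together with trace contractions of $W$. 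The point is that every term appearing in the CGK formula for $Q_4$ either involves $\oDelta\mP$, $\lv\mP\rv^2$, $(\mP_\alpha{}^\alpha)^2$, $\oDelta\trFi$, products of $\mP$ with $\Fi$ or $\trFi$, divergences of $\Di$ or $\mC$, or terms quadratic in $\Di$, $\mC$, $\tfss$, $W$, and $\mB$.

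Next I would collect terms by type. Subtracting $\oQ_4$ from $Q_4$ cancels all summands involving $\oDelta\mP_\alpha{}^\alpha$, $\lv\mP\rv^2$, and $(\mP_\alpha{}^\alpha)^2$, leaving a combination of (i) $\oDelta\trFi$ and $\Fi$-$\mP$ cross terms, (ii) the divergences $\onabla^\alpha\mC_\alpha$ and $\onabla^\alpha(\Di^{\beta\alpha^\prime}\tfss_{\alpha\beta\alpha^\prime})$, (iii) the scalars $\lv\Di\rv^2$, $\mB_\alpha{}^\alpha$, $(\tfss^2_{\alpha\beta} - W_{\alpha\gamma\beta}{}^\gamma)\mP^{\alpha\beta}$, and $\trFi\mP_\alpha{}^\alpha$, together with (iv) the algebraic Fialkow scalars $\lv\Fi\rv^2$ and $\trFi^2$. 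Comparing to~\eqref{Q4Gformula} shows that the group (ii) plus the $\lv\Di\rv^2$ and $\mB$ parts of (iii) and the $\Fi_{\alpha\beta}\mP^{\alpha\beta}$ and $\trFi\mP_\alpha{}^\alpha$ parts of (iii) repackage, with the correct coefficients, into $\QtrFi$ plus the leftover combination
\begin{equation*}
(\submfdim-6)\Bigl[\,\bigl(\tfss^2_{\alpha\beta}-W_{\alpha\gamma\beta}{}^\gamma\bigr)\mP^{\alpha\beta} + \tfrac{\submfdim-3}{\mfdim-4}\mB_\alpha{}^\alpha -(\submfdim-3)\lv\Di\rv^2 + \onabla^\alpha(\mC_\alpha - \Di^{\beta\alpha^\prime}\tfss_{\alpha\beta\alpha^\prime})\Bigr],
\end{equation*}
which together with the $(\submfdim-1)(-\oDelta\trFi + 2\trFi\mP_\alpha{}^\alpha)$ piece is exactly $\Wm$ by~\eqref{Iformula}. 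After this regrouping, the residual scalar must be $2\lv\Fi\rv^2 - \tfrac{\submfdim}{2}\trFi^2$, which we verify by matching coefficients of the two purely Fialkow monomials.

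The main obstacle is bookkeeping: one must keep track of roughly a dozen scalar types and confirm that each coefficient is correct, in particular that the $(\mfdim-4)^{-1}$ poles in $\mB_\alpha{}^\alpha$ cancel consistently between $\Wm$ and $\QtrFi$ and that the $\oDelta\trFi$ coefficients from $\oQ_4$, $\Wm$, and $\QtrFi$ sum to the coefficient of $\oDelta\trFi$ supplied by the CGK formula. A clean way to organize this is to verify the coefficient of each independent monomial using a fixed spanning set (the same one used in~\cref{sec:alexakis}), so that the identity is proved by comparing finitely many rational numbers. Once the identity is established for $\mfdim \neq 4$, specializing to $\submfdim = 4$ gives the decomposition for the critical extrinsic $Q$-curvature and thus completes the input needed for \cref{main-theorem} in the four-dimensional case.
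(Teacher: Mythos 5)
Your proposal is correct and takes essentially the same approach as the paper: both start from the CGK decomposition $Q_4 = \oQ_4 + \widetilde{Q}_4$, use the Gauss-equation substitutions $\oSch_{\alpha\beta} = \mP_{\alpha\beta} - \Fi_{\alpha\beta}$, $\otrSch = \mP_\alpha{}^\alpha - \trFi$, $\tfss^2_{\alpha\beta} - W_{\alpha\gamma\beta}{}^\gamma = (\submfdim-2)\Fi_{\alpha\beta} + \trFi g_{\alpha\beta}$, and regroup into $\QtrFi + \Wm$ plus a purely Fialkow remainder. One small inefficiency worth noting: the step where you contract \eqref{eqn:gcdL} and \eqref{eqn:gcD} to convert the divergence terms of $\QtrFi$ is unnecessary, because those divergence terms cancel exactly against the matching ones in $\Wm$ when the two are added; likewise expanding $\oQ_4$ in terms of $\mP,\Fi$ before subtracting is redundant since $Q_4 - \oQ_4$ is already $\widetilde{Q}_4$ verbatim. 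The paper streamlines by first computing $\Wm + \QtrFi$ directly (Equations~\eqref{Iformula} and~\eqref{Q4Gformula}), observing the cancellation, and then comparing to $\widetilde{Q}_4$; your spanning-set bookkeeping would arrive at the same identity.
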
 

\begin{remark}
 Since $Q_4$ depends only on a background conformal class $[g]$ on
$X$ and a representative $h \in i^\ast[g]$ on $Y$, we deduce from  Equation~\eqref{eqn:general-Q4} that the same is true of $\QtrFi$.
\end{remark}

\begin{remark}
 The conformal transformation law of $\QtrFi$ can be written in terms of the operator
 \begin{equation*}
  \PtrFi := P_4 - \oP_4 - \frac{\submfdim-4}{2}\left( \Wm + 2\lv\Fi\rv^2 - \frac{\submfdim}{2}\trFi^2 \right) ,
 \end{equation*}
 where $P_4$ is the extrinsic Paneitz operator defined in~\cite{CaseGrahamKuo2023} and $\oP_4$ is the intrinsic Paneitz operator.
 Indeed, Equation~\eqref{eqn:general-Q4} and the formula~\cite{CaseGrahamKuo2023}*{Equation~(5.12)} for $P_4 - \oP_4$ imply that $\PtrFi$ is equivalently written
 \begin{equation*}
  \PtrFi = \onabla^\alpha \circ (4\Fi_{\alpha\beta} - (\submfdim-2)\trFi g_{\alpha\beta}) \circ \onabla^\beta + 
  \frac{\submfdim-4}{2}\QtrFi ,
 \end{equation*}
 and in particular is second-order.
 Since $\Wm + 2\lv\Fi\rv^2 - \frac{\submfdim}{2}\trFi^2$ is conformally invariant of weight $-4$, it follows from the conformal transformation laws of $P_4$ and $\oP_4$ that if $\hh = e^{2\Upsilon}h$, then
 \begin{align*}
  \PtrFi^{\hh} &
  = e^{(-\submfdim/2-2)\Upsilon} \circ \PtrFi^h \circ 
e^{(\submfdim/2-2)\Upsilon}, & \text{for all $\submfdim \geq 3$} , \\
e^{4\Upsilon}\QtrFi^{\hh} &
= \QtrFi^h  + \PtrFi^h\Upsilon , & \text{if $\submfdim = 4$} . 
 \end{align*} 
\end{remark}

\begin{proof}
 Case, Graham, and Kuo showed~\cite{CaseGrahamKuo2023}*{Theorem~5.3} that 
 \begin{equation}\label{Qdecompose}
  Q_4 = \overline{Q}_4 + \widetilde{Q}_4,
 \end{equation}
 where
\begin{equation*}
  \widetilde{Q}_4 := - \oDelta\trFi - 2\lv\Fi\rv^2 + \frac{\submfdim}{2}\trFi^2 -
  4\Fi_{\alpha\beta}\oSch^{\alpha\beta} + \submfdim\trFi\otrSch - \frac{2}{\mfdim - 4}\mB_\alpha{}^\alpha +
  2\lv\Di\rv^2 . 
\end{equation*}
 Adding Equations~\eqref{Iformula} and~\eqref{Q4Gformula} gives
  \begin{multline*}
    \Wm + \QtrFi = -\oDelta\trFi +6\trFi\mP_\alpha{}^\alpha
+(\submfdim - 6) \left( \tfss_{\alpha\beta}^2
-W_{\alpha\gamma\beta}{}^\gamma \right) \mP^{\alpha\beta}\\
- (\submfdim -  4)^2\Fi_{\alpha\beta}\mP^{\alpha\beta}-\frac{2}{\mfdim -
  4}\mB_\alpha{}^\alpha
+2\lv\Di\rv^2 . 
  \end{multline*}
  We deduce that
  \begin{multline*}
    \widetilde{Q}_4 -\big(\Wm + \QtrFi \big) =
    -2|\Fi|^2 +\frac{\submfdim}{2}\trFi^2 -4\Fi_{\alpha\beta}\oSch^{\alpha\beta} 
    + \submfdim \trFi\otrSch - 6\trFi\mP_\alpha{}^\alpha\\
    -(\submfdim - 6) \left( \tfss_{\alpha\beta}^2
-W_{\alpha\gamma\beta}{}^\gamma \right) \mP^{\alpha\beta}
+ (\submfdim -  4)^2\Fi_{\alpha\beta}\mP^{\alpha\beta}.
  \end{multline*}
This reduces to $2\lv \Fi\rv^2 - \frac{\submfdim}{2}\trFi^2$ upon using Equations~\eqref{eqn:gcP}, \eqref{eqn:defn-Fialkow}, and~\eqref{eqn:mP} to substitute
\begin{equation*}
  \oSch^{\alpha\beta}= \mP^{\alpha\beta}-\Fi^{\alpha\beta},\qquad
  \otrSch = \mP_\alpha{}^\alpha - \trFi,\qquad
  \tfss_{\alpha\beta}^2-W_{\alpha\gamma\beta}{}^\gamma
  =(\submfdim -2)\Fi_{\alpha\beta}+\trFi g_{\alpha\beta} . \qedhere
\end{equation*}  
\end{proof}

The desired decomposition of $Q$ when $\submfdim = 4$ is an immediate
consequence:

\begin{proposition}
 \label{Q decomposition}
 Let $i \colon Y^4 \to (X^{\mfdim},g)$ be an immersion with $\mfdim >4$.
 Then 
 \begin{equation}
  \label{eqn:Q4-formula}
  Q = 2\oPf +\mathcal{W}_Q
   - \onabla^\alpha \left( \onabla_\alpha\otrSch - 2\onabla_\alpha\trFi -
   2\mC_\alpha + 2\Di^{\beta\alpha^\prime}\tfss_{\alpha\beta\alpha^\prime}
   \right)
 \end{equation}
with $\mathcal{W}_Q=- \frac{1}{4}\lv \oW \rv^2 + \Wm + 2\lv \Fi\rv^2 - 2\trFi^2$ . 
 \end{proposition}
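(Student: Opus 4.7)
The plan is to derive the stated decomposition by specializing the general-dimension formula of Proposition~\ref{general-Q-formula} to the critical case $\submfdim=4$ and then rewriting the intrinsic $\oQ_4$ on the right-hand side in terms of the Pfaffian, the intrinsic Weyl norm, and a Laplacian of $\otrSch$. Since Proposition~\ref{general-Q-formula} is stated for $3\leq\submfdim<\mfdim$ with $\mfdim\neq4$, the hypothesis $\mfdim>4$ allows me to set $\submfdim=4$ in that identity directly.

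First, I would simplify $\QtrFi$ from Equation~\eqref{Q4Gformula} at $\submfdim=4$. Every term carrying a factor of $(\submfdim-4)$ or $(\submfdim-4)(\submfdim-5)$ drops out, and only
\[
\QtrFi\big|_{\submfdim=4} = 2\,\oDelta\trFi + 2\,\onabla^\alpha\bigl(\mC_\alpha - \Di^{\beta\alpha'}\tfss_{\alpha\beta\alpha'}\bigr)
\]
survives. In particular $\QtrFi$ becomes a pure tangential divergence, accounting for three of the four summands of the divergence displayed in Equation~\eqref{eqn:Q4-formula}.

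Next I would establish the intrinsic four-dimensional identity
\[
\oQ_4 \;=\; 2\,\oPf \;-\; \tfrac{1}{4}|\oW|^2 \;-\; \oDelta\otrSch.
\]
Starting from Equation~\eqref{Qbar} at $\submfdim=4$, namely $\oQ_4 = -\oDelta\otrSch - 2|\oSch|^2 + 2\otrSch^2$, one uses the orthogonal decomposition of $\oRm$ into its Weyl and Schouten parts to express the Chern--Gauss--Bonnet integrand. Under the normalization $\int_Y \oPf\,\darea = 4\pi^2\chi(Y)$ used throughout the paper, the standard dimension-four identity expands $2\oPf$ as the combination $\tfrac{1}{4}|\oW|^2 - 2|\oSch|^2 + 2\otrSch^2$. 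Subtracting this from $\oQ_4$ leaves exactly the stated Laplacian term. This is the only genuinely new input and the main bookkeeping step in the proof, though since it is a purely intrinsic calculation in a fixed dimension it presents no essential obstacle.

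Finally, I would substitute everything into Proposition~\ref{general-Q-formula} at $\submfdim=4$, which reads
\[
Q \;=\; \oQ_4 \;+\; \QtrFi \;+\; \Wm \;+\; 2|\Fi|^2 \;-\; 2\trFi^2.
\]
Replacing $\oQ_4$ by $2\oPf - \tfrac14|\oW|^2 - \oDelta\otrSch$ and $\QtrFi$ by its $\submfdim=4$ form, the conformally invariant pieces assemble into
\[
\mW_Q \;=\; -\tfrac{1}{4}|\oW|^2 + \Wm + 2|\Fi|^2 - 2\trFi^2,
\]
and the remaining terms collect as $-\onabla^\alpha\bigl(\onabla_\alpha\otrSch - 2\onabla_\alpha\trFi - 2\mC_\alpha + 2\Di^{\beta\alpha'}\tfss_{\alpha\beta\alpha'}\bigr)$, matching Equation~\eqref{eqn:Q4-formula}. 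The coefficient $2$ on $\oPf$ agrees with Lemma~\ref{q-formula-modulo-alexakis}, providing a consistency check but not requiring separate invocation.
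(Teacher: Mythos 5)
Your proof is correct and follows essentially the same route as the paper's own (very terse) proof: specialize \cref{general-Q-formula} to $\submfdim=4$, observe that $\QtrFi$ collapses to a pure tangential divergence, and replace $\oQ_4$ using $2\oPf = \tfrac14\lv\oW\rv^2 - 2\lv\oSch\rv^2 + 2\otrSch^2$ in dimension four. The only cosmetic difference is that you package the intrinsic step as the standalone identity $\oQ_4 = 2\oPf - \tfrac14\lv\oW\rv^2 - \oDelta\otrSch$ before substituting, whereas the paper substitutes directly; the underlying computation is identical.
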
 
\begin{proof}
Recall that $2\oPf = \frac{1}{4}\lv\oW\rv^2 - 2\lv\oSch\rv^2 + 2\otrSch^2$
when $\submfdim=4$.
Substituting Equations~\eqref{Qbar} and~\eqref{Q4Gformula} into
Equation~\eqref{eqn:general-Q4} gives Equation \eqref{eqn:Q4-formula}.
\end{proof}

Our formula for the renormalized area of a minimal four-manifold in a Poincar\'e--Einstein manifold immediately follows.

\begin{proof}[Proof of \cref{renormalized-area-special}]
 Apply \cref{main-theorem,Q decomposition}.
\end{proof}

\subsection{Comparison to other conformal submanifold invariants}
\label{subsec:invariants/comparison}

Our derivation of the invariants of Subsection~\ref{subsec:invariants/construct} 
allows us to generalize to general dimension and codimension scalar
conformal submanifold invariants discovered previously in special cases.
In this subsection we identify these generalizations and defer to the
appendix the proofs, which are long but straightforward computations
using the formulas developed in Subsection~\ref{subsec:invariants/construct}.

Blitz, Gover, and Waldron
derived~\cite{BlitzGoverWaldron2021}*{Theorem~1.2} a conformal invariant
$W\!m$ for four-dimensional hypersurfaces which involves second derivatives of $\tfss_{\alpha\beta\alpha^\prime}$.
We extend their invariant to higher dimensions and higher codimensions.

\begin{proposition}
 \label{blitz-gover-waldron}
 Let $i \colon Y^{\submfdim} \to (X^{\mfdim},g)$ be an immersion with $3 \leq \submfdim < \mfdim$ and $\submfdim \not= 6$.
 Define the 
weight $-4$ scalar conformal submanifold invariant
 \begin{align*}
  \MoveEqLeft[0] W\!m := \frac{\submfdim(\submfdim-1)}{4(\submfdim-6)}(2\Wm + \Wn) + \frac{\submfdim-3}{2}\mK_1 - \frac{\submfdim^2-2\submfdim+3}{2(\submfdim-1)}\mK_2 - \frac{\submfdim-3}{2} W_{\alpha\beta\alpha^\prime}{}^\beta W^{\alpha\gamma\alpha^\prime}{}_\gamma \\
   & \quad - \frac{\submfdim-3}{4}W_{\alpha\beta\alpha^\prime\gamma} W^{\alpha\beta\alpha^\prime\gamma} - \frac{\submfdim-3}{2}W_{\alpha\beta\gamma\delta}\tfss^{\alpha\gamma\alpha^\prime}\tfss^{\beta\delta}{}_{\alpha^\prime} - \frac{\submfdim-3}{2}W_{\alpha\beta\alpha^\prime\beta^\prime}\tfss^{\gamma\alpha\alpha^\prime}\tfss_\gamma{}^{\beta\beta^\prime} \\
   & \quad - \frac{\submfdim^2 - 3\submfdim + 6}{2}\tfss^2_{\alpha\beta}\Fi^{\alpha\beta} - \frac{\submfdim(\submfdim-1)}{2(\submfdim-6)}\trFi\lv\tfss\rv^2 - \frac{\submfdim-3}{2}\tfss^{\alpha\beta\alpha^\prime}\tfss_{\alpha\beta\beta^\prime}\tfss^{\gamma\delta\beta^\prime}\tfss_{\gamma\delta\alpha^\prime} \\
   & \quad - \frac{\submfdim-3}{2}\lv\tfss^2\rv^2 + (\submfdim-3)\tfss^{\alpha\beta\alpha^\prime}\tfss^{\gamma\delta}{}_{\alpha^\prime}\tfss_{\alpha\gamma\beta^\prime}\tfss_{\beta\delta}{}^{\beta^\prime}.
 \end{align*}
 If $\submfdim = 4$ and $\mfdim = 5$, then
 \begin{align*}
  W\!m & = \frac{1}{2}\tfss^{\alpha\beta\alpha^\prime}\oDelta\tfss_{\alpha\beta\alpha^\prime} + \frac{4}{3}\onabla^\alpha(\tfss_\alpha{}^{\beta\alpha^\prime}\onabla^\gamma\tfss_{\gamma\beta\alpha^\prime}) + \frac{3}{2}\oDelta\lv\tfss\rv^2 - \frac{7}{2}\otrSch\lv\tfss\rv^2 \\
   & \quad - 6\tfss^{\alpha\beta\alpha^\prime} C_{\alpha\alpha^\prime\beta} + 4\tfss^2_{\alpha\beta}\oSch^{\alpha\beta} - 6H^{\alpha^\prime}\tr\tfss_{\alpha^\prime}^3 + 12H^{\alpha^\prime}\tfss^{\alpha\beta}{}_{\alpha^\prime}\Fi_{\alpha\beta} 
 \end{align*}
 equals the invariant of the same name
 derived in~\cite{BlitzGoverWaldron2021}*{Theorem~1.2}, where we recall the conventions~\eqref{eqn:conventions}.
\end{proposition}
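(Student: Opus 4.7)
The conformal invariance of $W\!m$ is immediate from its construction. Using \cref{2I-plus-J}, the combination $\frac{\submfdim(\submfdim-1)}{4(\submfdim-6)}(2\Wm+\Wn)$ is conformally invariant, regular at $\mfdim=4$, and well defined for $\submfdim\neq 6$; the scalars $\mK_1$ and $\mK_2$ are conformally invariant by \cref{conformally-invariant-divergence}; and each of the remaining summands is a polynomial contraction of the conformal submanifold invariants $W_{abcd}$, $\tfss_{\alpha\beta\alpha^\prime}$, $\Fi_{\alpha\beta}$, and $\trFi$. A direct index count, using the conformal weights $+2$, $+2$, $0$, and $-2$ respectively, confirms that every summand has weight $-4$. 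Hence $W\!m$ is a natural scalar conformal submanifold invariant of weight $-4$ throughout the stated range.

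To identify $W\!m$ with the Blitz--Gover--Waldron invariant in the case $\submfdim=4$, $\mfdim=5$, the plan is to specialize each constituent and collect terms. By \cref{Wn-zero-on-hypersurfaces}, $\Wn=0$, and by \cref{rk:mK}, $\mK_2=0$ together with
\begin{equation*}
 \mK_1 = \tfss^{\beta\gamma\alpha^\prime}\onabla^\alpha W_{\alpha\beta\alpha^\prime\gamma} + \tfrac{1}{2}W_{\alpha\beta\alpha^\prime\gamma}W^{\alpha\beta\alpha^\prime\gamma}.
\end{equation*}
The divergence $\onabla^\alpha W_{\alpha\beta\alpha^\prime\gamma}$ can be reorganized via the Codazzi equation~\eqref{eqn:gcdL} into $\oDelta\tfss_{\beta\gamma\alpha^\prime}$ plus terms involving $\onabla\Di$ and curvature commutators. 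Next, the invariant $\Wm$ specialized to $\submfdim=4$ via \eqref{Iformula} contributes the $-3\oDelta\trFi$ derivative term along with expansions of $\mB_\alpha{}^\alpha$, $\mC_\alpha$, and $\Di_{\alpha\alpha^\prime}$ obtained from the definitions~\eqref{eqn:defn-Di}, \eqref{eqn:mC-tr}, and~\eqref{eqn:mB}, which isolate the ambient Cotton, Bach, and Schouten contributions.

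With these expansions in hand, repeated application of the Gauss--Codazzi equations \eqref{eqn:gcW}--\eqref{eqn:gcnc} (with $\submfdim - 2 = 2$), the Weyl--Bianchi contraction~\eqref{eqn:weyl-bianchi-divW} in $\mfdim=5$, and a Ricci-commutator identity rewriting $\onabla^\alpha\onabla^\beta\tfss_{\beta\gamma\alpha^\prime}$ in terms of $\oDelta\tfss$ plus quartic corrections reorganize the expression into the six derivative groupings and the polynomial groupings of the Blitz--Gover--Waldron formula. The main obstacle is bookkeeping rather than concept: the substitution produces many quartic contractions of $\tfss$ and $W$, and many quadratic contractions of $\tfss$ against $\oSch$, $\Fi$, $C$, and $H$, which must be collected and compared term by term against the stated formula. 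As the excerpt indicates, this routine but lengthy computation is carried out in full in \cref{sec:extra}.
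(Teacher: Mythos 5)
There is a genuine gap: your argument for the identification with the Blitz--Gover--Waldron invariant is a plan, not a proof. You correctly observe that conformal invariance is built into the construction (a polynomial contraction of $\Wm$, $\Wn$, $\mK_1$, $\mK_2$, projections of $W$, $\tfss$, and $\Fi$, all of known weight), and you correctly note the simplifications $\Wn=0$ and $\mK_2=0$ in codimension one. But the actual content of the second half of the proposition is the verification that, after all substitutions, the coefficients match the explicit BGW expression, and you defer this entirely (``this routine but lengthy computation is carried out in full in \cref{sec:extra}''). For a blind attempt, that is circular --- you are pointing at the paper's appendix rather than supplying the calculation yourself. A proposal that acknowledges the main obstacle is ``bookkeeping'' and then skips the bookkeeping has not proved the identity.

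The paper organizes the computation differently and more robustly than your sketch. Rather than specializing to $\submfdim=4$, $\mfdim=5$ at the outset, it introduces three auxiliary scalars $\cmI_1$, $\cmI_2$, $\cmI_3$ (Equations~\eqref{eqn:defn-cmI}, and the displays preceding \eqref{eqn:cmI2-invariant} and \eqref{eqn:cmI3-invariant}), shows each reduces to a polynomial in conformal invariants via \cref{2I-plus-J}, \cref{rk:mK}, \cref{basic-invariants}, and Equation~\eqref{eqn:gcD}, and then proves
\[
W\!m = \frac{\submfdim(\submfdim-1)}{4(\submfdim-6)}\cmI_1 + \frac{\submfdim-3}{2}\cmI_2 + \submfdim\,\cmI_3
\]
\emph{in general dimension and codimension}, before specializing. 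The one ingredient your sketch gestures at but does not isolate is precisely \cref{basic-invariants}, the Simons-type identity for $\tfss^{\alpha\beta\alpha^\prime}\oDelta\tfss_{\alpha\beta\alpha^\prime}$; this is what converts the $\onabla^\alpha W_{\alpha\beta\alpha^\prime\gamma}$ term in $\mK_1$ into the second-derivative terms $\tfss^{\alpha\beta\alpha^\prime}\oDelta\tfss_{\alpha\beta\alpha^\prime}$ and $\onabla^\alpha(\tfss_\alpha{}^{\beta\alpha^\prime}\onabla^\gamma\tfss_{\gamma\beta\alpha^\prime})$ that appear in the BGW formula. Identifying and proving that lemma, and then carrying out the term collection against the explicit BGW coefficients, is the substance of the proof, and it is what is missing here.
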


\begin{remark}
 \label{rk:add-Wn-and-mK2}
 One can add any multiple of $\Wn$ and $\mK_2$ to $W\!m$ without losing the fact that $W\!m$ equals the invariant derived in~\cite{BlitzGoverWaldron2021} when evaluated at four-dimensional hypersurfaces.
 By \cref{2I-plus-J}, our choice of multiple of $\Wn$ removes the pole at $\mfdim=4$.
 Our choice of multiple of $\mK_2$ arises from our proof of \cref{blitz-gover-waldron} in the appendix.
\end{remark}

Juhl derived~\cite{Juhl2022}*{Proposition~8.1} two conformal invariants $\mJ_1$ and $\mJ_2$ for hypersurfaces which involve a transverse derivative of $R_{abcd}$.
On compact four-dimensional hypersurfaces, $\int \mJ_1$ equals the global conformal invariant identified by Astaneh and Solodukhin~\cite{AstanehSolodukhin2021}*{Equation~(29)}.
We extend Juhl's invariants to higher codimension.

\begin{proposition}
 \label{juhl1}
 Let $i \colon Y^{\submfdim} \to (X^{\mfdim},g)$ be an immersion with $4 \leq \submfdim < \mfdim$ and $\submfdim \not= 6$.
 Define the weight $-4$ scalar conformal submanifold invariant
 \begin{align*}
   \mJ_1 & := -\frac{\submfdim-2}{2(\submfdim-3)(\submfdim-6)}(2\Wm +\Wn) +
   \frac{\submfdim-2}{\submfdim-3}(\mK_1 + \mK_2 + \tfss_{\alpha\beta}^2\Fi^{\alpha\beta}) + \tfss^2_{\alpha\beta}W^{\alpha\gamma\beta}{}_\gamma \\ 
    & \quad + \frac{\submfdim-2}{(\submfdim-3)(\submfdim-6)}\trFi\lv\tfss\rv^2 + \tfss^{\alpha\gamma\alpha^\prime}\tfss^{\beta\delta}{}_{\alpha^\prime}W_{\alpha\beta\gamma\delta} - \tfss^{\alpha\beta\alpha^\prime}\tfss_{\alpha\beta}{}^{\beta^\prime}W_{\alpha^\prime\gamma\beta^\prime}{}^\gamma \\
    & \quad + \tfss^{\gamma\alpha\alpha^\prime}\tfss_\gamma{}^{\beta\beta^\prime}(2W_{\alpha\alpha^\prime\beta\beta^\prime} - W_{\alpha\beta\alpha^\prime\beta^\prime}) - \frac{1}{2}W_{\alpha\beta\alpha^\prime\gamma}W^{\alpha\beta\alpha^\prime\gamma} + W_{\alpha\beta\alpha^\prime}{}^\beta W^{\alpha\gamma\alpha^\prime}{}_\gamma .
 \end{align*}
 If \codimone, then
 \begin{align*}
  \mJ_1 & = \frac{\submfdim-4}{(\submfdim-3)(\submfdim-6)}\left( \oDelta\lv\tfss\rv^2 - \frac{\submfdim-2}{\submfdim-4}\otrSch\lv\tfss\rv^2\right) - \frac{1}{\submfdim-3}\onabla^\alpha\onabla^\beta\tfss^2_{\alpha\beta} +\tfss^{\alpha\beta\alpha^\prime}\nabla_{\alpha^\prime}W_{\alpha\gamma\beta}{}^\gamma \\
   & \quad + \frac{\submfdim-2}{(\submfdim-1)^2}(\onabla^\beta\tfss_{\beta\alpha\alpha^\prime})(\onabla_\gamma\tfss^{\gamma\alpha\alpha^\prime}) - \frac{\submfdim-2}{\submfdim-3}\tfss^2_{\alpha\beta}\oSch^{\alpha\beta} - 2H_{\alpha^\prime}\tfss^{\alpha\beta\alpha^\prime}W_{\alpha\gamma\beta}{}^\gamma
 \end{align*}
 equals the invariant derived in~\cite{Juhl2022}*{Equation~(8.1)}.  
\end{proposition}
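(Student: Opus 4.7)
The plan is to prove the two assertions of \cref{juhl1} separately: the conformal invariance of $\mJ_1$ as defined in general codimension, and the claimed identification with Juhl's invariant in codimension one. Conformal invariance is immediate: each summand of $\mJ_1$ is individually a conformal submanifold invariant of weight $-4$. The scalar $2\Wm+\Wn$ is invariant by \cref{2I-plus-J}; the scalars $\mK_1$ and $\mK_2$ are invariant by \cref{conformally-invariant-divergence}; and every remaining summand is a complete metric contraction of the conformally invariant tensors $\tfss_{\alpha\beta\alpha^\prime}$, $W_{abcd}$, $\Fi_{\alpha\beta}$, and $\trFi$ (of weights $2$, $2$, $0$, and $-2$, respectively), each such contraction being weight $-4$.

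For the hypersurface identification I would work in the codimension one setting $\mfdim=\submfdim+1$, where $NY$ is a line bundle spanned by a unit normal $\nu$ and $\tfss_{\alpha\beta\alpha^\prime}=\tfss_{\alpha\beta}\nu_{\alpha^\prime}$. The first step is to substitute the expanded form of $2\Wm+\Wn$ from \cref{2I-plus-J} and the expressions for $\mK_1$ and $\mK_2$ from \cref{rk:mK} (noting that $\mK_2=0$ on hypersurfaces by that remark), and then to unfold $\mP_{\alpha\beta}$, $\mC_{abc}$, $\mB_{\alpha\beta}$, and $\Di_{\alpha\alpha^\prime}$ using their definitions \eqref{eqn:mP}--\eqref{eqn:mB} and \eqref{eqn:defn-Di}. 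Three algebraic mechanisms then drive the simplification: squaring the Codazzi identity \eqref{eqn:gcD}, which produces the coefficient $(\submfdim-2)/(\submfdim-1)^2$ of $(\onabla^\beta\tfss_{\beta\alpha\alpha^\prime})(\onabla_\gamma\tfss^{\gamma\alpha\alpha^\prime})$ alongside compensating $\Di$-$W$ and $W$-$W$ terms; integrating by parts inside $\onabla^\alpha(\Di^{\beta\alpha^\prime}\tfss_{\alpha\beta\alpha^\prime})$ to extract $\onabla^\alpha\onabla^\beta\tfss^2_{\alpha\beta}$ with coefficient $-1/(\submfdim-3)$, the commutator $[\onabla^\alpha,\onabla^\beta]$ providing additional algebraic curvature remainders; and expanding the Cotton and Bach pieces inside $\mC_\alpha$ and $\mB_\alpha{}^\alpha$, then invoking the contracted Weyl--Bianchi identity \eqref{eqn:W-to-C} to convert a transverse derivative of the Schouten tensor into the $\nabla_{\alpha^\prime}W_{\alpha\gamma\beta}{}^\gamma$ term distinctive of Juhl's formula. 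The apparent $1/(\mfdim-4)$ pole in $\mB_\alpha{}^\alpha$ entering $\Wm$ and $\Wn$ cancels against the identical factor in \eqref{eqn:mB}, leaving behind the regular $-2H_{\alpha^\prime}\tfss^{\alpha\beta\alpha^\prime}W_{\alpha\gamma\beta}{}^\gamma$ term.

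The main obstacle is bookkeeping rather than any conceptual difficulty. Altogether the expansion produces several dozen $\tfss$-$\tfss$-$W$ and $\tfss^{\otimes 4}$ contractions, and verifying that the delicate rational coefficients such as $(\submfdim-2)/(\submfdim-3)$ and $(\submfdim-4)/[(\submfdim-3)(\submfdim-6)]$ in Juhl's formula emerge correctly from combinations of the factors in the definitions of $\Wm$, $\Wn$, $\mK_1$, and the algebraic $\tfss\tfss W$ and $WW$ summands requires methodical organization. The remaining algebraic collapses, converting mixed-Weyl and Fialkow pieces into intrinsic Schouten data, follow from the Gauss--Codazzi--Ricci equations \eqref{eqn:gauss-codazzi} with no new ideas needed.
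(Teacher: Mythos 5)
Your high-level plan is sound and heads in the same general direction as the paper's proof: both expand $2\Wm+\Wn$, $\mK_1$, $\mK_2$ in terms of elementary objects, simplify with the Gauss--Codazzi--Ricci identities and the Weyl--Bianchi identity, and then specialize to codimension one. The paper's version is more structured: it introduces two auxiliary quantities $\cmJ_1$ (built around $\tfss^{\alpha\beta\alpha^\prime}\nabla_{\alpha^\prime}W_{\alpha\gamma\beta}{}^\gamma$) and $\cmJ_2$ (built around $\onabla^\alpha\onabla^\beta\tfss^2_{\alpha\beta}$), shows $\cmJ_1 = \mK_1 + \mK_2 + (\text{algebraic }\tfss\tfss W\text{ and }WW\text{ terms})$ by a Weyl--Bianchi computation and $\cmJ_2 = -\mK_1 - \mK_2$ by taking the divergence of \eqref{eqn:div-tfss2}, then writes $\mJ_1$ as the explicit linear combination $-\frac{\submfdim-2}{2(\submfdim-3)(\submfdim-6)}\cmI_1 + \cmJ_1 - \frac{1}{\submfdim-3}\cmJ_2$ and substitutes. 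This is bookkeeping-efficient compared to the brute-force expansion you sketch, but the end result is the same.

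Two of your mechanism descriptions are inaccurate, though neither is fatal. First, the $1/(\mfdim-4)$ pole is not cancelled ``against the identical factor in \eqref{eqn:mB}'' --- the $\mfdim-4$ factors in \eqref{eqn:mB} multiply the $H\!C$ and $HHW$ correction terms, not the $B_{\alpha\beta}$ piece, so there is no internal cancellation. Rather, the two $\frac{\submfdim-3}{\mfdim-4}\mB_\alpha{}^\alpha$ terms carry opposite coefficients in $2\Wm$ and in $\Wn$ and drop out entirely in the sum, which is precisely what \cref{2I-plus-J} records; the term $-2H_{\alpha^\prime}\tfss^{\alpha\beta\alpha^\prime}W_{\alpha\gamma\beta}{}^\gamma$ comes from the Weyl--Bianchi expansion of $\tfss^{\alpha\beta\alpha^\prime}\nabla_{\alpha^\prime}W_{\alpha\gamma\beta}{}^\gamma$, not from $\mB$. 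Second, when you square \eqref{eqn:gcD} at the codimension-one stage to produce $\frac{\submfdim-2}{(\submfdim-1)^2}(\onabla^\beta\tfss_{\beta\alpha\alpha^\prime})(\onabla_\gamma\tfss^{\gamma\alpha\alpha^\prime})$, there are no ``compensating $\Di$-$W$ and $W$-$W$ terms'' to absorb, because $W_{\alpha\beta\alpha^\prime}{}^\beta$ vanishes identically in codimension one (by the trace-free property of the Weyl tensor and skew-symmetry), so the cross-terms and the $WW$ term in the square are simply zero. These corrections matter if you actually try to run the computation, but they do not change the viability of your overall strategy.
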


\begin{proposition}
 \label{juhl2}
 Let $i \colon Y^{\submfdim} \to (X^{\mfdim},g)$ be an immersion with $4 \leq \submfdim < \mfdim$ and $\submfdim \not= 6$.
 Define the weight $-4$ scalar conformal submanifold invariant
 \begin{multline*}
  \mJ_2 := -\frac{1}{2(\submfdim-3)(\submfdim-6)}(2\Wm +\Wn) +
  \frac{1}{\submfdim-3}(\mK_1+ \mK_2) \\
   - \frac{\submfdim-4}{\submfdim-3}\tfss^2_{\alpha\beta}\Fi^{\alpha\beta} + \frac{1}{(\submfdim-3)(\submfdim-6)}\trFi\lv\tfss\rv^2.
 \end{multline*}
 If \codimone, then
 \begin{align*}
  \mJ_2 & = -\tfss^{\alpha\beta\alpha^\prime}\nabla_{\alpha^\prime}\Sch_{\alpha\beta} - \tfss^{\alpha\beta\alpha^\prime}\tfss_{\alpha\beta}{}^{\beta^\prime}\Sch_{\alpha^\prime\beta^\prime} + \tfss^{\alpha\beta\alpha^\prime}\onabla_\alpha\onabla_\beta H_{\alpha^\prime} + H^{\alpha^\prime}\tfss^{\alpha\beta}{}_{\alpha^\prime}\oSch_{\alpha\beta} \\
   & \quad - \frac{1}{\submfdim-3}\onabla^\alpha\onabla^\beta\tfss^2_{\alpha\beta} + \frac{\submfdim-5}{2(\submfdim-3)(\submfdim-6)}\oDelta\lv\tfss\rv^2 - \frac{1}{(\submfdim-3)(\submfdim-6)}\otrSch\lv\tfss\rv^2 \\
   & \quad + \frac{\submfdim-4}{\submfdim-3}\tfss^2_{\alpha\beta}\oSch^{\alpha\beta} - \frac{\submfdim-3}{\submfdim-2}H^{\alpha^\prime}\tr\tfss^3_{\alpha^\prime} - \frac{\submfdim-1}{\submfdim-2}H^{\alpha^\prime}\tfss^{\alpha\beta}{}_{\alpha^\prime}W_{\alpha\gamma\beta}{}^\gamma - \frac{3}{2}\lv H \rv^2 \lv \tfss \rv^2 \\
   & \quad + \frac{\submfdim}{(\submfdim-1)^2}(\onabla^\beta\tfss_{\beta\alpha\alpha^\prime})(\onabla_\gamma\tfss^{\gamma\alpha\alpha^\prime})
 \end{align*}
 equals the invariant in~\cite{Juhl2022}*{Equation~(8.2)}.  
\end{proposition}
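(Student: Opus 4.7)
The plan is to establish (a) the conformal invariance of $\mJ_2$ of weight $-4$, and (b) the explicit identification in the hypersurface case \codimone. Part (a) is immediate once the formula is written down: $\Wm$, $\Wn$, $\mK_1$, $\mK_2$ are each conformal submanifold invariants of weight $-4$ by \cref{conformally-invariant-divergence,Wm,Wn}, while $\tfss^2_{\alpha\beta}\Fi^{\alpha\beta}$ and $\trFi\lv\tfss\rv^2$ are manifestly invariant of weight $-4$ because $\tfss_{\alpha\beta\alpha^\prime}$ and $\Fi_{\alpha\beta}$ are conformal invariants of weight $2$. The entire linear combination defining $\mJ_2$ is therefore conformally invariant of weight $-4$.

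For part (b), which is the main content, I would start by specializing the defining formulas~\eqref{Iformula}, \eqref{Jformula}, and the formulas in \cref{conformally-invariant-divergence} for $\mK_1$, $\mK_2$ to \codimone. In this case the normal bundle is a trivial line bundle; after a choice of unit normal, every primed index becomes a scalar, and $\mC_a = C_{\beta a}{}^\beta - H^{\alpha^\prime} W_{\beta a}{}^\beta{}_{\alpha^\prime}$ simplifies considerably. In particular, \cref{Wn-zero-on-hypersurfaces} supplies the substitution $\onabla^\alpha\mC_\alpha = -B_\alpha{}^\alpha + W_{\alpha\gamma\beta}{}^\gamma\Sch^{\alpha\beta} - \tfss^{\alpha\beta\alpha^\prime}C_{\alpha\alpha^\prime\beta}$ that eliminates $\mB_\alpha{}^\alpha$ from the sum, thereby removing the pole at $\mfdim=4$ and producing terms of the types appearing in Juhl's formula.

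Next I would use the Gauss--Codazzi--Ricci equations~\eqref{eqn:gauss-codazzi}, especially Equations~\eqref{eqn:gcP}--\eqref{eqn:gcD}, to trade intrinsic Schouten data ($\oSch$, $\otrSch$) for the combination $\mP-\Fi$, and to rewrite contractions of ambient curvature against $\tfss$ or $H$. The terms $\tfss^{\alpha\beta\alpha^\prime}\onabla_\alpha\onabla_\beta H_{\alpha^\prime}$ and $\tfss^{\alpha\beta\alpha^\prime}\nabla_{\alpha^\prime}\Sch_{\alpha\beta}$ in Juhl's expression would emerge from the definition~\eqref{eqn:defn-Di} of $\Di$ after commuting derivatives by means of the Ricci identity~\eqref{eqn:riemann}; the piece $\onabla^\alpha\onabla^\beta\tfss^2_{\alpha\beta}$ and the squared divergence $(\onabla^\beta\tfss_{\beta\alpha\alpha^\prime})(\onabla_\gamma\tfss^{\gamma\alpha\alpha^\prime})$ would arise by applying \cref{div-F} (specifically Equations~\eqref{eqn:div-tfss2} and~\eqref{eqn:div-W}) and by using $\onabla^\beta\tfss_{\beta\alpha\alpha^\prime} = -(\submfdim - 1)\Di_{\alpha\alpha^\prime} - W_{\alpha\beta\alpha^\prime}{}^\beta$ in both directions.

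The main obstacle is not any single identity but the bookkeeping: dozens of partial contractions of $\tfss$, $W$, $\Sch$, and $H$ must be collected into the canonical basis used in Juhl's formula, and the dimensional coefficients in the answer---such as $\tfrac{\submfdim}{(\submfdim - 1)^2}$, $\tfrac{\submfdim-3}{\submfdim-2}$, and the combinations multiplying $\oDelta\lv\tfss\rv^2$ and $\otrSch\lv\tfss\rv^2$---must emerge correctly from combining the rational coefficients in the definition of $\mJ_2$ with the $\submfdim$-dependent constants in the hypersurface specializations of $\Wm$ and $\Wn$. I would organize the verification by descending derivative weight: first the $\nabla^2\Sch$ and $\nabla^2 W$ contributions, then the $(\onabla\tfss)^2$ and $\tfss\,\nabla W$ contributions, and finally the quartic-in-$\tfss$ contributions, cross-checking against the analogous (shorter) hypersurface reduction of $\mJ_1$ in \cref{juhl1}. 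As the paper notes, the computation is mechanical, so the full detail is deferred to the appendix.
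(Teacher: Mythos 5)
Your proposal is directionally correct, but it departs from the paper's route in a way worth noting. The paper does not specialize to \codimone\ until the very last line: it first introduces the auxiliary quantity $\cmJ_3$, shows by a direct Gauss--Codazzi calculation that $\cmJ_3 = \tfss^2_{\alpha\beta}\Fi^{\alpha\beta}$, and then assembles $\mJ_2 = -\frac{1}{2(\submfdim-3)(\submfdim-6)}\cmI_1 - \frac{1}{\submfdim-3}\cmJ_2 - \cmJ_3$, where $\cmI_1$ comes from the proof of \cref{blitz-gover-waldron} and $\cmJ_2$ from the proof of \cref{juhl1}. Unfolding the definitions produces a formula for $\mJ_2$ valid in arbitrary dimension and codimension, from which the hypersurface statement follows by setting primed indices to one. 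Your plan instead specializes to hypersurfaces up front, where $\Wn = 0$ (\cref{Wn-zero-on-hypersurfaces}) and $\mK_2 = 0$ (\cref{rk:mK}), so the defining combination collapses immediately and only $\Wm$, $\mK_1$, and the two quartic terms survive; you then propose to do the bookkeeping from scratch using \cref{div-F} and the Gauss--Codazzi identities. Both routes work. The paper's buys you a general-dimension intermediate identity and lets you recycle the computations already done for $W\!m$ and $\mJ_1$, so the new work is essentially just the single computation $\cmJ_3 = \tfss^2_{\alpha\beta}\Fi^{\alpha\beta}$; your route is more self-contained in the hypersurface setting but duplicates a significant amount of the divergence bookkeeping. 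One small omission in your plan: you should explicitly invoke $\mK_2 = 0$ on hypersurfaces (in addition to $\Wn = 0$), since $\mK_2$ appears in the definition of $\mJ_2$ with a nontrivial coefficient $\frac{1}{\submfdim-3}$.
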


\begin{remark}
 As in \cref{rk:add-Wn-and-mK2}, one can add multiples of $\Wn$ or $\mK_2$ to these invariants without losing the fact that our invariants generalize Juhl's invariants.
 With our choices, we see that $\mJ_1 \equiv (\submfdim-2)\mJ_2$ modulo polynomials in the Weyl tensor and the trace-free second fundamental form, as was observed already by Juhl~\cite{Juhl2022}*{Equation~(1.19)} in the case of hypersurfaces.
\end{remark}

Chalabi et al.\ derived~\cite{ChalabiHerzogOBannonRobinsonSisti2022}*{Equations~(3.2) and~(3.3)} natural submanifold scalars whose integrals are conformal invariants of compact four-dimensional submanifolds in arbitrary codimension.
We find two pointwise conformal invariants for general dimensional submanifolds that, when restricted to dimension four, equal their scalars modulo divergences, and hence explain the conformal invariance of their integrals.

\begin{proposition}
 \label{chalabi1}
 Let $i \colon Y^{\submfdim} \to (X^{\mfdim},g)$ be an immersion with $2 \leq \submfdim < \mfdim$.
 Define the weight $-4$ scalar conformal submanifold invariant
 \begin{align*}
   \mN_1 & := \frac{1}{2}(2\Wm + \Wn)
   - \frac{\submfdim-6}{2}\Bigl( \mK_1 + \mK_2 \Bigr)
   + \frac{\submfdim-6}{2}\Bigl( \tfss^{\gamma\alpha\alpha^\prime}\tfss_\gamma{}^{\beta\beta^\prime}W_{\alpha\beta\alpha^\prime\beta^\prime} \\
   & \quad - W_{\alpha\beta\alpha^\prime}{}^\beta W^{\alpha\gamma\alpha^\prime}{}_\gamma + \frac{1}{2}W_{\alpha\beta\gamma\alpha^\prime}W^{\alpha\beta\gamma\alpha^\prime} - \tfss^2_{\alpha\beta}W^{\alpha\gamma\beta}{}_{\gamma} \\
   & \quad - \tfss^{\alpha\gamma\alpha^\prime}\tfss^{\beta\delta}{}_{\alpha^\prime}W_{\alpha\beta\gamma\delta} + \tfss^{\alpha\beta\alpha^\prime}\tfss_{\alpha\beta}{}^{\beta^\prime}W_{\alpha^\prime\gamma\beta^\prime}{}^\gamma - 2\tfss^{\gamma\alpha\alpha^\prime}\tfss_\gamma{}^{\beta\beta^\prime}W_{\alpha\alpha^\prime\beta\beta^\prime} \Bigr).
 \end{align*}
 If $\submfdim = 4$, then
 \begin{equation*}
  \mN_1 = -\frac{1}{2}\oDelta\lv\tfss\rv^2 + 2\onabla^\beta( \Di^{\alpha\alpha^\prime} \tfss_{\alpha\beta\alpha^\prime} ) + \mJ_1^{\mathrm{CHO}^+} ,
 \end{equation*}
 where
 \begin{multline*}
  \mJ_1^{\mathrm{CHO}^+} := 2\lv\Di\rv^2 + \tfss^{\alpha\beta\alpha^\prime}\nabla_{\alpha^\prime}W_{\alpha\gamma\beta}{}^\gamma + \frac{1}{\mfdim-1}R\lv\tfss\rv^2 - \frac{1}{\mfdim-2}\lv\tfss\rv^2 R_{\alpha^\prime}{}^{\alpha^\prime} \\
   - \frac{2}{\mfdim-2}\tfss^2_{\alpha\beta}R^{\alpha\beta} + \lv H \rv^2\lv\tfss\rv^2- 2H^{\alpha^\prime}\tr\tfss^3_{\alpha^\prime} - 2H^{\alpha^\prime}\tfss^{\alpha\beta}{}_{\alpha^\prime}W_{\alpha\gamma\beta}{}^\gamma
 \end{multline*}
 equals the scalar in~\cite{ChalabiHerzogOBannonRobinsonSisti2022}*{Equation~(3.2)}.
\end{proposition}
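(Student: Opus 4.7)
The first assertion, that $\mN_1$ is a conformal submanifold invariant of weight $-4$, is immediate from the constructions of Subsection~\ref{subsec:invariants/construct}: the combination $2\mI+\mJ$ is defined for all $n$ and is a weight $-4$ invariant by Remark~\ref{2I-plus-J}, while $\mK_1$ and $\mK_2$ are weight $-4$ invariants by Proposition~\ref{conformally-invariant-divergence}. The remaining summands are polynomial contractions of $\tfss_{\alpha\beta\alpha^\prime}$ with projections of $W_{abcd}$; each constituent is a conformal submanifold invariant of weight $2$ by Equations~\eqref{eqn:ambient-conformal-change} and~\eqref{eqn:submanifold-conformal-change}, so each quartic contraction yields a conformal invariant of weight~$-4$.

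The identity at $k=4$ is a direct tensor calculation, and the plan has four steps. Step one: I would apply Remark~\ref{2I-plus-J} at $k=4$ (where $k-6=-2$ and $k-3=1$) to obtain
\begin{align*}
 \tfrac{1}{2}(2\mI+\mJ) &= -\tfrac{1}{2}\oDelta\lv\tfss\rv^2 + \lv\tfss\rv^2\mP_\alpha{}^\alpha - 2\tfss^2_{\alpha\beta}\mP^{\alpha\beta} + 2\onabla^\alpha\bigl(\Di^{\beta\alpha^\prime}\tfss_{\alpha\beta\alpha^\prime}\bigr) \\
 &\quad + 2\lv\Di\rv^2 + 2\Di^{\alpha\alpha^\prime}W_{\alpha\beta\alpha^\prime}{}^\beta + 2\tfss^{\alpha\beta\alpha^\prime}\mC_{\alpha\alpha^\prime\beta}.
\end{align*}
This already produces $-\tfrac{1}{2}\oDelta\lv\tfss\rv^2$ and $2\onabla^\alpha(\Di^{\beta\alpha^\prime}\tfss_{\alpha\beta\alpha^\prime})$ on the right-hand side of the claimed identity, as well as the $2\lv\Di\rv^2$ contribution inside $\mJ_1^{\mathrm{CHO}^+}$.

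Step two: expand $\mK_1+\mK_2$ using Remark~\ref{rk:mK} (at $k=4$ the final summand of the formula for $\mK_1$ vanishes) and use the Weyl--Bianchi identity~\eqref{eqn:weyl-bianchi} applied to a mixed tangential/normal triple of indices to rewrite the combined tangential-derivative terms, modulo polynomial corrections, as the normal-derivative term $\tfss^{\alpha\beta\alpha^\prime}\nabla_{\alpha^\prime}W_{\alpha\gamma\beta}{}^\gamma$ that appears in $\mJ_1^{\mathrm{CHO}^+}$. Step three: substitute the definitions~\eqref{eqn:mP} and~\eqref{eqn:mC}, then apply the Gauss equation~\eqref{eqn:gcP} and its trace~\eqref{eqn:gcJ} to convert $\mP_\alpha{}^\alpha$, $\tfss^2_{\alpha\beta}\mP^{\alpha\beta}$, and $\tfss^{\alpha\beta\alpha^\prime}\mC_{\alpha\alpha^\prime\beta}$ into the ambient-Ricci, mean-curvature, and $H$-$\tfss$-$W$ contributions inside $\mJ_1^{\mathrm{CHO}^+}$. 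Step four: the remaining pure-Weyl and Weyl--$\tfss$ terms (coming both from the tail of $\mN_1$ and from the polynomial residues generated in Steps two and three) should cancel using the Gauss equation~\eqref{eqn:gcW}, the Ricci equation~\eqref{eqn:gcnc}, and the traced Codazzi equation~\eqref{eqn:gcD}.

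The main obstacle is organizational rather than conceptual. The specific integer coefficients in the definition of $\mN_1$ are engineered precisely so that these cancellations occur, and verifying this requires careful bookkeeping of cubic and quartic contractions grouped by tensor type (pure Weyl products, mixed Weyl--$\tfss$ products, and pure $\tfss$ products). This is why the authors defer the detailed computation to Section~\ref{sec:extra}.
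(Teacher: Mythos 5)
Your Step 1 computation is correct, and the overall plan (Remark~\ref{2I-plus-J}, Remark~\ref{rk:mK}, the Weyl--Bianchi identity, and the Gauss--Codazzi equations) uses the same tools as the paper. However, you miss the organizational shortcut that makes the paper's proof short: the quartic $W$--$\tfss$ tail in the definition of $\mN_1$ is chosen to match exactly the quartic tail appearing in Equation~\eqref{eqn:juhl1-first-step}, which expresses
\[
 \cmJ_1 := \tfss^{\alpha\beta\alpha^\prime}\nabla_{\alpha^\prime}W_{\alpha\gamma\beta}{}^\gamma - 2\Di^{\alpha\alpha^\prime}W_{\alpha\beta\alpha^\prime}{}^\beta - 2H^{\alpha^\prime}\tfss^{\alpha\beta}{}_{\alpha^\prime}W_{\alpha\gamma\beta}{}^\gamma - 2\tfss^{\alpha\beta\alpha^\prime}\mC_{\alpha\alpha^\prime\beta}
\]
in terms of $\mK_1 + \mK_2$ plus quartics. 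Consequently the definition collapses immediately to $\mN_1 = \Wm + \tfrac{1}{2}\Wn - \tfrac{\submfdim-6}{2}\cmJ_1$, with no Weyl--Bianchi work left to do. The entire content of your Step~2 --- expanding $\mK_1+\mK_2$ via Remark~\ref{rk:mK} and applying the Weyl--Bianchi identity to produce $\tfss^{\alpha\beta\alpha^\prime}\nabla_{\alpha^\prime}W_{\alpha\gamma\beta}{}^\gamma$ --- is precisely the computation carried out in the proof of Proposition~\ref{juhl1} to establish~\eqref{eqn:juhl1-first-step}; re-deriving it is valid but duplicates work the paper already has on the shelf.

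The second difference is that you specialize to $\submfdim=4$ at the outset, while the paper keeps general $\submfdim$ throughout, applying a single clean identity (derived from the definition~\eqref{eqn:mP} of $\mP_{\alpha\beta}$ together with $\Sch_{ab} = \tfrac{1}{\mfdim-2}(R_{ab}-\trSch g_{ab})$) that converts the contractions $\lv\tfss\rv^2\mP_\alpha{}^\alpha$ and $\tfss^2_{\alpha\beta}\mP^{\alpha\beta}$ directly to the $R$-contractions appearing in $\mJ_1^{\mathrm{CHO}^+}$, before specializing at the very last line. Keeping $\submfdim$ general makes the $(\submfdim-6)$ factors cancel transparently and exposes the structure of the identity; your approach, working at $\submfdim=4$ with numeric coefficients, should still close, but the bookkeeping you flag as ``the main obstacle'' is exactly what the general-$\submfdim$ bookkeeping avoids. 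In short, your proposal is a correct but more laborious implementation of the same underlying argument: same lemmas, different organization.
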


\begin{proposition}
 \label{chalabi}
 Let $i \colon Y^{\submfdim} \to (X^{\mfdim},g)$ be an immersion with $2 \leq \submfdim < \mfdim$ and $\submfdim \not\in \{ 3, 6 \}$.
 Define the weight $-4$ scalar conformal submanifold invariant
 \begin{align*}
  \mN_2 & := \frac{\mfdim-4}{(\submfdim-3)(\submfdim-6)}\Wn - \frac{2}{\submfdim-1}\left( W_{\alpha\beta cd}W^{\alpha\beta cd} - W_{\alpha c \beta d}W^{\alpha c \beta d} + W_{c \alpha d}{}^\alpha W^{c\beta d}{}_\beta \right) \\
   & \quad - \frac{2(\mfdim-\submfdim-1)}{(\submfdim-1)(\submfdim-3)}\mK_1 - \frac{2(\mfdim-5\submfdim+11)}{(\submfdim-1)(\submfdim-3)}\mK_2 - \frac{2(\mfdim-3\submfdim+5)}{(\submfdim-1)(\submfdim-3)} W_{\alpha\beta\alpha^\prime}{}^\beta W^{\alpha\gamma\alpha^\prime}{}_\gamma \\
   & \quad + \frac{\mfdim-\submfdim-1}{(\submfdim-1)(\submfdim-3)}W_{\alpha\beta\alpha^\prime\gamma}W^{\alpha\beta\alpha^\prime\gamma} - \frac{2(\mfdim-\submfdim-1)}{(\submfdim-1)(\submfdim-3)}\tfss^{\alpha\gamma\alpha^\prime}\tfss^{\beta\delta}{}_{\alpha^\prime}W_{\alpha\beta\gamma\delta} \\
   & \quad - \frac{2(\mfdim-3\submfdim+5)}{(\submfdim-1)(\submfdim-3)}\tfss^2_{\alpha\beta}W^{\alpha\gamma\beta}{}_\gamma + \frac{2(\mfdim-5\submfdim+11)}{(\submfdim-1)(\submfdim-3)}\tfss^{\gamma\alpha\alpha^\prime}\tfss_\gamma{}^{\beta\beta^\prime}W_{\alpha\beta\alpha^\prime\beta^\prime} \\
   & \quad + \frac{2(\mfdim-3\submfdim+5)}{(\submfdim-1)(\submfdim-3)}\tfss^{\alpha\beta\alpha^\prime}\tfss_{\alpha\beta}{}^{\beta^\prime}W_{\alpha^\prime\gamma\beta^\prime}{}^\gamma - \frac{4(\mfdim-2\submfdim+2)}{(\submfdim-1)(\submfdim-3)}\tfss^{\gamma\alpha\alpha^\prime}\tfss_\gamma{}^{\beta\beta^\prime}W_{\alpha\alpha^\prime\beta\beta^\prime} .
 \end{align*}
 If $\submfdim = 4$, then
 \begin{equation*}
  \mN_2 = \frac{3\mfdim - 10}{6}\oDelta W_{\alpha\beta}{}^{\alpha\beta} - \frac{4(\mfdim-5)}{3}\onabla^\alpha\mC_\alpha + \mJ_2^{\mathrm{CHO}^+} ,
 \end{equation*}
 where
 \begin{equation}
  \label{eqn:CHO2}
  \begin{split}
   \mJ_2^{\mathrm{CHO}^+} & := \frac{1}{3}\nabla^{\alpha^\prime}\nabla_{\alpha^\prime}W_{\alpha\beta}{}^{\alpha\beta} + \frac{\mfdim-10}{3}H^{\alpha^\prime}\nabla_{\alpha^\prime}W_{\alpha\beta}{}^{\alpha\beta} - \frac{\mfdim-4}{\mfdim-1}RW_{\alpha\beta}{}^{\alpha\beta} \\
    & \quad + \frac{\mfdim-4}{\mfdim-2}R_{\alpha^\prime}{}^{\alpha^\prime}W_{\alpha\beta}{}^{\alpha\beta} + \frac{4(\mfdim-5)}{3(\mfdim-2)}R^{\alpha\beta}W_{\alpha\gamma\beta}{}^\gamma - \frac{4}{3}W_{\alpha\beta\alpha^\prime}{}^\beta\onabla^\alpha H^{\alpha^\prime} \\
    & \quad - \frac{2(\mfdim-5)}{3}\tfss^{\alpha\beta\alpha^\prime}\nabla_{\alpha^\prime}W_{\alpha\gamma\beta}{}^\gamma + \frac{8(\mfdim-5)}{3}H^{\alpha^\prime}\tfss^{\alpha\beta}{}_{\alpha^\prime}W_{\alpha\gamma\beta}{}^\gamma \\
    & \quad - \frac{4(\mfdim+1)}{3}\Di^{\alpha\alpha^\prime}W_{\alpha\beta\alpha^\prime}{}^\beta - \frac{5(\mfdim -4)}{3}\lv H\rv^2 W_{\alpha\beta}{}^{\alpha\beta}
  \end{split}
 \end{equation}
 equals the scalar in~\cite{ChalabiHerzogOBannonRobinsonSisti2022}*{Equation~(3.3)}.
 \end{proposition}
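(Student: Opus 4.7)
The plan has two parts: establish the conformal invariance of $\mN_2$ assembled from the stated building blocks, and then verify the pointwise identification at $\submfdim = 4$.

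For the invariance, I would observe that $\mN_2$ is a sum of conformal invariants of weight $-4$. By \cref{Wn} together with \cref{rk:pole}, the expression $(\mfdim-4)\Wn$ is conformally invariant and well-defined in all admissible dimensions, so the leading term $\tfrac{\mfdim-4}{(\submfdim-3)(\submfdim-6)}\Wn$ is conformally invariant of weight $-4$ whenever $\submfdim \notin \{3,6\}$. The scalars $\mK_1$ and $\mK_2$ are conformally invariant of weight $-4$ by \cref{conformally-invariant-divergence}. Every remaining summand is a full metric contraction of a polynomial in the Weyl tensor $W_{abcd}$ and the trace-free second fundamental form $\tfss_{\alpha\beta\alpha^\prime}$, each of which is a conformal submanifold invariant of weight $+2$; a quick weight count (four copies of a weight $+2$ tensor with four metric contractions) confirms that each such term has weight $-4$.

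For the identification at $\submfdim = 4$, I would proceed by direct substitution and reduction. First, I would substitute the explicit formula \eqref{Jformula} for $\Wn$ into the definition of $\mN_2$; the prefactor $\mfdim-4$ cancels the $(\mfdim-4)^{-1}$ in $\mB_\alpha{}^\alpha$, producing an expression that is polynomial in the ambient curvature tensors and the extrinsic data. Next, I would unpack $\mC_\alpha$, $\mC_{\alpha\alpha^\prime\beta}$, and $\mB_\alpha{}^\alpha$ via the definitions \eqref{eqn:mC}, \eqref{eqn:mC-tr}, and \eqref{eqn:mB} into raw Cotton, Bach, and Weyl tensor pieces together with mean-curvature corrections, and I would apply the Gauss--Codazzi--Ricci equations \eqref{eqn:gauss-codazzi} to rewrite contractions involving $\mP_{\alpha\beta}$ and $\Fi_{\alpha\beta}$ in terms of the ambient Schouten tensor and $\tfss_{\alpha\beta\alpha^\prime}$. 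The contracted Weyl--Bianchi identity \eqref{eqn:weyl-bianchi-divW} will be used to convert divergences of $W_{abcd}$ into Cotton tensor traces so that the divergence contributions $\oDelta W_{\alpha\beta}{}^{\alpha\beta}$ and $\onabla^\alpha \mC_\alpha$ can be isolated with the correct coefficients $\tfrac{3\mfdim-10}{6}$ and $-\tfrac{4(\mfdim-5)}{3}$. Matching the residual algebraic and derivative terms against \eqref{eqn:CHO2} then completes the identification.

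The hard part will be the combinatorial bookkeeping: many curvature-quadratic and derivative terms with delicate numerical coefficients must conspire exactly at $\submfdim = 4$. I expect the trickiest subexpressions to be the $H^{\alpha^\prime}H^{\beta^\prime}W_{\alpha\alpha^\prime\beta\beta^\prime}$ contributions hidden inside $\mB_\alpha{}^\alpha$ and their interplay with the algebraic term $\tfss^{\gamma\alpha\alpha^\prime}\tfss_\gamma{}^{\beta\beta^\prime}W_{\alpha\alpha^\prime\beta\beta^\prime}$ already present in $\mN_2$, as well as the coupling of $\Di^{\alpha\alpha^\prime}W_{\alpha\beta\alpha^\prime}{}^\beta$ terms that arise both from $\mK_2$ and from unpacking the Schouten piece of $\mC_\alpha$. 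Since the authors defer the analogous identifications for $W\!m$, $\mJ_1$, $\mJ_2$, and $\mN_1$ to \cref{sec:extra}, the same systematic verification strategy applies here.
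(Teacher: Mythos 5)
Your argument for the conformal invariance of $\mN_2$ is correct and matches the paper's implicit reasoning: each summand is built from the already-established invariants $\Wn$ (more precisely $(\mfdim-4)\Wn$, regular at $\mfdim=4$ by \cref{rk:pole}), $\mK_1$, $\mK_2$, and complete contractions of $W$ and $\tfss$, all of weight $-4$.

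However, your plan for the identification at $\submfdim=4$ has a genuine gap. The target $\mJ_2^{\mathrm{CHO}^+}$ in Equation~\eqref{eqn:CHO2} contains the transverse second derivative $\tfrac{1}{3}\nabla^{\alpha^\prime}\nabla_{\alpha^\prime}W_{\alpha\beta}{}^{\alpha\beta}$, and the coefficient of $\oDelta W_{\alpha\beta}{}^{\alpha\beta}$ on the right-hand side is $\tfrac{3\mfdim-10}{6}$ rather than the $\tfrac{3\mfdim-12}{6}=\tfrac{\mfdim-4}{2}$ that comes directly from the $-\oDelta W_{\alpha\beta}{}^{\alpha\beta}$ inside $\Wn$. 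The extra $\tfrac{1}{3}\oDelta W$ and the transverse Laplacian cannot be reached by the tools you name (Gauss--Codazzi--Ricci, the once-contracted Weyl--Bianchi identity \eqref{eqn:weyl-bianchi-divW}, and mechanical unpacking of $\mC$ and $\mB$). The crucial missing ingredient is the Lichnerowicz-type formula for $\Delta W_{abcd}$ in terms of $\nabla C$, Bach, and $W\ast W$ (Equation~\eqref{eqn:standard-DeltaW} in the paper), combined with the decomposition $\nabla^{\alpha^\prime}\nabla_{\alpha^\prime}W_{\alpha\beta}{}^{\alpha\beta} = \Delta W_{\alpha\beta}{}^{\alpha\beta} - \nabla^\gamma\nabla_\gamma W_{\alpha\beta}{}^{\alpha\beta}$ and a companion formula expressing $\nabla^\gamma\nabla_\gamma W_{\alpha\beta}{}^{\alpha\beta}$ through $\oDelta W_{\alpha\beta}{}^{\alpha\beta}$ and curvature-quadratic corrections (Equation~\eqref{eqn:oDeltaW}). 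These are what convert the $\nabla^c C_{cab}$ content of $\mB_\alpha{}^\alpha$ into the transverse Laplacian and supply the missing $\tfrac{1}{3}\oDelta W$ contribution. Without them the substitution stalls at the Bach--Cotton level and the matching fails.

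The paper packages this into an auxiliary quantity $\cmN$ (a specific combination of $\nabla^{\alpha^\prime}\nabla_{\alpha^\prime}W_{\alpha\beta}{}^{\alpha\beta}$, $\oDelta W_{\alpha\beta}{}^{\alpha\beta}$, $H^{\alpha^\prime}\nabla_{\alpha^\prime}W_{\alpha\beta}{}^{\alpha\beta}$, $\onabla^\alpha\mC_\alpha$, $\mB_\alpha{}^\alpha$, and lower-order terms) and proves that $\cmN$ equals a purely algebraic polynomial in $W$ and $\tfss$ (Equation~\eqref{eqn:cmN-invariant}); the proposition then follows by writing $\mN_2 = \tfrac{1}{\submfdim-1}\cmN + \tfrac{\mfdim-4}{(\submfdim-3)(\submfdim-6)}\Wn - \tfrac{2(\mfdim-\submfdim-1)}{(\submfdim-1)(\submfdim-3)}\cmJ_1 + \tfrac{8}{\submfdim-1}\mK_2 + \tfrac{4}{\submfdim-1}W_{\alpha\beta\alpha^\prime}{}^\beta W^{\alpha\gamma\alpha^\prime}{}_\gamma$ and specializing to $\submfdim=4$. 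You should add to your plan the identity for $\Delta W_{abcd}$ and the $\Delta$-versus-$\oDelta$ comparison before claiming the bookkeeping closes.
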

 
 \begin{remark}
  Equation~\eqref{eqn:CHO2} corrects two typos in~\cite{ChalabiHerzogOBannonRobinsonSisti2022}*{Equation~(3.3)}.
 \end{remark}

\section{Proof of \cref{submanifold-alexakis} in dimensions two and four}
\label{sec:alexakis}

We conclude this paper by proving \cref{submanifold-alexakis} in dimensions two and four.
First we identify spanning sets for the spaces of natural submanifold scalars of weight $-2$ and $-4$ modulo tangential divergences and scalar conformal submanifold invariants.
Then we compute the conformal linearization of the integral of an arbitrary linear combination of elements of our spanning sets in order to prove \cref{submanifold-alexakis}.

We begin with some terminology.
Let $r,s,t,u \in \bN_0$ and let $I$ be a natural submanifold tensor of weight~$w$ taking values in $(T^\ast Y)^{\otimes r} \otimes (TY)^{\otimes s} \otimes (N^\ast Y)^{\otimes t} \otimes (NY)^{\otimes u}$.
The \defn{tensor weight} of $I$ is defined to be $w - r - t + s + u$.
In particular,
\begin{enumerate}
 \item the tensor weight and the weight coincide for scalars;
 \item contraction between an upper and a lower index preserves the tensor weight;
 \item each of $g_{\alpha\beta}$, $g^{\alpha\beta}$, $g_{\alpha^\prime\beta^\prime}$, and $g^{\alpha^\prime\beta^\prime}$ has tensor weight $0$, so we can raise and lower indices without changing the tensor weight;
 \item $\tfss_{\alpha\beta\alpha^\prime}$ and $H_{\alpha^\prime}$ have tensor weight $-1$;
 \item the various projections of $R_{abcd}$ have tensor weight $-2$; and
 \item if $T$ has tensor weight $w$, then $\onabla T$ has tensor weight $w-1$.
\end{enumerate}

\subsection{The two-dimensional case}
\label{subsec:alexakis/2}

The proof of the two-dimensional case of \cref{submanifold-alexakis} is straightforward.

\begin{proof}[Proof of \cref{low-dimension-submanifold-alexakis} when $\submfdim=2$]
 Let $I$ be a natural scalar on $2$-dimensional submanifolds $Y$ of $\mfdim$-dimensional Riemannian manifolds $(X,g)$ whose integral over compact $Y$ is invariant under conformal rescaling of $g$.
 Then $I$ has weight $-2$.
 Evaluation of the tensor weight of tensors of the form~\eqref{eqn:partial contraction} shows that $I$ is a linear combination of complete contractions of $L \otimes L$, $\onabla L$, and projections of $\Rm$.
 The tensor $\onabla L$ has exactly one normal index, and hence cannot be completely contracted, while a projection of $\Rm$ can only be completely contracted if it has an even number of tangential indices.
 Since the Weyl tensor is trace-free, we see that
 \begin{equation}
  \label{eqn:reduce-weyl}
  W_{a\alpha^\prime b}{}^{\alpha^\prime} = -W_{a\alpha b}{}^\alpha ,
 \end{equation}
 and thus $W_{\alpha\beta}{}^{\alpha\beta} = -W_{\alpha\alpha^\prime}{}^{\alpha\alpha^\prime} = W_{\alpha^\prime\beta^\prime}{}^{\alpha^\prime\beta^\prime}$.
 Equation~\eqref{eqn:gcJ} now implies that
 \begin{equation*}
  I \in \vspan \{ W_{\alpha\beta}{}^{\alpha\beta} , \Sch_{\alpha^\prime}{}^{\alpha^\prime} , \otrSch, \lv H \rv^2 , \lv\tfss\rv^2 \} .
 \end{equation*}
 
 Clearly $W_{\alpha\beta}{}^{\alpha\beta}$ and $\lv \tfss \rv^2$ are conformally invariant of weight $-2$.
 In dimension two, $\oPf = \otrSch$.
 Therefore it suffices to show that if $I = a_1\lv H \rv^2 + a_2\Sch_{\alpha^\prime}{}^{\alpha^\prime}$, then $a_1=a_2=0$.
 The assumption of conformal invariance implies that
 \begin{equation*}
  0 = \int_Y (I \darea)^\bullet = -\int_Y \left( 2a_1H^{\alpha^\prime}\nabla_{\alpha^\prime}\Upsilon + a_2\nabla^{\alpha^\prime}\nabla_{\alpha^\prime}\Upsilon \right) \darea
 \end{equation*}
 for all $\Upsilon \in C^\infty(Y)$.
 Choosing first $\Upsilon$ such that $\nabla_{\alpha^\prime}\Upsilon=0$ along $Y$ implies that $a_2=0$;
 then taking a non-minimal immersion implies that $a_1=0$.
\end{proof}

\subsection{The four-dimensional case}
\label{subsec:alexakis/4}

We begin by finding a spanning set for the space of natural submanifold scalars of weight $-4$ modulo tangential divergences and scalar conformal submanifold invariants.
As a means to organize this set, we say that $I^\bullet$ \defn{depends only on the transverse $j$-jet of the conformal factor} if it vanishes for all $\Upsilon \in C^\infty(Y)$ such that $\nabla_{\alpha_1^\prime} \dotsm \nabla_{\alpha_k^\prime} \Upsilon = 0$ along $Y$ for all nonnegative integers $k \leq j$.

\begin{lemma}
 \label{lem:4d-invariants}
 Let $\mfdim > 4$.
 Denote by $\sI_4$ the space of natural submanifold scalars of weight $-4$ for immersions $i \colon Y^4 \to (X^{\mfdim},g)$.
 Let $\sI_{4,\divsymb} \subseteq \sI_4$ be the subspace of natural tangential divergences and let $\sI_{4,c} \subseteq \sI_4$ be the subspace of conformal submanifold invariants.
 Set
 \begin{align*}
  \sI_4^0 & := \{ \lp \Fi, \oSch \rp , W_{\alpha\beta\alpha^\prime}{}^\beta\Di^{\alpha\alpha^\prime} , \lv\tfss\rv^2\otrSch , \lp \tfss^2 , \oSch \rp , \otrSch^2 , \lv \oSch \rv^2 , \trFi\otrSch , \lv \Di \rv^2 \} , \\
  \sI_4^1 & := \{ H^{\alpha^\prime}\oDelta H_{\alpha^\prime} , H^{\alpha^\prime}\onabla^\alpha\Di_{\alpha\alpha^\prime} , H^{\alpha^\prime}\onabla^\alpha W_{\alpha\beta\alpha^\prime}{}^\beta , \lv H \rv^4 , \lv H \rv^2 \lv \tfss \rv^2 ,  \\
   & \qquad H^{\alpha^\prime}\tfss_{\alpha\beta\alpha^\prime}\tfss^{\alpha\beta\beta^\prime}H_{\beta^\prime} , \lv H \rv^2\otrSch , \trFi\lv H \rv^2, H^{\alpha^\prime} \tr \tfss^3_{\alpha^\prime} , H^{\alpha^\prime} \tfss_{\alpha\beta\alpha^\prime} \Fi^{\alpha\beta} , \\
   & \qquad H^{\alpha^\prime} \tfss_{\alpha\beta\alpha^\prime} \oSch^{\alpha\beta} , H^{\alpha^\prime} H^{\beta^\prime} W_{\alpha^\prime\alpha\beta^\prime}{}^\alpha , H^{\alpha^\prime} \tfss^{\alpha\beta\beta^\prime} W_{\alpha\alpha^\prime\beta\beta^\prime}, H^{\alpha^\prime} \mC_{\alpha^\prime} \} , \\
  \sI_4^2 & := \{ \trFi\Sch_{\alpha^\prime}{}^{\alpha^\prime} , \Sch^{\alpha^\prime\beta^\prime}W_{\alpha^\prime\alpha\beta^\prime}{}^\alpha , \lv H\rv^2\Sch_{\alpha^\prime}{}^{\alpha^\prime}, \lv\tfss\rv^2 \Sch_{\alpha^\prime}{}^{\alpha^\prime} , H^{\alpha^\prime} H^{\beta^\prime} \Sch_{\alpha^\prime\beta^\prime} , \\
   & \qquad \tfss_{\alpha\beta}{}^{\alpha^\prime} \tfss^{\alpha\beta\beta^\prime}\Sch_{\alpha^\prime\beta^\prime}, \otrSch \Sch_{\alpha^\prime}{}^{\alpha^\prime} , \Sch_{\alpha^\prime}{}^{\alpha^\prime} \Sch_{\beta^\prime}{}^{\beta^\prime} , \Sch_{\alpha^\prime\beta^\prime}\Sch^{\alpha^\prime\beta^\prime} \} , \\
  \sI_4^3 & := \{ H^{\alpha^\prime} \nabla_{\alpha^\prime} \trSch \} , \\
  \sI_4^4 & := \{ \Delta \trSch \} ,
 \end{align*}
 where we recall our conventions~\eqref{eqn:conventions}.
 Then
 \begin{equation*}
  \sI_4 = \sI_{4,\divsymb} + \sI_{4,c} + \vspan \bigcup_{j=0}^4 \sI_4^j .
 \end{equation*}
 Moreover, for each integer $0 \leq j \leq 4$, the conformal linearizations of the elements of $\sI_4^j$ depend only on the transverse $j$-jet of the conformal factor.
\end{lemma}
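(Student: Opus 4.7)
The plan is to start from a complete list of monomial candidates for natural submanifold scalars of weight $-4$ and then reduce that list modulo $\sI_{4,\divsymb} + \sI_{4,c}$ to the stated 33 generators. Every such scalar arises, as in Subsection~\ref{subsec:natural}, as a complete contraction of a tensor of the form~\eqref{eqn:partial contraction}, and the tensor-weight bookkeeping recalled before the lemma limits the possible factors. I would first use the Gauss--Codazzi--Ricci equations~\eqref{eqn:gauss-codazzi} to trade tangential projections of $\Rm$ for $\oRm$ (equivalently $\oW$ and $\oSch$, since $\submfdim=4$), the various projections of $W$, the trace-free second fundamental form $\tfss$, the mean curvature $H$, and the Fialkow tensor $\Fi$. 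This produces an explicit but redundant finite spanning list organized by the number and type of normal-indexed factors.

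Next I would systematically cut the list down using integration by parts, the contracted Bianchi identity, the Weyl--Bianchi identity~\eqref{eqn:weyl-bianchi}, and the Codazzi relations \eqref{eqn:gcdL}--\eqref{eqn:gcD} to remove a single $\onabla$ landing on $W$ or on $\tfss$, at the cost of divergence terms and curvature polynomials. In particular this is how monomials of the form $\onabla^\alpha\onabla^\beta(\cdot\,)$ and $\onabla^\beta W_{\alpha\gamma\beta}{}^\gamma$ are eliminated. After this step only contractions involving $\oDelta$ of a scalar, a normal-indexed derivative, or undifferentiated curvature and second fundamental form factors survive.

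Then I would reduce modulo $\sI_{4,c}$. The manifestly conformally invariant combinations $\lv\oW\rv^2$, $\lv W\rv^2$ in its various projections, $\lv\tfss\rv^2$-contractions with $W$ and with itself, $\lv\Fi\rv^2$, $\trFi^2$, and the conformally invariant scalars $\lv\oW\rv^2$ and various polynomial $\tfss$--$W$ invariants remove the obvious candidates. The non-obvious invariants $\mK_1,\mK_2$ of \cref{conformally-invariant-divergence} and $\Wm,\Wn$ of \cref{Wm,Wn} absorb the remaining combinations involving $\oDelta\trFi$, $\oDelta W_{\alpha\beta}{}^{\alpha\beta}$, $\onabla^\alpha\mC_\alpha$, and $\onabla^\alpha(\Di^{\beta\alpha^\prime}\tfss_{\alpha\beta\alpha^\prime})$ that would otherwise appear. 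What remains is precisely the union $\bigcup_j\sI_4^j$.

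Finally, for the jet-dependence claim, I would appeal to the conformal transformation formulas of Subsection~\ref{subsec:conformal-submanifold-invariants}. The tensors $\tfss$, $\oW$, $\Fi$, and $\trFi$ are conformally invariant; the tensors $\oSch$, $\mP$, $\mC$, $\mB$, and $\Di$ satisfy \cref{new-tangential-invariants} so their linearizations involve only tangential derivatives of $\Upsilon$; and $H^\bullet_{\alpha^\prime}=-\Upsilon_{\alpha^\prime}$, $\Sch^\bullet_{\alpha^\prime\beta^\prime}=-\Upsilon_{\alpha^\prime\beta^\prime}$, while the linearizations of the ambient scalars $\nabla_{\alpha^\prime}\trSch$ and $\Delta\trSch$ require three and four transverse derivatives of $\Upsilon$, respectively. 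A factor-by-factor count through each generator in $\sI_4^j$ then matches the index $j$.

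The main obstacle is the sheer combinatorial bookkeeping in Steps~1--3: one must verify that the initial monomial list is exhaustive, that no relation among the candidates is missed once Bianchi, Codazzi, and Fialkow substitutions are applied, and that the 33 elements chosen as representatives are indeed independent modulo $\sI_{4,\divsymb}+\sI_{4,c}$. Independence will not be used in the sequel --- the spanning property is what powers the conformal-variation argument for \cref{submanifold-alexakis} --- so a controlled overcount is acceptable, but completeness of the spanning set is essential and must be checked case by case on the number of $\tfss$, $H$, $W$, and $\oSch$ factors.
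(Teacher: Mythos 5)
Your plan matches the paper's proof in all essentials: enumerate monomial candidates via the tensor-weight count, reduce modulo tangential divergences using the Gauss--Codazzi equations and the Bianchi/Weyl--Bianchi identities, absorb the remaining non-obvious terms using the invariants $\mK_1$, $\mK_2$, $\Wm$, $\Wn$ from \cref{sec:invariants}, and check the transverse-jet dependence factor by factor from the conformal linearization formulas. The only cosmetic difference is organizational --- the paper carries out the reduction in three steps indexed by decreasing total derivative order (analyzing $\Contr(\nabla^2\Rm,\,L\otimes\onabla^2L,\,\onabla L\otimes\onabla L)$ first, then $\Contr(L\otimes\nabla\Rm,\,\onabla L\otimes\Rm)$, then the undifferentiated terms) before regrouping the survivors into the $\sI_4^j$ by jet order --- but the tools and the logic are identical.
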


\begin{proof}
 It is clear from the discussion of Subsection~\ref{subsec:conformal-submanifold-invariants} that if $I \in \sI_4^j$, then $I^\bullet$ depends only on the transverse $j$-jet of the conformal factor.
 
 Evaluation of the tensor weight of tensors of the form~\eqref{eqn:partial contraction} shows that if $I$ is a natural submanifold scalar of weight $-4$, then
 \begin{equation*}
  I \in \Contr( \nabla^2\Rm , L \otimes \onabla^2 L , \onabla L \otimes \onabla L ,  L \otimes \nabla\Rm, \onabla L \otimes \Rm, L^{\otimes 4} , L^{\otimes 2} \otimes \Rm, \Rm^{\otimes 2} ) ,
 \end{equation*}
 where $\Contr( \, \dotsm )$ denotes the space of linear combinations of complete contractions of the various projections of its arguments.
 We consider such contractions in decreasing order of the total number of derivatives of $L$ and $\Rm$ taken.
 
 \underline{Step 1: Analyze $\Contr( \nabla^2\Rm , L \otimes \onabla^2L ,\onabla L \otimes \onabla L)$}.
 Given two natural submanifold scalars $A$ and $B$ of weight $-4$, write $A \sim B$ if
 \begin{equation*}
  A - B \in \sI_{4,\divsymb} + \sI_{4,c} + \Contr( L \otimes \nabla \Rm, \onabla L \otimes \Rm, L^{\otimes 4} , L^{\otimes 2} \otimes \Rm, \Rm^{\otimes 2} ) .
 \end{equation*}
 
 We first consider $\Contr( \nabla^2\Rm )$.
 The definition~\eqref{eqn:second-fundamental-form-dual} of the second fundamental form implies that if $V_\alpha$ is a partial contraction of a projection of $\nabla\Rm$ with values in $T^\ast Y$, then $\nabla^\alpha V_\alpha \sim \onabla^\alpha V_\alpha \sim 0$.
 By combining this with the Ricci identity, we see that if $I$ is a complete contraction of a projection of $\nabla^2\Rm$ with at least one tangential index on a covariant derivative, then $I \sim 0$.
 Therefore
 \begin{multline*}
  \Contr( \nabla^2\Rm ) \subseteq \Contr( \nabla_{\alpha^\prime}\nabla_{\beta^\prime} R_{abcd} ) + \sI_{4,\divsymb} + \sI_{4,c} \\
   + \Contr(L \otimes \nabla\Rm, \onabla L \otimes \Rm, L^{\otimes 4} , L^{\otimes 2} \otimes \Rm, \Rm^{\otimes 2}) .
 \end{multline*}
 \Cref{Wm} implies that $\mB_\alpha{}^\alpha \sim (\mfdim-4)\lv\Di\rv^2$.
 Therefore
 \begin{equation}
  \label{eqn:nabla2Rm-cases}
  \begin{split}
   \nabla^{\alpha^\prime}\nabla_{\alpha^\prime}\trSch & \sim \Delta\trSch , \\
   \nabla^{\alpha^\prime}C_{\alpha^\prime\alpha}{}^\alpha & \sim \mB_\alpha{}^\alpha \sim (\mfdim-4)\lv\Di\rv^2 .
  \end{split}
 \end{equation}
 Equations~\eqref{eqn:nabla2Rm-cases} and direct computation yield
 \begin{align*}
  \nabla^{\alpha^\prime}\nabla_{\alpha^\prime}\Sch_\alpha{}^\alpha & = \nabla^{\alpha^\prime}C_{\alpha^\prime\alpha}{}^\alpha + \nabla^{\alpha^\prime}\nabla_\alpha\Sch^\alpha{}_{\alpha^\prime} \sim (\mfdim-4)\lv\Di\rv^2 , \\
  \nabla^{\alpha^\prime}\nabla_{\alpha^\prime}\Sch_{\beta^\prime}{}^{\beta^\prime} & = \nabla^{\alpha^\prime}\nabla_{\alpha^\prime}\trSch - \nabla^{\alpha^\prime}\nabla_{\alpha^\prime}\Sch_{\alpha}{}^{\alpha} \sim \Delta\trSch - (\mfdim-4)\lv\Di\rv^2 , \\
  \nabla^{\alpha^\prime}\nabla^{\beta^\prime}\Sch_{\alpha^\prime\beta^\prime} & = \nabla^{\alpha^\prime}\nabla_{\alpha^\prime}\trSch - \nabla^{\alpha^\prime}\nabla^\alpha\Sch_{\alpha\alpha^\prime} \sim \Delta\trSch .
 \end{align*}
 Similarly, Equations~\eqref{eqn:weyl-bianchi}, \eqref{eqn:W-to-C}, and~\eqref{eqn:nabla2Rm-cases} yield
 \begin{align*}
  \nabla^{\alpha^\prime}\nabla_{\alpha^\prime} W_{\alpha\beta}{}^{\alpha\beta} & = 2\nabla^{\alpha^\prime}\nabla_\alpha W_{\alpha^\prime\beta}{}^{\alpha\beta} + 6\nabla^{\alpha^\prime} C_{\alpha\alpha^\prime}{}^\alpha \sim -6(\mfdim-4)\lv\Di\rv^2 , \\
  \nabla^{\alpha^\prime}\nabla^{\beta^\prime} W_{\alpha^\prime\alpha\beta^\prime}{}^\alpha & \sim (\mfdim-3)\nabla^{\alpha^\prime} C_{\alpha^\prime\alpha}{}^\alpha \sim (\mfdim-3)(\mfdim-4)\lv\Di\rv^2 .
 \end{align*}
 We conclude from Equation~\eqref{eqn:reduce-weyl} that
 \begin{multline}
  \label{eqn:nabla2Rm-contr}
  \Contr(\nabla^2\Rm) \subseteq \vspan \{ \Delta\trSch , \lv\Di\rv^2 \} + \sI_{4,\divsymb} + \sI_{4,c} \\
   + \Contr( L \otimes \nabla\Rm , \onabla L \otimes \Rm , L^{\otimes 4} , L^{\otimes 2} \otimes \Rm , \Rm^{\otimes 2} ).
 \end{multline}
 
 We now consider $\Contr( L \otimes \onabla^2L , \onabla L \otimes \onabla L)$.
 Since $\Contr( \onabla( L \otimes \onabla L)) \subseteq \sI_{4,\divsymb}$, we see that
 \begin{equation*}
  \Contr( L \otimes \onabla^2L , \onabla L \otimes \onabla L) \subseteq \Contr( L \otimes \onabla^2L ) + \sI_{4,\divsymb} .
 \end{equation*}
 The Ricci identity and Gauss equation imply that any element of $\Contr( L \otimes \onabla^2L )$ is a linear combination of
 \begin{equation*}
  H^{\alpha^\prime}\oDelta H_{\alpha^\prime} , \tfss^{\alpha\beta\alpha^\prime}\oDelta\tfss_{\alpha\beta\alpha^\prime} , H^{\alpha^\prime}\onabla^\alpha\onabla^\beta\tfss_{\alpha\beta\alpha^\prime}, \tfss^{\alpha\beta\alpha^\prime}\onabla_\alpha\onabla_\beta H_{\alpha^\prime}, \tfss^{\alpha\beta\alpha^\prime}\onabla_{\alpha}\onabla^\gamma\tfss_{\gamma\beta\alpha^\prime} ,
 \end{equation*}
 and elements of $\Contr( L^{\otimes 4}, L^{\otimes 2} \otimes \Rm)$.
 Equations~\eqref{eqn:defn-Di} and~\eqref{eqn:gcD} imply that
 \begin{equation}
  \label{eqn:simplify-Lnabla2L}
  \begin{aligned}
   \tfss^{\alpha\beta\alpha^\prime}\onabla_\alpha\onabla_\beta H_{\alpha^\prime} & \sim H^{\alpha^\prime}\onabla^\alpha\onabla^\beta\tfss_{\alpha\beta\alpha^\prime} \sim -3H^{\alpha^\prime}\onabla^\alpha\Di_{\alpha\alpha^\prime} \sim 3H^{\alpha^\prime}\oDelta H_{\alpha^\prime} , \\
   \tfss^{\alpha\beta\alpha^\prime}\onabla_{\alpha}\onabla^\gamma\tfss_{\gamma\beta\alpha^\prime} & \sim 3\tfss^{\alpha\beta\alpha^\prime}\onabla_\alpha\onabla_\beta H_{\alpha^\prime} \sim 9H^{\alpha^\prime}\oDelta H_{\alpha^\prime} .
  \end{aligned}
 \end{equation}
 Therefore
 \begin{multline*}
  \Contr( L \otimes \onabla^2L , \onabla L \otimes \onabla L ) \subseteq \vspan \{ H^{\alpha^\prime}\oDelta H_{\alpha^\prime}, \tfss^{\alpha\beta\alpha^\prime}\oDelta\tfss_{\alpha\beta\alpha^\prime} \} \\
   + \sI_{4,\divsymb} + \sI_{4,c} + \Contr(L \otimes \nabla\Rm, \onabla L \otimes \Rm, L^{\otimes 4} , L^{\otimes 2} \otimes \Rm, \Rm^{\otimes 2}) .
 \end{multline*}
 Similarly, Equation~\eqref{eqn:defn-Di} implies that $\lv\Di\rv^2 \sim -H^{\alpha^\prime}\oDelta H_{\alpha^\prime}$, and Equations~\eqref{eqn:defn-Di}, \eqref{eqn:gcdL}, and~\eqref{eqn:simplify-Lnabla2L} imply that
 \begin{align*}
  \tfss^{\alpha\beta\alpha^\prime}\oDelta\tfss_{\alpha\beta\alpha^\prime} & = \tfss^{\alpha\beta\alpha^\prime}\onabla^\gamma(\onabla_\alpha\tfss_{\gamma\beta\alpha^\prime} + 2\onabla_{[\gamma}\tfss_{\alpha]\beta\alpha^\prime}) \\
  & \sim \tfss^{\alpha\beta\alpha^\prime}(\onabla_\alpha\onabla^\gamma\tfss_{\gamma\beta\alpha^\prime} - \onabla_\alpha\onabla_\beta H_{\alpha^\prime}) \\
  & \sim 6H^{\alpha^\prime}\oDelta H_{\alpha^\prime} .
 \end{align*}
 We conclude from Statement~\eqref{eqn:nabla2Rm-contr} that
 \begin{multline}
  \label{eqn:second-order}
   \Contr( \nabla^2\Rm , L \otimes \onabla^2L , \onabla L \otimes \onabla L) \subseteq \vspan \{ \Delta\trSch , H^{\alpha^\prime} \oDelta H_{\alpha^\prime} \} + \sI_{4,\divsymb} \\
    + \sI_{4,c} + \Contr( L \otimes \nabla\Rm, \onabla L \otimes \Rm , L^{\otimes 4} , L^{\otimes 2} \otimes \Rm , \Rm^{\otimes 2}) .
 \end{multline}

 \underline{Step 2: Analyze $\Contr( L \otimes \nabla\Rm , \onabla L \otimes\Rm )$}.
 Given two natural submanifold scalars $A$ and $B$ of weight $-4$, write $A \approx B$ if
 \begin{equation*}
  A - B \in \sI_{4,\divsymb} + \sI_{4,c} + \Contr( L^{\otimes 4} , L^{\otimes 2} \otimes \Rm , \Rm^{\otimes 2}) .
 \end{equation*}
 
 Since $\Contr( \onabla( L \otimes \Rm) ) \subseteq \sI_{4,\divsymb}$, we deduce from Equation~\eqref{eqn:second-fundamental-form-dual} that
 \begin{equation*}
  \Contr( L \otimes \nabla\Rm , \onabla L \otimes \Rm) \subseteq \Contr( L \otimes \nabla\Rm) + \sI_{4,\divsymb} + \Contr( L^{\otimes 2} \otimes \Rm) .
 \end{equation*}
 In particular, it suffices to analyze $\Contr( L \otimes \nabla\Rm )$.
 
 First, Equations~\eqref{eqn:weyl-bianchi} and~\eqref{eqn:second-fundamental-form-dual}--\eqref{eqn:defn-Di} imply that
 \begin{align*}
  \nabla_\alpha\Sch_{\beta\alpha^\prime} & \equiv \onabla_\alpha\onabla_\beta H_{\alpha^\prime} + \onabla_\alpha\Di_{\beta\alpha^\prime} , \\
  \nabla_{\alpha^\prime}\Sch_{\alpha\beta} & \equiv -\mC_{\alpha\alpha^\prime\beta} + \onabla_\alpha\onabla_\beta H_{\alpha^\prime} + \onabla_\alpha\Di_{\beta\alpha^\prime} , \\
  \nabla^\gamma W_{\alpha\gamma\beta\alpha^\prime} & \equiv \onabla^\gamma W_{\alpha\gamma\beta\alpha^\prime} , \\
  \nabla_{\alpha^\prime} W_{\alpha\gamma\beta}{}^\gamma & \equiv \onabla_\alpha W_{\alpha^\prime\gamma\beta}{}^\gamma + \onabla_\gamma W_{\alpha\alpha^\prime\beta}{}^\gamma + 2\mC_{\alpha\alpha^\prime\beta} + \mC_{\alpha^\prime} g_{\alpha\beta} ,
 \end{align*}
 where $\equiv$ denotes equivalence modulo partial contractions of $L \mathop{\otimes} \Rm \mathop{\otimes} g$.
 Recalling Equation~\eqref{eqn:gcD}, we deduce that any complete contraction of a projection of $L_{\alpha\beta\alpha^\prime}\nabla^{\alpha^\prime} R_{abcd}$ is a linear combination of
 \begin{align*}
  H^{\alpha^\prime}\nabla_{\alpha^\prime}\trSch , \\
  H^{\alpha^\prime}\nabla_{\alpha^\prime}\Sch_\alpha{}^\alpha & \approx -H^{\alpha^\prime} \mC_{\alpha^\prime} + H^{\alpha^\prime}\oDelta H_{\alpha^\prime} + H^{\alpha^\prime}\onabla^\alpha\Di_{\alpha\alpha^\prime} , \\
  H^{\alpha^\prime}\nabla_{\alpha^\prime} W_{\alpha\beta}{}^{\alpha\beta} & \approx 2H^{\alpha^\prime}\onabla^\alpha W_{\alpha\beta\alpha^\prime}{}^\beta + 6H^{\alpha^\prime} \mC_{\alpha^\prime} , \\
  \tfss^{\alpha\beta\alpha^\prime}\nabla_{\alpha^\prime}\Sch_{\alpha\beta} & \approx -\tfss^{\alpha\beta\alpha^\prime}\mC_{\alpha\alpha^\prime\beta} + 3\lv\Di\rv^2 + \Di^{\alpha\alpha^\prime}W_{\alpha\beta\alpha^\prime}{}^\beta \\
   & \quad - 3H^{\alpha^\prime}\onabla^\alpha\Di_{\alpha\alpha^\prime} - H^{\alpha^\prime}\onabla^\alpha W_{\alpha\beta\alpha^\prime}{}^\beta , \\
  \tfss^{\alpha\beta\alpha^\prime}\nabla_{\alpha^\prime} W_{\alpha\gamma\beta}{}^\gamma & \approx 3\Di^{\alpha\alpha^\prime}W_{\alpha\beta\alpha^\prime}{}^\beta + \tfss^{\alpha\beta\alpha^\prime}\onabla^\gamma W_{\alpha\alpha^\prime\beta\gamma} + 2\tfss^{\alpha\beta\alpha^\prime}\mC_{\alpha\alpha^\prime\beta} .
 \end{align*}
 Similarly, any complete contraction of a projection of $L_{\alpha\beta\alpha^\prime}\nabla_aR_{bcd}{}^{\alpha^\prime}$ is a linear combination of
 \begin{align*}
  H^{\alpha^\prime}\nabla^\alpha W_{\alpha\beta\alpha^\prime}{}^\beta & \approx H^{\alpha^\prime}\onabla^\alpha W_{\alpha\beta\alpha^\prime}{}^\beta , \\
  H^{\alpha^\prime}\nabla^\alpha\Sch_{\alpha\alpha^\prime} & \approx H^{\alpha^\prime}\oDelta H_{\alpha^\prime} + H^{\alpha^\prime}\onabla^\alpha\Di_{\alpha\alpha^\prime} , \\
  \tfss^{\alpha\beta\alpha^\prime}\nabla_\alpha\Sch_{\beta\alpha^\prime} & \approx 3\lv\Di\rv^2 + \Di^{\alpha\alpha^\prime}W_{\alpha\beta\alpha^\prime}{}^\beta - 3H^{\alpha^\prime}\onabla^\alpha\Di_{\alpha\alpha^\prime} - H^{\alpha^\prime}\onabla^\alpha W_{\alpha\beta\alpha^\prime}{}^\beta , \\
  \tfss^{\alpha\beta\alpha^\prime}\nabla_\alpha W_{\beta\gamma\alpha^\prime}{}^\gamma & \approx 3\Di^{\alpha\alpha^\prime}W_{\alpha\beta\alpha^\prime}{}^\beta , \\
  \tfss^{\alpha\beta\alpha^\prime}\nabla^\gamma W_{\alpha\gamma\beta\alpha^\prime} & \approx \tfss^{\alpha\beta\alpha^\prime}\onabla^\gamma W_{\alpha\gamma\beta\alpha^\prime} , 
 \end{align*}
 and
 \begin{align*}
  H^{\alpha^\prime}\nabla^{\beta^\prime} W_{\alpha^\prime\alpha\beta^\prime}{}^\alpha & \approx -(\mfdim-3)H^{\alpha^\prime} \mC_{\alpha^\prime} - H^{\alpha^\prime}\nabla^\alpha W_{\alpha\beta\alpha^\prime}{}^\beta , \\
  H^{\alpha^\prime}\nabla^{\beta^\prime}\Sch_{\alpha^\prime\beta^\prime} & = H^{\alpha^\prime}\nabla_{\alpha^\prime}\trSch - H^{\alpha^\prime} \nabla^\alpha \Sch_{\alpha\alpha^\prime} , \\
  \tfss^{\alpha\beta\alpha^\prime}\nabla^{\beta^\prime} W_{\alpha\alpha^\prime\beta\beta^\prime} & \approx -(\mfdim-3)\tfss^{\alpha\beta\alpha^\prime}\mC_{\alpha\alpha^\prime\beta} - \tfss^{\alpha\beta\alpha^\prime}\nabla^\gamma W_{\alpha\gamma\beta\alpha^\prime} .
 \end{align*}
 Combining these computations with \cref{rk:mK}, \cref{Wm,Wn}, and Statement~\eqref{eqn:second-order} yields
 \begin{equation}
  \label{eqn:first-order}
  \begin{aligned}
   \MoveEqLeft \Contr( \nabla^2\Rm , L \otimes \onabla^2L , \onabla L \otimes \onabla L , L \otimes \nabla\Rm , \onabla L \otimes \Rm) \\
   & \subseteq \vspan \{ \Delta\trSch , H^{\alpha^\prime}\oDelta H_{\alpha^\prime} , H^{\alpha^\prime}\nabla_{\alpha^\prime}\trSch , H^{\alpha^\prime}\onabla^\alpha\Di_{\alpha\alpha^\prime} , \lv \Di \rv^2 , \\
   & \qquad\qquad H^{\alpha^\prime}\mC_{\alpha^\prime} , H^{\alpha^\prime}\onabla^\alpha W_{\alpha\beta\alpha^\prime}{}^\beta , \Di^{\alpha\alpha^\prime}W_{\alpha\beta\alpha^\prime}{}^\beta \} \\
   & \quad + \sI_{4,\divsymb} + \sI_{4,c} + \Contr( L^{\otimes 4} , L^{\otimes 2} \otimes \Rm , \Rm^{\otimes 2}) .
  \end{aligned}
 \end{equation}
 
 \underline{Step 3: Analyze $\Contr( L^{\otimes 4}, L^{\otimes 2} \otimes \Rm , \Rm^{\otimes 2} )$}.
 It is clear that
 \begin{equation}
  \label{eqn:obvious-invariants}
  \Contr( \tfss^{\otimes 4} , \tfss^{\otimes 2} \otimes W , W^{\otimes 2}) \subseteq \sI_{4,c} .
 \end{equation}
 Denote by
 \begin{equation*}
  \sI_{(0)} := \Contr( L^{\otimes 4} , L^{\otimes 2} \otimes \Rm , \Rm^{\otimes 2} ) \cap \bigcup_{j=0}^4 \sI_4^j
 \end{equation*}
 the set of undifferentiated scalars in $\bigcup \sI_4^j$.
 We will show that
 \begin{equation}
  \label{eqn:step3-claim}
  \Contr( L^{\otimes 4} , L^{\otimes 2} \otimes \Rm , \Rm^{\otimes 2} ) \subseteq \sI_{4,\divsymb} + \sI_{4,c} + \vspan \sI_{(0)}  .
 \end{equation}
 This together with Statement~\eqref{eqn:first-order} then completes the proof.
 
 We first show that
 \begin{equation}
  \label{eqn:step3-rm2-claim}
  \begin{aligned}
   \MoveEqLeft[0] \Contr( \Rm^{\otimes 2} ) \subseteq \vspan \Bigl\{ W_{\alpha^\prime\alpha\beta^\prime}{}^\alpha \Sch^{\alpha^\prime\beta^\prime} , \Sch_{\alpha^\prime}{}^{\alpha^\prime}\Sch_{\beta^\prime}{}^{\beta^\prime} , \Sch^{\alpha^\prime\beta^\prime}\Sch_{\alpha^\prime\beta^\prime} , \trFi\Sch_{\alpha^\prime}{}^{\alpha^\prime} , \otrSch \Sch_{\alpha^\prime}{}^{\alpha^\prime} , \\
    & \qquad\qquad \Di^{\alpha\alpha^\prime}W_{\alpha\beta\alpha^\prime}{}^{\beta} , H^{\alpha^\prime}\onabla^\alpha W_{\alpha\beta\alpha^\prime}{}^{\beta} , \lv \Di \rv^2 , H^{\alpha^\prime}\onabla^\alpha \Di_{\alpha\alpha^\prime} , H^{\alpha^\prime}\oDelta H_{\alpha^\prime} , \\
    & \qquad\qquad \otrSch^2 , \lv\oSch\rv^2 , \trFi\otrSch , \lp \Fi, \oSch \rp \Bigr\}  + \Contr( L^{\otimes 4} , L^{\otimes 2} \otimes \Rm ) + \sI_{4,\divsymb} + \sI_{4,c} .
  \end{aligned}
 \end{equation}
 To that end, for each nonnegative integer $j$, let $\Contr_j(\Rm^{\otimes 2})$ be the set of elements of $\Contr(\Rm^{\otimes 2})$ that can be expressed as a linear combination of complete contractions of projections of $W \mathop{\otimes} \Sch$ and $\Sch^{\otimes 2}$ with at least $j$ tangential contractions.
 Statement~\eqref{eqn:obvious-invariants} and the decomposition~\eqref{eqn:defn-weyl} of $\Rm$ imply that
 \begin{equation}
  \label{eqn:step3-prereduce}
  \Contr( \Rm^{\otimes 2} ) \subseteq \Contr_0( \Rm^{\otimes 2} ) + \sI_{4,c} ,
 \end{equation}
 while an index count implies that $\Contr_j(\Rm^{\otimes 2}) = \{ 0 \}$ for all $j > 3$.

 Using Equation~\eqref{eqn:reduce-weyl} yields
 \begin{equation*}
  \Contr_0(\Rm^{\otimes 2}) \subseteq \Contr( \Sch_{\alpha^\prime\beta^\prime}\Sch_{\gamma^\prime\delta^\prime} ) + \Contr_1( \Rm^{\otimes 2} ) .
 \end{equation*}
 Therefore
 \begin{equation}
  \label{eqn:step3-reduce0}
  \Contr_0(\Rm^{\otimes 2}) \subseteq \vspan \{ \Sch_{\alpha^\prime}{}^{\alpha^\prime}\Sch_{\beta^\prime}{}^{\beta^\prime} , \Sch^{\alpha^\prime\beta^\prime}\Sch_{\alpha^\prime\beta^\prime} \} + \Contr_1( \Rm^{\otimes 2} ) .
 \end{equation}
 
 Using Equation~\eqref{eqn:reduce-weyl} again yields
 \begin{equation*}
  \Contr_1( \Rm^{\otimes 2}) \subseteq \Contr( W_{\alpha\alpha^\prime\beta\beta^\prime}\Sch_{\gamma^\prime\delta^\prime} , \Sch_{\alpha\beta}\Sch_{\alpha^\prime\beta^\prime}, \Sch_{\alpha\alpha^\prime}\Sch_{\beta\beta^\prime} ) + \Contr_2(\Rm^{\otimes 2} ) .
 \end{equation*}
 Equation~\eqref{eqn:defn-Fialkow} implies that the Fialkow tensor $\Fi$ can be written as a linear combination of partial contractions of $L^2_{\alpha\beta}g_{\gamma\delta}$ and $W_{\alpha\beta\gamma\delta}g_{\epsilon\zeta}$.
 Combining this observation with Equations~\eqref{eqn:defn-Di}, \eqref{eqn:gcP}, and~\eqref{eqn:reduce-weyl} yields
 \begin{multline}
  \label{eqn:step3-reduce1}
   \Contr_1( \Rm^{\otimes 2}) \subseteq \Contr_2(\Rm^{\otimes 2}) + \Contr(L^{\otimes 2} \otimes \Rm) + \sI_{4,\divsymb} \\
    + \vspan \{ W_{\alpha^\prime\alpha\beta^\prime}{}^\alpha\Sch^{\alpha^\prime\beta^\prime} , \otrSch \Sch_{\alpha^\prime}{}^{\alpha^\prime} , \lv \Di \rv^2 ,H^{\alpha^\prime}\onabla^{\alpha}\Di_{\alpha\alpha^\prime} , H^{\alpha^\prime}\oDelta H_{\alpha^\prime} \} .
 \end{multline}
 
 Applying Equation~\eqref{eqn:reduce-weyl} yet again yields
 \begin{equation*}
  \Contr_2( \Rm^{\otimes 2}) \subseteq \Contr( W_{\alpha\beta\gamma\alpha^\prime}\Sch_{\delta\beta^\prime} , W_{\alpha\beta\gamma\delta}\Sch_{\alpha^\prime\beta^\prime} , \Sch_{\alpha\beta}\Sch_{\gamma\delta} ) + \Contr_3(\Rm^{\otimes 2}) .
 \end{equation*}
 Similar to the previous paragraph, Equations~\eqref{eqn:defn-Di}, \eqref{eqn:gcP}, and~\eqref{eqn:defn-Fialkow} yield
 \begin{multline}
  \label{eqn:step3-reduce2}
   \Contr_2( \Rm^{\otimes 2} ) \subseteq \vspan \{ \Di^{\alpha\alpha^\prime}W_{\alpha\beta\alpha^\prime}{}^{\beta} , H^{\alpha^\prime}\onabla^\alpha W_{\alpha\beta\alpha^\prime}{}^\beta , \trFi \Sch_{\alpha^\prime}{}^{\alpha^\prime} , \otrSch^2 , \lv \oSch \rv^2 \} \\
    + \Contr_3(\Rm^{\otimes 2}) + \Contr( L^{\otimes 4} , L^{\otimes 2} \otimes \Rm ) + \sI_{4,\divsymb} + \sI_{4,c} .
 \end{multline}
 
 An index count and Equation~\eqref{eqn:defn-Fialkow} imply that
 \begin{align*}
  \Contr_3(\Rm^{\otimes 2}) \subseteq \Contr( W_{\alpha\beta\gamma\delta}\Sch_{\epsilon\zeta} ) \subseteq \Contr( \Fi_{\alpha\beta}\Sch_{\gamma\delta}) + \Contr( L^{\otimes 2} \otimes \Rm)  .
 \end{align*}
 Combining this with Equations~\eqref{eqn:gcP} and~\eqref{eqn:defn-Fialkow} yields
 \begin{equation}
  \label{eqn:step3-reduce3}
  \Contr_3(\Rm^{\otimes 2}) \subseteq \vspan \{ \trFi \otrSch , \lp \Fi, \oSch \rp \} + \Contr( L^{\otimes 4} , L^{\otimes 2} \otimes \Rm ) + \sI_{4,c} .
 \end{equation}

 Statements~\eqref{eqn:step3-prereduce}--\eqref{eqn:step3-reduce3} together yield Statement~\eqref{eqn:step3-rm2-claim}.
 
 We now show that
 \begin{equation}
  \label{eqn:step3-lrm-claim}
  \begin{aligned}
   \MoveEqLeft[1] \Contr( L^{\otimes 2} \otimes \Rm ) \subseteq \vspan \Bigl\{ \lv H \rv^2\otrSch , \lv\tfss\rv^2 \otrSch , \lp \tfss^2 , \oSch \rp , H^{\alpha^\prime}H^{\beta^\prime}W_{\alpha^\prime\alpha\beta^\prime}{}^\alpha , \\
    & \quad \trFi \lv H \rv^2 , H^{\alpha^\prime}\tfss_{\alpha\beta\alpha^\prime}\Fi^{\alpha\beta} , H^{\alpha^\prime}\tfss_{\alpha\beta\alpha^\prime}\oSch^{\alpha\beta} , H^{\alpha^\prime}\tfss^{\alpha\beta\beta^\prime} W_{\alpha\alpha^\prime\beta\beta^\prime} , H^{\alpha^\prime} H^{\beta^\prime}\Sch_{\alpha^\prime\beta^\prime} , \\
    & \quad \lv H \rv^2\Sch_{\alpha^\prime}{}^{\alpha^\prime} ,  \lv\tfss\rv^2\Sch_{\alpha^\prime}{}^{\alpha^\prime} , \tfss_{\alpha\beta}{}^{\alpha^\prime}\tfss^{\alpha\beta\beta^\prime}\Sch_{\alpha^\prime\beta^\prime} \Bigr\} + \Contr(L^{\otimes 4}) + \sI_{4,c} .
  \end{aligned}
 \end{equation}
 Equations~\eqref{eqn:defn-weyl} and~\eqref{eqn:reduce-weyl} imply that
 \begin{multline*}
  \Contr( L^{\otimes 2} \otimes \Rm ) \subseteq \Contr( L^{\otimes 2} \otimes W_{\alpha\beta\alpha^\prime\beta^\prime} , L^{\otimes 2} \otimes W_{\alpha\alpha^\prime\beta\beta^\prime} , L^{\otimes 2} \otimes W_{\alpha\beta\gamma\delta} , \\ L^{\otimes 2} \otimes \Sch_{\alpha^\prime\beta^\prime} , L^{\otimes 2} \otimes \Sch_{\alpha\beta} ) .
 \end{multline*}
 On the one hand, combining Equations~\eqref{eqn:defn-Fialkow} and~\eqref{eqn:reduce-weyl} with Statement~\eqref{eqn:obvious-invariants} yields
 \begin{align*}
  \Contr(L^{\otimes 2} \otimes W_{\alpha\beta\alpha^\prime\beta^\prime} ) & \subseteq \sI_{4,c} , \\
  \Contr( L^{\otimes 2} \otimes W_{\alpha\alpha^\prime\beta\beta^\prime} ) & \subseteq \vspan \{ H^{\alpha^\prime}H^{\beta^\prime} W_{\alpha^\prime\alpha\beta^\prime}{}^\alpha, H^{\alpha^\prime}\tfss^{\alpha\beta\beta^\prime}W_{\alpha\alpha^\prime\beta\beta^\prime} \} \\
   & \quad + \Contr( L^{\otimes 2} \otimes W_{\alpha\beta\gamma\delta} ) + \sI_{4,c} , \\
  \Contr( L^{\otimes 2} \otimes W_{\alpha\beta\gamma\delta} ) & \subseteq \Contr( L^{\otimes 2} \otimes \Fi ) + \Contr( L^{\otimes 4} ) + \sI_{4,c} .
 \end{align*}
 On the other hand, Equation~\eqref{eqn:gcP} implies that
 \begin{equation*}
  \Contr( L^{\otimes 2} \otimes \Sch_{\alpha\beta} ) \subseteq \Contr( L^{\otimes 2} \otimes \oSch , L^{\otimes 2} \otimes \Fi , L^{\otimes 4} ) ,
 \end{equation*}
 while direct computation yields
 \begin{align*}
  \Contr( L^{\otimes 2} \otimes \Sch_{\alpha^\prime\beta^\prime} ) & \subseteq \vspan \{ \lv H \rv^2\Sch_{\alpha^\prime}{}^{\alpha^\prime} , \lv\tfss\rv^2\Sch_{\alpha^\prime}{}^{\alpha^\prime} , H^{\alpha^\prime}H^{\beta^\prime}\Sch_{\alpha^\prime\beta^\prime} , \tfss_{\alpha\beta}{}^{\alpha^\prime}\tfss^{\alpha\beta\beta^\prime}\Sch_{\alpha^\prime\beta^\prime} \} , \\
  \Contr( L^{\otimes 2} \otimes \oSch ) & \subseteq \vspan \{ \lv H \rv^2\otrSch , \lv\tfss\rv^2\otrSch , H^{\alpha^\prime}\tfss_{\alpha\beta\alpha^\prime}\oSch^{\alpha\beta} , \lp \tfss^2 , \oSch \rp  \} , \\
  \Contr( L^{\otimes 2} \otimes \Fi ) & \subseteq \vspan \{ \trFi \lv H \rv^2 , H^{\alpha^\prime}\tfss_{\alpha\beta\alpha^\prime}\Fi^{\alpha\beta} \} + \sI_{4,c} .
 \end{align*}
 Combining these observations yields Statement~\eqref{eqn:step3-lrm-claim}.
 
 Finally, one directly checks using Equation~\eqref{eqn:obvious-invariants} that
 \begin{equation}
  \label{eqn:step3-reduceLLLL}
  \Contr( L^{\otimes 4} ) \subseteq \vspan \{ \lv H\rv^4 , \lv H\rv^2 \lv \tfss\rv^2 , H^{\alpha^\prime}\tfss_{\alpha\beta\alpha^\prime}\tfss^{\alpha\beta\beta^\prime}H_{\beta^\prime} , H^{\alpha^\prime} \tr\tfss^3_{\alpha^\prime} \} + \sI_{4,c} .
 \end{equation}
 
 Combining Statements~\eqref{eqn:step3-rm2-claim}, \eqref{eqn:step3-lrm-claim}, and~\eqref{eqn:step3-reduceLLLL} yields Statement~\eqref{eqn:step3-claim}.
\end{proof}

\Cref{lem:4d-invariants} reduces the verification of \cref{submanifold-alexakis} for four-dimensional submanifolds to an analysis of the conformal linearization of the integral of linear combinations of elements of $\bigcup_{j=0}^4\sI_4^j$.

\begin{proof}[Proof of \cref{low-dimension-submanifold-alexakis} when $\submfdim = 4$]
 Let $I$ be a natural scalar on $4$-dimensional submanifolds $Y$ of $\mfdim$-dimensional Riemannian manifolds $(X,g)$ whose integral over compact $Y$ is invariant under conformal rescaling of $g$.
 Then $I$ has weight $-4$.
 By \cref{lem:4d-invariants}, we may assume that $I \in \vspan \bigcup_{j=0}^4 \sI_4^j$.
 We will show that $I = a(\otrSch^2 - \lv\oSch\rv^2)$ for some constant $a\in\bR$.
 The final conclusion then follows from the fact that
 \begin{equation*}
  \oPf = \frac{1}{8}\lv\oW\rv^2 + \otrSch^2 - \lv\oSch\rv^2
 \end{equation*}
 in dimension four.
 
 \underline{Step 1: Show no dependence on $\sI_4^4$}.
 By definition, there is a constant $a\in\bR$ such that $I \equiv a\Delta \trSch \mod \vspan \bigcup_{j=0}^3\sI_4^j$.
 Let $\Upsilon \in C^\infty(X)$.
 Suppose that $\Upsilon$, $\Upsilon_{\alpha^\prime}$, $\Upsilon_{\alpha^\prime\beta^\prime}$, and $\Upsilon_{\alpha^\prime\beta^\prime\gamma^\prime}$ all vanish along $Y$.
 Then
 \begin{equation*}
  0 = \int_Y (I \darea )^\bullet = -a\int_Y \Upsilon_{\alpha^\prime}{}^{\alpha^\prime}{}_{\beta^\prime}{}^{\beta^\prime} \darea .
 \end{equation*}
 Since $\Upsilon$ is otherwise arbitrary, we conclude that $a=0$.
 Therefore $I \in \vspan \bigcup_{j=0}^3 \sI_4^j$.
 
 \underline{Step 2: Show no dependence on $\sI_4^3$}.
 Step~1 implies that there is a constant $a \in \bR$ such that $I \equiv aH^{\alpha^\prime}\nabla_{\alpha^\prime} \trSch \mod \vspan \bigcup_{j=0}^2 \sI_4^j$.
 Suppose that $\Upsilon \in C^\infty(X)$ is such that $\Upsilon$, $\Upsilon_{\alpha^\prime}$, and $\Upsilon_{\alpha^\prime\beta^\prime}$ all vanish along $Y$.
 Then
 \begin{equation*}
  0 = \int_Y ( I \darea )^\bullet = -a\int_Y H^{\alpha^\prime}\Upsilon_{\beta^\prime}{}^{\beta^\prime}{}_{\alpha^\prime} \darea .
 \end{equation*}
 Since $\Upsilon$ is otherwise arbitrary, we conclude that $a=0$.
 Therefore $I \in \vspan \bigcup_{j=0}^2 \sI_4^j$.
 
 \underline{Step 3: Show no dependence on $\sI_4^2$}.
 Step~2 implies that there are constants $a_1,\dotsc,a_9 \in \bR$ such that
 \begin{equation}
  \label{eqn:step3-general}
  \begin{split}
   I & \equiv a_1\lv H\rv^2 \Sch_{\alpha^\prime}{}^{\alpha^\prime} + a_2H^{\alpha^\prime} H^{\beta^\prime} \Sch_{\alpha^\prime\beta^\prime} + a_3\trFi\Sch_{\alpha^\prime}{}^{\alpha^\prime} \\
    & \quad + a_4W_{\alpha^\prime\alpha\beta^\prime}{}^\alpha \Sch^{\alpha^\prime\beta^\prime} + a_5\lv\tfss\rv^2 \Sch_{\alpha^\prime}{}^{\alpha^\prime} + a_6\tfss^{\alpha\beta}{}_{\alpha^\prime} \tfss_{\alpha\beta\beta^\prime}\Sch^{\alpha^\prime\beta^\prime} \\
    & \quad + a_7\otrSch\Sch_{\alpha^\prime}{}^{\alpha^\prime} + a_8\Sch_{\alpha^\prime}{}^{\alpha^\prime}\Sch_{\beta^\prime}{}^{\beta^\prime} + a_9\Sch_{\alpha^\prime\beta^\prime}\Sch^{\alpha^\prime\beta^\prime} \mod \vspan \sI_4^0 \cup \sI_4^1 .
  \end{split}
 \end{equation}
 Suppose that $\Upsilon \in C^\infty(X)$ is such that $\Upsilon$ and $\Upsilon_{\alpha^\prime}$ vanish along $Y$.
 Then
 \begin{equation}
  \label{eqn:step3-general-variation}
  \begin{split}
  0 & = \int_Y (I \darea)^\bullet \\
   & = -\int_Y \Bigl( ( a_1\lv H\rv^2 + a_3\trFi + a_5\lv\tfss\rv^2 + a_7\otrSch + 2a_8\Sch_{\gamma^\prime}{}^{\gamma^\prime}) g_{\alpha^\prime\beta^\prime} + a_2H_{\alpha^\prime} H_{\beta^\prime} \\
   & \qquad\qquad + a_4W_{\alpha^\prime\alpha\beta^\prime}{}^\alpha + a_6\tfss^{\alpha\beta}{}_{\alpha^\prime} \tfss_{\alpha\beta\beta^\prime} + 2a_9\Sch_{\alpha^\prime\beta^\prime} \Bigr)\Upsilon^{\alpha^\prime\beta^\prime} \darea .
  \end{split}
 \end{equation}
 Since $\Upsilon$ is otherwise arbitrary, we deduce that
 \begin{multline*}
  0 = ( a_1\lv H\rv^2 + a_3\trFi + a_5\lv\tfss\rv^2 + a_7\otrSch + 2a_8\Sch_{\gamma^\prime}{}^{\gamma^\prime}) g_{\alpha^\prime\beta^\prime} \\
   + a_2H_{\alpha^\prime} H_{\beta^\prime} + a_4W_{\alpha^\prime\alpha\beta^\prime}{}^\alpha  + a_6\tfss^{\alpha\beta}{}_{\alpha^\prime} \tfss_{\alpha\beta\beta^\prime} + 2a_9\Sch_{\alpha^\prime\beta^\prime} .
 \end{multline*}
 Contracting this with $\Sch^{\alpha^\prime\beta^\prime}$ and comparing with Equation~\eqref{eqn:step3-general} yields
 \begin{equation*}
  I \equiv -a_8\Sch_{\alpha^\prime}{}^{\alpha^\prime}\Sch_{\beta^\prime}{}^{\beta^\prime} - a_9\Sch_{\alpha^\prime\beta^\prime}\Sch^{\alpha^\prime\beta^\prime} \mod \vspan \sI_4^0 \cup \sI_4^1 .
 \end{equation*}
 Repeating the variational computation~\eqref{eqn:step3-general-variation} yields $a_8\Sch_{\gamma^\prime}{}^{\gamma^\prime} g_{\alpha^\prime\beta^\prime} + a_9\Sch_{\alpha^\prime\beta^\prime}=0$.
 Therefore $I \in \vspan \sI_4^0 \cup \sI_4^1$.
 
 \underline{Step 4: Show no dependence on $\sI_4^1$}.
 Step~3 implies that there are constants $a_1,\dotsc,a_{14} \in \bR$ such that
 \begin{align*}
  I & \equiv a_1H^{\alpha^\prime}\oDelta H_{\alpha^\prime} + a_2H^{\alpha^\prime}\onabla^\alpha\Di_{\alpha\alpha^\prime} + a_3H^{\alpha^\prime}\onabla^\alpha W_{\alpha\beta\alpha^\prime}{}^\beta + a_4\lv H\rv^4 \\
   & \quad + a_5\lv H\rv^2\lv\tfss\rv^2 + a_6 H^{\alpha^\prime}\tfss_{\alpha\beta\alpha^\prime}\tfss^{\alpha\beta\beta^\prime}H_{\beta^\prime} + a_7\lv H\rv^2\otrSch + a_8 \trFi \lv H\rv^2 \\
   & \quad + a_9 H^{\alpha^\prime} \tr\tfss^3_{\alpha^\prime} + a_{10}H^{\alpha^\prime} \tfss^{\alpha\beta\beta^\prime}W_{\alpha\alpha^\prime\beta\beta^\prime} + a_{11}H^{\alpha^\prime}\tfss_{\alpha\beta\alpha^\prime}\Fi^{\alpha\beta} \\
   & \quad + a_{12}H^{\alpha^\prime} \tfss_{\alpha\beta\alpha^\prime}\oSch^{\alpha\beta} + a_{13}H^{\alpha^\prime} H^{\beta^\prime} W_{\alpha^\prime\alpha\beta^\prime}{}^\alpha + a_{14} H^{\alpha^\prime} \mC_{\alpha^\prime} \mod \vspan \sI_4^0 .
 \end{align*}
 Suppose that $\Upsilon \in C^\infty(X)$ is such that $\Upsilon$ vanishes along $Y$.
 Then
 \begin{align*}
  0 & = \int_Y ( I \darea )^\bullet \\
   & = -\int_Y \Bigl( 2a_1\oDelta H_{\alpha^\prime} + a_2\onabla^\alpha\Di_{\alpha\alpha^\prime} + a_3\onabla^\alpha W_{\alpha\beta\alpha^\prime}{}^\beta + 4a_4\lv H\rv^2H_{\alpha^\prime} + 2a_5\lv\tfss\rv^2H_{\alpha^\prime} \\
    & \qquad + 2a_6H^{\beta^\prime}\tfss_{\alpha\beta\beta^\prime}\tfss^{\alpha\beta}{}_{\alpha^\prime} + 2a_7\otrSch H_{\alpha^\prime} + 2a_8 \trFi H_{\alpha^\prime} + a_{9}\tr\tfss^3_{\alpha^\prime} \\
    & \qquad + a_{10}\tfss^{\alpha\beta\beta^\prime}W_{\alpha\alpha^\prime\beta\beta^\prime} + a_{11}\Fi^{\alpha\beta}\tfss_{\alpha\beta\alpha^\prime} + a_{12}\oSch^{\alpha\beta}\tfss_{\alpha\beta\alpha^\prime} \\
    & \qquad + 2a_{13}H^{\beta^\prime} W_{\alpha^\prime\alpha\beta^\prime}{}^\alpha + a_{14}\mC_{\alpha^\prime} \Bigr) \Upsilon^{\alpha^\prime} \darea .
 \end{align*}
 Since $\Upsilon$ is otherwise arbitrary, we deduce that
 \begin{align*}
  0 & = 2a_1\oDelta H_{\alpha^\prime} + a_2\onabla^\alpha\Di_{\alpha\alpha^\prime} + a_3\onabla^\alpha W_{\alpha\beta\alpha^\prime}{}^\beta + 4a_4\lv H\rv^2H_{\alpha^\prime} + 2a_5\lv\tfss\rv^2H_{\alpha^\prime} \\
   & \quad + 2a_6H^{\beta^\prime}\tfss_{\alpha\beta\beta^\prime}\tfss^{\alpha\beta}{}_{\alpha^\prime} + 2a_7\otrSch H_{\alpha^\prime} + 2a_8 \trFi H_{\alpha^\prime} + a_{9}\tr\tfss^3_{\alpha^\prime} + a_{10}\tfss^{\alpha\beta\beta^\prime}W_{\alpha\alpha^\prime\beta\beta^\prime} \\
   & \quad + a_{11}\Fi_{\alpha\beta}\tfss^{\alpha\beta}{}_{\alpha^\prime} + a_{12}\oSch^{\alpha\beta}\tfss_{\alpha\beta\alpha^\prime} + 2a_{13}H^{\beta^\prime} W_{\alpha^\prime\alpha\beta^\prime}{}^\alpha + a_{14}\mC_{\alpha^\prime} .
 \end{align*}
 Contracting with $H^{\alpha^\prime}$ and comparing this with our initial formula for $I$ implies that
 \begin{multline*}
  I \equiv -a_1H^{\alpha^\prime}\oDelta H_{\alpha^\prime} - 3a_4\lv H\rv^4 - a_5\lv H\rv^2 \lv\tfss\rv^2 - a_6 H^{\alpha^\prime}\tfss_{\alpha\beta\alpha^\prime}\tfss^{\alpha\beta\beta^\prime}H_{\beta^\prime} \\
   - a_7\lv H\rv^2\otrSch - a_8\trFi\lv H\rv^2 - a_{13}H^{\alpha^\prime} H^{\beta^\prime} W_{\alpha^\prime\alpha\beta^\prime}{}^\alpha \mod \vspan \sI_4^0 .
 \end{multline*}
 Again let $\Upsilon \in C^\infty(X)$ be such that $\Upsilon \rv_Y = 0$.
 Then
 \begin{align*}
  0 & = \int_Y \left( I \darea \right)^\bullet \\
   & = \int_Y \Bigl( 2a_1\oDelta H_{\alpha^\prime} + 12a_4\lv H \rv^2H_{\alpha^\prime} + 2a_5\lv\tfss\rv^2H_{\alpha^\prime} + 2a_6H^{\beta^\prime}\tfss_{\alpha\beta\beta^\prime}\tfss^{\alpha\beta}{}_{\alpha^\prime} \\
    & \qquad  + 2a_7\otrSch H_{\alpha^\prime} + 2a_8\trFi H_{\alpha^\prime} + 2a_{13}H^{\beta^\prime}W_{\alpha^\prime\alpha\beta^\prime}{}^\alpha \Bigr)\Upsilon^{\alpha^\prime} \darea .
 \end{align*}
 It follows that
 \begin{equation*}
  I \equiv 3a_4\lv H \rv^4 \mod \vspan \sI_4^0 .
 \end{equation*}
 Repeating this variational argument one more time yields $I \in \vspan \sI_4^0$.
 
 \underline{Step 5: Show $I$ is proportional to $\otrSch^2 - \lv\oSch\rv^2$}.
 Step~4 implies that there are constants $a_1,\dotsc,a_8 \in \bR$ such that
 \begin{multline}
  \label{eqn:step5-expression}
  I = a_1(\otrSch^2 - \lv \oSch\rv^2) + a_2\otrSch^2 + a_3\lp\Fi,\oSch\rp + a_4\lv \Di\rv^2 \\
   + a_5W_{\alpha\beta\alpha^\prime}{}^\beta \Di^{\alpha\alpha^\prime} + a_6\lv\tfss\rv^2\otrSch + a_7\lp \tfss^2, \oSch\rp + a_8\trFi\otrSch .
 \end{multline}
 Let $\Upsilon \in C^\infty(X)$.
 By computing the conformal variation and integrating by parts, we deduce that
 \begin{align*}
  0 & = \int_Y ( I \darea )^\bullet \\
  & = -\int_Y \Bigl( 2a_2\oDelta\otrSch + a_3\onabla^\alpha\onabla^\beta \Fi_{\alpha\beta} - 2a_4\onabla^\alpha(\tfss_{\alpha\beta\alpha^\prime}\Di^{\beta\alpha^\prime}) - a_5\onabla^\alpha(\tfss_\alpha{}^{\beta\alpha^\prime}W_{\beta\gamma\alpha^\prime}{}^\gamma) \\
   & \qquad\qquad + a_6\oDelta\lv\tfss\rv^2 + a_7\onabla^\alpha\onabla^\beta\tfss^2_{\alpha\beta} + a_8\oDelta \trFi \Bigr) \Upsilon \darea .
 \end{align*}
 Since $\Upsilon$ is arbitrary, we deduce from this and Equation~\eqref{eqn:div-tfss2} that
 \begin{multline}
  \label{eqn:step5-noconf}
  0 = 2a_2\oDelta\otrSch + a_3\onabla^\alpha\onabla^\beta \Fi_{\alpha\beta} - 2(a_4+a_7)\onabla^\alpha(\tfss_{\alpha\beta\alpha^\prime}\Di^{\beta\alpha^\prime}) + a_8\oDelta \trFi  \\
    + \left( a_6 + \frac{a_7}{2} \right)\oDelta\lv\tfss\rv^2 - (a_5+a_7)\onabla^\alpha(\tfss_\alpha{}^{\beta\alpha^\prime}W_{\beta\gamma\alpha^\prime}{}^\gamma) - a_7\onabla^\alpha(\tfss^{\beta\gamma\alpha^\prime}W_{\beta\alpha\gamma\alpha^\prime}) .
 \end{multline}
 In dimension four, the conformal linearizations of the terms in this expression are
 \begin{equation*}
  \begin{split}
  \bigl( \oDelta\otrSch \bigr)^\bullet & = -\oDelta^2\Upsilon - 2\onabla^\alpha(\otrSch \Upsilon_\alpha) , \\
  \bigl( \onabla^\alpha\onabla^\beta\Fi_{\alpha\beta} \bigr)^\bullet & = \onabla^\alpha( 2\Fi_{\alpha\beta}\Upsilon^\beta - \trFi \Upsilon_\alpha ) , \\
  \bigl( \onabla^\alpha(\tfss_{\alpha\beta\alpha^\prime}\Di^{\beta\alpha^\prime}) \bigr)^\bullet & = -\onabla^\alpha( \tfss^2_{\alpha\beta}\Upsilon^\beta ) , \\
  \bigl( \oDelta\lv\tfss\rv^2 \bigr)^\bullet & = -2\onabla^\alpha( \lv\tfss\rv^2\Upsilon_\alpha ) , \\
  \bigl( \oDelta\trFi \bigr)^\bullet & = -2\onabla^\alpha(\trFi\Upsilon_\alpha) , \\
  \bigl( \onabla^\alpha(\tfss^{\beta\gamma\alpha^\prime}W_{\beta\alpha\gamma\alpha^\prime}) \bigr)^\bullet & = 0 , \\
  \bigl( \onabla^\alpha ( \tfss_{\alpha\beta\alpha^\prime}W^{\beta\gamma\alpha^\prime}{}_\gamma ) \bigr)^\bullet & = 0 .
  \end{split}
 \end{equation*}
 Since only $(\oDelta\otrSch)^\bullet$ depends on the full tangential four-jet of $\Upsilon$, we see that $a_2=0$.
 Taking arbitrary $\Upsilon$ and applying \cref{technical-linear-independence} below yields
 \begin{align*}
  a_3 &= 0, & a_4 & = -a_7, & a_8 & = 0, & a_6 & = -\frac{a_7}{2}, && \text{if $\mfdim > 5$}, \\
  a_3 & = 0, & a_4 & = -a_7, & a_6 & = -\frac{a_7}{2} - \frac{a_8}{6}, &&&& \text{if $\mfdim = 5$} .
 \end{align*}
 Applying \cref{technical-linear-independence} below to Equation~\eqref{eqn:step5-noconf} yields
 \begin{align*}
  a_7 & = 0, & a_5 & = 0, && \text{if $\mfdim > 5$} , \\
  a_7 & = 0 , &&&& \text{if $\mfdim = 5$} .
 \end{align*}
 Inserting these back into Equation~\eqref{eqn:step5-expression} yields $I = a_1(\otrSch^2 - \lv\oSch\rv^2)$.
\end{proof}

We conclude with a technical lemma used above which establishes the linear independence of certain sets of natural submanifold tensors.

\begin{lemma}
 \label{technical-linear-independence}
 Each of the sets
 \begin{equation*}
  \left\{ \Fi_{\alpha\beta} , \tfss_{\alpha\beta}^2, \lv\tfss\rv^2g_{\alpha\beta} \right\} \quad\text{and}\quad \left\{ \onabla^\alpha(\tfss^{\beta\gamma\alpha^\prime}W_{\alpha\beta\gamma\alpha^\prime}) \right\}
 \end{equation*}
 of natural tensors on $4$-dimensional submanifolds of $5$-dimensional Riemannian manifolds is linearly independent.
 
 If $\mfdim \geq 6$, then each of the sets
 \begin{equation*}
  \left\{ \Fi_{\alpha\beta} , \tfss_{\alpha\beta}^2, \lv\tfss\rv^2g_{\alpha\beta}, \trFi g_{\alpha\beta} \right\} \quad\text{and}\quad \left\{ \onabla^\alpha(\tfss^{\beta\gamma\alpha^\prime}W_{\alpha\beta\gamma\alpha^\prime}) , \onabla^\alpha(\tfss_\alpha{}^{\beta\alpha^\prime}W_{\beta\gamma\alpha^\prime}{}^\gamma) \right\}
 \end{equation*}
 of natural tensors on $4$-dimensional submanifolds of $\mfdim$-dimensional Riemannian manifolds is linearly independent.
\end{lemma}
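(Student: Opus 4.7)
The plan is to prove linear independence by exhibiting, for each candidate dependence, an immersion at which the tensors take values that rule out the dependence; by naturality, any putative linear relation among natural tensors must hold pointwise in every example. The main bookkeeping formulas, which track how each algebraic tensor depends on the ambient Weyl tensor versus the second fundamental form, are, in the critical dimension $\submfdim=4$,
\[
 \Fi_{\alpha\beta} = \tfrac{1}{2}\bigl(\tfss^2_{\alpha\beta} - W_{\alpha\gamma\beta}{}^\gamma - \trFi\, g_{\alpha\beta}\bigr), \qquad \trFi = \tfrac{1}{6}\bigl(\lv\tfss\rv^2 - W_{\alpha\beta}{}^{\alpha\beta}\bigr).
\]

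To handle the algebraic sets, I will first separate $\Fi_{\alpha\beta}$ from $\tfss^2_{\alpha\beta}$ and $\lv\tfss\rv^2 g_{\alpha\beta}$ by choosing a totally geodesic embedding of $Y^4$ into an ambient with nontrivial Weyl tensor (for instance a totally geodesic factor inside a suitable product) arranged so that $W_{\alpha\gamma\beta}{}^\gamma$ at some $p$ is not proportional to $g_{\alpha\beta}$. At $p$ one has $\tfss=0$ while the trace-free part of $\Fi_{\alpha\beta}(p)$ equals $-\tfrac12$ times the trace-free part of $W_{\alpha\gamma\beta}{}^\gamma$, which is nonzero; this forces the coefficient of $\Fi$ to vanish. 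Next I will take a hypersurface in flat $\bR^5$ whose trace-free second fundamental form at a point has distinct eigenvalues, e.g.\ $(3,-1,-1,-1)$, so that $\tfss^2$ is not a scalar multiple of $g$, thereby separating $\tfss^2_{\alpha\beta}$ from $\lv\tfss\rv^2 g_{\alpha\beta}$. For the four-element set with $\pedim\geq 6$, it remains to distinguish $\trFi g_{\alpha\beta}$ from $\lv\tfss\rv^2 g_{\alpha\beta}$; the formula for $\trFi$ reduces this to exhibiting a totally geodesic immersion with $W_{\alpha\beta}{}^{\alpha\beta}\neq 0$, which is realized by a totally geodesic factor inside a product of two Einstein manifolds of nonzero scalar curvatures, and is impossible in codimension one since the Weyl tensor's full trace-freeness then forces $W_{\alpha\beta}{}^{\alpha\beta}=-2W_{\alpha\alpha'}{}^{\alpha\alpha'}=W_{\alpha'\beta'}{}^{\alpha'\beta'}=0$.

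For the divergence sets, I will expand each expression by the Leibniz rule into $(\onabla\tfss)\cdot W + \tfss\cdot(\onabla W)$ and then perturb a totally geodesic embedding so that $\tfss(p)=0$ while $\onabla\tfss(p)$ is nonzero and consistent with the Codazzi equation~\eqref{eqn:gcdL}; this kills the second contraction at $p$ and reduces each divergence to a purely algebraic pairing of $\onabla\tfss$ against an ambient Weyl projection. For the single-element case in $\pedim=5$ it then suffices to make this pairing nonzero for some choice of the jet data at $p$. For the two-element set with $\pedim\geq 6$, the two divergences contract $\onabla\tfss$ against distinct Weyl projections---the full $W_{\alpha\beta\gamma\alpha'}$ in one, the tangentially-traced $W_{\beta\gamma\alpha'}{}^\gamma$ in the other---so prescribing two ambient Weyl tensors at $p$, each with only one of these projections nonzero, realizes each divergence independently and forces both coefficients in any purported linear relation to vanish.

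The main obstacle is that the jets $(\tfss,\onabla\tfss,W)$ at $p$ are not freely prescribable but are coupled by the Gauss--Codazzi--Ricci equations. The delicate step is therefore a dimension count within the space of admissible first-order jets at $p$, verifying that the Codazzi identity~\eqref{eqn:gcdL} does not secretly identify the two Weyl projections used to distinguish the divergence expressions, so that the separating configurations described above actually exist. I expect this to reduce to a finite-dimensional linear-algebra check on a flat background with first-order curvature and/or second fundamental form perturbations turned on, and to be entirely routine once the constraint imposed by Equation~\eqref{eqn:gcdL} is written explicitly.
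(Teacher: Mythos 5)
Your overall strategy — exhibit immersions at which any putative linear relation becomes impossible, and appeal to naturality — is the same strategy the paper uses. The first paragraph (cylinder to separate $\tfss^2_{\alpha\beta}$ from $\lv\tfss\rv^2g_{\alpha\beta}$; reduce the $\Fi$- and $\trFi$-terms to the two Weyl traces $W_{\alpha\gamma\beta}{}^\gamma$ and $W_{\gamma\delta}{}^{\gamma\delta}g_{\alpha\beta}$ via Equation~\eqref{eqn:defn-Fialkow}; note the codimension-one vanishing of $W_{\alpha\beta}{}^{\alpha\beta}$) is correct in spirit and matches the paper's opening reduction, though you should be careful that the particular product examples you float actually have the required nonvanishing Weyl traces: a Ricci-flat factor $Y^4$, for instance, gives $W_{\alpha\gamma\beta}{}^\gamma$ proportional to $g_{\alpha\beta}$, so the trace-free part of the Fialkow tensor is not separated that way.

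The genuine gap is in the last paragraph. You correctly identify that the crux is whether, subject to the Codazzi constraint~\eqref{eqn:gcdL} relating the antisymmetric part of $\onabla\tfss$ to $W_{\alpha\beta\alpha'\gamma}$ and $\Di$, one can independently realize nonzero values of the two contractions $(\onabla^\alpha\tfss^{\beta\gamma\alpha'})W_{\alpha\beta\gamma\alpha'}$ and $(\onabla^\alpha\tfss_\alpha{}^{\beta\alpha'})W_{\beta\gamma\alpha'}{}^\gamma$ at a point where $\tfss=0$. But rather than carry out the check you characterize it as ``entirely routine'' and stop. This check \emph{is} the content of the lemma; deferring it leaves the proof unfinished. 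It is not obvious a priori that the Codazzi constraint does not collapse the two contractions — indeed, in codimension one the second contraction is forced to vanish (cf.\ Remark~\ref{rk:mK}) and in the case $\mfdim=5$ only one independent divergence survives, which is exactly why the statement separates those cases. The paper resolves the issue constructively: it writes down a concrete two-parameter family of diagonal metrics $g=\sum e^{2f_a}(dx^a)^2$ on a model $\bR^\mfdim$ with $Y=\{x^{\alpha'}=0\}$, computes all the relevant jets ($L$, $\onabla L$, $W$, $\Sch$) at the origin in closed form, and then makes specific choices of the functions $f_a$ realizing the separating configurations. Your proof as written is a correct plan, but the plan's final ``linear-algebra check'' is where the work lies and must actually be done; without it, the argument is incomplete.
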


\begin{proof}
 First observe that the embedding $\bR \times S^3 \times \{0\} \hookrightarrow \bR \times \bR^4 \times \bR^{\mfdim-5}$ of a cylinder into flat Euclidean space is such that $\tfss^2_{\alpha\beta}$ is not proportional to the induced metric.
 Hence, by Equation~\eqref{eqn:defn-Fialkow}, it suffices to prove the following claim:
 If $\mfdim = 5$, then
 \begin{equation*}
  W_{\alpha\gamma\beta}{}^\gamma \quad\text{and}\quad \onabla^\alpha(\tfss^{\beta\gamma\alpha^\prime}W_{\alpha\beta\gamma\alpha^\prime})
 \end{equation*}
 are nonzero natural submanifold tensors, and if $\mfdim \geq 6$, then each of the sets
 \begin{equation*}
  \left\{ W_{\alpha\gamma\beta}{}^\gamma , W_{\gamma\delta}{}^{\gamma\delta}g_{\alpha\beta} \right\} \quad\text{and}\quad \left\{ \onabla^\alpha(\tfss^{\beta\gamma\alpha^\prime}W_{\alpha\beta\gamma\alpha^\prime}) , \onabla^\alpha(\tfss_\alpha{}^{\beta\alpha^\prime}W_{\beta\gamma\alpha^\prime}{}^\gamma) \right\}
 \end{equation*}
 is linearly independent.
 We prove this claim by computing some local examples.
 To that end,  we do not use Einstein summation notation in the rest of this proof.

 Set $Y := \{ (x,0) \in \bR^4 \times \bR^{\mfdim-4} \} \subseteq \bR^{\mfdim}$ and denote $p := (0,0) \in Y$.
 Equip $\bR^{\mfdim}$ with the metric
 \begin{equation*}
  g = \sum_{a=1}^{\mfdim} e^{2f_a} (dx^a)^2
 \end{equation*}
 for $f_a \colon \bR^{\mfdim} \to \bR$ functions satisfying $f_a(p)=0$ and $df_a(p)=0$.
 Mirroring our abstract index notation, we let $a \in \{ 1, \dotsc, \mfdim \}$ and $\alpha \in \{ 1, \dotsc, 4 \}$ and $\alpha^\prime \in \{ 5, \dotsc , \mfdim \}$.
 Unless otherwise indicated, we use the convention that distinct characters are not equal;
 e.g.\ $\Sch_{\alpha\beta}$ denotes a component with $\alpha,\beta \in \{ 1, \dotsc, 4 \}$ and $\alpha \not= \beta$.
 
 The nonvanishing Christoffel symbols of $g$ are
 \begin{align*}
  \Gamma_{aa}^a & = \partial_af_a , & \Gamma_{ab}^a & = \partial_bf_a , & \Gamma_{aa}^b = -e^{2(f_a-f_b)}\partial_bf_a .
 \end{align*}
 Thus the only nonvanishing components of the second fundamental form are
 \begin{equation*}
  L_{\alpha\alpha\alpha^\prime} = -e^{2f_\alpha}\partial_{\alpha^\prime}f_{\alpha} .
 \end{equation*}
 In particular, $L=0$ at $p$.
 Since the Christoffel symbols all vanish at $p$, we compute that
 \begin{equation*}
  R_{ab}{}^c{}_d = \partial_a\Gamma_{bd}^c - \partial_b\Gamma_{ad}^c
 \end{equation*}
 at $p$, where distinct indices can be equal in the above display.
 Thus the only nonvanishing components of $\Rm$ at $p$ are
 \begin{align*}
  R_{abab} & = -\partial_{aa}^2f_b - \partial_{bb}^2f_a , & R_{acbc} & = -\partial^2_{ab}f_c ,
 \end{align*}
 and those that can be obtained from the symmetries of $\Rm$.
 Therefore
 \begin{align*}
  W_{abab} & = -\partial_{aa}^2f_b - \partial_{bb}^2f_a - \Sch_{aa} - \Sch_{bb} , \\
  W_{acbc} & = -\partial^2_{ab}f_c - \Sch_{ab} , 
 \end{align*}
 and
 \begin{align*}
  \Sch_{aa} & = -\frac{1}{\mfdim-1}\sum_{b \not= a} (\partial^2_{aa}f_b + \partial^2_{bb}f_a) + \frac{1}{(\mfdim-1)(\mfdim-2)}\sum_{\substack{b,c \not=a \\ b\not=c}}\partial_{bb}^2f_c , \\
  \Sch_{ab} & = -\frac{1}{\mfdim-2}\sum_{c \not\in \{ a,b \}} \partial^2_{ab}f_c ,
 \end{align*}
 at $p$.
 Additionally, the only nonvanishing components of $\onabla L$ at $p$ are
 \begin{align*}
  \onabla_{\alpha}L_{\alpha\alpha\alpha^\prime} & = -\partial^2_{\alpha\alpha^\prime}f_\alpha , & \onabla_{\beta}L_{\alpha\alpha\alpha^\prime} & = -\partial^2_{\beta\alpha^\prime}f_\alpha .
 \end{align*}
 
 We now simplify our computation by taking $f_{\alpha^\prime}=0$ and assuming at $p$ that
 \begin{equation}
  \label{eqn:assume-parallel-H}
  \sum_{\beta=1}^4 \partial^2_{\alpha\alpha^\prime}f_\beta = 0 \quad\text{and}\quad \sum_{\beta^\prime=5}^{\mfdim} \partial^2_{\beta^\prime\beta^\prime}f_\alpha = 0 
 \end{equation}
 for each $\alpha \in \{ 1, \dotsc, 4\}$ and $\alpha^\prime \in \{ 5, \dotsc, \mfdim \}$.
 This assumption and our formula for $\Sch_{ab}$ yield
 \begin{align*}
  \Sch_{\alpha\alpha^\prime} & = \frac{1}{\mfdim-2}\partial^2_{\alpha\alpha^\prime}f_\alpha , \\
  \onabla_\alpha\tfss_{\alpha\alpha\alpha^\prime} & = -\partial^2_{\alpha\alpha^\prime}f_\alpha , \\
  \onabla_\beta\tfss_{\alpha\alpha\alpha^\prime} & = -\partial^2_{\beta\alpha^\prime}f_\alpha ,
 \end{align*}
 at $p$.  It follows that, at $p$,
 \begin{align*}
  \sum_{\gamma=1}^4 W_{\alpha\gamma\beta}{}^\gamma & = -\frac{\mfdim-4}{\mfdim-2}\sum_{\gamma\not\in\{\alpha,\beta\}} \partial^2_{\alpha\beta}f_\gamma , \\
  \sum_{\alpha,\beta=1}^4 W_{\alpha\beta}{}^{\alpha\beta} & = -\frac{2(\mfdim-4)(\mfdim-5)}{(\mfdim-1)(\mfdim-2)}\sum_{\alpha\not=\beta} \partial_{\alpha\alpha}^2f_\beta ,
 \end{align*}
 and
 \begin{align*}
  \sum_{\alpha,\beta,\gamma,\alpha^\prime} \onabla^\alpha( \tfss^{\beta\gamma\alpha^\prime}W_{\alpha\beta\gamma\alpha^\prime}) & = -\sum_{\alpha\not=\beta}\sum_{\alpha^\prime}(\partial^2_{\beta\alpha^\prime}f_\alpha)^2 + \frac{1}{\mfdim-2}\sum_\alpha\sum_{\alpha^\prime} (\partial_{\alpha\alpha^\prime}^2f_\alpha)^2 , \\
  \sum_{\alpha,\beta,\gamma,\alpha^\prime}\onabla^\alpha(\tfss_{\alpha}{}^{\beta\alpha^\prime}W_{\beta\gamma\alpha^\prime}{}^\gamma) & = -\frac{\mfdim-5}{\mfdim-2}\sum_{\alpha}\sum_{\alpha^\prime} (\partial^2_{\alpha\alpha^\prime}f_\alpha)^2 .
 \end{align*}
 Taking
 \begin{align*}
  f_1 & = s(x^2)^2 + tx^2x^3 + ux^1x^5 , & f_2 & = vx^1x^5 , \\
  f_3 & = -(u+v)x^1x^5 , & f_4 & = 0 ,
 \end{align*}
 with $s,t,u,v \in \bR$ yields our claim.
\end{proof}

\appendix
\section{Computations of conformal submanifold invariants}
\label{sec:extra}

In this appendix we prove the five propositions in Subsection~\ref{subsec:invariants/comparison} relating our scalar conformal submanifold invariants of Subsection~\ref{subsec:invariants/construct} to previously-known invariants.
To that end, we first establish a useful formula for the inner product $\lp \tfss, \oDelta\tfss \rp$ which is closely related to Simons' formula~\cite{Simons1968}*{Theorem~4.2.1} for the Laplacian of the second fundamental form of a minimal submanifold.

\begin{lemma}
 \label{basic-invariants}
 Let $i \colon Y^{\submfdim} \to (X^{\mfdim},g)$ be an immersion with  $3 \leq \submfdim < \mfdim$.
 Then
 \begin{equation}
  \label{eqn:simons}
  \begin{split}
  \tfss^{\alpha\beta\alpha^\prime}\oDelta\tfss_{\alpha\beta\alpha^\prime} &
  = \tfss^{\alpha\beta\alpha^\prime}\Bigl(
  -\submfdim\onabla_\alpha\Di_{\beta\alpha^\prime} - \onabla_\alpha
  W_{\beta\gamma\alpha^\prime}{}^\gamma + \onabla^\gamma
  W_{\gamma\alpha\alpha^\prime\beta}
  + \otrSch\tfss_{\alpha\beta\alpha^\prime} \\ 
   \MoveEqLeft + \submfdim \oSch_\alpha{}^\gamma\tfss_{\gamma\beta\alpha^\prime} - W_\alpha{}^\gamma{}_\beta{}^\delta\tfss_{\gamma\delta\alpha^\prime} - W_\alpha{}^\gamma{}_{\alpha^\prime}{}^{\beta^\prime}\tfss_{\gamma\beta\beta^\prime} + 2\Fi_\alpha{}^\gamma\tfss_{\gamma\beta\alpha^\prime} \\ 
   \MoveEqLeft - \tfss_{\alpha\beta\beta^\prime}\tfss^{\gamma\delta\beta^\prime}\tfss_{\gamma\delta\alpha^\prime} - \tfss_{\alpha\delta\alpha^\prime}\tfss^{\gamma\delta\beta^\prime}\tfss_{\gamma\beta\beta^\prime} + 2\tfss^{\gamma\delta}{}_{\alpha^\prime}\tfss_{\alpha\delta\beta^\prime}\tfss_{\beta\gamma}{}^{\beta^\prime} \Bigr) . 
  \end{split}
 \end{equation}
\end{lemma}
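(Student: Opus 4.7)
The plan is to follow the classical Simons'-type identity strategy, starting from $\oDelta\tfss_{\alpha\beta\alpha'} = \onabla^\gamma\onabla_\gamma\tfss_{\alpha\beta\alpha'}$ and interchanging the order of the two derivatives. First I would rewrite the Codazzi equation~\eqref{eqn:gcdL} in the form
\[
\onabla_\gamma\tfss_{\alpha\beta\alpha'} = \onabla_\alpha\tfss_{\gamma\beta\alpha'} + W_{\gamma\alpha\alpha'\beta} + g_{\alpha\beta}\Di_{\gamma\alpha'} - g_{\beta\gamma}\Di_{\alpha\alpha'},
\]
apply $\onabla^\gamma$, and observe that the $g_{\alpha\beta}$-multiplied term is killed on contracting with the trace-free $\tfss^{\alpha\beta\alpha'}$; this already produces the terms $\onabla^\gamma W_{\gamma\alpha\alpha'\beta}$ and $-\onabla_\beta\Di_{\alpha\alpha'}$, the latter equalling $-\onabla_\alpha\Di_{\beta\alpha'}$ after using the $\alpha\leftrightarrow\beta$ symmetry of $\tfss^{\alpha\beta\alpha'}$.

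Next I would commute $\onabla^\gamma\onabla_\alpha\tfss_{\gamma\beta\alpha'} = \onabla_\alpha\onabla^\gamma\tfss_{\gamma\beta\alpha'} + [\onabla^\gamma,\onabla_\alpha]\tfss_{\gamma\beta\alpha'}$. The first summand is simplified via the traced Codazzi equation~\eqref{eqn:gcD}, which substitutes $\onabla^\gamma\tfss_{\gamma\beta\alpha'} = -(\submfdim-1)\Di_{\beta\alpha'} - W_{\beta\gamma\alpha'}{}^\gamma$; differentiating produces the $-\onabla_\alpha W_{\beta\gamma\alpha'}{}^\gamma$ term of the formula plus $-(\submfdim-1)\onabla_\alpha\Di_{\beta\alpha'}$. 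Combined with the earlier $-\onabla_\alpha\Di_{\beta\alpha'}$ contribution, the $\Di$-derivatives collect precisely into the stated coefficient $-\submfdim$.

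To unpack the commutator I would apply the Ricci identity, yielding a traced Riemann contraction $\oR_\alpha{}^\gamma\tfss_{\gamma\beta\alpha'}$, a full Riemann contraction $-\oR_\alpha{}^\gamma{}_\beta{}^\delta\tfss_{\gamma\delta\alpha'}$, and a normal-bundle contraction $-\oR_\alpha{}^\gamma{}_{\alpha'}{}^{\beta'}\tfss_{\gamma\beta\beta'}$. For the Ricci piece I would write $\oR_\alpha{}^\gamma = (\submfdim-2)\oSch_\alpha{}^\gamma + \otrSch\,\delta_\alpha{}^\gamma$, which contributes $\otrSch\tfss_{\alpha\beta\alpha'}$ directly and part of the $\submfdim\oSch_\alpha{}^\gamma\tfss_{\gamma\beta\alpha'}$ coefficient. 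For the full Riemann piece I would decompose $\oR_{\alpha\beta\gamma\delta}$ into intrinsic Weyl plus intrinsic Schouten pieces and then use the Gauss equation~\eqref{eqn:gcW} to trade $\oW$ for $W$, two of the cubic $\tfss$-terms, and Fialkow-tensor pieces; the $+2\Fi_\alpha{}^\gamma\tfss_{\gamma\beta\alpha'}$ term arises as a residue, and combining the $\oSch$-contributions with those from the Ricci piece (using tracelessness of $\tfss^{\alpha\beta\alpha'}$ to kill $\oSch$-trace contractions) yields exactly $\submfdim\oSch_\alpha{}^\gamma\tfss_{\gamma\beta\alpha'}$. The normal-bundle contraction is handled by~\eqref{eqn:gcnc}, producing the $-W_\alpha{}^\gamma{}_{\alpha'}{}^{\beta'}\tfss_{\gamma\beta\beta'}$ term and the last two cubic-$\tfss$ terms of the formula.

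The main obstacle is purely computational: careful bookkeeping of signs, symmetrizations, and the coefficient $\submfdim$ appearing in two separate places, together with the identification of three cubic-$\tfss$ terms arising from independent sources (two from the Gauss-equation expansion of the intrinsic Weyl inside $\oR_\alpha{}^\gamma{}_\beta{}^\delta\tfss_{\gamma\delta\alpha'}$ and one from the Ricci-equation expansion of the normal-bundle curvature). The systematic use of the symmetry $\tfss_{\alpha\beta\alpha'} = \tfss_{\beta\alpha\alpha'}$ and the tangential tracelessness $\tfss^\gamma{}_{\gamma\alpha'} = 0$ throughout is what forces the final form to match the stated right-hand side exactly.
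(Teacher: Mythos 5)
Your proposal is correct and follows essentially the same path as the paper's proof: rewrite $\oDelta\tfss$ using the Codazzi equation~\eqref{eqn:gcdL}, commute covariant derivatives via the Ricci identity, substitute the traced Codazzi equation~\eqref{eqn:gcD}, decompose the intrinsic curvature into Weyl and Schouten pieces, and finally apply the Gauss equation~\eqref{eqn:gcW} and the Ricci equation~\eqref{eqn:gcnc}. The bookkeeping you flag is exactly where the work lies, but your strategy matches the paper's in every essential step.
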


\begin{proof}
 Using Equations~\eqref{eqn:gcdL} and~\eqref{eqn:gcD}, we compute that
 \begin{align*}
  \tfss^{\alpha\beta\alpha^\prime}\oDelta\tfss_{\alpha\beta\alpha^\prime} &
  = \tfss^{\alpha\beta\alpha^\prime}\onabla^\gamma\left(
  \onabla_\alpha\tfss_{\gamma\beta\alpha^\prime} +
  W_{\gamma\alpha\alpha^\prime\beta} -
  g_{\gamma\beta}\Di_{\alpha\alpha^\prime} \right) \\ 
   & = \tfss^{\alpha\beta\alpha^\prime}\Bigl(
  \onabla_\alpha\onabla^\gamma\tfss_{\gamma\beta\alpha^\prime} +
  (\submfdim-2)\oSch_\alpha{}^\gamma\tfss_{\gamma\beta\alpha^\prime} +
  \otrSch\tfss_{\alpha\beta\alpha^\prime} -
  \oR_\alpha{}^\gamma{}_\beta{}^\delta\tfss_{\gamma\delta\alpha^\prime} \\ 
    & \qquad -
  \oR_\alpha{}^\gamma{}_{\alpha^\prime}{}^{\beta^\prime}\tfss_{\gamma\beta\beta^\prime}
  + \onabla^\gamma W_{\gamma\alpha\alpha^\prime\beta} -
  \onabla_\beta\Di_{\alpha\alpha^\prime} \Bigr) \\ 
   & = \tfss^{\alpha\beta\alpha^\prime}
  \Bigl(-\submfdim\onabla_\beta\Di_{\alpha\alpha^\prime} - \onabla_\alpha
  W_{\beta\gamma\alpha^\prime}{}^\gamma + \onabla^\gamma
  W_{\gamma\alpha\alpha^\prime\beta} \\ 
    & \qquad -
  \oW_\alpha{}^\gamma{}_\beta{}^\delta\tfss_{\gamma\delta\alpha^\prime} +
  \submfdim\oSch_\alpha{}^\gamma\tfss_{\gamma\beta\alpha^\prime} +
  \otrSch\tfss_{\alpha\beta\alpha^\prime} -
  \oR_\alpha{}^\gamma{}_{\alpha^\prime}{}^{\beta^\prime}\tfss_{\gamma\beta\beta^\prime}
  \Bigr) . 
 \end{align*}
 Combining this with Equations~\eqref{eqn:gcW} and~\eqref{eqn:gcnc} yields Equation~\eqref{eqn:simons}.
\end{proof}

We now prove the results in Subsection~\ref{subsec:invariants/comparison}.

\begin{proof}[Proof of \cref{blitz-gover-waldron}]
 The definitions~\eqref{eqn:defn-Fialkow} and~\eqref{eqn:mC} of $\Fi_{\alpha\beta}$ and $\mC_{\alpha\alpha^\prime\beta}$, respectively, imply that
 \begin{multline}
  \label{eqn:LmC}
   \tfss^{\alpha\beta\alpha^\prime}\mC_{\alpha\alpha^\prime\beta} = \tfss^{\alpha\beta\alpha^\prime}C_{\alpha\alpha^\prime\beta} + H^{\alpha^\prime}\tr\tfss_{\alpha^\prime}^3 - (\submfdim-2)H^{\alpha^\prime}\tfss^{\alpha\beta}{}_{\alpha^\prime}\Fi_{\alpha\beta} \\
    - H^{\alpha^\prime}\tfss^{\alpha\beta\beta^\prime}W_{\alpha\alpha^\prime\beta\beta^\prime} - H^{\alpha^\prime}\tfss^{\alpha\beta}{}_{\alpha^\prime}W_{\alpha\gamma\beta}{}^{\gamma} .
 \end{multline}

 First set
 \begin{multline}
  \label{eqn:defn-cmI}
  \cmI_1 := -\oDelta\lv\tfss\rv^2 + 2\otrSch\lv\tfss\rv^2 + 2(\submfdim-6) \Bigl[ \tfss_{\alpha\beta}^2\oSch^{\alpha\beta} - \onabla^\alpha(\Di^{\beta\alpha^\prime}\tfss_{\alpha\beta\alpha^\prime}) \\
   \qquad\qquad - (\submfdim-3)\lv\Di\rv^2 - \Di^{\alpha\alpha^\prime}W_{\alpha\beta\alpha^\prime}{}^\beta - \tfss^{\alpha\beta\alpha^\prime}\mC_{\alpha\alpha^\prime\beta} \Bigr] .
 \end{multline}
 Equation~\eqref{eqn:gcP} and \cref{2I-plus-J} imply that
 \begin{equation}
  \label{eqn:2Wm+Wn}
  \cmI_1 = 2\Wm + \Wn - 2\trFi\lv\tfss\rv^2 - 2(\submfdim-6)\tfss^2_{\alpha\beta}\Fi^{\alpha\beta} .
 \end{equation}

 Second set
 \begin{multline*}
  \cmI_2 := \tfss^{\alpha\beta\alpha^\prime}\oDelta\tfss_{\alpha\beta\alpha^\prime} + \submfdim\onabla^\alpha(\Di^{\beta\alpha^\prime}\tfss_{\alpha\beta\alpha^\prime}) + \submfdim(\submfdim-1)\lv\Di\rv^2 \\
    + (\submfdim+4)\Di^{\alpha\alpha^\prime}W_{\alpha\beta\alpha^\prime}{}^\beta- \submfdim \tfss^2_{\alpha\beta}\oSch^{\alpha\beta} - \otrSch\lv\tfss\rv^2 + (\submfdim-4)\tfss^{\alpha\beta\alpha^\prime}\mC_{\alpha\alpha^\prime\beta} .
 \end{multline*}
 Combining the formulas in \cref{rk:mK} with \cref{basic-invariants} and Equation~\eqref{eqn:gcD} yields
 \begin{equation}
  \label{eqn:cmI2-invariant}
  \begin{split}
   \cmI_2 & = \mK_1 - \mK_2 - W_{\alpha\beta\alpha^\prime}{}^\beta W^{\alpha\gamma\alpha^\prime}{}_\gamma - \frac{1}{2} W_{\alpha\beta\alpha^\prime\gamma}W^{\alpha\beta\alpha^\prime\gamma} - W_\alpha{}^\gamma{}_\beta{}^\delta \tfss^{\alpha\beta\alpha^\prime}\tfss_{\gamma\delta\alpha^\prime} \\
    & \quad - W_\alpha{}^\beta{}_{\alpha^\prime}{}^{\beta^\prime}\tfss^{\gamma\alpha\alpha^\prime}\tfss_{\gamma\beta\beta^\prime} + 2\tfss^2_{\alpha\beta}\Fi^{\alpha\beta} - \tfss^{\alpha\beta\alpha^\prime}\tfss_{\alpha\beta\beta^\prime}\tfss^{\gamma\delta\beta^\prime}\tfss_{\gamma\delta\alpha^\prime} \\
    & \quad - \lv\tfss^2\rv^2 + 2\tfss^{\alpha\beta\alpha^\prime}\tfss^{\gamma\delta}{}_{\alpha^\prime}\tfss_{\alpha\gamma\beta^\prime}\tfss_{\beta\delta}{}^{\beta^\prime} .
  \end{split}
 \end{equation}
 
 Third set
 \begin{equation*}
  \cmI_3 := \onabla^\beta(\Di^{\alpha\alpha^\prime}\tfss_{\alpha\beta\alpha^\prime}) + \frac{1}{\submfdim-1}\onabla^\beta(\tfss_\beta{}^{\alpha\alpha^\prime}\onabla^\gamma\tfss_{\gamma\alpha\alpha^\prime}) - \frac{\submfdim-4}{\submfdim-1}\Di^{\alpha\alpha^\prime}W_{\alpha\beta\alpha^\prime}{}^{\beta} .
 \end{equation*}
 Equation~\eqref{eqn:gcD} implies that
 \begin{equation}
  \label{eqn:cmI3-invariant}
  \cmI_3 = -\frac{1}{\submfdim-1}\mK_2 .
 \end{equation}
 
 Equations~\eqref{eqn:2Wm+Wn}--\eqref{eqn:cmI3-invariant} imply that
 \begin{equation*}
  W\!m = \frac{\submfdim(\submfdim-1)}{4(\submfdim-6)}\cmI_1 + \frac{\submfdim-3}{2}\cmI_2 + \submfdim\cmI_3 .
 \end{equation*}
 Applying the definitions of $\cmI_1$, $\cmI_2$, and $\cmI_3$ yields
 \begin{align*}
  \MoveEqLeft[0] W\!m = \frac{\submfdim-3}{2}\tfss^{\alpha\beta\alpha^\prime}\oDelta\tfss_{\alpha\beta\alpha^\prime} + \frac{\submfdim}{\submfdim-1}\onabla^\alpha(\tfss_\alpha{}^{\beta\alpha^\prime}\onabla^\gamma\tfss_{\gamma\beta\alpha^\prime}) \\
   & \quad + \frac{1}{\submfdim-6}\left( - \frac{\submfdim(\submfdim-1)}{4}\oDelta\lv\tfss\rv^2 + (4\submfdim-9)\otrSch\lv\tfss\rv^2 \right) - 3(\submfdim-2)\tfss^{\alpha\beta\alpha^\prime} C_{\alpha\alpha^\prime\beta}+ \submfdim\tfss^2_{\alpha\beta}\oSch^{\alpha\beta} \\
   & \quad - \frac{3(\submfdim-2)}{\submfdim-1}\Di^{\alpha\alpha^\prime}W_{\alpha\beta\alpha^\prime}{}^\beta - 3(\submfdim-2)H^{\alpha^\prime}\tr\tfss_{\alpha^\prime}^3 + 3(\submfdim-2)^2H^{\alpha^\prime}\tfss^{\alpha\beta}{}_{\alpha^\prime}\Fi_{\alpha\beta} \\
   & \quad + 3(\submfdim-2)H^{\alpha^\prime}\tfss^{\alpha\beta\beta^\prime}W_{\alpha\alpha^\prime\beta\beta^\prime} + 3(\submfdim-2)H^{\alpha^\prime}\tfss^{\alpha\beta}{}_{\alpha^\prime}W_{\alpha\gamma\beta}{}^\gamma .
 \end{align*}
 Specializing to the case $\submfdim=4$ and $\mfdim=5$ yields the final conclusion.
\end{proof}

\begin{proof}[Proof of \cref{juhl1}]
 Set
 \begin{align*}
  \cmJ_1 & := \tfss^{\alpha\beta\alpha^\prime}\nabla_{\alpha^\prime}W_{\alpha\gamma\beta}{}^\gamma - 2\Di^{\alpha\alpha^\prime}W_{\alpha\beta\alpha^\prime}{}^\beta - 2H^{\alpha^\prime}\tfss^{\alpha\beta}{}_{\alpha^\prime}W_{\alpha\gamma\beta}{}^\gamma - 2\tfss^{\alpha\beta\alpha^\prime}\mC_{\alpha\alpha^\prime\beta} , \\
  \cmJ_2 & := \onabla^\alpha\onabla^\beta\tfss^2_{\alpha\beta} - \frac{1}{2}\oDelta\lv\tfss\rv^2 + (\submfdim-2)\onabla^\alpha(\Di^{\beta\alpha^\prime}\tfss_{\alpha\beta\alpha^\prime}) \\
   & \quad - (\submfdim-4)\tfss^{\alpha\beta\alpha^\prime}\mC_{\alpha\alpha^\prime\beta} - (\submfdim-4)\Di^{\alpha\alpha^\prime}W_{\alpha\beta\alpha^\prime}{}^\beta .
 \end{align*}
 On the one hand, combining Equations~\eqref{eqn:weyl-bianchi}, \eqref{eqn:second-fundamental-form-dual}, and~\eqref{eqn:second-fundamental-form} yields
 \begin{align*}
  \tfss^{\alpha\beta\alpha^\prime}\nabla_{\alpha^\prime} W_{\alpha\gamma\beta}{}^\gamma & = \tfss^{\alpha\beta\alpha^\prime}\Bigl( \onabla_\alpha W_{\alpha^\prime\gamma\beta}{}^\gamma + \onabla_\gamma W_{\alpha\alpha^\prime\beta}{}^\gamma + (\submfdim-2)\mC_{\alpha\alpha^\prime\beta} \\
   & \quad + \tfss_{\alpha}{}^\delta{}_{\alpha^\prime}W_{\delta\gamma\beta}{}^\gamma + \tfss_{\gamma}{}^\delta{}_{\alpha^\prime}W_{\alpha\delta\beta}{}^\gamma - \tfss_{\alpha\beta}{}^{\beta^\prime}W_{\alpha^\prime\gamma\beta^\prime}{}^\gamma \\
   & \quad
 + \tfss_\alpha{}^{\gamma\beta^\prime}W_{\gamma\alpha^\prime\beta\beta^\prime} + \tfss_{\beta}{}^{\gamma\beta^\prime}W_{\alpha\alpha^\prime\gamma\beta^\prime} + 2H_{\alpha^\prime}W_{\alpha\gamma\beta}{}^\gamma \Bigr) .
 \end{align*}
 Combining this with the formulas in \cref{rk:mK} yields
 \begin{equation}
  \label{eqn:juhl1-first-step}
  \begin{split}
   \cmJ_1 & = \mK_1 + \mK_2 + W_{\alpha\beta\alpha^\prime}{}^\beta W^{\alpha\gamma\alpha^\prime}{}_\gamma - \frac{1}{2}W_{\alpha\beta\alpha^\prime\gamma}W^{\alpha\beta\alpha^\prime\gamma} + \tfss^2_{\alpha\beta}W^{\alpha\gamma\beta}{}_\gamma \\
   & \quad + \tfss^{\alpha\gamma\alpha^\prime}\tfss^{\beta\delta}{}_{\alpha^\prime}W_{\alpha\beta\gamma\delta} - \tfss^{\alpha\beta\alpha^\prime}\tfss_{\alpha\beta}{}^{\beta^\prime}W_{\alpha^\prime\gamma\beta^\prime}{}^\gamma \\
  & \quad + 2\tfss^{\gamma\alpha\alpha^\prime}\tfss_\gamma{}^{\beta\beta^\prime}W_{\alpha\alpha^\prime\beta\beta^\prime} - \tfss^{\gamma\alpha\alpha^\prime}\tfss_\gamma{}^{\beta\beta^\prime}W_{\alpha\beta\alpha^\prime\beta^\prime} .
  \end{split}
 \end{equation}
 On the other hand, taking the tangential divergence of Equation~\eqref{eqn:div-tfss2} yields
 \begin{equation}
  \label{eqn:div2tfss}
  \cmJ_2 = -\mK_1 - \mK_2 .
 \end{equation}
 Equations~\eqref{eqn:2Wm+Wn}, \eqref{eqn:juhl1-first-step}, and~\eqref{eqn:div2tfss} imply that
 \begin{equation*}
  \mJ_1 = -\frac{\submfdim-2}{2(\submfdim-3)(\submfdim-6)}\cmI_1 + \cmJ_1 - \frac{1}{\submfdim-3}\cmJ_2 .
 \end{equation*}
 Applying the definitions of $\cmI_1$, $\cmJ_1$, and $\cmJ_2$ yields
 \begin{align*}
  \mJ_1 & = \frac{\submfdim-4}{(\submfdim-3)(\submfdim-6)}\oDelta\lv\tfss\rv^2 - \frac{1}{\submfdim-3}\onabla^\alpha\onabla^\beta\tfss^2_{\alpha\beta} +\tfss^{\alpha\beta\alpha^\prime}\nabla_{\alpha^\prime}W_{\alpha\gamma\beta}{}^\gamma \\
   & \quad + (\submfdim-2)\lv\Di\rv^2 - \frac{\submfdim-2}{(\submfdim-3)(\submfdim-6)}\otrSch\lv\tfss\rv^2 - \frac{\submfdim-2}{\submfdim-3}\tfss^2_{\alpha\beta}\oSch^{\alpha\beta} - 2H_{\alpha^\prime}\tfss^{\alpha\beta\alpha^\prime}W_{\alpha\gamma\beta}{}^\gamma .
 \end{align*}
 Specializing to the case \codimone\ and using Equation~\eqref{eqn:gcD} yields the final conclusion.
\end{proof}

\begin{proof}[Proof of \cref{juhl2}]
 Set
 \begin{align*}
  \cmJ_3 & := \tfss^{\alpha\beta\alpha^\prime}\nabla_{\alpha^\prime}\Sch_{\alpha\beta} + \tfss^{\alpha\beta\alpha^\prime}\tfss_{\alpha\beta}{}^{\beta^\prime}\Sch_{\alpha^\prime\beta^\prime} - \tfss^{\alpha\beta\alpha^\prime}\onabla_\alpha\onabla_\beta H_{\alpha^\prime} - \onabla^\beta(\Di^{\alpha\alpha^\prime}\tfss_{\alpha\beta\alpha^\prime}) \\
   & \quad - (\submfdim-1)\lv\Di\rv^2 - \Di^{\alpha\alpha^\prime}W_{\alpha\beta\alpha^\prime}{}^\beta - \tfss^2_{\alpha\beta}\oSch^{\alpha\beta} + \frac{\submfdim-3}{\submfdim-2}H^{\alpha^\prime}\tr\tfss^3_{\alpha^\prime} \\
   & \quad - H^{\alpha^\prime}\tfss^{\alpha\beta}{}_{\alpha^\prime}\oSch_{\alpha\beta} + H^{\alpha^\prime}\tfss_{\alpha\beta\alpha^\prime}\tfss^{\alpha\beta\beta^\prime}H_{\beta^\prime} + \frac{1}{2}\lv H\rv^2 \lv \tfss \rv^2 + \tfss^{\alpha\beta\alpha^\prime}\mC_{\alpha\alpha^\prime\beta} \\
   & \quad + H^{\alpha^\prime}\tfss^{\alpha\beta\beta^\prime}W_{\alpha\alpha^\prime\beta\beta^\prime} + \frac{1}{\submfdim-2}H^{\alpha^\prime}\tfss^{\alpha\beta}{}_{\alpha^\prime}W_{\alpha\gamma\beta}{}^\gamma .
 \end{align*}
 Using Equations~\eqref{eqn:second-fundamental-form-dual}, \eqref{eqn:second-fundamental-form}, \eqref{eqn:defn-Di}, and~\eqref{eqn:gcP},  we first compute that
 \begin{align*}
  \tfss^{\alpha\beta\alpha^\prime}\nabla_{\alpha^\prime}\Sch_{\alpha\beta} & = \tfss^{\alpha\beta\alpha^\prime}(\nabla_\alpha\Sch_{\alpha^\prime\beta} - C_{\alpha\alpha^\prime\beta} ) \\
   & = \tfss^{\alpha\beta\alpha^\prime}(\onabla_\alpha\Sch_{\alpha^\prime\beta} + \tfss^\gamma{}_{\alpha\alpha^\prime}\Sch_{\beta\gamma} - \tfss_{\alpha\beta}{}^{\beta^\prime}\Sch_{\alpha^\prime\beta^\prime} + H_{\alpha^\prime}\Sch_{\alpha\beta} - C_{\alpha\alpha^\prime\beta} ) \\
   & = \tfss^{\alpha\beta\alpha^\prime} \bigl( \onabla_\alpha\onabla_\beta H_{\alpha^\prime} + \onabla_\alpha\Di_{\beta\alpha^\prime} + \oSch_\alpha{}^\gamma\tfss_{\gamma\beta\alpha^\prime} - H^{\beta^\prime}\tfss^\gamma{}_{\alpha\beta^\prime}\tfss_{\gamma\beta\alpha^\prime} \\
    & \quad + H_{\alpha^\prime}\oSch_{\alpha\beta} - H_{\alpha^\prime}H_{\beta^\prime}\tfss_{\alpha\beta}{}^{\beta^\prime} + H_{\alpha^\prime}\Fi_{\alpha\beta} - \frac{1}{2}\lv H\rv^2\tfss_{\alpha\beta\alpha^\prime} \\
    & \quad + \Fi_\alpha{}^\gamma\tfss_{\gamma\beta\alpha^\prime} - \tfss_{\alpha\beta}{}^{\beta^\prime}\Sch_{\alpha^\prime\beta^\prime} - \mC_{\alpha\alpha^\prime\beta} - H^{\beta^\prime}W_{\alpha\alpha^\prime\beta\beta^\prime} \bigr) .
 \end{align*}
 Combining this with Equation~\eqref{eqn:gcD} and the definition~\eqref{eqn:defn-Fialkow} of $\Fi_{\alpha\beta}$ yields
 \begin{equation}
  \label{eqn:cmJ3}
  \cmJ_3 = \tfss^2_{\alpha\beta}\Fi^{\alpha\beta} .
 \end{equation}
 Equations~\eqref{eqn:2Wm+Wn}, \eqref{eqn:juhl1-first-step}, and~\eqref{eqn:cmJ3} imply that
 \begin{equation*}
  \mJ_2 = -\frac{1}{2(\submfdim-3)(\submfdim-6)}\cmI_1 - \frac{1}{\submfdim-3}\cmJ_2 - \cmJ_3 .
 \end{equation*}
 Combining Equation~\eqref{eqn:div-tfss2} with the definitions of $\cmI_1$, $\cmJ_2$, and $\cmJ_3$ yields
 \begin{align*}
  \mJ_2 & = -\tfss^{\alpha\beta\alpha^\prime}\nabla_{\alpha^\prime}\Sch_{\alpha\beta} - \tfss^{\alpha\beta\alpha^\prime}\tfss_{\alpha\beta}{}^{\beta^\prime}\Sch_{\alpha^\prime\beta^\prime} + \tfss^{\alpha\beta\alpha^\prime}\onabla_\alpha\onabla_\beta H_{\alpha^\prime} + H^{\alpha^\prime}\tfss^{\alpha\beta}{}_{\alpha^\prime}\oSch_{\alpha\beta} \\
   & \quad - \frac{1}{\submfdim-3}\onabla^\alpha\onabla^\beta\tfss^2_{\alpha\beta} + \frac{\submfdim-5}{2(\submfdim-3)(\submfdim-6)}\oDelta\lv\tfss\rv^2 - \frac{1}{(\submfdim-3)(\submfdim-6)}\otrSch\lv\tfss\rv^2 \\
   & \quad + \frac{\submfdim-4}{\submfdim-3}\tfss^2_{\alpha\beta}\oSch^{\alpha\beta} - \frac{\submfdim-3}{\submfdim-2}H^{\alpha^\prime}\tr\tfss^3_{\alpha^\prime} - H^{\alpha^\prime}\tfss^{\alpha\beta\beta^\prime}W_{\alpha\alpha^\prime\beta\beta^\prime} \\
   & \quad - \frac{1}{\submfdim-2}H^{\alpha^\prime}\tfss^{\alpha\beta}{}_{\alpha^\prime}W_{\alpha\gamma\beta}{}^\gamma - H^{\alpha^\prime}\tfss_{\alpha\beta\alpha^\prime}\tfss^{\alpha\beta\beta^\prime}H_{\beta^\prime} - \frac{1}{2}\lv H \rv^2 \lv \tfss \rv^2 \\
   & \quad + \submfdim\lv\Di\rv^2 + 2\Di^{\alpha\alpha^\prime}W_{\alpha\beta\alpha^\prime}{}^\beta .
 \end{align*}
 Specializing to the case \codimone\ yields the final conclusion.
\end{proof}

\begin{proof}[Proof of \cref{chalabi1}]
 Equation~\eqref{eqn:juhl1-first-step} implies that
 \begin{equation*}
  \mN_1 = \Wm + \frac{1}{2}\Wn - \frac{\submfdim-6}{2}\cmJ_1 .
 \end{equation*}
 The definition~\eqref{eqn:mP} of $\mP_{\alpha\beta}$ yields
 \begin{multline*}
   (\submfdim-6)\tfss_{\alpha\beta}^2\mP^{\alpha\beta} +\lv\tfss\rv^2\mP_\alpha{}^\alpha - (\submfdim-3)\lv H \rv^2\lv\tfss\rv^2 - (\submfdim-6)H^{\alpha^\prime}\tr\tfss^3_{\alpha^\prime} \\
  = \frac{\mfdim-\submfdim+2}{(\mfdim-1)(\mfdim-2)}\lv\tfss\rv^2R - \frac{1}{\mfdim-2}\lv\tfss\rv^2R_{\alpha^\prime}{}^{\alpha^\prime} + \frac{\submfdim-6}{\mfdim-2}\tfss_{\alpha\beta}^2R^{\alpha\beta} .
 \end{multline*}
 Combining this with the definitions of $\cmI_1$ and $\cmJ_1$ yields
 \begin{align*}
  \MoveEqLeft[0] \mN_1 = -\frac{1}{2}\oDelta\lv\tfss\rv^2 - (\submfdim-6)\onabla^\beta(\Di^{\alpha\alpha^\prime}\tfss_{\alpha\beta\alpha^\prime}) - \frac{\submfdim-6}{2}\tfss^{\alpha\beta\alpha^\prime}\nabla_{\alpha^\prime}W_{\alpha\gamma\beta}{}^\gamma \\
   & \quad + \frac{\mfdim-\submfdim+2}{(\mfdim-1)(\mfdim-2)}\lv\tfss\rv^2R - \frac{1}{\mfdim-2}\lv\tfss\rv^2 R_{\alpha^\prime}{}^{\alpha^\prime} + \frac{\submfdim-6}{\mfdim-2}\tfss^2_{\alpha\beta}R^{\alpha\beta} + (\submfdim-3)\lv H \rv^2\lv\tfss\rv^2 \\
   & \quad + (\submfdim-6)H^{\alpha^\prime}\tr\tfss^3_{\alpha^\prime} + (\submfdim-6)H_{\alpha^\prime}\tfss^{\alpha\beta\alpha^\prime}W_{\alpha\gamma\beta}{}^\gamma - (\submfdim-3)(\submfdim-6)\lv\Di\rv^2 .
 \end{align*}
 Specializing to the case $\submfdim=4$ yields the final conclusion.
\end{proof}

\begin{proof}[Proof of \cref{chalabi}]
 Set
 \begin{align*}
  \cmN & := \nabla^{\alpha^\prime}\nabla_{\alpha^\prime}W_{\alpha\beta}{}^{\alpha\beta} + (\mfdim-\submfdim-6)H^{\alpha^\prime}\nabla_{\alpha^\prime}W_{\alpha\beta}{}^{\alpha\beta} + 2(\mfdim-2)\onabla^\alpha\mC_\alpha + \oDelta W_{\alpha\beta}{}^{\alpha\beta} \\
   & \quad - 8\onabla^\gamma( \tfss_\gamma{}^{\alpha\alpha^\prime}W_{\alpha\beta\alpha^\prime}{}^\beta) + 2(\mfdim-2)\tfss^{\alpha\beta\alpha^\prime}\mC_{\alpha\alpha^\prime\beta} + 2(\submfdim-1)\mB_\alpha{}^\alpha - 2\trSch W_{\alpha\beta}{}^{\alpha\beta} \\
   & \quad - 2(\mfdim-2)\Sch^{\alpha\beta}W_{\alpha\gamma\beta}{}^\gamma - 4W_{\alpha\beta\alpha^\prime}{}^\beta\onabla^\alpha H^{\alpha^\prime} - 2(\mfdim-4)\lv H\rv^2 W_{\alpha\beta}{}^{\alpha\beta} \\
   & \quad + 4W_{\alpha\beta\alpha^\prime}{}^\beta\onabla_\gamma\tfss^{\gamma\alpha\alpha^\prime} - 2(\mfdim-2)H^{\alpha^\prime}\tfss^{\alpha\beta}{}_{\alpha^\prime} W_{\alpha\gamma\beta}{}^\gamma - 2(\mfdim-2)\Di^{\alpha\alpha^\prime}W_{\alpha\beta\alpha^\prime}{}^\beta .
 \end{align*}
 Observe that the Weyl--Bianchi identity~\eqref{eqn:weyl-bianchi} and the definitions~\eqref{eqn:second-fundamental-form-dual} and~\eqref{eqn:mC} of $\tfss_{\alpha\beta\alpha^\prime}$ and $\mC_{abc}$, respectively, imply that
 \begin{multline}
  \label{eqn:HnablaW}
  H^{\alpha^\prime}\nabla_{\alpha^\prime}W_{\alpha\beta}{}^{\alpha\beta} = 2H^{\alpha^\prime}\onabla^\alpha W_{\alpha\beta\alpha^\prime}{}^\beta + 2(\submfdim-1)H^{\alpha^\prime}\mC_{\alpha^\prime} + 2\lv H\rv^2 W_{\alpha\beta}{}^{\alpha\beta} \\
   + 2H^{\alpha^\prime}\tfss^{\alpha\beta\beta^\prime}W_{\alpha\alpha^\prime\beta\beta^\prime} + 2H^{\alpha^\prime}\tfss^{\alpha\beta}{}_{\alpha^\prime}W_{\alpha\gamma\beta}{}^\gamma .
 \end{multline}
 The Weyl--Bianchi identity also implies that
 \begin{equation}
  \label{eqn:standard-DeltaW}
  \begin{split}
   \Delta W_{abcd} & = 2(\mfdim-3)\nabla_{[a}C_{|cd|b]} + 2\nabla_{[c}C_{|ab|d]} - 2B_{a[c}g_{d]b} + 2B_{b[c}g_{d]a} \\
    & \quad - W_{ab}{}^{ef}W_{efcd} - 2W_{aecf}W_b{}^e{}_d{}^f + 2W_{aedf}W_b{}^e{}_c{}^f + 2\trSch W_{abcd} \\
    & \quad - 2(\mfdim-3)\Sch^e{}_{[a}W_{b]ecd} - 2\Sch^e{}_{[c}W_{d]eab} ,
  \end{split}
 \end{equation}
 where indices enclosed in vertical bars are not included in the skew symmetrization;
 e.g.
 \begin{equation*}
  2\nabla_{[a}C_{|cd|b]} := \nabla_a C_{cdb} - \nabla_b C_{cda} .
 \end{equation*}
 On the one hand, taking the complete tangential trace of Equation~\eqref{eqn:standard-DeltaW} yields
 \begin{multline*}
  \Delta W_{\alpha\beta}{}^{\alpha\beta} = -2(\mfdim-2)\nabla^\alpha C_{\beta\alpha}{}^\beta - 2(\submfdim-1)B_\alpha{}^\alpha + 2\trSch W_{\alpha\beta}{}^{\alpha\beta} \\
   + 2(\mfdim-2)\Sch^{\alpha c}W_{c\beta \alpha}{}^\beta - 2\left( W_{\alpha\beta cd}W^{\alpha\beta cd} + W_{c\alpha d}{}^\alpha W^{c\beta d}{}_\beta - W_{\alpha c \beta d}W^{\alpha c \beta d} \right) .
 \end{multline*}
 Using the definitions~\eqref{eqn:second-fundamental-form}, \eqref{eqn:defn-Di}, \eqref{eqn:mC}, and~\eqref{eqn:mB} of $\tfss_{\alpha\beta\alpha^\prime}$, $\Di_{\alpha\alpha^\prime}$, $\mC_{abc}$, and~$\mB_{\alpha\beta}$, respectively, yields
 \begin{align*}
  \Delta W_{\alpha\beta}{}^{\alpha\beta} & = -2(\mfdim-2)\onabla^\alpha\left( \mC_\alpha + H^{\alpha^\prime}W_{\alpha\beta\alpha^\prime}{}^\beta\right) - 2(\mfdim-2)\tfss^{\alpha\beta\alpha^\prime}\mC_{\alpha\alpha^\prime\beta} \\
   & \quad - 2(\submfdim-1)\mB_\alpha{}^\alpha - 2(\mfdim-2)H^{\alpha^\prime}\tfss^{\alpha\beta\beta^\prime}W_{\alpha\alpha^\prime\beta\beta^\prime} + 2(\mfdim-2)\Sch^{\alpha\beta}W_{\alpha\gamma\beta}{}^\gamma \\
   & \quad - 2(\submfdim-1)(\mfdim-6)H^{\alpha^\prime}\mC_{\alpha^\prime} + 4(\submfdim-1) H^{\alpha^\prime}H^{\beta^\prime}W_{\alpha^\prime\alpha\beta^\prime}{}^\alpha + 2\trSch W_{\alpha\beta}{}^{\alpha\beta} \\
   & \quad + 2(\mfdim-2)\Di^{\alpha\alpha^\prime}W_{\alpha\beta\alpha^\prime}{}^\beta + 2(\mfdim-2)W_{\alpha\beta\alpha^\prime}{}^\beta\onabla^\alpha H^{\alpha^\prime} \\
   & \quad - 2\left( W_{\alpha\beta cd}W^{\alpha\beta cd} + W_{c\alpha d}{}^\alpha W^{c\beta d}{}_\beta - W_{\alpha c \beta d}W^{\alpha c \beta d} \right) .
 \end{align*}
 Using Equation~\eqref{eqn:HnablaW} to eliminate $H^{\alpha^\prime}\mC_{\alpha^\prime}$ yields
 \begin{equation}
  \label{eqn:DeltaW}
  \begin{split}
   \Delta W_{\alpha\beta}{}^{\alpha\beta} & = - 2(\mfdim-2)\onabla^\alpha\mC_\alpha - 8\onabla^\alpha(H^{\alpha^\prime}W_{\alpha\beta\alpha^\prime}{}^\beta) - 2(\mfdim-2)\tfss^{\alpha\beta\alpha^\prime}\mC_{\alpha\alpha^\prime\beta} \\
   & \quad - (\mfdim-6)H^{\alpha^\prime}\nabla_{\alpha^\prime}W_{\alpha\beta}{}^{\alpha\beta} - 2(\submfdim-1)\mB_\alpha{}^\alpha + 8W_{\alpha\beta\alpha^\prime}{}^\beta\onabla^\alpha H^{\alpha^\prime} \\
   & \quad + 2(\mfdim-6)\lv H\rv^2W_{\alpha\beta}{}^{\alpha\beta} + 4(\submfdim-1) H^{\alpha^\prime}H^{\beta^\prime}W_{\alpha^\prime\alpha\beta^\prime}{}^\alpha \\
   & \quad - 8H^{\alpha^\prime}\tfss^{\alpha\beta\beta^\prime}W_{\alpha\alpha^\prime\beta\beta^\prime} + 2(\mfdim-6)H^{\alpha^\prime}\tfss^{\alpha\beta}{}_{\alpha^\prime}W_{\alpha\gamma\beta}{}^\gamma \\
   & \quad + 2\trSch W_{\alpha\beta}{}^{\alpha\beta} + 2(\mfdim-2)\Sch^{\alpha\beta}W_{\alpha\gamma\beta}{}^\gamma + 2(\mfdim-2)\Di^{\alpha\alpha^\prime}W_{\alpha\beta\alpha^\prime}{}^\beta \\
   & \quad - 2\left( W_{\alpha\beta cd}W^{\alpha\beta cd} + W_{c\alpha d}{}^\alpha W^{c\beta d}{}_\beta - W_{\alpha c \beta d}W^{\alpha c \beta d} \right) . 
  \end{split}
 \end{equation}
On the other hand, the definition~\eqref{eqn:second-fundamental-form} of the second fundamental form yields
 \begin{equation}
  \label{eqn:oDeltaW}
  \begin{split}
   \MoveEqLeft \nabla^\gamma\nabla_\gamma W_{\alpha\beta}{}^{\alpha\beta} = \oDelta W_{\alpha\beta}{}^{\alpha\beta} + 4W_{\alpha\beta\alpha^\prime}{}^\beta\onabla_\gamma\tfss^{\gamma\alpha\alpha^\prime} + 4W_{\alpha\beta\alpha^\prime}{}^\beta\onabla^\alpha H^{\alpha^\prime} \\
    & \quad - \submfdim H^{\alpha^\prime}\nabla_{\alpha^\prime}W_{\alpha\beta}{}^{\alpha\beta} - 4\tfss^2_{\alpha\beta}W^{\alpha\gamma\beta}{}_\gamma + 8\tfss^{\gamma\alpha\alpha^\prime}\tfss_\gamma{}^{\beta\beta^\prime}W_{\alpha\beta\alpha^\prime\beta^\prime} \\
    & \quad + 4\tfss^{\alpha\beta\alpha^\prime}\tfss_{\alpha\beta}{}^{\beta^\prime}W_{\alpha^\prime\gamma\beta^\prime}{}^\gamma - 4\tfss^{\gamma\alpha\alpha^\prime}\tfss_\gamma{}^{\beta\beta^\prime}W_{\alpha\alpha^\prime\beta\beta^\prime} \\
    & \quad - 8H^{\alpha^\prime}\tfss^{\alpha\beta}{}_{\alpha^\prime}W_{\alpha\gamma\beta}{}^\gamma - 8H^{\alpha^\prime}\tfss^{\alpha\beta\beta^\prime}W_{\alpha\alpha^\prime\beta\beta^\prime} - 8\onabla^\alpha(H^{\alpha^\prime}W_{\alpha\beta\alpha^\prime}{}^\beta) \\
    & \quad - 4\lv H\rv^2W_{\alpha\beta}{}^{\alpha\beta} + 4(\submfdim-1)H^{\alpha^\prime}H^{\beta^\prime}W_{\alpha^\prime\alpha\beta^\prime}{}^\alpha - 8\onabla^\gamma(\tfss_\gamma{}^{\alpha\alpha^\prime}W_{\alpha\beta\alpha^\prime}{}^\beta) .
  \end{split}
 \end{equation}
 Recall that $\nabla^{\alpha^\prime}\nabla_{\alpha^\prime}W_{\alpha\beta}{}^{\alpha\beta} = \Delta W_{\alpha\beta}{}^{\alpha\beta} - \nabla^\gamma\nabla_\gamma W_{\alpha\beta}{}^{\alpha\beta}$.
 Combining this with Equations~\eqref{eqn:DeltaW} and~\eqref{eqn:oDeltaW} yields
 \begin{multline}
  \label{eqn:cmN-invariant}
  \cmN =  - 2\left( W_{\alpha\beta cd}W^{\alpha\beta cd} + W_{c\alpha d}{}^\alpha W^{c\beta d}{}_\beta - W_{\alpha c \beta d}W^{\alpha c \beta d} \right) + 4\tfss^2_{\alpha\beta}W^{\alpha\gamma\beta}{}_\gamma \\
    - 8\tfss^{\gamma\alpha\alpha^\prime}\tfss_\gamma{}^{\beta\beta^\prime}W_{\alpha\beta\alpha^\prime\beta^\prime} - 4\tfss^{\alpha\beta\alpha^\prime}\tfss_{\alpha\beta}{}^{\beta^\prime}W_{\alpha^\prime\gamma\beta^\prime}{}^\gamma + 4\tfss^{\gamma\alpha\alpha^\prime}\tfss_\gamma{}^{\beta\beta^\prime}W_{\alpha\alpha^\prime\beta\beta^\prime} .
 \end{multline}
 Combining Equations~\eqref{eqn:juhl1-first-step} and~\eqref{eqn:cmN-invariant} yields
 \begin{multline*}
  \mN_2 = \frac{1}{\submfdim-1}\cmN + \frac{\mfdim-4}{(\submfdim-3)(\submfdim-6)}\Wn - \frac{2(\mfdim - \submfdim - 1)}{(\submfdim-1)(\submfdim-3)}\cmJ_1 \\ + \frac{8}{\submfdim-1}\mK_2 + \frac{4}{\submfdim-1}W_{\alpha\beta\alpha^\prime}{}^\beta W^{\alpha\gamma\alpha^\prime}{}_\gamma .
 \end{multline*}
 Now observe that
 \begin{align*}
   \MoveEqLeft \frac{4(\mfdim-\submfdim-1)}{(\submfdim-1)(\submfdim-3)}\Sch^{\alpha\beta}W_{\alpha\gamma\beta}{}^\gamma - \frac{2}{\submfdim-1}\trSch W_{\alpha\beta}{}^{\alpha\beta} + \frac{2(\mfdim-4)}{(\submfdim-3)(\submfdim-6)}\Sch_\alpha{}^\alpha W_{\beta\gamma}{}^{\beta\gamma} \\
    & = \frac{2(\mfdim-4)(\mfdim-\submfdim+2)}{(\submfdim-3)(\submfdim-6)(\mfdim-1)(\mfdim-2)}RW_{\alpha\beta}{}^{\alpha\beta} \\
     & \quad - \frac{2(\mfdim-4)}{(\submfdim-3)(\submfdim-6)(\mfdim-2)}R_{\alpha^\prime}{}^{\alpha^\prime}W_{\alpha\beta}{}^{\alpha\beta} + \frac{4(\mfdim-\submfdim-1)}{(\submfdim-1)(\submfdim-3)(\mfdim-2)}R^{\alpha\beta}W_{\alpha\gamma\beta}{}^\gamma .
 \end{align*}
 Combining this with the formula for $\mK_2$ in \cref{rk:mK}, the definitions of $\cmN$, $\Wn$, and $\cmJ_1$, and Equations~\eqref{eqn:gcD} and~\eqref{eqn:mP} yields
 \begin{align*}
  \MoveEqLeft[0] \mN_2 = \frac{1}{\submfdim-1}\nabla^{\alpha^\prime}\nabla_{\alpha^\prime}W_{\alpha\beta}{}^{\alpha\beta} + \frac{\mfdim-\submfdim-6}{\submfdim-1}H^{\alpha^\prime}\nabla_{\alpha^\prime}W_{\alpha\beta}{}^{\alpha\beta} \\
   & \quad - \frac{4(\mfdim-\submfdim-1)}{(\submfdim-1)(\submfdim-3)}\onabla^\alpha\mC_\alpha + \frac{\submfdim^2 - 5\submfdim + 14 - (\submfdim-1)\mfdim}{(\submfdim-1)(\submfdim-3)(\submfdim-6)}\oDelta W_{\alpha\beta}{}^{\alpha\beta} \\
   & \quad + \frac{2(\mfdim-4)(\mfdim-\submfdim+2)}{(\submfdim-3)(\submfdim-6)(\mfdim-1)(\mfdim-2)}RW_{\alpha\beta}{}^{\alpha\beta} - \frac{2(\mfdim-4)}{(\submfdim-3)(\submfdim-6)(\mfdim-2)}R_{\alpha^\prime}{}^{\alpha^\prime}W_{\alpha\beta}{}^{\alpha\beta} \\
   & \quad + \frac{4(\mfdim-\submfdim-1)}{(\submfdim-1)(\submfdim-3)(\mfdim-2)}R^{\alpha\beta}W_{\alpha\gamma\beta}{}^\gamma - \frac{2(\mfdim-\submfdim-1)}{(\submfdim-1)(\submfdim-3)}\tfss^{\alpha\beta\alpha^\prime}\nabla_{\alpha^\prime}W_{\alpha\gamma\beta}{}^\gamma \\
   & \quad - \frac{4}{\submfdim-1}W_{\alpha\beta\alpha^\prime}{}^\beta\onabla^\alpha H^{\alpha^\prime} + \frac{8(\mfdim-\submfdim-1)}{(\submfdim-1)(\submfdim-3)}H^{\alpha^\prime}\tfss^{\alpha\beta}{}_{\alpha^\prime}W_{\alpha\gamma\beta}{}^\gamma \\
   & \quad - \frac{4(\mfdim-\submfdim+5)}{\submfdim-1}\Di^{\alpha\alpha^\prime}W_{\alpha\beta\alpha^\prime}{}^\beta + \frac{10(\mfdim - 4)}{(\submfdim-1)(\submfdim-6)}\lv H\rv^2 W_{\alpha\beta}{}^{\alpha\beta} .
 \end{align*}
 Specializing to the case $\submfdim=4$ yields the final conclusion.
\end{proof}

\section*{Acknowledgements}
This work was initiated and significantly advanced during the workshop \emph{Partial differential equations and conformal geometry} held at the American Institute of Mathematics (AIM) in August 2022.
We thank AIM for providing an ideal research environment.

\section*{Funding}
JSC acknowledges support from a Simons Foundation Collaboration Grant for Mathematicians, ID 524601.
AW acknowledges support from the Royal Society of New Zealand via Marsden Grant 19-UOA-008 and from a Simons Foundation Collaboration Grant for Mathematicians, ID 686131.

\section*{Declarations}
The authors have no relevant financial or non-financial interests to disclose.

\section*{Data availability}
Data sharing not applicable to this article as no datasets were generated or analyzed during the study.

\bibliography{bib}
\end{document}